\newtheorem{teo}{Theorem}[section]
\newtheorem{defi}{Definition}[section]
\newtheorem{cor}{Corollary}[section]
\newtheorem{prop}{Proposition}[section]
\newtheorem{rem} {Remark}[section]
\newcommand{\nn}{\nonumber}
\DeclareMathOperator{\im}{im}
\DeclareMathOperator{\dvol}{dvol}
\DeclareMathOperator{\supp}{supp}
\DeclareMathOperator{\id}{Id}
\DeclareMathOperator{\vol}{vol}
\DeclareMathOperator{\reg}{reg}
\DeclareMathOperator{\sing}{sing}
\DeclareMathOperator{\End}{End}
\DeclareMathOperator{\re}{Re}
\DeclareMathOperator{\si}{sign}
\DeclareMathOperator{\Hom}{Hom}
\DeclareMathOperator{\tr}{tr}
\DeclareMathOperator{\Tr}{Tr}
\DeclareMathOperator{\op}{op}
\DeclareMathOperator{\inte}{int}
\DeclareMathOperator{\cod}{cod}
\DeclareMathOperator{\depth}{depth}
\title{\huge \bf  Sobolev spaces and Bochner Laplacian  on  complex projective varieties and stratified pseudomanifolds}
\author{Francesco Bei  \bigskip \\
Institute f\"ur Mathematik, Humboldt Universit\"at zu Berlin,\\ E-mail addresses: \ bei@math.hu-berlin.de\     francescobei27@gmail.com }
\date{}
\begin{document}

\maketitle
\begin{abstract}
 Let $V\subset \mathbb{C}\mathbb{P}^n$ be an irreducible complex projective variety of complex dimension $v$ and let $g$ be the K\"ahler metric on $\reg(V)$, the regular part of $V$,  induced by the Fubini Study metric of $\mathbb{C}\mathbb{P}^n$. In \cite{LT} Li and Tian proved that $W^{1,2}_0(\reg(V),g)=W^{1,2}(\reg(V),g)$, that the natural inclusion $W^{1,2}(\reg(V),g)\hookrightarrow L^2(\reg(V),g)$ is a compact operator and that the heat operator associated to the Friedrich extension of the scalar Laplacian $\Delta_0:C^{\infty}_c(\reg(V))\rightarrow C^{\infty}_c(\reg(V))$, that is  $e^{-t\Delta_0^{\mathcal{F}}}:L^2(\reg(V),g)\rightarrow L^2(\reg(V),g)$, is a trace class operator. The goal of this paper is to provide an extension of the above  result to the case of Sobolev spaces of sections and symmetric Schr\"odinger type operators with potential bounded from below  where the underling Riemannian manifold is the regular part of a complex projective variety endowed with the Fubini-Study metric or the regular part of a  stratified pseudomanifold endowed with  an iterated edge metric.
\end{abstract}
\vspace{1 cm}

\noindent\textbf{Keywords}: Projective variety, Fubini Study metric, Stratified pseudomanifold, Iterated edge metric, Sobolev space, Bochner Laplacian, Heat kernel.
\vspace{0.5 cm}

\noindent\textbf{Mathematics subject classification}:  58J35, 58J10,  35P15, 31C12.

\tableofcontents

\section*{Introduction}
Complex projective varieties endowed with the Fubini-Study metric as well as stratified pseudomanifolds with an iterated edge metric are important examples of singular spaces with a rich interplay between topological and analytic questions. An important topic in this setting  is certainly provided by the heat operator and its properties.  Many papers during the last thirty years have been devoted to explore this subject. Without any goal of completeness we can mention here the seminal paper of Cheeger  \cite{JCH}, where the study of the heat kernel on stratified pseudomanifolds has been initiated,  \cite{BS2}, \cite{BSE2}, \cite{BS}, \cite{MaVe}, \cite{EMO} where the heat operator on manifolds with conical singularities and on manifolds with edges is studied,  \cite{BRLU}, \cite{LT}  \cite{MasNag} and \cite{Pa} that  deal with the heat operator on complex projective varieties and so on. In particular in \cite{LT}, generalizing the results established in \cite{MasNag} and \cite{Pa},  Li and Tian  proved, without any assumptions on the singularities of $V$, that $W^{1,2}_0(\reg(V),g)=W^{1,2}(\reg(V),g)$ (in other words the $L^2$-Stokes Theorem holds for functions), that the natural inclusion $W^{1,2}(\reg(V),g)\hookrightarrow L^2(\reg(V),g)$ is a compact operator and that the heat operator associated to the Friedrich extension of the scalar Laplacian $\Delta_0:C^{\infty}_c(\reg(V))\rightarrow C^{\infty}_c(\reg(V))$, that is  $e^{-t\Delta_0^{\mathcal{F}}}:L^2(\reg(V),g)\rightarrow L^2(\reg(V),g)$, is a trace class operator.\\ In this paper we are interested to extend the result of Li and Tian to the case of Sobolev spaces of sections and to symmetric Schr\"odinger type operators with potential bounded form below where the underling Riemannian manifold is the regular part of a complex projective variety endowed with the Fubini-Study metric or the regular part of a  stratified pseudomanifold with an iterated edge metric.\\ Let us go more into the details explaining the structure of the paper. The first section is devoted to the background material. We recall briefly the definition of $L^p$ spaces, maximal and minimal extension of a differential operator and the notion of Sobolev space associated to a connection. In particular, given an open and possibly incomplete Riemannian manifold $(M,g)$ with a vector bundle $E$ endowed with a metric $h$, we will consider the spaces $W^{1,2}(M,E)$ and $W^{1,2}_0(M,E)$. The former  is the space of sections  $s\in L^2(M,E)$ such that $\nabla s$, applied in the distributional sense, lies in $L^2(M,T^*M\otimes E)$. The latter is defined as the graph closure of $\nabla:L^2(M,E)\rightarrow L^2(M,T^*M\otimes E)$ with core domain $C^{\infty}_c(M,E)$, the space of smooth sections with compact support. In the  second section we recall   Kato's inequality and then we provide some results about the  domination of semigroups. In particular, under some additional assumptions, we give a proof of the domination of the heat semigroups on a possibly incomplete Riemannian manifold, which is based on Kato's inequality. The third section contains some general results concerning Sobolev spaces of sections and symmetric Schr\"odinger operator with potential bounded from below. The fourth section concerns irreducible complex projective varieties. The main result of its first part is the following theorem:

\begin{teo}
\label{pianopianozx}
Let $V\subset \mathbb{C}\mathbb{P}^n$ be an irreducible complex projective variety of complex dimension $v$. Let $E$ be a vector bundle over $\reg(V)$ and let $h$ be a metric on $E$, Hermitian if $E$ is a complex vector bundle, Riemannian if $E$ is a real vector bundle. Let $g$ be the K\"ahler  metric on $\reg(V)$ induced by the Fubini-Study metric of  $\mathbb{C}\mathbb{P}^n$. Finally let $\nabla:C^{\infty}(\reg(V),E)\rightarrow C^{\infty}(\reg(V),T^*\reg(V)\otimes E)$ be a metric connection. We have the following properties:
\begin{itemize}
\item $W^{1,2}(\reg(V),E)=W^{1,2}_0(\reg(V),E)$.
\item Assume that $v>1$. Then there exists a continuous inclusion $W^{1,2}(\reg(V),E)\hookrightarrow L^{\frac{2v}{v-1}}(\reg(V),E)$.
\item Assume that $v>1$. Then the inclusion $W^{1,2}(\reg(V),E)\hookrightarrow L^2(\reg(V),E)$ is a compact operator.
\end{itemize}
\end{teo}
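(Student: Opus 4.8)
The plan is to reduce the three bundle-valued statements to the corresponding scalar statements for $(\reg(V),g)$ by means of Kato's inequality (Section~2) and the general results of Section~3, the scalar input being the theorem of Li and Tian together with the Sobolev inequality for $(\reg(V),g)$. The point is that these scalar facts hold even though $(\reg(V),g)$ is incomplete, so no completeness-based argument is required and the singular set never has to be analysed directly.

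For the first item I would argue via the orthogonal-complement characterization of $W^{1,2}_0=W^{1,2}$: it suffices to show that every $s\in W^{1,2}(\reg(V),E)$ which is orthogonal, with respect to the graph inner product, to $W^{1,2}_0(\reg(V),E)$ vanishes. Such an $s$ is a distributional solution of $\nabla^*\nabla s+s=0$, hence smooth by elliptic regularity; Kato's inequality then gives $|s|\in W^{1,2}(\reg(V),g)$ with $|d|s||\le|\nabla s|$ and, distributionally, $\Delta_0|s|+|s|\le 0$. Since $W^{1,2}(\reg(V),g)=W^{1,2}_0(\reg(V),g)$ by \cite{LT}, $|s|$ can be approximated in $W^{1,2}$ by nonnegative functions in $C^\infty_c(\reg(V))$; testing $\Delta_0|s|+|s|\le 0$ against this sequence and passing to the limit yields $\|d|s|\|_{L^2}^2+\||s|\|_{L^2}^2\le 0$, so $s=0$. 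This is the content of the relevant lemma of Section~3.

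For the second item I would first establish the scalar Sobolev inequality $\|f\|_{L^{2v/(v-1)}(\reg(V),g)}\le C\|f\|_{W^{1,2}(\reg(V),g)}$, which is where the geometry of $V$ genuinely enters and which I regard as the crucial input: one may obtain it from the fact that $\reg(V)$ is a complex submanifold of the compact K\"ahler manifold $\mathbb{C}\mathbb{P}^n$, hence a minimal submanifold, so that the Michael--Simon/Hoffman--Spruck Sobolev inequality applies with uniform constants to every $f\in C^\infty_c(\reg(V))$ and, after the substitution $f\mapsto|f|^\gamma$ and H\"older's inequality, gives the $L^2$-gradient form; the first item then extends it to all of $W^{1,2}(\reg(V),g)$. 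Alternatively it follows from the on-diagonal heat-kernel bound $p_t(x,x)\le Ct^{-v}$ underlying the analysis of \cite{LT}. Given the scalar inequality, Kato's inequality immediately yields the bundle version: for $s\in W^{1,2}(\reg(V),E)$ one has $|s|\in W^{1,2}(\reg(V),g)$, whence $\|s\|_{L^{2v/(v-1)}}=\||s|\|_{L^{2v/(v-1)}}\le C\||s|\|_{W^{1,2}}\le C\|s\|_{W^{1,2}(\reg(V),E)}$.

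For the third item I would combine the bundle Sobolev embedding with $\vol_g(\reg(V))<\infty$ and with the fact that the sets $U_\delta:=\{x\in\reg(V):\operatorname{dist}_{\mathbb{C}\mathbb{P}^n}(x,\sing(V))\le\delta\}$ satisfy $\vol_g(U_\delta)\to0$ as $\delta\to0$. H\"older's inequality gives $\|s\|_{L^2(U_\delta,E)}\le\vol_g(U_\delta)^{1/(2v)}\|s\|_{L^{2v/(v-1)}(\reg(V),E)}\le C\vol_g(U_\delta)^{1/(2v)}\|s\|_{W^{1,2}}$, so a $W^{1,2}$-bounded sequence is uniformly small in $L^2$ off the relatively compact sets $\reg(V)\setminus U_\delta$, on each of which the classical Rellich--Kondrachov theorem provides an $L^2$-convergent subsequence; a diagonal argument over $\delta\to0$ then gives an $L^2(\reg(V),E)$-convergent subsequence. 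The main technical obstacles are thus the verification of the scalar Sobolev inequality for $(\reg(V),g)$ and the refined, weak-solution form of Kato's inequality needed in the first item; once these are in hand the argument is a matter of assembling the pieces of Sections~2 and~3.
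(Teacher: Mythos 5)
Your proposal is correct, and for the second and third items it follows essentially the same route as the paper: the scalar Sobolev inequality for $(\reg(V),g)$ is exactly the Li--Tian/Michael--Simon--Hoffman--Spruck input the paper cites, the passage to the bundle case is Kato's inequality plus density of smooth sections, and the compactness argument (H\"older against the vanishing volume of neighbourhoods of $\sing(V)$, local Rellich on relatively compact pieces, diagonal extraction) is precisely the mechanism of the paper's Prop.~\ref{otto}. Where you genuinely diverge is the first item. The paper does not use the orthogonal-complement characterization at all: it constructs an explicit sequence of Lipschitz cut-off functions $\phi_j$ with $0\le\phi_j\le1$, $\phi_j\to1$ and $\|d_{0,\min}\phi_j\|_{L^2}\to0$ via resolution of singularities (Prop.~\ref{taglia}, adapting Li--Tian and Chavel--Feldman), and then feeds this into a general criterion (Prop.~\ref{sette}) that multiplies a bounded smooth section by $\phi_j$. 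Your route instead takes the scalar equality $W^{1,2}(\reg(V),g)=W^{1,2}_0(\reg(V),g)$ from \cite{LT} as a black box and upgrades it to bundles via elliptic regularity and the second-order distributional Kato inequality $\Delta_0|s|\le\re h(\nabla^t\nabla s,\si(s))$; this is a valid and arguably slicker argument, though note that (i) it is not "the content of the relevant lemma of Section~3" --- the paper's Section~3 lemma is the cut-off criterion, not your orthogonality argument; (ii) you need to justify that $|s|$ can be approximated in $W^{1,2}$ by \emph{nonnegative} compactly supported functions (continuity of $u\mapsto u^{+}$ on $W^{1,2}$, or test directly with $\phi_k^2$ times a bounded function as the paper does in the proof of Theorem~\ref{lower}); and (iii) the paper's cut-off sequence is needed anyway for several downstream results (Prop.~\ref{drivevz}, Theorem~\ref{lower}), so the explicit construction buys more than the first bullet point, whereas your argument buys a proof that does not re-derive the Li--Tian cut-offs.
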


The proof of this theorem lies essentially on a combination of Kato's inequality, Sobolev inequality and the existence of a suitable sequence of cut-off functions. Moreover from Theorem \ref{pianopianozx} we have the following application: for a large class of first order differential operators $D:C^{\infty}_c(\reg(V),E)\rightarrow C^{\infty}_c(\reg(V),F)$, see Theorem \ref{drivevz} for the definition, which includes for instance the de Rham differential $d_k$, the Dolbeault operator $\overline{\partial}_{p,q}$ and Dirac type operators, we have the following inclusion:
\begin{equation}
\label{caldare}
\mathcal{D}(D_{\max})\cap L^{\infty}(\reg(V),E)\subset \mathcal{D}(D_{\min}).
\end{equation}
In the second part of the fourth section we consider Schr\"ondinger type operators $P:C^{\infty}_c(\reg(V),E)\rightarrow C^{\infty}_c(\reg(V),E)$,  $P:=\nabla^t\circ \nabla+L$, which are positive and formally self-adjoint. We study some properties of $e^{-tP^{\mathcal{F}}}$, the heat operator associated to the Friedrich extension of $P$. Our mean result is the following: 

\begin{teo}
\label{skernelzx}
Let $V$, $E$, $g$, $h$, and $\nabla$ be as in Theorem \ref{pianopianozx}. Let $P:=\nabla^t\circ \nabla +L$, $$P:C^{\infty}_c(\reg(V),E)\rightarrow C_c^{\infty}(\reg(V),E)$$ be a Schr\"odinger type operator with $L\in C^{\infty}(\reg(V),\End(E))$. Assume that:
\begin{itemize}
\item $P$ is symmetric and  positive.
\item There is a constant $c\in \mathbb{R}$ such that, for each $s\in C^{\infty}(\reg(V),E)$, we have  $h(Ls,s)\geq ch(s,s).$
\end{itemize}
Consider $P^{\mathcal{F}}:L^2(\reg(V),E)\rightarrow L^2(\reg(V),E)$  and   $\Delta_0^{\mathcal{F}}:L^2(\reg(V),g)\rightarrow L^2(\reg(V),g)$  respectively   the Friedrich extension of $P$ and  the Friedrich extension of $\Delta_0:C^{\infty}_c(\reg(V))\rightarrow C^{\infty}_c(\reg(V))$.
Then the heat operator associated to $P^{\mathcal{F}}$ $$e^{-tP^{\mathcal{F}}}:L^{2}(\reg(V),E)\longrightarrow L^2(\reg(V),E)$$  
is a trace class operator and its trace satisfies the following inequality: 
\begin{equation}
\label{marzzx}
\Tr(e^{-tP^{\mathcal{F}}})\leq me^{-tc}\Tr(e^{-t\Delta_0^{\mathcal{F}}})
\end{equation}
where $m$ is the rank of the vector bundle $E$.
\end{teo}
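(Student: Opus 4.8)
The overall strategy is to reduce the statement to three ingredients that are already available: the equality $W^{1,2}(\reg(V),E)=W^{1,2}_0(\reg(V),E)$ furnished by Theorem \ref{pianopianozx} together with its scalar analogue $W^{1,2}(\reg(V),g)=W^{1,2}_0(\reg(V),g)$ due to Li and Tian; the domination of heat semigroups obtained from Kato's inequality in Section 2; and the Li--Tian theorem that $e^{-t\Delta_0^{\mathcal{F}}}$ is trace class. First I would note that, since $h(Ls,s)\ge c\,h(s,s)$, the quadratic form $q(s)=\|\nabla s\|^2_{L^2}+\int_{\reg(V)}h(Ls,s)\,\dvol_g$ with core $C^{\infty}_c(\reg(V),E)$ is bounded below by $c\|s\|^2_{L^2}$; its form domain is continuously contained in $W^{1,2}_0(\reg(V),E)=W^{1,2}(\reg(V),E)$, the associated self-adjoint operator is $P^{\mathcal{F}}$, and $P^{\mathcal{F}}\ge 0$ by hypothesis, so $e^{-tP^{\mathcal{F}}}$ is a well-defined self-adjoint contraction semigroup.

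Next I would invoke the domination machinery of Section 2. Since $P=\nabla^{t}\nabla+L\ge\nabla^{t}\nabla+c$ and both $W^{1,2}=W^{1,2}_0$ for functions on $(\reg(V),g)$ and $W^{1,2}=W^{1,2}_0$ for sections of $E$ hold on the (incomplete) manifold $\reg(V)$, the hypotheses needed to run the Kato-inequality argument are satisfied, and one obtains the pointwise bound
\[
|e^{-tP^{\mathcal{F}}}s|_h\ \le\ e^{-tc}\,e^{-t\Delta_0^{\mathcal{F}}}\bigl(|s|_h\bigr)\qquad\text{a.e.\ on }\reg(V)
\]
for every $s\in L^2(\reg(V),E)$ and every $t>0$. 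By the Li--Tian theorem $e^{-t\Delta_0^{\mathcal{F}}}$ is trace class, hence it is represented by a non-negative jointly smooth kernel $k_0(t,\cdot,\cdot)$ with $\int_{\reg(V)}k_0(t,x,x)\,\dvol_g=\Tr(e^{-t\Delta_0^{\mathcal{F}}})<\infty$ and, by the semigroup property, $\int_{\reg(V)}k_0(t/2,x,y)^2\,\dvol_g(y)=k_0(t,x,x)$.

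The domination estimate then upgrades to a kernel estimate for $e^{-(t/2)P^{\mathcal{F}}}$: this operator is represented by a section $K_{t/2}(x,y)\in\Hom(E_y,E_x)$ with $|K_{t/2}(x,y)|_{\op}\le e^{-tc/2}\,k_0(t/2,x,y)$, so that
\[
\int_{\reg(V)}\!\!\int_{\reg(V)}|K_{t/2}(x,y)|^2_{\op}\,\dvol_g(x)\,\dvol_g(y)\ \le\ e^{-tc}\,\Tr(e^{-t\Delta_0^{\mathcal{F}}})\ <\ \infty .
\]
Hence $e^{-(t/2)P^{\mathcal{F}}}$ is Hilbert--Schmidt and, being self-adjoint, $e^{-tP^{\mathcal{F}}}=\bigl(e^{-(t/2)P^{\mathcal{F}}}\bigr)^2$ is trace class. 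It therefore has a continuous kernel $K_t(x,y)$, with $K_t(x,x)\ge 0$ as an endomorphism of $E_x$ because $P^{\mathcal{F}}\ge 0$; consequently $\tr_{E_x}K_t(x,x)\le m\,|K_t(x,x)|_{\op}\le m\,e^{-tc}\,k_0(t,x,x)$, and integrating over $\reg(V)$ gives
\[
\Tr(e^{-tP^{\mathcal{F}}})=\int_{\reg(V)}\tr_{E_x}K_t(x,x)\,\dvol_g(x)\le m\,e^{-tc}\,\Tr(e^{-t\Delta_0^{\mathcal{F}}}),
\]
which is (\ref{marzzx}).

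The main obstacle is the passage through the domination step: one has to verify that the abstract Kato-inequality results of Section 2 genuinely apply on the incomplete manifold $\reg(V)$ --- this is exactly where the equalities $W^{1,2}=W^{1,2}_0$ for functions and for sections of $E$ enter --- and then to upgrade the resulting $L^2$-level comparison of semigroups to the pointwise kernel bound for $e^{-(t/2)P^{\mathcal{F}}}$ that launches the Hilbert--Schmidt bootstrap. Once that is in place, the trace-class property and the inequality (\ref{marzzx}) follow from soft operator theory together with the Li--Tian trace-class statement.
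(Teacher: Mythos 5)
Your proposal is correct and follows essentially the same route as the paper: it specializes the general domination result of Section 2 (whose hypotheses --- finite volume and $W^{1,2}=W^{1,2}_0$ --- are supplied by Theorem \ref{pianopianozx} and the cut-off construction), upgrades the semigroup domination to the pointwise operator-norm bound $\|k_P(t,x,y)\|_{h,\op}\le e^{-tc}k_{\Delta_0}(t,x,y)$, and deduces the trace-class property and \eqref{marzzx} from the scalar heat-trace bound with the factor $m$ coming from comparing the fibrewise trace with the operator norm. The only cosmetic remark is that the positivity of $K_t(x,x)$ follows from $e^{-tP^{\mathcal{F}}}=e^{-(t/2)P^{\mathcal{F}}}\bigl(e^{-(t/2)P^{\mathcal{F}}}\bigr)^*$ rather than from $P^{\mathcal{F}}\ge 0$, and in any case the estimate $\tr_{E_x}K_t(x,x)\le m\,\|k_P(t,x,x)\|_{h,\op}$ is all that is needed.
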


This theorem is proved applying the results about the domination of semigroups recalled in the second section.  In the remaining part of the forth section we discuss some corollaries of Theorem \ref{skernelzx}. In particular we get an asymptotic inequality for the eigenvalues of $P^{\mathcal{F}}$ and an estimate for the  trace  $\Tr(e^{-tP^\mathcal{F}})$ when $t\in (0,1)$. Moreover we point out that these results apply to $(\nabla^t\circ \nabla)^{\mathcal{F}}:L^2(\reg(V),E)\rightarrow L^2(\reg(V),E)$, the Friedrich extension of the Bochner Laplacian $\nabla^t\circ \nabla:C^{\infty}_c(\reg(V),E)\rightarrow C^{\infty}_c(\reg(V),E)$. Finally, another application of Theorem \ref{skernelzx}, is provided by the extension of Cor. 5.5 of \cite{LT} to our setting. More precisely we prove the following result:
\begin{teo}
\label{lowerz}
There exists a positive constant $\gamma=\gamma(d,n,m)$, that is $\gamma$ depends only on the dimension of the ambient space $\mathbb{C}\mathbb{P}^n$, on the degree $d$ and  on the rank $m$, such that for every irreducible complex projective variety $V\subset \mathbb{C}\mathbb{P}^n$ of degree $d$, for every vector bundle $E$ on $\reg(V)$ of rank $m$ endowed with an arbitrary metric $h$ and for every Schr\"odinger type operator $P:C^{\infty}_c(\reg(V),E)\rightarrow C^{\infty}_c(\reg(V),E)$ as in Theorem \ref{skernelzx} with  $L\geq 0$, the $(md)$-th eingenvalue of $P^{\mathcal{F}}$, that is $\lambda_{md}$, satisfies the following inequality:
\begin{equation}
\label{vicvic}
0<\gamma \leq \lambda_{md}. 
\end{equation}
\end{teo}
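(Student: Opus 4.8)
The plan is to deduce the statement from the scalar heat trace estimate of Li and Tian by means of the semigroup domination furnished by Theorem \ref{skernelzx}. Since $L\ge 0$, the hypotheses of Theorem \ref{skernelzx} hold with $c=0$. In particular $P^{\mathcal F}$ has compact resolvent (this also follows from the compactness of $W^{1,2}(\reg(V),E)\hookrightarrow L^2(\reg(V),E)$ established in Theorem \ref{pianopianozx}), so that $\spec(P^{\mathcal F})$ consists of a non-decreasing sequence of eigenvalues $0\le \lambda_0\le\lambda_1\le\cdots\to+\infty$, repeated according to multiplicity, the operator $e^{-tP^{\mathcal F}}$ is trace class, and
\[
\Tr\big(e^{-tP^{\mathcal F}}\big)\le m\,\Tr\big(e^{-t\Delta_0^{\mathcal F}}\big)\qquad\text{for every }t>0 .
\]

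Next I would use the elementary lower bound $\Tr(e^{-tP^{\mathcal F}})=\sum_{j\ge0}e^{-t\lambda_j}\ge\sum_{j=0}^{md}e^{-t\lambda_j}\ge(md+1)e^{-t\lambda_{md}}$, which combined with the previous display gives
\[
\lambda_{md}\ \ge\ \frac1t\,\log\!\left(\frac{md+1}{m\,\Tr(e^{-t\Delta_0^{\mathcal F}})}\right)\qquad\text{for every }t>0 .
\]
Thus the whole problem is reduced to exhibiting a single time $t_0$, depending only on $n,d,m$, at which $m\,\Tr(e^{-t_0\Delta_0^{\mathcal F}})<md+1$, i.e. $\Tr(e^{-t_0\Delta_0^{\mathcal F}})<d+\tfrac1m$, uniformly over all irreducible $V\subset\mathbb{C}\mathbb{P}^n$ of degree $d$.

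This uniform control of the scalar heat trace is where the results of \cite{LT} enter, and it is the main obstacle. Set $\Phi(n,d,t):=\sup\{\Tr(e^{-t\Delta_0^{\mathcal F}}):\ V\subset\mathbb{C}\mathbb{P}^n\ \text{irreducible},\ \deg V=d\}$. The trace class estimates of \cite{LT} (precisely those behind Cor. 5.5 of \cite{LT}) show that $\Phi(n,d,t)$ is finite and depends only on $n,d,t$; being a supremum of functions non-increasing in $t$, it is itself non-increasing, and the crucial point to be extracted from \cite{LT} is that $\lim_{t\to+\infty}\Phi(n,d,t)\le d$ (heuristically, a flat limit of an irreducible degree $d$ variety breaks into at most $d$ pieces, and the large time heat trace detects the number of pieces). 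Since $d<d+\tfrac1m$, monotonicity then yields $t_0=t_0(n,d,m)>0$ with $m\,\Phi(n,d,t_0)<md+1$; setting
\[
\gamma\ :=\ \gamma(d,n,m)\ :=\ \frac1{t_0}\,\log\!\left(\frac{md+1}{m\,\Phi(n,d,t_0)}\right)\ >\ 0
\]
and inserting $t=t_0$ in the displayed lower bound for $\lambda_{md}$ gives $0<\gamma\le\lambda_{md}$, with $\gamma$ depending only on $d,n,m$. So the delicate step is the $V$-uniform bound on $\Phi(n,d,t)$ with the correct behaviour as $t\to+\infty$; in the scalar case $m=1$ this is exactly Cor. 5.5 of \cite{LT} (where the inequality needed is only $\Phi(n,d,t_0)<d+1$), and the contribution here is the reduction of the general vector bundle and Schr\"odinger operator case to it through the domination $\Tr(e^{-tP^{\mathcal F}})\le m\,\Tr(e^{-t\Delta_0^{\mathcal F}})$ of Theorem \ref{skernelzx}.
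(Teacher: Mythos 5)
Your reduction is sound, and in one respect it streamlines the paper's argument: combining the domination $\Tr(e^{-tP^{\mathcal F}})\le m\,\Tr(e^{-t\Delta_0^{\mathcal F}})$ (with $c=0$) with the elementary bound $\Tr(e^{-tP^{\mathcal F}})\ge (md+1)e^{-t\lambda_{md}}$ and then evaluating at a single fixed time $t_0$ avoids both the substitution $t=\alpha/\lambda_{md}$ used in the paper and the paper's preliminary step of showing $\lambda_{md}\neq 0$ by proving that $\ker(P^{\mathcal F})$ consists of parallel sections (hence $\dim\ker(P^{\mathcal F})\le m$): in your version the strict positivity of $\lambda_{md}$ falls out of the final inequality for free.

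The gap is precisely at the step you flag as the main obstacle. You assert, with only a heuristic about flat limits, that $\Phi(n,d,t):=\sup_V\Tr(e^{-t\Delta_0^{\mathcal F}})$ is finite, depends only on $n,d,t$, and satisfies $\lim_{t\to+\infty}\Phi(n,d,t)\le d$. Neither the finiteness and $V$-uniformity of this supremum nor the large-time bound is proved, and the degeneration heuristic is not the mechanism by which \cite{LT} obtains uniformity. What actually delivers it, and what the paper uses, is the pointwise heat-kernel comparison of Theorem 2.1 in \cite{LT}: $k_{\Delta_0}(t,x,y)\le\tilde k(t,r(x,y))$, where $\tilde k$ is the rotation-invariant heat kernel of $\mathbb{C}\mathbb{P}^n$ with the Fubini--Study metric. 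Integrating along the diagonal over $\reg(V)$ and using $\vol_g(\reg(V))=d\,\vol_{FS}(\mathbb{C}\mathbb{P}^n)$ gives $\Tr(e^{-t\Delta_0^{\mathcal F}})\le d\,\vol_{FS}(\mathbb{C}\mathbb{P}^n)\,\tilde k(t,0)=d\sum_{j\ge 0}e^{-t\mu_j}$, where $\{\mu_j\}$ are the eigenvalues of the Laplacian of $\mathbb{C}\mathbb{P}^n$; the right-hand side is independent of $V$ and tends to $d$ as $t\to\infty$ because $\mathbb{C}\mathbb{P}^n$ is compact and connected. This is exactly the statement $\Phi(n,d,t_0)<d+\tfrac1m$ for $t_0=t_0(n,d,m)$ large. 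One caveat to keep in mind when filling the gap: the quantitative form $\Tr(e^{-t\Delta_0^{\mathcal F}})\le d(1+b(n)t^{-n})$ invoked in the paper is stated only for $t\in(0,1)$, where it cannot yield $\Phi<d+\tfrac1m$ once $mdb(n)\ge 1$; so your argument genuinely needs the comparison at large times (which the heat-kernel comparison does provide), not merely the short-time asymptotics. With that estimate supplied, your proof is complete.
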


The fifth section contains  applications to stratified pseudomanifolds. We start recalling the basic definitions and properties and then we  prove analogous results to those proved in the fourth section. More precisely we have the following  theorem: 

\begin{teo}
\label{pianopianofv}
Let $X$ be a compact, smoothly Thom-Mather stratified pseudomanifold of dimension $m$.  Consider on $\reg(X)$ an iterated edge metric $g$.  Let $E$ be a vector bundle over $\reg(X)$ and let $h$ be a metric on $E$, Riemannian if $E$ is a real vector bundle, Hermitian if $E$ is a complex vector bundle.  Finally let $\nabla:C^{\infty}(\reg(X),E)\rightarrow C^{\infty}(\reg(V),T^*\reg(X)\otimes E)$ be a metric connection. We have the following properties:
\begin{itemize}
\item $W^{1,2}(\reg(X),E)=W^{1,2}_0(\reg(X),E)$.
\item Assume that $m>2$. Then there exists a continuous inclusion $W^{1,2}(\reg(X),E)\hookrightarrow L^{\frac{2m}{m-2}}(\reg(X),E)$.
\item Assume that $m>2$. Then the inclusion $W^{1,2}(\reg(X),E)\hookrightarrow L^2(\reg(X),E)$ is a compact operator.
\end{itemize}
\end{teo}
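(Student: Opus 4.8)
The plan is to mimic, in the pseudomanifold setting, exactly the strategy that was used for complex projective varieties in Theorem \ref{pianopianozx}. The three ingredients there were: Kato's inequality (recalled in the second section), the Sobolev inequality on $(\reg(V),g)$, and the existence of a suitable sequence of cut-off functions adapted to the singular stratum. For a compact smoothly Thom-Mather stratified pseudomanifold $X$ of dimension $m$ with an iterated edge metric $g$, the first and third ingredients are available verbatim, so the real work is to produce the analogue of the other two: a Sobolev inequality $\|u\|_{L^{2m/(m-2)}(\reg(X),g)}\leq C(\|du\|_{L^2}+\|u\|_{L^2})$ for $u\in C^\infty_c(\reg(X))$, and a sequence $\phi_k\in C^\infty_c(\reg(X))$ with $0\leq\phi_k\leq 1$, $\phi_k\to 1$ pointwise, and $\|d\phi_k\|_{L^\infty}$ (or at least $\|d\phi_k\|_{L^m}$, which is what the Sobolev-based cut-off argument needs) controlled.

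First I would establish the scalar Sobolev inequality on $(\reg(X),g)$. Since $X$ is compact, it suffices to work locally near each point of $\sing(X)$, and by the inductive structure of iterated edge metrics one reduces to a model neighbourhood of the form (a piece of a smooth manifold) $\times$ (a truncated cone $C(L)$ over a lower-dimensional stratified space $L$) with the iterated edge metric $dr^2+r^2 g_L+ g_{\mathrm{base}}$. One then proves the Sobolev inequality on such model pieces by an induction on the depth of the stratification, using the one-dimensional Hardy inequality in the radial variable together with the inductive Sobolev inequality on the link $L$; the condition $m>2$ is exactly what makes the relevant Hardy constant finite. Patching the local estimates with a partition of unity subordinate to a finite cover then gives the global inequality. (In fact, since $(\reg(X),g)$ is quasi-isometric to an incomplete manifold of bounded geometry away from the singular set and the singular set has measure zero with the appropriate volume growth, one may alternatively invoke known Sobolev-type results for iterated edge metrics; I would cite these if available.)

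Next I would construct the cut-off functions. Let $\Sigma_\varepsilon$ be an $\varepsilon$-neighbourhood of $\sing(X)$ measured in $g$. Because the singular stratum has codimension at least two in $X$ (for the $L^2$-Stokes statement one needs codimension $\geq 2$, which holds for any pseudomanifold, and for the Sobolev part one uses $m>2$), one can choose $\phi_\varepsilon$ equal to $1$ outside $\Sigma_{2\varepsilon}$, equal to $0$ on $\Sigma_\varepsilon$, with $|d\phi_\varepsilon|\leq C/(r\log(1/r))$ or a similar logarithmic cut-off in the radial variables of the cone charts, so that $\int_{\reg(X)}|d\phi_\varepsilon|^2\,\dvol_g\to 0$ as $\varepsilon\to 0$ when the codimension is $\geq 3$, and $\int |d\phi_\varepsilon|^m\,\dvol_g$ stays bounded (again using $m>2$) in general. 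This is the standard capacity argument; the iterated nature of the metric means one must apply it stratum by stratum, but the estimates are the same on each cone factor.

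With these two tools in hand, the three conclusions follow exactly as in Theorem \ref{pianopianozx}. For $W^{1,2}(\reg(X),E)=W^{1,2}_0(\reg(X),E)$: given $s\in W^{1,2}(\reg(X),E)$, Kato's inequality gives $|s|\in W^{1,2}(\reg(X))$ with $|d|s||\leq|\nabla s|$, one approximates $|s|$ using the cut-offs $\phi_\varepsilon$, and then the general functional-analytic results of the third section (which reduce the vector-bundle statement to the scalar one via Kato) upgrade this to $s\in W^{1,2}_0(\reg(X),E)$. For the continuous inclusion into $L^{2m/(m-2)}$: combine Kato's inequality with the scalar Sobolev inequality. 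For compactness of $W^{1,2}(\reg(X),E)\hookrightarrow L^2(\reg(X),E)$: since $X$ is compact, the Sobolev embedding plus the usual Rellich argument on the regular part (covering $\reg(X)$ by a region staying away from $\sing(X)$, where interior Rellich applies, plus a thin neighbourhood of $\sing(X)$ whose contribution is small by the $L^{2m/(m-2)}$ bound and Hölder) gives precompactness in $L^2$. I expect the main obstacle to be the first step — proving the Sobolev inequality uniformly across all strata — since the recursive cone structure forces an induction on depth in which one must carefully track how the Sobolev and Hardy constants on the links assemble into a constant on $X$; everything after that is a transcription of the projective-variety argument.
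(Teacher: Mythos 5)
Your overall strategy coincides with the paper's: both reduce the bundle statements to two scalar ingredients --- a Sobolev inequality $W^{1,2}_0(\reg(X),g)\hookrightarrow L^{2m/(m-2)}(\reg(X),g)$ and a sequence of compactly supported cut-off functions with $L^2$-small differentials --- and then feed these into Kato's inequality (Prop. \ref{devo}) and the general Propositions \ref{sette} and \ref{otto}. The only real difference is that the paper does not prove the scalar ingredients: it imports the Sobolev inequality from Akutagawa--Carron--Mazzeo \cite{ACM} (Prop. 2.2 there) and the cut-off sequence from \cite{BeGu} (Prop. \ref{tagliaz}), whereas you sketch direct constructions (induction on the depth with radial Hardy inequalities for the Sobolev inequality; a capacity argument for the cut-offs). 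Your sketches are in the right spirit, and the alternative you mention --- invoking known results for iterated edge metrics --- is exactly what the paper does; if you carry out the inductive Sobolev argument in full you are essentially reproving the \cite{ACM} result.

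One claim in your cut-off construction should be corrected. You assert that $\int_{\reg(X)}|d\phi_\varepsilon|^2\,\dvol_g\to 0$ only when the singular stratum has codimension at least $3$, and propose to fall back on $L^m$-boundedness of $d\phi_\varepsilon$ otherwise. A pseudomanifold only guarantees codimension $\geq 2$ (condition (vi) of Definition \ref{thom}), and Prop. \ref{sette} genuinely requires $\|d_{0,\min}\phi_j\|_{L^2\Omega^1}\to 0$; moreover for $m=2$ the $L^m=L^2$ fallback gives nothing, and for $m>2$ applying the Sobolev embedding to elements of the \emph{maximal} domain before the first bullet is established is circular. Fortunately the claim is false in the direction that helps you: with the logarithmic cut-off you yourself write down, codimension $2$ already gives $\int|d\phi_\varepsilon|^2\,\dvol_g\lesssim 1/\log(1/\varepsilon)\to 0$, since the transverse volume element behaves like $r\,dr$ near a codimension-two stratum. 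This vanishing in codimension $2$ is precisely the $2$-parabolicity statement the paper takes from \cite{BeGu}, so the defect is in your auxiliary claim rather than in the architecture of the proof.
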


Similarly to \eqref{caldare},   using Theorem \eqref{pianopianofv},  we derive the following conclusion: for a large class of first order differential operators $D:C^{\infty}_c(\reg(V),E)\rightarrow C^{\infty}_c(\reg(V),F)$, see Theorem \ref{drivevz} for the definition, which includes for instance the de Rham differential $d_k$ and Dirac type operators, we have the following inclusion:
\begin{equation}
\label{caldarebelle}
\mathcal{D}(D_{\max})\cap L^{\infty}(\reg(X),E)\subset \mathcal{D}(D_{\min}).
\end{equation}

Furthermore we prove the following theorem:

\begin{teo}
\label{skernelfv}
Let $X$, $E$, $g$, $h$, and $\nabla$ be as in Theorem \ref{pianopianofv}. Let $$P:=\nabla^t\circ \nabla +L,\ P:C^{\infty}_c(\reg(X),E)\rightarrow C_c^{\infty}(\reg(X),E)$$ be a Schr\"odinger type operator with $L\in C^{\infty}(\reg(X),\End(E))$. Assume that:
\begin{itemize}
\item $P$ is symmetric and  positive.
\item    There is a constant $c\in \mathbb{R}$ such that, for each $s\in C^{\infty}(\reg(X),E)$, we have  $$h(Ls,s)\geq ch(s,s).$$
\end{itemize}
Let $P^{\mathcal{F}}:L^2(\reg(X),E)\rightarrow L^2(\reg(X),E)$ be the Friedrich extension of $P$ and let  $\Delta_0^{\mathcal{F}}:L^2(\reg(X),g)\rightarrow L^2(\reg(X),g)$  be the Friedrich extension of $\Delta_0:C^{\infty}_c(\reg(X))\rightarrow C^{\infty}_c(\reg(X))$. Then the heat operator associated to $P^{\mathcal{F}}$ $$e^{-tP^{\mathcal{F}}}:L^{2}(\reg(X),E)\longrightarrow L^2(\reg(X),E)$$  
is a trace class operator and its trace satisfies the following inequality: 
\begin{equation}
\label{marzfv}
\Tr(e^{-tP^{\mathcal{F}}})\leq re^{-tc}\Tr(e^{-t\Delta_0^{\mathcal{F}}})
\end{equation}
where $r$ is the rank of the vector bundle $E$.
\end{teo}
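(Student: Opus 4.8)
The proof proceeds in complete parallel with that of Theorem~\ref{skernelzx}, with $\reg(V)$ replaced by $\reg(X)$; the plan is therefore to check that every ingredient used there is available in the present setting. These ingredients are of three types: (i) the $L^2$-Stokes property $W^{1,2}(\reg(X),E)=W^{1,2}_0(\reg(X),E)$ together with its scalar counterpart $W^{1,2}(\reg(X))=W^{1,2}_0(\reg(X))$; (ii) the Sobolev inequality $W^{1,2}\hookrightarrow L^{2m/(m-2)}$ and the compactness of the inclusion $W^{1,2}\hookrightarrow L^2$; (iii) the domination machinery for heat semigroups on a possibly incomplete Riemannian manifold developed in the second section. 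Items (i) and (ii) are furnished verbatim by Theorem~\ref{pianopianofv} — the scalar statements follow by applying it to the trivial line bundle with the trivial connection — and item (iii) applies because its hypotheses were designed for incomplete manifolds and only require (i).

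First I would record, exactly as in the general results of the third section, that $\Delta_0^{\mathcal{F}}$ and $P^{\mathcal{F}}$ have purely discrete spectrum: this is immediate from the compactness of $W^{1,2}_0\hookrightarrow L^2$ combined with $W^{1,2}=W^{1,2}_0$, which forces $P^{\mathcal{F}}$ to have compact resolvent. Moreover $e^{-t\Delta_0^{\mathcal{F}}}$ is trace class: the Sobolev inequality yields a Nash-type inequality, hence ultracontractivity of the scalar heat semigroup with $\|e^{-t\Delta_0^{\mathcal{F}}}\|_{L^1\to L^\infty}\le C t^{-m/2}$ for $0<t\le 1$, and since $X$ is compact we have $\vol(\reg(X))<\infty$, so
\[
\Tr(e^{-t\Delta_0^{\mathcal{F}}})=\int_{\reg(X)}p_t(x,x)\,d\vol\le C t^{-m/2}\vol(\reg(X))<\infty ,
\]
which in particular makes the right-hand side of \eqref{marzfv} finite.

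Next comes the core of the argument, the domination estimate. Using Kato's inequality and the domination results of the second section, together with $W^{1,2}(\reg(X),E)=W^{1,2}_0(\reg(X),E)$, the corresponding scalar identity, and the hypothesis $h(Ls,s)\ge c\,h(s,s)$, one obtains, for every $s\in L^2(\reg(X),E)$ and a.e. $x\in\reg(X)$,
\[
|e^{-tP^{\mathcal{F}}}s|(x)\le e^{-tc}\,\big(e^{-t\Delta_0^{\mathcal{F}}}|s|\big)(x);
\]
that is, $e^{-tP^{\mathcal{F}}}$ is dominated by $e^{-tc}e^{-t\Delta_0^{\mathcal{F}}}$. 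Since the dominating semigroup is trace class, standard domination arguments — comparing the Hilbert-Schmidt norms of $e^{-(t/2)P^{\mathcal{F}}}$ and $e^{-(t/2)\Delta_0^{\mathcal{F}}}$ and then passing to trace norms, as in the third section — show that $e^{-tP^{\mathcal{F}}}$ is trace class. Finally, writing both traces as integrals over $\reg(X)$ of the diagonals of the respective heat kernels and using that the fibrewise trace of the diagonal of the $E$-heat kernel is bounded by $r$ times $e^{-tc}$ times the diagonal of the scalar heat kernel, integration yields \eqref{marzfv}.

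The step requiring the most care is the domination estimate on the incomplete manifold $\reg(X)$: Kato's inequality has to be combined with a cut-off argument near the singular strata, and the fact that the Friedrich extension is exactly the operator for which domination holds relies precisely on $W^{1,2}(\reg(X),E)=W^{1,2}_0(\reg(X),E)$ — the first bullet of Theorem~\ref{pianopianofv}. Once this is in place, the remainder is a transcription of the proof of Theorem~\ref{skernelzx}.
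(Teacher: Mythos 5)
Your proposal is correct and follows essentially the same route as the paper: the paper proves this theorem by invoking its general Proposition \ref{skernelz} (domination of $e^{-tP^{\mathcal{F}}}$ by $e^{-tc}e^{-t\Delta_0^{\mathcal{F}}}$ via Kato's inequality, plus trace-class of the scalar heat operator from the Sobolev inequality and finite volume) together with Theorem \ref{pianopianofv}, which supplies exactly the ingredients (i)--(iii) you list. Your only cosmetic deviations — deriving ultracontractivity via a Nash-type inequality rather than citing the Sobolev/heat-kernel-bound equivalence directly, and spelling out the Hilbert--Schmidt factorization — are the same arguments the paper carries out inside Propositions \ref{luglioaberlino} and \ref{skernelz}.
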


Finally, in the last part of the fifth section, using Theorem \ref{skernelfv}, we derive some consequences for the operator $P^{\mathcal{F}}$, such as discreteness, an asymptotic inequality for its eigenvalues and an estimate for the trace  $\Tr(e^{-tP^{\mathcal{F}}})$ when $t\in (0,1)$. Moreover, analogously to the previous section, we point out that these results apply to $(\nabla^t\circ \nabla)^{\mathcal{F}}:L^2(\reg(X),E)\rightarrow L^2(\reg(X),E)$, that is the Friedrich extension of the Bochner Laplacian $\nabla^t\circ \nabla:C^{\infty}_c(\reg(X),E)\rightarrow C^{\infty}_c(\reg(X),E)$.

\vspace{1 cm}

\noindent\textbf{Acknowledgments.}   I wish to thank Pierre Albin, Jochen Br\"uning, Simone Cecchini, Eric Leichtnam, Paolo Piazza  and Jean Ruppenthal for interesting comments and useful discussions. This research has been financially supported by the SFB 647 : Raum-Zeit-Materie.

\section{Background material}

The aim of this section is to  recall briefly some basic notions about $L^p$-spaces,  Sobolev spaces and differential operators and then to prove some propositions that we will use often in the rest of the paper. We refer to \cite{TAU}, \cite{BGV}, \cite{GYA},   or the appendix in \cite{KWE} for a thorough discussion about this background material.
Let $(M,g)$  be an open and possibly incomplete Riemannian manifold of dimension $m$. Let $E$ be a vector bundle over $M$ of rank $k$ and let $h$ be a metric on $E$, Hermitian if $E$ is a complex vector bundle, Riemannian if $E$ is a real vector bundle. Let $\dvol_g$ be the one-density associated to $g$.  We consider $M$ endowed with the {\em Riemannian measure} as in \cite{GYA} pag. 59 or \cite{BGV} pag. 29.  A section $s$ of $E$ is said {\em measurable} if, for any trivialization $(U,\phi)$ of $E$, $\phi(s|_U)$ is given by a $k$-tuple of measurable functions.  Given a measurable section $s$ let $|s|_h$ be defined as $|s|_h:= (h(s,s))^{1/2}$. Then for every $p$, $1\leq p< \infty$ we can define $L^{p}(M,E)$ as the  space  of measurable sections $s$ such that    $$\|s\|_{L^{p}(M,E)}:=(\int_{M}|s|_h^p\dvol_g)^{1/p}<\infty.$$
For each $p\in [1, \infty)$ we have a Banach space,  for each $p\in (1, \infty)$  we get a reflexive Banach space and  in the case $p=2$ we have a Hilbert space whose inner product is $$\langle s, t \rangle_{L^2(M,E)}:= \int_{M}h(s,t)\dvol_g.$$ Moreover $C^{\infty}_c(M,E)$,  the space of smooth sections with compact support,  is dense in $L^p(M,E)$ for $p\in [1,\infty).$ Finally $L^{\infty}(M,E)$ is defined as the space of measurable sections whose {\em essential\ supp} is bounded, that is the space of measurable sections $s$ such that $|s|_h$ is bounded almost everywhere. Also in this case we get a Banach space. Clearly all the spaces we defined so far depend on $M$, $E$, $h$ and $g$ but in order to have a lighter notation we prefer to write $L^p(M,E)$ instead of $L^p(M,E,h,g)$. In the case $E$ is the trivial bundle $M\times \mathbb{R}$ we will write $L^p(M,g)$ while for the k-th exterior power of the cotangent bundle, that is $\Lambda^kT^*M$, we will write as usual  $L^p\Omega^k(M,g)$.\\
Let now $F$ be another vector bundle over $M$ endowed with a metric $\rho$. Let $P: C^{\infty}_c(M,E)\longrightarrow  C^{\infty}_c(M,F)$ be a differential operator of order $d\in \mathbb{N}$. Then the {\em formal adjoint} of $P$ $$P^t: C^{\infty}_c(M,F)\longrightarrow C^{\infty}_c(M,E)$$ is the differential operator uniquely characterized by the following property: for each $u\in C^{\infty}_c(M,E)$ and for each $v\in C^{\infty}_c(M,F)$ we have  $$\int_{M}h(u, P^tv)\dvol_g=\int_M\rho(Pu,v)\dvol_g.$$ We can look at $P$ as an unbounded, densely defined and closable operator  acting between $L^2(M,E)$ and $L^2(M,F)$. In general $P$ admits several different closed extensions all included between the minimal and the maximal one. We recall now their  definitions.
The domain of the {\em maximal extension} of $P:L^2(M,E)\longrightarrow L^2(M,F)$ is defined as\\ 
\begin{align}
\label{ner}
& \mathcal{D}(P_{\max}):=\{s\in L^{2}(M,E): \text{there is}\ v\in L^2(M,F)\ \text{such that}\ \int_{M}h(s,P^t\phi)\dvol_g=\\
 \nn &=\int_{M}\rho(v,\phi)\dvol_g\ \text{for each}\ \phi\in C^{\infty}_c(M,F)\}.\ \text{In this case we put}\ P_{\max}s=v.
\end{align} In other words the maximal extension of $P$ is the one defined in the {\em distributional sense}.\\The domain of the {\em minimal extension} of $P:L^2(M,E)\longrightarrow L^2(M,F)$ is defined as\\ 
\begin{align}
\label{spinaci}
& \mathcal{D}(P_{\min}):=\{s\in L^{2}(M,E)\ \text{such that there is a sequence}\ \{s_i\}\in C_c^{\infty}(M,E)\ \text{with}\ s_i\rightarrow s\\ \nn & \text{in}\ L^{2}(M,E)\ \text{and}\ Ps_i\rightarrow w\ \text{in}\ L^2(M,F)\ \text{to some }\ w\in L^2(M,F)\}.\ \text{We put}\ P_{\min}s=w.
\end{align} Briefly the minimal extension of $P$ is the closure of $C^{\infty}_c(M,E)$ under the graph norm $\|s\|_{L^2(M,E)}+\|Ps\|_{L^2(M,F)}$.  It is immediate to check that 
\begin{equation}
\label{emendare}
P_{\max}^*=P^t_{\min}\ \text{and that}\  P_{\min}^*=P^t_{\max}
\end{equation}
 that is  $P^t_{\max/\min}:L^2(M,F)\rightarrow L^2(M,E)$ is the Hilbert space adjoint of $P_{\min/\max}$ respectively. Moreover we have the following two $L^2$-orthogonal decomposition for $L^2(M,E)$
\begin{equation}
\label{fantaghi}
L^2(M,E)=\ker(P_{\min/\max})\oplus \overline{\im(P^t_{\max/\min})}.
\end{equation}
We have the following properties that we will use often later:

\begin{prop}
\label{partiro}
Let $(M,g)$,  $E$ and $F$ be as above.  Let $P:C^{\infty}_c(M,E)\rightarrow C^{\infty}_c(M,F)$ be a differential operator such that  $P^t\circ P:C^{\infty}_c(M,E)\rightarrow C^{\infty}_c(M,E)$ is elliptic. Let $\overline{P}:L^2(M,E)\rightarrow L^2(M,F)$ be a closed extension of $P$. In particular $\overline{P}$ might be $P_{\max}$ or $P_{\min}$. Let $\overline{P}^*$ be the Hilbert space adjoint of $\overline{P}$.  Then $C^{\infty}(M,E)\cap \mathcal{D}(\overline{P}^*\circ \overline{P})$ is dense in $\mathcal{D}(\overline{P})$ with respect to the graph norm of $\overline{P}$. In particular we have that  $C^{\infty}(M,E)\cap \mathcal{D}(P_{\max/\min})$ is dense in $\mathcal{D}(P_{\max/\min})$ with respect to the graph norm of $P_{\max/\min}$.
\end{prop}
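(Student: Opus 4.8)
The plan is to exploit the ellipticity of $P^t\circ P$ together with a Friedrichs mollification argument, carried out locally via a partition of unity so that the (possibly noncompact, incomplete) manifold $M$ causes no trouble. Set $Q:=\overline{P}^*\circ\overline{P}$, a nonnegative self-adjoint operator on $L^2(M,E)$. First I would recall that $\mathcal{D}(Q)$ is a core for $\overline{P}$: indeed, for $s\in\mathcal{D}(\overline{P})$ one has $(\id+Q)^{-1}s\in\mathcal{D}(Q)$, and writing $s_\varepsilon:=(\id+\varepsilon^{-1}Q)^{-1}s$ (equivalently using the spectral theorem for $Q$) one checks $s_\varepsilon\to s$ in $L^2$ and $\overline{P}s_\varepsilon\to\overline{P}s$ in $L^2$ as $\varepsilon\to 0$, using that $\overline{P}$ is closed and that $\overline{P}(\id+Q)^{-1}$ is bounded. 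Hence it suffices to approximate an arbitrary $s\in\mathcal{D}(Q)$, in the graph norm of $\overline{P}$, by smooth sections lying in $\mathcal{D}(Q)$.

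Next, fix $s\in\mathcal{D}(Q)$, so $s\in\mathcal{D}(\overline{P})$ and $Qs\in L^2(M,E)$. The key point is that $s$ then satisfies $P^t\circ P\, s = Qs$ in the distributional sense on all of $M$ — this is where ellipticity enters: by elliptic regularity $s\in C^{\infty}(M,E)$ already, so on the smooth manifold $M$ there is nothing left to do for interior smoothness. Thus the only real content is showing that the smooth section $s$ can be \emph{approximated} by smooth sections that still lie in $\mathcal{D}(Q)$ and converge in the $\overline{P}$-graph norm; but $s$ itself is such a section, so the statement $C^\infty(M,E)\cap\mathcal{D}(Q)$ dense in $\mathcal{D}(\overline{P})$ reduces exactly to the density of $\mathcal{D}(Q)$ in $\mathcal{D}(\overline{P})$ established in the previous paragraph, combined with $\mathcal{D}(Q)\subset C^\infty(M,E)$ from elliptic regularity. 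The second assertion follows by specializing: take $\overline{P}=P_{\max}$ (resp. $P_{\min}$), whose Hilbert space adjoint is $P^t_{\min}$ (resp. $P^t_{\max}$) by \eqref{emendare}; then $\overline{P}^*\circ\overline{P}$ is a closed extension of the elliptic operator $P^t\circ P$, the above applies verbatim, and $C^\infty(M,E)\cap\mathcal{D}(\overline{P}^*\circ\overline{P})\subset C^\infty(M,E)\cap\mathcal{D}(P_{\max/\min})$.

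The main obstacle I anticipate is the justification of elliptic regularity for elements of $\mathcal{D}(Q)$: one must argue that $s\in\mathcal{D}(\overline{P}^*\circ\overline{P})$ implies $P^t P s\in L^2$ in the distributional sense, for which the containment $\overline{P}s\in\mathcal{D}(\overline{P}^*)$ together with $\overline{P}^*\subset P^t_{\max}$ (the maximal/distributional extension of $P^t$) must be invoked carefully; once this is in place, local elliptic estimates give $s\in H^{2}_{\mathrm{loc}}$, and bootstrapping yields $s\in C^\infty(M,E)$. A secondary technical point is verifying the spectral-calculus convergence $\overline{P}(\id+\varepsilon^{-1}Q)^{-1}s\to\overline{P}s$; this is handled by noting $\|\overline{P}u\|^2=\langle Qu,u\rangle$ for $u\in\mathcal{D}(Q)$ and applying dominated convergence against the spectral measure of $Q$ associated to $s$.
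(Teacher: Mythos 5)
Your first step --- that $\mathcal{D}(Q)$ with $Q:=\overline{P}^*\circ\overline{P}$ is a core for $\overline{P}$ --- is fine; your resolvent/spectral-calculus argument is a standard route to this (von Neumann's theorem), and it is essentially equivalent to what the paper does by showing that the graph-orthogonal complement of $\mathcal{D}(Q)$ in $\mathcal{D}(\overline{P})$ is trivial because $\id+Q$ has dense range. The problem is your second step: the claim that elliptic regularity forces $\mathcal{D}(Q)\subset C^{\infty}(M,E)$ is false in general, and the whole proof collapses onto it. If $s\in\mathcal{D}(Q)$, all you know is that $s\in L^2$ and that $P^tPs=Qs\in L^2$ distributionally (granting your careful identification of $\overline{P}^*$ with a restriction of $P^t_{\max}$). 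For an elliptic operator of order $2d$ this yields only $s\in H^{2d}_{\mathrm{loc}}$, not smoothness, and the bootstrap you invoke cannot be run: the right-hand side $Qs$ is a fixed $L^2$ section with no further regularity, so there is no gain on a second pass. Concretely, for $P=d_0$ the domain of $\Delta_{0,\max}$ contains plenty of $H^2_{\mathrm{loc}}$ functions that are not smooth when $\dim M\geq 5$.

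This is exactly why the paper does not work with $\mathcal{D}(Q)$ directly but with $\bigcap_{k\in\mathbb{N}}\mathcal{D}(Q^k)$: elements of the intersection of the domains of \emph{all} powers satisfy $Q^ks\in L^2_{\mathrm{loc}}$ for every $k$, so iterated elliptic regularity does give $s\in C^{\infty}(M,E)$. One then needs the additional fact (quoted from Br\"uning--Lesch) that this intersection is dense in $\mathcal{D}(Q)$ for the graph norm of $Q$, together with the continuity of the inclusion $\mathcal{D}(Q)\hookrightarrow\mathcal{D}(\overline{P})$ coming from $\|\overline{P}s\|^2=\langle s,Qs\rangle\leq\frac12(\|s\|^2+\|Qs\|^2)$, to transport that density down to the $\overline{P}$-graph norm. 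To repair your proof you must insert this extra smoothing layer (e.g. approximate $s\in\mathcal{D}(Q)$ by $e^{-\varepsilon Q}s$ or by spectral cutoffs $\chi_{[0,N]}(Q)s$, which do lie in $\bigcap_k\mathcal{D}(Q^k)$ and hence are smooth) and verify convergence in the $\overline{P}$-graph norm; as written, the argument does not prove the proposition.
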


\begin{proof}
We start pointing out that, if $\mathcal{D}(\overline{P})$ is the domain of $\overline{P}$ and $\mathcal{D}(\overline{P}^*)$ is the domain of $\overline{P}^*$, then  $\mathcal{D}(\overline{P}^*\circ \overline{P})$ is given by $\{s\in \mathcal{D}(\overline{P}):\ \overline{P}s\in \mathcal{D}(\overline{P}^*)\}$. Consider now $\overline{P}^*\circ \overline{P}:L^2(M,E)\rightarrow L^2(M,E)$. Then, according to \cite{BL} pag. 98, we have  that 
$\bigcap_{k\in \mathbb{N}}\mathcal{D}((\overline{P}^*\circ \overline{P})^k)$ is dense in $\mathcal{D}(\overline{P}^*\circ \overline{P})$ with respect to the graph norm of $\overline{P}^*\circ \overline{P}$. By elliptic regularity we have that $\bigcap_{k\in \mathbb{N}}\mathcal{D}((\overline{P}^*\circ \overline{P})^k)$ $\subset C^{\infty}(M,E)$ and therefore $C^{\infty}(M,E)\cap \mathcal{D}(\overline{P}^*\circ \overline{P})$ is dense in $\mathcal{D}(\overline{P}^*\circ \overline{P})$ with respect to the graph norm of $\overline{P}^*\circ \overline{P}$. Thus, in order to complete the proof, we have to show that: 
\begin{itemize}
\item The inclusion $\mathcal{D}(\overline{P}^*\circ \overline{P})\hookrightarrow \mathcal{D}(\overline{P})$ is continuous where each space is endowed with the corresponding graph norm.
\item $\mathcal{D}(\overline{P}^*\circ \overline{P})$ is dense in $\mathcal{D}(\overline{P})$ with respect to the graph norm of $\overline{P}$.
\end{itemize}
The first point it is a consequence of the following inequality: for each $s\in \mathcal{D}(\overline{P}^*\circ \overline{P})$ $$\|\overline{P}s\|^2_{L^2(M,F)}=\langle s,\overline{P}^*(\overline{P}(s))\rangle_{L^2(M,E)}\leq \frac{1}{2}( \|s\|^2_{L^2(M,E)}+\|\overline{P}^*(\overline{P}(s))\|^2_{L^2(M,E)})$$ and therefore
$$\|s\|^2_{L^2(M,E)}+\|\overline{P}s\|^2_{L^2(M,E)}\leq \frac{3}{2} (\|s\|^2_{L^2(M,E)}+\|\overline{P}^*(\overline{P}(s))\|^2_{L^2(M,E)}).$$
For the second point we can argue in this way: let $v\in \mathcal{D}(\overline{P})$ and assume that for each $s\in \mathcal{D}(\overline{P}^*\circ \overline{P})$ we have $\langle v, s\rangle_{L^2(M,E)}+$ $\langle \overline{P}v,\overline{P}s\rangle_{L^2(M,F)}=0$. This is equivalent to say that $\langle v,s+\overline{P}^*(\overline{P}(s))\rangle_{L^2(M,E)}=0$. But $\id+(\overline{P}^*\circ \overline{P})$, where $\id$ is the identity operator,  has dense image because it is an injective and self-adjoint operator. We can therefore conclude that $v=0$ and this completes the proof.
\end{proof}

\begin{prop}
\label{carmina}
Let $(M,g)$,  $E$ and $F$ be as above.  Let  $ P:C^{\infty}_c(M,E)\rightarrow C^{\infty}_c(M,F)$ be  a first order differential operator. Let $s\in\mathcal{D}(P_{\max})$. Assume that there is  an open subset $U\subset M$  with compact closure such that $s|_{M\backslash \overline{U}}=0$. Then $s\in\mathcal{D}(P_{\min})$.

\end{prop}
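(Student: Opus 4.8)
The plan is to prove this by localization together with a Friedrichs mollifier argument; in particular no ellipticity of $P$ or of $P^t\circ P$ will be used. First I would reduce to the case in which $\supp s$ is contained in a single coordinate chart. Since $\overline{U}$ is compact, cover it by finitely many open sets $W_1,\dots,W_N$, each the domain of a chart $\phi_i\colon W_i\to\Omega_i\subset\mathbb{R}^m$ over which $E$ and $F$ are both trivial, and pick $\psi_1,\dots,\psi_N\in C^\infty_c(M)$ with $\supp\psi_i\subset W_i$ and $\sum_i\psi_i\equiv 1$ near $\overline{U}$. Because $s$ vanishes off $\overline{U}$ we have $s=\sum_i\psi_i s$ a.e. Since $P$ is first order, the Leibniz rule gives $\psi_i s\in\mathcal{D}(P_{\max})$ with $P_{\max}(\psi_i s)=\psi_i P_{\max}s+c_i(s)$, where $c_i$ is a zeroth order operator with smooth compactly supported coefficients (built from the principal symbol of $P$ and $d\psi_i$), so the right-hand side lies in $L^2(M,F)$; also $\supp(\psi_i s)\subset\supp\psi_i$ is compact. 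As $\mathcal{D}(P_{\min})$ is a linear subspace and the sum is finite, it is enough to treat each $\psi_i s$, i.e.\ I may assume from now on that $\supp s$ is a compact subset $K$ of the domain $W$ of a single trivializing chart.

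In that chart, $P$ takes the Euclidean form $\widetilde P u=\sum_{j=1}^m A_j(x)\partial_j u+B(x)u$ with $A_j,B$ smooth and matrix-valued, $s$ corresponds to a compactly supported $\widetilde s\in L^2(\Omega)$, and $v:=P_{\max}s$ corresponds to some $\widetilde v\in L^2(\Omega)$ with $\widetilde P\widetilde s=\widetilde v$ in $\mathcal{D}'(\Omega)$. Here I would note that on the compact set $K$ the density $\dvol_g$ and the fibre metrics $h$, $\rho$ are, in these coordinates and frames, given by smooth tensor fields bounded above and below by positive constants; hence for sections/functions supported in $K$ the intrinsic $L^2$-norms are equivalent to the Lebesgue $L^2$-norms of the corresponding vector-valued functions, and the distributional identity defining $\mathcal{D}(P_{\max})$ becomes, after absorbing these smooth factors (which only modifies $B$), exactly $\widetilde P\widetilde s=\widetilde v$ weakly on $\Omega$.

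Next, fix a standard mollifier $\rho\in C^\infty_c(B_1(0))$, $\rho\ge 0$, $\int\rho=1$, set $\rho_\epsilon(x)=\epsilon^{-m}\rho(x/\epsilon)$, and for $0<\epsilon<\mathrm{dist}(K,\partial\Omega)$ put $\widetilde s_\epsilon:=\rho_\epsilon*\widetilde s\in C^\infty_c(\Omega)$. Then $\widetilde s_\epsilon\to\widetilde s$ in $L^2(\Omega)$, and the crucial point is the Friedrichs commutator estimate
\[
\widetilde P\widetilde s_\epsilon-\rho_\epsilon*(\widetilde P\widetilde s)\ \longrightarrow\ 0\quad\text{in }L^2(\Omega)\ \text{as }\epsilon\to 0^+ .
\]
Indeed $\widetilde P\widetilde s_\epsilon-\rho_\epsilon*(\widetilde P\widetilde s)=\sum_j\bigl(A_j(\rho_\epsilon*\partial_j\widetilde s)-\rho_\epsilon*(A_j\partial_j\widetilde s)\bigr)+\bigl(B(\rho_\epsilon*\widetilde s)-\rho_\epsilon*(B\widetilde s)\bigr)$; the last bracket tends to $0$ in $L^2$ since $B$ is continuous and $\rho_\epsilon$ is an approximate identity, while each summand tends to $0$ in $L^2$ by the classical Friedrichs lemma, which needs only Lipschitz coefficients $A_j$ and $\widetilde s\in L^2$. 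Since also $\rho_\epsilon*(\widetilde P\widetilde s)\to\widetilde P\widetilde s$ in $L^2$, we obtain $\widetilde P\widetilde s_\epsilon\to\widetilde P\widetilde s$ in $L^2$. Transporting $\{\widetilde s_{1/i}\}$ back through the chart and the trivialization and extending by zero yields a sequence $\{s_i\}\subset C^\infty_c(M,E)$ with $s_i\to s$ and $Ps_i\to P_{\max}s$ in $L^2$, which is precisely the content of the definition \eqref{spinaci} of $\mathcal{D}(P_{\min})$; hence $s\in\mathcal{D}(P_{\min})$.

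The only non-routine ingredient is the Friedrichs commutator estimate of the third paragraph; the reduction via partitions of unity and the passage to Euclidean coordinates are standard bookkeeping, the one mild care being the equivalence of the intrinsic and Euclidean $L^2$-norms on the compact support noted in the second paragraph.
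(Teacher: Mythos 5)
Your argument is correct, and it is essentially the paper's approach written out in full: the paper proves Proposition \ref{carmina} simply by citing Lemma 2.1 of Grieser--Lesch \cite{GL}, whose proof is precisely the localization by a partition of unity followed by the Friedrichs mollifier/commutator argument that you give. The one point worth keeping explicit is the equivalence of the intrinsic and Euclidean $L^2$-norms on the compact support, which you do address.
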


\begin{proof}
The statement follows by   Lemma 2.1 in \cite{GL}.
\end{proof}

Now, in the remaining part of this section, we recall the notion of {\em Sobolev space associated to a metric connection.}
Consider again the bundle $E$ endowed with the metric $h$. Let $\nabla:C^{\infty}(M,E)\longrightarrow C^{\infty}(M,T^*M\otimes E)$ be a metric connection, that is a connection which satisfies the following property: for each $s, v\in C^{\infty}(M,E)$ we have $d(h(s,v))=h(\nabla s, v)+h(s, \nabla v)$. Clearly $h$ and $g$  induce in a natural way a metric on $T^*M\otimes E$ that we label by $\tilde{h}$. Let $\nabla^t:C^{\infty}_c(M,T^*M\otimes E)\longrightarrow C^{\infty}_c(M,E)$ be  the {\em formal adjoint} of $\nabla$ with respect to $\tilde{h}$ and $g$. Then  the Sobolev space $W^{1,2}(M,E)$ is  defined in the following way: 
\begin{align}
\label{cardo}
& W^{1,2}(M,E):=\{s\in L^2(M,E): \text{there is}\ v\in L^2(M,T^*M\otimes E)\ \text{such that}\ \int_{M}h(s,\nabla^t\phi)\dvol_g=\\ \nn&=\int_{M}\tilde{h}(v,\phi)\dvol_g\ \text{for each}\ \phi\in C^{\infty}_c(M,T^*M\otimes E)\}
\end{align}
Using \eqref{ner} we have $W^{1,2}(M,E)=\mathcal{D}(\nabla_{\max})$.
Moreover we have also   the Sobolev space $W^{1,2}_0(M,E)$ whose definition is the following:
\begin{align}
\label{scold}
& W^{1,2}_0(M,E):=\{s\in L^2(M,E)\ \text{such that there is a sequence}\ \{s_i\}\in C_c^{\infty}(M,E)\ \text{with}\ s_i\rightarrow s\\
\nn & \text{in}\ L^2(M,E)\ \text{and}\ \nabla s_i\rightarrow w\ \text{in}\ L^2(M,T^*M\otimes E)\ \text{to some }\ w\in L^2(M,T^*M\otimes E)\}.
\end{align}
Analogously to the previous case, using \eqref{spinaci}, we have $W^{1,2}_0(M,E)=\mathcal{D}(\nabla_{\min})$. As is this well known $W^{1,2}(M,E)$ and $W^{1,2}_0(M,E)$ are two Hilbert spaces. We adopt the same convention for the notation  we used before. Instead of writing $W^{1,2}(M,E,g,h)$ or $W^{1,2}_0(M,E,g,h)$ we will simply write $W^{1,2}(M,E)$ and $W^{1,2}_0(M,E)$. For the trivial bundle $M\times \mathbb{R}$ we will write $W^{1,2}(M,g)$ and $W^{1,2}_0(M,g)$. We   recall  the following result:
\begin{prop}
\label{lunedi}
Let $(M,g)$ be an open and possibly incomplete Riemannian manifold of dimension $m$. Let $E$ be a vector bundle over $M$ endowed with a metric $h$. Let $U\subset M$ be an open subset with compact closure. Consider the spaces $ L^2(U, E|_{U})$  and $W^{1,2}_0(U,E|_U)$ where $U$ is endowed with the metric $g|_U$.  Then the natural inclusion
\begin{equation}
\label{cccp}
W^{1,2}_0(U,E|_U)\hookrightarrow L^2(U, E|_{U})
\end{equation}
is a compact operator .  Therefore  the map $$i_0: W^{1,2}_0(U,E|_U)\rightarrow L^2(M, E)$$ given by
\begin{equation}
\label{amatasorella}
i_{0}(f)= \left\{
\begin{array}{ll}
f & on\ U\\
0 & on\ M\setminus U
\end{array}
\right.
\end{equation}
is an injective and  compact operator.
\end{prop}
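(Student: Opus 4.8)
The plan is to reduce the compactness of $W^{1,2}_0(U,E|_U)\hookrightarrow L^2(U,E|_U)$ to the classical Rellich--Kondrachov theorem via a localization argument, and then deduce the statement about $i_0$ from this. The point is that $\overline{U}$ is compact, so it can be covered by finitely many trivializing charts.

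\medskip

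\noindent\emph{Step 1: Localization via a finite partition of unity.}
Since $\overline{U}$ is compact, I would cover $\overline{U}$ by finitely many open sets $V_1,\dots,V_N$ such that each $V_j$ is the domain of a coordinate chart on $M$ over which $E$ is trivial, with $\overline{V_j}$ compact. Choose a smooth partition of unity $\{\varphi_j\}_{j=1}^N$ subordinate to $\{V_j\}$ with $\sum_j \varphi_j \equiv 1$ on a neighborhood of $\overline{U}$ and each $\varphi_j\in C^\infty_c(V_j)$. Given $s\in W^{1,2}_0(U,E|_U)$, write $s=\sum_j \varphi_j s$. Because multiplication by $\varphi_j$ is a bounded operator on $L^2$ and satisfies $\nabla(\varphi_j s)=d\varphi_j\otimes s+\varphi_j\nabla s$ with $d\varphi_j$ bounded, multiplication by $\varphi_j$ maps $W^{1,2}_0(U,E|_U)$ continuously into $W^{1,2}_0(V_j\cap U,E|_{V_j\cap U})$ (one checks this first on $C^\infty_c(U,E)$ and passes to the graph closure, using that $\varphi_j$ has support away from $\partial V_j$). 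It therefore suffices to prove that each inclusion $W^{1,2}_0(V_j\cap U, E|_{V_j\cap U})\hookrightarrow L^2(V_j\cap U, E|_{V_j\cap U})$ is compact, since then a bounded sequence in $W^{1,2}_0(U,E|_U)$ has, after finitely many successive extractions, the property that each $\varphi_j s$-component converges in $L^2$, hence so does $s$.

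\medskip

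\noindent\emph{Step 2: Reduction to the Euclidean Rellich theorem.}
On a single chart $V_j$ we may identify $E|_{V_j}$ with $V_j\times \mathbb{R}^k$ (or $\mathbb{C}^k$), and the metrics $g|_{V_j}$ and $h|_{V_j}$ are then represented by smooth, positive-definite matrix-valued functions which, on the relatively compact set $V_j\cap U$, are uniformly comparable to the standard Euclidean metrics; similarly the density $\dvol_g$ is comparable to Lebesgue measure. Hence the $L^2$ norms and the $W^{1,2}_0$ norms defined via $\nabla$ are equivalent to the componentwise standard Sobolev norms $H^1_0$ on a bounded open subset of $\mathbb{R}^m$ (the connection differs from the flat exterior derivative by a bounded Christoffel/connection term, which is absorbed into the equivalence of graph norms). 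The classical Rellich--Kondrachov theorem gives compactness of $H^1_0(\Omega,\mathbb{R}^k)\hookrightarrow L^2(\Omega,\mathbb{R}^k)$ for any bounded open $\Omega\subset\mathbb{R}^m$, and transporting back through the equivalences of norms yields the compactness of \eqref{cccp}.

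\medskip

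\noindent\emph{Step 3: The operator $i_0$.}
The map $i_0$ of \eqref{amatasorella} is well defined because any $s\in W^{1,2}_0(U,E|_U)$ is approximated in graph norm by sections in $C^\infty_c(U,E)$, whose extension by zero is again in $C^\infty_c(M,E)\subset L^2(M,E)$; extension by zero is an $L^2$-isometry onto the subspace of sections supported in $\overline{U}$, so it sends Cauchy sequences to Cauchy sequences and $i_0$ is a bounded injective linear map into $L^2(M,E)$. Moreover $i_0$ factors as the composition of the inclusion \eqref{cccp} with the isometric extension-by-zero map $L^2(U,E|_U)\hookrightarrow L^2(M,E)$; a bounded operator post-composed with a compact operator is compact, so $i_0$ is compact as claimed.

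\medskip

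\noindent The main obstacle is Step 1: making precise that multiplication by a cutoff $\varphi_j$ genuinely maps $W^{1,2}_0(U,E|_U)$ into $W^{1,2}_0(V_j\cap U,E|_{V_j\cap U})$ — that is, that the truncated section still lies in the \emph{minimal} domain on the smaller set — requires care with the supports of approximating sequences and with the Leibniz rule for $\nabla$; once this localization is in place, Steps 2 and 3 are routine applications of Rellich--Kondrachov and elementary functional analysis.
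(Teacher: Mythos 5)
Your argument is correct. Note, though, that the paper does not actually prove the compactness of \eqref{cccp}: it simply cites \cite{MM} pag.~349 and \cite{KWE} pag.~179 for that statement, and then obtains the assertion about $i_0$ from exactly the factorization you use in Step 3, namely $W^{1,2}_0(U,E|_U)\hookrightarrow L^2(U,E|_U)\stackrel{i_0}{\rightarrow}L^2(M,E)$ with extension by zero an isometry. What you add is the standard self-contained proof of the cited fact: a finite cover of $\overline{U}$ by trivializing charts, a partition of unity, the observation that multiplication by a cutoff preserves the \emph{minimal} domain (which you justify correctly by multiplying the approximating sequence in $C^\infty_c(U,E)$ and using the Leibniz rule with $d\varphi_j$ bounded), and Rellich--Kondrachov for $H^1_0(\Omega)$ on a bounded open $\Omega\subset\mathbb{R}^m$, which indeed requires no boundary regularity. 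The only cosmetic point to tighten is in Step 2: to get uniform two-sided bounds on the coefficients of $g$, $h$ and the connection, you should choose the $V_j$ so that each compact closure $\overline{V_j}$ is contained in a strictly larger chart domain (rather than letting $V_j$ itself be the full chart domain); this is trivially arranged and does not affect the argument.
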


\begin{proof}
See for example \cite{MM} pag. 349 or \cite{KWE} pag. 179 for \eqref{cccp}. Now \eqref{amatasorella} follows immediately by the following decomposition: $W^{1,2}_0(U,E|_U)\hookrightarrow L^2(U, E|_{U})\stackrel{i_0}{\rightarrow} L^2(M,E).$
\end{proof}

Consider now $d_k:\Omega^k_c(M)\rightarrow \Omega^{k+1}_c(M)$, the de Rham differential acting on the space of smooth $k$-forms with compact support. Given a Riemannian metric $g$ on $M$, we will label by $\langle\ ,\rangle_{g_k}$ and by $|\ |_{g_k}$ respectively the metric and the  pointwise norm induced by $g$ on $\Lambda^kT^*M$ for each $k=0,...,m$ where $m=\dim(M)$. In the case $k=1$, with a little abuse of notation, we will simply write $\langle\ ,\rangle_{g}$ and  $|\ |_{g}$ instead of $\langle\ ,\rangle_{g_1}$ and  $|\ |_{g_1}.$ Finally we will label by $\tilde{g}_k$ the metric that $g$ induces on $T^*M\otimes \Lambda^kT^*M$. Following \eqref{ner} and \eqref{spinaci} we will denote by $d_{k,\max/\min}:L^2\Omega^k(M,g)\rightarrow L^2\Omega^{k+1}(M,g)$ respectively the maximal and the minimal extension of $d_k$ acting on the space of $L^2$ $k$-forms. 

\begin{prop}
\label{pasta}
Let $(M,g)$ be an open and possibly incomplete Riemannian manifold of dimension $m$. Let $E$ be a vector bundle over $M$ endowed with a metric $h$. Let $s\in W^{1,2}(M,E)\cap C^{\infty}(M,E)$ and let $f\in \mathcal{D}(d_{0,\min})$ with compact support. Then $fs\in W^{1,2}_0(M,E)$ and we have $\nabla_{\min}(fs)=\eta \otimes s+f\nabla s$ where $\eta=d_{0,\min}f$.
\end{prop}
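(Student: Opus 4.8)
The plan is a direct approximation argument: approximate $f$ by smooth compactly supported functions all living in one fixed compact set, multiply by $s$, and pass to the limit in the definition \eqref{scold} of $W^{1,2}_0(M,E)=\mathcal{D}(\nabla_{\min})$.

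To set things up, note first that $\eta:=d_{0,\min}f$ coincides with $d_{0,\max}f$, the distributional exterior derivative of $f$ in the sense of \eqref{ner}, and hence is supported in $\supp f$: testing \eqref{ner} against $1$-forms $\omega\in C^{\infty}_c(M,T^*M)$ with $\supp\omega\cap\supp f=\emptyset$ — for which $\supp(d_0^t\omega)\subset\supp\omega$ is again disjoint from $\supp f$, while $f$ vanishes almost everywhere off $\supp f$ — gives $\int_M\langle\eta,\omega\rangle_g\,\dvol_g=\int_M f\, d_0^t\omega\,\dvol_g=0$, so $\eta=0$ a.e.\ outside $\supp f$. Put $K:=\supp f$ and fix a cut-off $\chi\in C^{\infty}_c(M)$ with $0\le\chi\le1$ and $\chi\equiv1$ on an open neighborhood of $K$, and set $K':=\supp\chi$.

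By definition of $\mathcal{D}(d_{0,\min})$ there is a sequence $\{f_i\}\subset C^{\infty}_c(M)$ with $f_i\to f$ in $L^2(M,g)$ and $d_0f_i\to\eta$ in $L^2\Omega^1(M,g)$. Replacing $f_i$ by $\chi f_i$ we may assume in addition that $\supp f_i\subset K'$ for every $i$: indeed $\chi f_i\to\chi f=f$ in $L^2(M,g)$, and, since $d_0\chi$ is bounded, $d_0(\chi f_i)=\chi\,d_0f_i+f_i\,d_0\chi\to\chi\eta+f\,d_0\chi=\eta$ in $L^2\Omega^1(M,g)$, where in the last step we used $\chi\equiv1$ on $\supp\eta\subset K$ and $d_0\chi\equiv0$ on $\supp f=K$. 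Now $f_is\in C^{\infty}_c(M,E)$ since $s\in C^{\infty}(M,E)$, and the Leibniz rule for the metric connection gives $\nabla(f_is)=d_0f_i\otimes s+f_i\nabla s$. The sections $s$ and $\nabla s$ are smooth, so $|s|_h$ and $|\nabla s|_{\tilde{h}}$ are bounded on the compact set $K'$ by some constant $C$, and because $f_i-f$ and $d_0f_i-\eta$ are all supported in $K'$ we get $\|f_is-fs\|_{L^2(M,E)}\le C\|f_i-f\|_{L^2(M,g)}\to0$, $\|d_0f_i\otimes s-\eta\otimes s\|_{L^2(M,T^*M\otimes E)}\le C\|d_0f_i-\eta\|_{L^2\Omega^1(M,g)}\to0$ and $\|f_i\nabla s-f\nabla s\|_{L^2(M,T^*M\otimes E)}\le C\|f_i-f\|_{L^2(M,g)}\to0$; along the way one checks that $fs$, $\eta\otimes s$ and $f\nabla s$ do lie in $L^2$, thanks to their compact support together with the local boundedness of $s$ and the global membership $\nabla s\in L^2(M,T^*M\otimes E)$. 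Hence $f_is\to fs$ in $L^2(M,E)$ and $\nabla(f_is)\to\eta\otimes s+f\nabla s$ in $L^2(M,T^*M\otimes E)$, which by \eqref{scold} is exactly the assertion that $fs\in W^{1,2}_0(M,E)$ with $\nabla_{\min}(fs)=\eta\otimes s+f\nabla s$.

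I do not expect a genuine obstacle. The only point that needs care is the localization in the third paragraph — forcing the whole approximating sequence into the single compact set $K'$ — since that is precisely what lets one bound things by the suprema of $|s|_h$ and $|\nabla s|_{\tilde{h}}$ there; without it the term $f_i\nabla s$ might fail to converge, as a priori $\nabla s$ is only globally square-integrable rather than bounded.
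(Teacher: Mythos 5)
Your proof is correct and follows essentially the same route as the paper: both arguments take an approximating sequence for $f$ in the graph norm of $d_{0,\min}$, multiply by a fixed cut-off to confine all approximants to one compact set containing $\supp f$, and then pass to the limit in $\nabla(f_is)=d_0f_i\otimes s+f_i\nabla s$ using the boundedness of $|s|_h$ and $|\nabla s|_{\tilde h}$ on that compact set. Your preliminary observation that $\eta$ is supported in $\supp f$ (and your choice of $\chi\equiv1$ on a neighborhood of $\supp f$) makes the verification that the modified sequence still converges to $f$ in the graph norm slightly more explicit than the paper's, but the substance is identical.
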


\begin{proof}
Let $U$ be an open subset of $M$ such that $\overline{U}$ is compact and $\supp(f)\subset U$. Let $\{\phi_j\}_{j\in \mathbb{N}}$ a sequences of smooth functions with compact support such that $\phi_j$ converges to $f$ in $\mathcal{D}(d_{0,\min})$ with respect to the graph norm. Consider now a smooth function with compact support $\gamma$ such that $\gamma|_{\supp(f)}=1$ and $\gamma|_{M\backslash U}=0$. Let $\{\psi_j\}_{j\in \mathbb{N}}$ be the sequence of smooth functions with compact support defined as $\psi_j=\gamma \phi_j$. Then it is immediate to check that  also $\psi_j$ converges to $f$ in $\mathcal{D}(d_{0,\min})$ with respect to the graph norm. Finally consider the sequence $\{\psi_js\}_{j\in \mathbb{N}}$.  We first note that $\eta \otimes s+f\nabla s\in L^2(M,T^*M\otimes E)$ because $$\|\eta \otimes s+f\nabla s\|_{ L^2(M,T^*M\otimes E)}\leq \|\eta\|_{L^2\Omega^1(M,g)}\|s|_U\|_{L^{\infty}(U,E)}+\|f\|_{L^{2}(M,g)}\|\nabla s|_U\|_{L^{\infty}(U,T^*U\otimes E)}.$$ Now in order to complete the proof we have to show that $\{\psi_js\}_{j\in \mathbb{N}}$  converges to $fs$ in the graph norm of $\nabla_{\min}$
We have $$\|\psi_js-fs\|^2_{L^2(M,E)}\leq \|s|_U\|^2_{L^{\infty}(U,E|_U)} \|\psi_j-f\|^2_{L^2(M,g)}$$ and therefore $\lim_{j\rightarrow \infty} \|\psi_js-fs\|^2_{L^2(M,E)}=0$. In the same way $$\|\psi_j\nabla s-f\nabla s\|^2_{L^2(M,T^*M\otimes E)}\leq \|\nabla s|_U\|^2_{L^{\infty}(U,T^*U\otimes E|_U)} \|\psi_j-f\|^2_{L^2(M,g)}$$ and therefore $\lim_{j\rightarrow \infty}\|\psi_j\nabla s-f\nabla s\|^2_{L^2(M,T^*M\otimes E)}=0$. Again $$\|d_0\psi_j \otimes s-\eta\otimes s\|^2_{L^2(M,T^*M\otimes E)}\leq \| s|_U\|^2_{L^{\infty}(U,E|_U)} \|d_0\psi_j-\eta\|^2_{L^2\Omega^1(M,g)}$$ and therefore $\lim_{j\rightarrow \infty}\|d_{0}\psi_j\otimes  s-\eta\otimes s\|^2_{L^2(M,T^*M\otimes E)}=0$. So we can conclude that $\psi_js$ converges to $fs$ in $\mathcal{D}(\nabla_{\min})$ with respect to the graph norm and that $\nabla_{\min}(fs)=\eta \otimes s+f\nabla s$.
\end{proof}

We conclude this section with the following proposition.
\begin{prop}
\label{corti}
Let $(M,g)$ be an open and  possibly incomplete Riemannian manifold. Let $E$ be a vector bundle over $M$, $h$ a metric on $E$ and let $\nabla:C^{\infty}(M,E)\rightarrow C^{\infty}(M,T^*M\otimes E)$ be a metric connection. Consider the Sobolev space $W^{1,2}(M,E)$. Then $C^{\infty}(M,E)\cap L^{\infty}(M,E)\cap W^{1,2}(M,E)$ is dense in $W^{1,2}(M,E)$.
\end{prop}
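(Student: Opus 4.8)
The plan is to reduce to smooth sections via Proposition \ref{partiro} and then perform a fiberwise truncation. Since $\nabla^t\circ\nabla$ is elliptic (it is the Bochner Laplacian, with principal symbol $|\xi|_g^2\,\id$), Proposition \ref{partiro} applied with $P=\nabla$ and $\overline{P}=\nabla_{\max}$ shows that $C^{\infty}(M,E)\cap W^{1,2}(M,E)=C^{\infty}(M,E)\cap\mathcal{D}(\nabla_{\max})$ is dense in $W^{1,2}(M,E)$ for the graph norm. Hence it suffices to show that an arbitrary $s\in C^{\infty}(M,E)\cap W^{1,2}(M,E)$ can be approximated in $W^{1,2}(M,E)$ by sections lying in $C^{\infty}(M,E)\cap L^{\infty}(M,E)\cap W^{1,2}(M,E)$.

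To construct the approximating sequence, fix smooth functions $\phi_k\colon[0,\infty)\to[0,1]$ with $\phi_k\equiv 1$ on $[0,k]$, $\phi_k\equiv 0$ on $[2k,\infty)$, and $|\phi_k'|\le 2/k$, and set $s_k:=\phi_k(|s|_h^2)\,s$. Because $|s|_h^2=h(s,s)\in C^{\infty}(M)$ and $\phi_k\in C^{\infty}([0,\infty))$, we get $s_k\in C^{\infty}(M,E)$. Since $\phi_k$ is supported in $\{|s|_h^2\le 2k\}$ we have $|s_k|_h=\phi_k(|s|_h^2)|s|_h\le\sqrt{2k}$, so $s_k\in L^{\infty}(M,E)$, and $|s_k|_h\le|s|_h$ gives $s_k\in L^2(M,E)$.

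It remains to prove $s_k\to s$ in $W^{1,2}(M,E)$. First, $\|s_k-s\|_{L^2(M,E)}^2=\int_M(1-\phi_k(|s|_h^2))^2|s|_h^2\,\dvol_g\to 0$ by dominated convergence, the integrand being bounded by $|s|_h^2\in L^1$ and tending to $0$ pointwise. For the covariant derivative, the Leibniz rule gives $\nabla s_k=\phi_k(|s|_h^2)\,\nabla s+\phi_k'(|s|_h^2)\,d(|s|_h^2)\otimes s$, and since $\nabla$ is metric, $d(|s|_h^2)=2\,\re\,h(\nabla s,s)$, whence $|d(|s|_h^2)|_g\le 2|s|_h\,|\nabla s|_{\tilde h}$ pointwise by Cauchy--Schwarz. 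Thus $\nabla s_k-\nabla s=(\phi_k(|s|_h^2)-1)\nabla s+\phi_k'(|s|_h^2)\,d(|s|_h^2)\otimes s$: the first summand has pointwise norm $\le|\nabla s|_{\tilde h}$ and tends to $0$ a.e., while the second is supported in $\{k\le|s|_h^2\le 2k\}$ where its pointwise norm is $\le\frac{2}{k}\cdot 2|s|_h|\nabla s|_{\tilde h}\cdot|s|_h=\frac{4}{k}|s|_h^2|\nabla s|_{\tilde h}\le 8|\nabla s|_{\tilde h}$, hence is dominated by $8|\nabla s|_{\tilde h}$ and tends to $0$ a.e.; both go to $0$ in $L^2$ by dominated convergence. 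Therefore $s_k\to s$ in $W^{1,2}(M,E)$, and since each $s_k$ is smooth with $\nabla s_k\in L^2$ we indeed have $s_k\in\mathcal{D}(\nabla_{\max})=W^{1,2}(M,E)$, which completes the proof.

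The only point requiring care is showing that the cutoff-derivative term $\phi_k'(|s|_h^2)\,d(|s|_h^2)\otimes s$ does not blow up: this works precisely because $\phi_k'$ is supported where $|s|_h^2\sim k$ while $|\phi_k'|\lesssim 1/k$, so the product $|\phi_k'(|s|_h^2)|\,|s|_h^2$ stays bounded uniformly in $k$. Everything else is a routine application of dominated convergence.
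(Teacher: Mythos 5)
Your proof is correct and follows essentially the same strategy as the paper's: reduce to smooth sections via Proposition \ref{partiro} (using ellipticity of the Bochner Laplacian), then truncate by multiplying with a smooth function of $|s|_h^2$ and conclude by the Leibniz rule, the pointwise bound $|d(|s|_h^2)|_g\leq 2|s|_h|\nabla s|_{\tilde h}$, and dominated convergence. The only difference is cosmetic: the paper damps with the globally positive multiplier $(\tfrac{|s|_h^2}{n}+1)^{-1/2}$, while you use a genuine cutoff $\phi_k(|s|_h^2)$; in both cases the key point is that the derivative of the multiplier times $|s|_h^2$ stays uniformly bounded.
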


\begin{proof}
We give the proof in the case $E$ is a complex vector bundle endowed with a Hermitian metric $h$. When $E$ is real the proof is the same with the obvious modifications. According to Prop. \ref{partiro} it is enough to show that $C^{\infty}(M,E)\cap L^{\infty}(M,E)\cap W^{1,2}(M,E)$ is dense in $C^{\infty}(M,E)\cap W^{1,2}(M,E)$. Let $s\in C^{\infty}(M,E)\cap W^{1,2}(M,E)$ and define 
\begin{equation}
\label{venere}
s_n:= \frac{s}{(\frac{|s|_h^2}{n}+1)^{\frac{1}{2}}}.
\end{equation}
Clearly $s_n\in C^{\infty}(M,E)$. Moreover it is immediate to note that $(\frac{|s|_h^2}{n}+1)^{-\frac{1}{2}}\in L^{\infty}(M,g)$. Therefore, by the fact that $s\in L^2(M,E)$, we can conclude that $s_n\in L^2(M,E)\cap C^{\infty}(M,E)$. Finally we have $$|s_n|_h=\frac{|s|_h}{(\frac{|s|_h^2}{n}+1)^{\frac{1}{2}}}\leq n^{\frac{1}{2}}.$$  In this way we get  that $s_n \in C^{\infty}(M,E)\cap L^{\infty}(M,E)\cap L^{2}(M,E)$. Now consider $\nabla s_n$.  
We have: $$\nabla s_n= -\frac{1}{2}(\frac{|s|_h^2}{n}+1)^{-\frac{3}{2}}\frac{2}{n}\re(h(\nabla s, s))\otimes s+ (\frac{|s|^2_h}{n}+1)^{-\frac{1}{2}} \nabla s$$  
where $\re(h(\nabla s,s ))$ is the real part of $h(\nabla s,s )$. First of all we want to show that $\nabla s_n\in L^2(M,T^*M\otimes E)$. By the assumptions $\nabla s\in L^2(M,T^*M\otimes E)$. As remarked above  we have  $(\frac{|s|_h^2}{n}+1)^{-\frac{1}{2}}\in L^{\infty}(M,g)$. Therefore we can conclude that $(\frac{|s|^2_h}{n}+1)^{-\frac{1}{2}}\nabla s\in L^2(M,T^*M\otimes E).$ For $-\frac{1}{2}(\frac{|s|_h^2}{n}+1)^{-\frac{3}{2}}\frac{2}{n}\re(h(\nabla s, s))\otimes s$ we argue as follows. First of all we note that 
\begin{equation}
\label{zappa}
|-\frac{1}{2}(\frac{|s|_h^2}{n}+1)^{-\frac{3}{2}}\frac{2}{n}\re(h(\nabla s, s))\otimes s|_{\tilde{h}}\leq \frac{1}{n}(\frac{|s|_h^2}{n}+1)^{-\frac{3}{2}}|\nabla s|_{\tilde{h}}|s|_h^2.
\end{equation}
 It is clear that $(\frac{|s|_h^2}{n}+1)^{-\frac{3}{2}}|s|_h\in L^{\infty}(M,g)$ and  $|s|_h\in L^2(M, g)$. Therefore $(\frac{|s|_h^2}{n}+1)^{-\frac{3}{2}}|s|_h^2 \in  L^2(M, g)$. Moreover $(\frac{|s|_h^2}{n}+1)^{-\frac{3}{2}}|s|_h^2 \in  L^{\infty}(M, g)$. In fact we have 
\begin{equation}
\label{orianabella}
(\frac{|s|_h^2}{n}+1)^{-\frac{3}{2}}|s|^2_h\leq \frac{|s|^2_h}{(\frac{|s|_h^2}{n})+1}\leq n.
\end{equation}
 This shows that $(\frac{|s|_h^2}{n}+1)^{-\frac{3}{2}}|s|_h^2 \in  L^{\infty}(M, g)\cap L^2(M,g)$. By the fact that $|\nabla s|_{\tilde{h}} \in L^2(M,g)$ we can conclude that $$-\frac{1}{n}(\frac{|s|_h^2}{n}+1)^{-\frac{3}{2}}|\nabla s|_{\tilde{h}}|s|_h^2\in L^2(M,g).$$  According to \eqref{zappa} this implies that  $$-\frac{1}{n}(\frac{|s|_h^2}{n}+1)^{-\frac{3}{2}}\re(h(\nabla s, s))\otimes s\in  L^{2}(M,T^*M\otimes  E)$$ and in conclusion we proved that $\nabla s_n\in L^2(M,T^*M\otimes E)$. Finally the last step is to show that $s_n$ converges to $s$ in the graph norm of $\nabla$.
For $\|s-s_n\|^2_{L^2(M,E)}$ we have $$\|s-s_n\|^2_{L^2(M,E)}=\int_M (1-(\frac{|s|_h^2}{n}+1)^{-\frac{1}{2}})^2|s|_h^2\dvol_g$$ and  using the Lebesgue dominate convergence theorem we get $\lim_{n\rightarrow \infty} \|s-s_n\|^2_{L^2(M,E)}=0$. Now we show that $\lim_{n\rightarrow \infty} \|\nabla s-\nabla s_n\|^2_{L^2(M,T^*M\otimes E)}=0$. We have  $$\|\nabla s-\nabla s_n\|_{L^2(M,T^*M\otimes E)}\leq  \|-\frac{1}{n}(\frac{|s|_h^2}{n}+1)^{-\frac{3}{2}}\re(h(\nabla s, s))\otimes s\|_{L^2(M,T^*M\otimes E)}+ \|\nabla s-(\frac{|s|^2_h}{n}+1)^{-\frac{1}{2}}\nabla s\|_{L^2(M,T^*M\otimes E)}.$$ For  $\|\nabla s-(\frac{|s|^2_h}{n}+1)^{-\frac{1}{2}}\nabla s\|_{L^2(M,T^*M\otimes E)}$ we have  $$\|\nabla s-(\frac{|s|^2_h}{n}+1)^{-\frac{1}{2}}\nabla s\|^2_{L^2(M,T^*M\otimes E)}=\int_{M}(1-(\frac{|s|^2_h}{n}+1)^{-\frac{1}{2}})^2|\nabla s|^2_{\tilde{h}}\dvol_g$$ and,  again by the Lebesgue dominate convergence theorem, we can conclude that  $$\lim_{n\rightarrow \infty}\|\nabla s-(\frac{|s|^2_h}{n}+1)^{-\frac{1}{2}}\nabla s\|_{L^2(M,T^*M\otimes E)}=0.$$ For the remaining term, using \eqref{zappa},  we have 
\begin{align}
\nn& \|-\frac{1}{n}(\frac{|s|_h^2}{n}+1)^{-\frac{3}{2}}\re(h(\nabla s, s))\otimes s\|^2_{L^2(M,T^*M\otimes E)}=\int_M|-\frac{1}{n}(\frac{|s|_h^2}{n}+1)^{-\frac{3}{2}}\re(h(\nabla s, s))\otimes s|_{\tilde{h}}^2\dvol_g\leq\\ \nn&\leq\int_{M}\frac{1}{n^2}(\frac{|s|_h^2}{n}+1)^{-3}|\nabla s|^2_{\tilde{h}}|s|_h^4\dvol_g.
\end{align}
Using \eqref{orianabella} we have $$|s|_h^4(\frac{|s|_h^2}{n}+1)^{-3}=(|s|_h^2(\frac{|s|_h^2}{n}+1)^{-\frac{3}{2}})^2\leq n^2$$ and this in turn implies that $$\frac{1}{n^2}|\nabla s|_{\tilde{h}}^2|s|_h^4(\frac{|s|_h^2}{n}+1)^{-3}\leq |\nabla s|_{\tilde{h}}^2.$$
So we are again in  the position to apply the Lebesgue dominate convergence theorem and thus we can conclude that  $$0\leq \lim_{n\rightarrow \infty}\|-\frac{1}{n}(\frac{|s|_h^2}{n}+1)^{-\frac{3}{2}}\re(h(\nabla s, s))\otimes s\|_{L^2(M,T^*M\otimes E)}\leq \int_{M}\lim_{n\rightarrow \infty}\frac{1}{n^2}(\frac{|s|_h^2}{n}+1)^{-3}|\nabla s|^2_{\tilde{h}}|s|_h^4\dvol_g=0.$$ In conclusion we proved that  $s_n$ converges to $s$ in the graph norm of $\nabla$ and this complete the proof.
\end{proof}

\section{Kato's inequality and  domination of semigroups}
In this section we recall the Kato's inequality and then, following   the lines of \cite{HSUH} and \cite{HSU},  we discuss  its relation with the theory of domination of semigroups. Unlike \cite{HSUH} and  \cite{HSU} we are interested to apply this tool in an incomplete setting and this requires a more  careful analysis because in general the Laplacian $\Delta_0$,  with core domain given by the smooth functions with compact support, is not longer an essentially self-adjoint operator.

\subsection{Kato's inequality}

Consider again an open and possibly incomplete Riemaniann manifold $(M,g)$. Let $E$ be a vector bundle over $M$ and let $h$ be a metric on $E$, Hermitian whether  $E$ is complex , Riemannian whether $E$ is  real. Finally let $\tilde{h}$ be the natural metric that $h$ and $g$ induce on $T^*M\otimes E$.
\begin{prop}
\label{devo}
Let $M$, $g$, $E$, $h$ and $\tilde{h}$ be as described above. Let $\nabla:C^{\infty}(M,E)\longrightarrow C^{\infty}(M,T^*M\otimes E)$ be a metric connection and let $s\in C^{\infty}(M,E)$. Let $Z\subset M$ be the zero set of $s$. Then  on $M\setminus Z$ we have the following pointwise inequality: 
\begin{equation}
\label{kato}
|d(|s|_h)|_g\leq |\nabla s|_{\tilde{h}}.
\end{equation}
If $s\in W^{1,2}(M,E)\cap C^{\infty}(M,E)$ then $|s|_h\in \mathcal{D}(d_{0,\max})$ and we have 
\begin{equation}
\label{tratti}
\|d_{0,\max}|s|_h\|_{L^2\Omega^1(M,g)}\leq \|\nabla s\|_{L^2(M,T^*M\otimes E)}
\end{equation}
In particular, if $(E,h)$ is a complex vector bundle endowed with a Hermitian metric,  $d_{0,\max}(|s|_h)$ satisfies:
\begin{equation}
\label{cucchi}
d_{0,\max}(|s|_h)= \left\{
\begin{array}{ll}
\re(h(\nabla s,s))|s|^{-1}_h\ & on\ M\setminus Z\\
0 & on\ Z
\end{array}
\right.
\end{equation}
while if $(E,h)$ is a real vector bundle endowed with a Riemannian  metric,  $d_{0,\max}(|s|_h)$ satisfies:
\begin{equation}
\label{cucchiai}
d_{0,\max}(|s|_h)= \left\{
\begin{array}{ll}
h(\nabla s,s)|s|^{-1}_h\ & on\ M\setminus Z\\
0 & on\ Z
\end{array}
\right.
\end{equation}
\end{prop}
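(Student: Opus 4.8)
The plan is to establish the pointwise Kato inequality \eqref{kato} first on the complement of the zero set $Z$, where $|s|_h$ is smooth, and then to bootstrap this to the $L^2$ statement \eqref{tratti} by a cut-off argument near $Z$. For the pointwise inequality, I would work at a fixed point $x\in M\setminus Z$ and simply differentiate $|s|_h^2 = h(s,s)$. Using that $\nabla$ is a metric connection, $d(|s|_h^2) = 2\re(h(\nabla s, s))$ in the complex case (and $2h(\nabla s,s)$ in the real case), hence $|s|_h\, d(|s|_h) = \re(h(\nabla s, s))$, which already gives the formulas \eqref{cucchi} and \eqref{cucchiai} on $M\setminus Z$. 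Then the Cauchy–Schwarz inequality applied fiberwise to $\langle \nabla s, s\rangle$ in each cotangent direction yields $|d(|s|_h)|_g = |\re(h(\nabla s,s))|_g\,|s|_h^{-1} \leq |\nabla s|_{\tilde h}\,|s|_h\,|s|_h^{-1} = |\nabla s|_{\tilde h}$, which is \eqref{kato}. (Here one has to be slightly careful that $\tilde h$ is the tensor-product metric, so that the fiberwise Cauchy–Schwarz for a $T^*M$-valued object gives exactly this bound; this is routine.)

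Next, assuming $s\in W^{1,2}(M,E)\cap C^\infty(M,E)$, I would define a candidate one-form $\omega$ on all of $M$ by the right-hand side of \eqref{cucchi} (resp. \eqref{cucchiai}), i.e. $\omega = \re(h(\nabla s,s))|s|_h^{-1}$ on $M\setminus Z$ and $\omega = 0$ on $Z$. By the pointwise inequality $|\omega|_g \leq |\nabla s|_{\tilde h}\in L^2$, so $\omega\in L^2\Omega^1(M,g)$; it remains to show $|s|_h\in\mathcal D(d_{0,\max})$ with $d_{0,\max}|s|_h = \omega$, i.e. that $\int_M |s|_h\, d_0^t\phi\,\dvol_g = \int_M \langle \omega,\phi\rangle_g\,\dvol_g$ for all $\phi\in\Omega^1_c(M)$. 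Away from $Z$ this is just integration by parts, so the content is that $Z$ carries no distributional boundary term. The standard device is a regularized norm: put $\varphi_\varepsilon := \sqrt{|s|_h^2+\varepsilon^2}-\varepsilon$, which is smooth everywhere since $|s|_h^2$ is smooth, satisfies $\varphi_\varepsilon\to |s|_h$ pointwise and in $L^2$ as $\varepsilon\to 0$ (dominated convergence, using $0\leq\varphi_\varepsilon\leq |s|_h$), and has $d\varphi_\varepsilon = \re(h(\nabla s,s))(|s|_h^2+\varepsilon^2)^{-1/2}$, which is globally smooth, bounded in norm by $|\nabla s|_{\tilde h}$, and converges pointwise a.e. to $\omega$. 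Testing against $\phi\in\Omega^1_c(M)$, $\int_M \varphi_\varepsilon\, d_0^t\phi = \int_M \langle d\varphi_\varepsilon,\phi\rangle_g$; passing to the limit with dominated convergence on both sides (domination by $\||s|_h\|$ and $\|\nabla s\|_{\tilde h}$ respectively, times $\|\phi\|_\infty$ and $\supp\phi$) gives exactly $\int_M |s|_h\,d_0^t\phi = \int_M\langle\omega,\phi\rangle_g$. Hence $|s|_h\in\mathcal D(d_{0,\max})$ and $d_{0,\max}|s|_h=\omega$, and then \eqref{tratti} follows by integrating \eqref{kato}: $\|d_{0,\max}|s|_h\|_{L^2\Omega^1}^2 = \int_M|\omega|_g^2\dvol_g \leq \int_M|\nabla s|_{\tilde h}^2\dvol_g = \|\nabla s\|_{L^2(M,T^*M\otimes E)}^2$.

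The main obstacle is the behavior at the zero set $Z$: a priori $|s|_h$ is only Lipschitz (not smooth) there, and one must verify that the naive weak derivative computed on $M\setminus Z$ is genuinely the distributional derivative on all of $M$, with no singular contribution supported on $Z$. The regularization $\varphi_\varepsilon=\sqrt{|s|_h^2+\varepsilon^2}-\varepsilon$ is precisely what sidesteps this, since it is smooth across $Z$ and its derivative is controlled uniformly; the only thing to check carefully is the applicability of dominated convergence in the limit $\varepsilon\to 0$, for which the bounds $|\varphi_\varepsilon|\leq|s|_h\in L^2$ and $|d\varphi_\varepsilon|_g\leq|\nabla s|_{\tilde h}\in L^2$ (both following from the metric-connection property and Cauchy–Schwarz, uniformly in $\varepsilon$) are exactly what is needed. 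Everything else — the pointwise identities, the fiberwise Cauchy–Schwarz, and the integration by parts away from $Z$ — is routine.
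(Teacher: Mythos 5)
Your proposal is correct and follows essentially the same route as the paper: the pointwise inequality via $d(|s|_h^2)=2\re(h(\nabla s,s))$ plus fiberwise Cauchy--Schwarz, and the passage to $\mathcal{D}(d_{0,\max})$ via the smooth regularization $(|s|_h^2+\varepsilon^2)^{1/2}$, integration by parts against a compactly supported test form, and dominated convergence with the uniform bound $|d\varphi_\varepsilon|_g\leq|\nabla s|_{\tilde h}$. The subtraction of the constant $\varepsilon$ in your regularizer is an immaterial cosmetic difference.
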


\begin{proof}
As for the previous proof we treat only  the complex case. The proof for the real case is completely analogous with the obvious modifications. The proof of \eqref{kato} is based on the following observations: On $M\setminus Z$ we have $|d(|s|^2_h)|_g=2|s|_h|d(|s|_h)|_g$. On the other hand $|d(|s|^2_h)|_g=2|\re(h(\nabla s, s))|_g\leq 2|\nabla s |_{\tilde{h}}|s|_h.$ Therefore  \eqref{kato} holds. For \eqref{tratti} and \eqref{cucchi} we argue as in \cite{PB} VI.31. Consider $\phi\in \Omega^1_c(M)$ and let $\epsilon_n:=\frac{1}{n}$. Then 
\begin{align}
\nn &\int_{M}|s|_h(d^t_0\phi) \dvol_g=\lim_{n\rightarrow \infty}\int_{M}(|s|_h^2+\epsilon_n^2)^{\frac{1}{2}}(d^t_0\phi) \dvol_g=\lim_{n\rightarrow \infty}\int_{M}\langle d_0((|s|_h^2+\epsilon_n^2)^{\frac{1}{2}}),\phi\rangle_g \dvol_g\\
 \nn &=\lim_{n\rightarrow \infty}\int_{M}\langle (|s|_h^2+\epsilon^2_n)^{-\frac{1}{2}}\re(h(\nabla s, s)),\phi\rangle_g \dvol_g=\int_{M}\langle \eta,\phi\rangle_g\dvol_g
\end{align} where $\eta$ is defined as in \eqref{cucchi}. In particular  for the pointwise norms we have  $|\eta|_g\leq |\nabla s|_{\tilde{h}}$. Therefore,  if  $s\in W^{1,2}(M,E)\cap C^{\infty}(M,E)$, we can conclude that $|s|_h\in \mathcal{D}(d_{0,\max})$ and that \eqref{tratti} and \eqref{cucchi} hold. 
\end{proof}

\begin{cor}
\label{katocor}
Under the assumptions of Prop. \ref{devo}.
\begin{itemize}
\item If $s\in W^{1,2}(M,E)$ then $|s|_h\in \mathcal{D}(d_{0,\max})$ and we have $\|d_{0,\max}|s|_h\|_{L^2\Omega^1(M,g)}\leq \|\nabla_{\max} s\|_{L^2(M,T^*M\otimes E)}$,
\item If $s\in W_0^{1,2}(M,E)$ then $|s|_h\in  \mathcal{D}(d_{0,\min})$ and we have $\|d_{0,\min}|s|_h\|_{L^2\Omega^1(M,g)}\leq \|\nabla_{\min} s\|_{L^2(M,T^*M\otimes E)}$.
\end{itemize}
\end{cor}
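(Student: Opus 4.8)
The plan is to reduce the corollary to Proposition \ref{devo} by combining a density argument with weak compactness in $L^2$. I treat the first bullet; the second is obtained by the same scheme with $\max$ replaced by $\min$. Let $s\in W^{1,2}(M,E)=\mathcal{D}(\nabla_{\max})$. Since $\nabla^t\circ\nabla$ is elliptic, Proposition \ref{partiro} furnishes a sequence $\{s_j\}\subset C^{\infty}(M,E)\cap W^{1,2}(M,E)$ with $s_j\to s$ in $L^2(M,E)$ and $\nabla s_j\to\nabla_{\max}s$ in $L^2(M,T^*M\otimes E)$. By Proposition \ref{devo} each $|s_j|_h$ lies in $\mathcal{D}(d_{0,\max})$ and $\|d_{0,\max}|s_j|_h\|_{L^2\Omega^1(M,g)}\leq\|\nabla s_j\|_{L^2(M,T^*M\otimes E)}$.

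Next I would use the pointwise reverse triangle inequality $\big||s_j|_h-|s|_h\big|\leq|s_j-s|_h$ to get $|s_j|_h\to|s|_h$ in $L^2(M,g)$. Combined with $\nabla s_j\to\nabla_{\max}s$, the previous estimate shows that $\{d_{0,\max}|s_j|_h\}$ is bounded in the Hilbert space $L^2\Omega^1(M,g)$, so along a subsequence it converges weakly to some $\omega\in L^2\Omega^1(M,g)$. Because $d_{0,\max}$ is closed, its graph is a closed, hence weakly closed, linear subspace of $L^2(M,g)\times L^2\Omega^1(M,g)$; since $\big(|s_j|_h,\,d_{0,\max}|s_j|_h\big)$ converges weakly to $\big(|s|_h,\,\omega\big)$, this forces $|s|_h\in\mathcal{D}(d_{0,\max})$ with $d_{0,\max}|s|_h=\omega$. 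Finally, weak lower semicontinuity of the norm gives $\|d_{0,\max}|s|_h\|_{L^2\Omega^1(M,g)}\leq\liminf_j\|d_{0,\max}|s_j|_h\|_{L^2\Omega^1(M,g)}\leq\lim_j\|\nabla s_j\|_{L^2(M,T^*M\otimes E)}=\|\nabla_{\max}s\|_{L^2(M,T^*M\otimes E)}$, which is the first assertion.

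For the second bullet, given $s\in W^{1,2}_0(M,E)=\mathcal{D}(\nabla_{\min})$ I would take, straight from the definition, a sequence $\{s_j\}\subset C^{\infty}_c(M,E)$ with $s_j\to s$ in $L^2(M,E)$ and $\nabla s_j\to\nabla_{\min}s$ in $L^2(M,T^*M\otimes E)$. Each $|s_j|_h$ has compact support and belongs to $\mathcal{D}(d_{0,\max})$ by Proposition \ref{devo}, hence to $\mathcal{D}(d_{0,\min})$ by Proposition \ref{carmina}, with $d_{0,\min}|s_j|_h=d_{0,\max}|s_j|_h$ and $\|d_{0,\min}|s_j|_h\|_{L^2\Omega^1(M,g)}\leq\|\nabla s_j\|_{L^2(M,T^*M\otimes E)}$. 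Repeating the argument of the preceding paragraph with the closed operator $d_{0,\min}$ in place of $d_{0,\max}$ yields $|s|_h\in\mathcal{D}(d_{0,\min})$ and the desired inequality. I expect the one genuinely non-formal point to be exactly this passage to the limit in the differentials: one controls $\{d_{0,\max/\min}|s_j|_h\}$ only in $L^2$-norm and not in the strong sense, so one must invoke weak compactness of bounded sets together with the weak closedness of the graph of $d_{0,\max/\min}$ rather than passing to a naive limit.
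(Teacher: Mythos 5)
Your argument is correct and follows essentially the same route as the paper's proof: approximate $s$ by smooth sections in the graph norm, apply Proposition \ref{devo} to get a uniform $L^2$-bound on $d_{0,\max/\min}|s_j|_h$, extract a weak limit, and identify it as $d_{0,\max/\min}|s|_h$. The only cosmetic differences are that you invoke Proposition \ref{partiro} rather than Proposition \ref{corti} for the smooth density step, and you identify the weak limit via weak closedness of the graph of the closed operator instead of testing against compactly supported forms (resp.\ elements of $\mathcal{D}(d^t_{0,\max})$) as the paper does; both variants are valid.
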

\begin{proof}
Let $s\in W^{1,2}(M,E)$. According to Prop. \ref{corti} there is a sequence $\{\phi_n\}_{n\in \mathbb{N}}\subset W^{1,2}(M,E)\cap C^{\infty}(M,E)$   such that $\lim_{n\rightarrow \infty} \phi_n=s$ in $W^{1,2}(M,E)$. We have $\lim_{n\rightarrow \infty} |\phi_n|_h=|s|_h$ in $L^{2}(M,g)$ and, using  \eqref{tratti} and the fact that $\nabla\phi_n\rightarrow \nabla_{\max} s$ in $L^2(M,T^*M\otimes E)$, we get $\|d_{0,\max}|\phi_n|_h\|_{L^2\Omega^1(M,g)}\leq \tau$ for some positive $\tau\in \mathbb{R}.$ This implies  the existence of a subsequence $\{\phi_n'\}_{n\in \mathbb{N}}\subset \{\phi_n\}_{n\in \mathbb{N}}$ such that $\{d_{0,\max}(|\phi'_n|_h)\}_{n\in \mathbb{N}}$ converges weakly in $L^2\Omega^1(M,g)$ to some $\beta\in L^2\Omega^1(M,g)$, see for instance \cite{JBC} pag. 132.  Now let $\omega\in \Omega^1_c(M)$. We have $$\langle |s|_h, d^t_{0}\omega \rangle_{L^2(M,g)}=\lim_{n\rightarrow \infty}\langle |\phi'_n|_h, d^t_{0}\omega \rangle_{L^2(M,g)}=\lim_{n\rightarrow \infty}\langle d_{0,\max}|\phi'_n|_h, \omega \rangle_{L^2\Omega^1(M,g)}=\langle \beta, \omega \rangle_{L^2\Omega^1(M,g)}.$$ Therefore, according to \eqref{ner}, we proved that  $|s|_h\in \mathcal{D}(d_{0,\max})$ and $d_{0,\max}|s|_h=\beta$.\\  Now to estimate $\|d_{0,\max}|s|_h\|_{L^2\Omega^1(M,g)}$, using \eqref{tratti}, we have 
\begin{align}
\nn & \|d_{0,\max}|s|_h\|^2_{L^2\Omega^1(M,g)}=\|\beta\|^2_{L^2\Omega^1(M,g)}=\lim_{n\rightarrow \infty}\langle  d_{0,\max}|\phi'_n|_h,\beta \rangle_{L^2\Omega^1(M,g)}\leq \lim_{n\rightarrow \infty} \| d_{0,\max}|\phi'_n|_h\|_{L^2\Omega^1(M,g)}\|\beta \|_{L^2\Omega^1(M,g)}\\
\nn & \leq \lim_{n\rightarrow \infty} \| \nabla\phi'_n\|_{L^2(M,T^*M\otimes E)}\|\beta \|_{L^2\Omega^1(M,g)}=\| \nabla_{\max} s\|_{L^2(M,T^*M\otimes E)}\|\beta \|_{L^2\Omega^1(M,g)}.\ \text{Hence the first point is proved.}
\end{align}
For the second point we argue in a similar manner. Let $s\in W^{1,2}_0(M,E)$ and let $\{\psi_n\}_{n\in \mathbb{N}}$ be a sequence of smooth sections with compact support such that $\lim_{n\rightarrow \infty} \psi_n=s$ in $W^{1,2}_0(M,E)$. As in the previous case we  have $\lim_{n\rightarrow \infty} |\psi_n|_h=|s|_h$ in $L^{2}(M,g)$ and, using  \eqref{tratti} and the fact that $\nabla\psi_n\rightarrow \nabla_{\min}s$ in $L^2(M,T^*M\otimes E)$, we get $\|d_{0,\max}|\psi_n|_h\|_{L^2\Omega^1(M,g)}\leq \tau'$ for some positive $\tau'\in \mathbb{R}.$ This implies  the existence of a subsequence $\{\psi_n'\}_{n\in \mathbb{N}}\subset \{\psi_n\}_{n\in \mathbb{N}}$ such that $\{d_{0,\max}(|\psi'_n|_h)\}_{n\in \mathbb{N}}$ converges weakly in $L^2\Omega^1(M,g)$ to some $\gamma\in L^2\Omega^1(M,g)$. Moreover we observe that $|\psi_n|_h\in \mathcal{D}(d_{0,\min})$ because $|\psi_n|_h\in \mathcal{D}(d_{0,\max})$ and it has  compact support, see Prop. \ref{carmina}. Now let $\omega\in \mathcal{D}(d^t_{0,\max})$. We have $$\langle |s|_h, d^t_{0,\max}\omega \rangle_{L^2(M,g)}=\lim_{n\rightarrow \infty}\langle |\psi'_n|_h, d^t_{0,\max}\omega \rangle_{L^2(M,g)}=\lim_{n\rightarrow \infty}\langle d_{0,\min}|\psi'_n|_h, \omega \rangle_{L^2\Omega^1(M,g)}=\langle \gamma, \omega \rangle_{L^2\Omega^1(M,g)}.$$ Therefore, for each $\omega\in \mathcal{D}(d^t_{0,\max})$, we have $\langle |s|_h, d^t_{0,\max}\omega \rangle_{L^2(M,g)}\leq \|\gamma\|_{L^2\Omega^1(M,g)}\|\omega\|_{L^2\Omega^1(M,g)}$ and this shows that $|s|_h\in \mathcal{D}((d^t_{0,\max})^*)$ that is $|s|_h\in \mathcal{D}(d_{0,\min})$. Finally, by the previous point, we have  $$\|d_{0,\min}(|s|_h)\|_{L^2\Omega^1(M,g)}=\|d_{0,\max}(|s|_h)\|_{L^2\Omega^1(M,g)}\leq \|\nabla_{\max} s\|_{L^2(M,T^*M\otimes E)}=\|\nabla_{\min} s\|_{L^2(M,T^*M\otimes E)}$$ and the proposition is thus established.
\end{proof}

\subsection{A brief reminder on quadratic forms and the Friedrich extension}
In this subsection we give a very brief account about some results on  quadratic forms and the Friedrich extension of a positive and symmetric operator. We follow  the Appendix C.1 in \cite{MM} and we refer to it for the proofs. For a thorough treatment of the subject  we refer to \cite{RSI} and \cite{RSII}.
Let $H$ be a   Hilbert space with inner product $\langle\ ,\ \rangle$. Let $B:H\rightarrow H$ be a linear, unbounded and densely defined operator. Assume that $B$ is   symmetric and positive, that is $B$ is   extended by its adjoint $B^*$ and $\langle Bu,u \rangle\geq 0$ for each $u\in \mathcal{D}(B)$. The quadratic form associated to $B$, usually labeled by $Q_B$, is by definition $Q_B(u,v):=\langle Bu,v \rangle.$ Let $\langle\ , \rangle_{B}$ be  the inner product given by $\langle\ , \rangle+Q_B$ and let  $\mathcal{D}(Q_B)$ be the completion of $\mathcal{D}(B)$ through $\langle\ , \rangle_B$. It is immediate to check that the identity $\id:\mathcal{D}(B)\rightarrow \mathcal{D}(B)$ extends as a bounded and injective map $i_{Q_B}:\mathcal{D}(Q_B)\rightarrow H$. Therefore in what follows we will identify $\mathcal{D}(Q_B)$ with its image in $H$ through $i_{Q_B}$ which is given by $$\{u\in H:\ \text{there exists}\ \{u_n\}_{n\in \mathbb{N}}\subset \mathcal{D}(B)\ \text{such that}\ \langle u_n-u,u_n-u \rangle\rightarrow 0\ \text{and}\  \langle u_n-u_m,u_n-u_m \rangle_B\rightarrow 0\ \text{as}\ m,n\rightarrow \infty\}.$$
Now we define the {\em Friedrich extension} of $B$, labeled by $B^{\mathcal{F}}$, as the operator whose {\em domain} is given by $$\{u\in \mathcal{D}(Q_B):\ \text{there exists}\ v\in H\ \text{with}\ Q_B(u,w)=\langle v,w\rangle\ \text{for any}\ w\in \mathcal{D}(Q_B)\}$$ and we put $B^{\mathcal{F}}u:=v$. Defined in this way $B^{\mathcal{F}}$ is a {\em positive} and {\em self-adjoint} operator.  Moreover the above construction is equivalent to require that $\mathcal{D}(B^{\mathcal{F}})=\{u\in \mathcal{D}(B^*):\ \text{there exists}\ \{u_{n}\}\subset \mathcal{D}(B)\  \text{such that}\  \langle u-u_n,u-u_n\rangle \rightarrow 0\ \text{and}\  \langle B(u_n-u_m),u_n-u_m\rangle \rightarrow 0\ \text{for}\ n,m\rightarrow \infty\}$ and   $B^{\mathcal{F}}(u)=B^*(u)$, that is in a concise way $$\mathcal{D}(B^{\mathcal{F}}):=\mathcal{D}(Q_B)\cap \mathcal{D}(B^*)\ \text{and}\ B^{\mathcal{F}}u:=B^*u$$ for $u\in \mathcal{D}(B^{\mathcal{F}})$.
We conclude this reminder with the following result:
\begin{prop}
\label{fall}
 Let $E,F$ be two vector bundles over an open and  possibly incomplete  Riemannian manifold $(M,g).$ Let $h_E$ and $h_F$ be two metrics on $E$ and $F$ respectively. Let $D:C^{\infty}_{c}(M,E)\rightarrow C^{\infty}_{c}(M,F)$ be an unbounded and densely defined differential operator. Let $D^t:C^{\infty}_{c}(M,F)\rightarrow C^{\infty}_{c}(M,E)$ be its formal adjoint. Then for  $D^t\circ D:L^{2}(M,E)\rightarrow L^{2}(M,E)$ we have: 
\begin{enumerate}
\item $(D^t\circ D)^{\mathcal{F}}=(D^t)_{\max}\circ D_{\min}.$
\item $\mathcal{D}(Q_{D^t\circ D})=\mathcal{D}(D_{\min})$ and $Q_{D^t\circ D}(u,v)=\langle D_{\min}u, D_{\min}v\rangle_{L^2(M,E)}$ for each $u,v\in \mathcal{D}(Q_{D^t\circ D}).$
\end{enumerate}
\end{prop}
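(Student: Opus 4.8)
The plan is to prove the two assertions in the order (2) then (1), since the identification of the form domain feeds directly into the description of the Friedrich extension.

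First I would compute the quadratic form $Q_{D^t\circ D}$ on the core $C^{\infty}_c(M,E)$. For $u,v\in C^{\infty}_c(M,E)$, integration by parts (legitimate because $u,v$ have compact support) gives $Q_{D^t\circ D}(u,v)=\langle D^t Du,v\rangle_{L^2(M,E)}=\langle Du,Dv\rangle_{L^2(M,F)}$. Hence the inner product $\langle\,\cdot\,,\cdot\,\rangle_{D^t\circ D}=\langle\,\cdot\,,\cdot\,\rangle_{L^2(M,E)}+Q_{D^t\circ D}$ restricted to $C^{\infty}_c(M,E)$ is exactly the graph inner product of $D$ with core $C^{\infty}_c(M,E)$. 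By construction $\mathcal{D}(Q_{D^t\circ D})$ is the abstract completion of $C^{\infty}_c(M,E)$ with respect to this norm, and the identification map $i_{Q_{D^t\circ D}}$ into $L^2(M,E)$ sends a graph-norm Cauchy sequence $\{u_n\}\subset C^{\infty}_c(M,E)$ to its $L^2$-limit. Comparing with the definition \eqref{spinaci} of $D_{\min}$, an element of the image of $i_{Q_{D^t\circ D}}$ is precisely a $u\in L^2(M,E)$ admitting $\{u_n\}\subset C^{\infty}_c(M,E)$ with $u_n\to u$ in $L^2(M,E)$ and $\{Du_n\}$ convergent in $L^2(M,F)$, i.e.\ $u\in\mathcal{D}(D_{\min})$; and on such $u$ the form equals $\lim_n\langle Du_n,Du_n\rangle=\langle D_{\min}u,D_{\min}u\rangle$, polarizing to $\langle D_{\min}u,D_{\min}v\rangle$. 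This gives (2).

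Next, for (1), I would use the characterization of the Friedrich extension recalled just above: $\mathcal{D}((D^t\circ D)^{\mathcal{F}})$ consists of those $u\in\mathcal{D}(Q_{D^t\circ D})$ for which there is $v\in L^2(M,E)$ with $Q_{D^t\circ D}(u,w)=\langle v,w\rangle$ for all $w\in\mathcal{D}(Q_{D^t\circ D})$, and $(D^t\circ D)^{\mathcal{F}}u=v$. By (2) this reads: $u\in\mathcal{D}(D_{\min})$ and $\langle D_{\min}u,D_{\min}w\rangle=\langle v,w\rangle$ for all $w\in\mathcal{D}(D_{\min})$. Restricting $w$ to $C^{\infty}_c(M,E)$ turns the last identity into $\langle D_{\min}u,D\phi\rangle_{L^2(M,F)}=\langle v,\phi\rangle_{L^2(M,E)}$ for all $\phi\in C^{\infty}_c(M,E)$, which by the distributional definition \eqref{ner} applied to the operator $D^t$ (whose formal adjoint is $D$) says exactly that $D_{\min}u\in\mathcal{D}((D^t)_{\max})$ with $(D^t)_{\max}(D_{\min}u)=v$; equivalently, via \eqref{emendare}, $D_{\min}u\in\mathcal{D}((D_{\min})^*)$. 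Thus $\mathcal{D}((D^t\circ D)^{\mathcal{F}})\subset\mathcal{D}((D^t)_{\max}\circ D_{\min})$ and the two operators agree there. For the reverse inclusion, given $u\in\mathcal{D}((D^t)_{\max}\circ D_{\min})$ set $v:=(D^t)_{\max}(D_{\min}u)$; then $\langle D_{\min}u,D\phi\rangle=\langle v,\phi\rangle$ holds for $\phi\in C^{\infty}_c(M,E)$, and since both sides are continuous in the graph norm of $D_{\min}$ and $C^{\infty}_c(M,E)$ is dense in $\mathcal{D}(D_{\min})=\mathcal{D}(Q_{D^t\circ D})$, the identity extends to all $w\in\mathcal{D}(Q_{D^t\circ D})$, so $u\in\mathcal{D}((D^t\circ D)^{\mathcal{F}})$ with value $v$. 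This proves (1).

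I do not expect a serious obstacle; the statement is essentially bookkeeping with the definitions \eqref{ner}, \eqref{spinaci}, \eqref{emendare} and the construction of the Friedrich extension. The one point that deserves care is the first step: one must check that the abstract form-completion $\mathcal{D}(Q_{D^t\circ D})$, viewed inside $L^2(M,E)$ through $i_{Q_{D^t\circ D}}$, coincides as a set with $\mathcal{D}(D_{\min})$ and does not merely embed into it — this is precisely the fact that $D_{\min}$ is a \emph{closed} operator, so that graph-norm Cauchy sequences of core elements converge to genuine elements of its domain with the expected value of $D_{\min}$. An alternative, slicker route that avoids the explicit form computation is to invoke von Neumann's theorem: $(D_{\min})^*D_{\min}=(D^t)_{\max}\circ D_{\min}$ is self-adjoint and non-negative, it extends $D^t\circ D$, and its form domain is $\mathcal{D}(D_{\min})\subset\mathcal{D}(Q_{D^t\circ D})$; by the uniqueness of the self-adjoint extension whose form domain is contained in $\mathcal{D}(Q_{D^t\circ D})$ — which is the defining property of the Friedrich extension — it must equal $(D^t\circ D)^{\mathcal{F}}$, and then (2) follows from the density of the core together with $\mathcal{D}(Q_{D^t\circ D})\supseteq\mathcal{D}(D_{\min})$.
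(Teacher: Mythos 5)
Your argument is correct, and it is precisely the unwinding of the definitions that the paper's own proof declares to "follow immediately from the definitions and the constructions given above" (with a citation to Br\"uning--Lesch for point (1)): identifying the form closure of $Q_{D^t\circ D}$ on $C^{\infty}_c(M,E)$ with $\mathcal{D}(D_{\min})$ via the graph norm, and then reading off the Friedrich domain through \eqref{ner} and \eqref{emendare}. You correctly flag the one genuinely delicate point, namely that the abstract completion injects onto $\mathcal{D}(D_{\min})$ because $D_{\min}$ is closed, so no further comment is needed.
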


\begin{proof}
Both statements follow immediately from the definitions  and the constructions given above. Moreover the first point is also proved in  \cite{BLE}, pag. 447.
\end{proof}

\subsection{Domination of semigroups}

We refer to \cite{BGV} and to \cite{GYA}  for the background on the heat operator.

\begin{teo}
\label{ready}
Let $(M,g)$ be an open and  possibly incomplete Riemannian manifold. Let $E$ be a  vector bundle on $M$ and let $h$ be a metric on $E$. Let $\nabla$ be a metric connection, let $\nabla^t$ be  its formal adjoint  and let  
\begin{equation}
\label{gazzo}
P:C^{\infty}_c(M,E)\rightarrow C^{\infty}_c(M,E),\ P= (\nabla^t\circ \nabla)+ L
\end{equation} 
be a Schr\"odinger type operator with $L\in \End(E)$ such that 
\begin{itemize}
\item $P$ is symmetric and  positive.
\item  There is a constant $c\in \mathbb{R}$ such that, for each $s\in C^{\infty}(M,E)$, we have  $$h(Ls,s)\geq ch(s,s).$$
\end{itemize}
Let $P^{\mathcal{F}}$be  the Friedrich extension of $P$ and let $\Delta_{0}^{\mathcal{F}}$ be the Friedrich extension of the  Laplacian acting on smooth functions with compact support $\Delta_{0}:C^{\infty}_c(M)\rightarrow C^{\infty}_c(M)$. Then, for  the respective heat operators $e^{-tP^{\mathcal{F}}}$ and $e^{-t\Delta_0^{\mathcal{F}}}$,  we have the following  domination of semigroups:
\begin{equation}
\label{vis}
|e^{-tP^{F}}s|_h\leq e^{-tc}e^{-\Delta_0^{\mathcal{F}}}|s|_h
\end{equation}
 for each $s\in L^2(M,E).$
\end{teo}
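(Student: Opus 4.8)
The strategy is the classical Kato-inequality route to domination, adapted to the incomplete setting where $\Delta_0$ need not be essentially self-adjoint, so that one must be careful to work with the *Friedrich* extensions and their quadratic forms. The master tool will be the Beurling–Deny / Ouhabaz criterion for domination of semigroups in terms of the associated (closed, nonnegative) quadratic forms: if $Q$ and $\widetilde Q$ are the forms of $P^{\mathcal F}$ and $\Delta_0^{\mathcal F}$ (the latter shifted by $c$), then $|e^{-tP^{\mathcal F}}s|_h \le e^{-tc}e^{-t\Delta_0^{\mathcal F}}|s|_h$ for all $s\in L^2(M,E)$ provided: (i) whenever $s\in\mathcal D(Q)$ one has $|s|_h\in\mathcal D(\widetilde Q)$, and (ii) $\widetilde Q(|s|_h,|s|_h)\le \R Q(s,s)$ (in the complex case; and the analogous real statement), together with the sign/domination-of-operators hypothesis encoded in the compatibility of $L^\infty$-contractions. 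Here $\widetilde Q$ denotes the form $Q_{\Delta_0}(f,f)+ c\|f\|^2 = \|d_{0,\min}f\|^2_{L^2\Omega^1} + c\|f\|^2$, which is the form of $\Delta_0^{\mathcal F}$ shifted by $c$; note $\widetilde Q\ge \widetilde Q$ may fail to be nonnegative but $c$ enters only through the harmless exponential factor $e^{-tc}$, so one really compares the *unshifted* forms after pulling $e^{-tc}$ out.

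The concrete steps are as follows. First, identify the forms: by Proposition \ref{fall}, $\mathcal D(Q_{\nabla^t\circ\nabla})=\mathcal D(\nabla_{\min})=W^{1,2}_0(M,E)$ with $Q_{\nabla^t\circ\nabla}(s,s)=\|\nabla_{\min}s\|^2_{L^2(M,T^*M\otimes E)}$, and similarly $\mathcal D(Q_{\Delta_0})=\mathcal D(d_{0,\min})$ with $Q_{\Delta_0}(f,f)=\|d_{0,\min}f\|^2_{L^2\Omega^1(M,g)}$. Since $L$ is bounded below by $c$, the form of $P$ is $Q_P(s,s)=\|\nabla_{\min}s\|^2 + \int_M h(Ls,s)\,\dvol_g$ on the same domain $W^{1,2}_0(M,E)$ (the potential term is a form-bounded perturbation because $h(Ls,s)\ge c|s|_h^2$ and $P$ is assumed positive, so the form is closed on $W^{1,2}_0$). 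Second, verify the domain inclusion (i): this is exactly the content of Corollary \ref{katocor}, second bullet — if $s\in W^{1,2}_0(M,E)$ then $|s|_h\in\mathcal D(d_{0,\min})$, i.e. $|s|_h\in\mathcal D(Q_{\Delta_0})$. Third, verify the form inequality (ii): again by Corollary \ref{katocor}, $\|d_{0,\min}|s|_h\|^2_{L^2\Omega^1(M,g)}\le\|\nabla_{\min}s\|^2_{L^2(M,T^*M\otimes E)}$, hence $Q_{\Delta_0}(|s|_h,|s|_h)\le Q_{\nabla^t\circ\nabla}(s,s)\le Q_P(s,s)-c\|s\|^2_{L^2}$. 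Fourth, check the remaining structural hypothesis of the domination criterion, namely that $e^{-t\Delta_0^{\mathcal F}}$ is positivity preserving (standard for the Friedrich/Dirichlet Laplacian, since $|d_{0,\min}|f||\le|d_{0,\min}f|$ gives the first Beurling–Deny criterion) and that the cone of nonnegative functions interacts correctly with the modulus map — this is packaged in the abstract domination theorems of Hess–Schrader–Uhlenbrock and Ouhabaz, which is precisely why the paper cites \cite{HSUH}, \cite{HSU}.

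The part requiring the most care — and where the incompleteness genuinely matters — is ensuring that one may legitimately invoke the abstract domination criterion with the *minimal* extensions throughout, i.e. that the relevant form for $\Delta_0$ is the one with core $C^\infty_c(M)$ (Friedrich/Dirichlet), and that the Kato inequality is available at the level of these closures rather than only for smooth compactly supported data. Step three above (Corollary \ref{katocor}) already does this heavy lifting: it upgrades the pointwise Kato inequality \eqref{kato} first to smooth $W^{1,2}$ sections via \eqref{tratti}, then to all of $W^{1,2}$ and $W^{1,2}_0$ by the density result Proposition \ref{corti} together with a weak-compactness argument. So the proof here should be short: state that we apply the domination criterion (e.g. Ouhabaz's theorem, or Theorem 2.15 in \cite{HSU}) to the pair of closed forms $(Q_P,\, Q_{\Delta_0})$ on $L^2(M,E)$ and $L^2(M,g)$, whose hypotheses — positivity preservation of $e^{-t\Delta_0^{\mathcal F}}$, the domain inclusion $s\in\mathcal D(Q_P)\Rightarrow|s|_h\in\mathcal D(Q_{\Delta_0})$, and the form domination $Q_{\Delta_0}(|s|_h,|s|_h)+c\|s\|^2\le Q_P(s,s)$ — are supplied respectively by the first Beurling–Deny criterion for the Dirichlet Laplacian, by Corollary \ref{katocor}, and again by Corollary \ref{katocor} combined with the lower bound on $L$; this yields \eqref{vis} directly.
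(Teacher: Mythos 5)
Your route is genuinely different from the paper's: the paper follows Hess--Schrader--Uhlenbrock at the level of generators and resolvents (Propositions \ref{primo}--\ref{ulimo}: positivity preservation of $e^{-t\Delta_0^{\mathcal F}}$ and of the resolvents, the pointwise inequality $\re h(Ps,s)\geq |s|_{h,\epsilon}\Delta_0|s|_{h,\epsilon}+c|s|_h^2$ tested against sections of the form $u=g\,\si(s)$ with $|u|_h=(\Delta_0^{\mathcal F}+\lambda)^{-1}f$, then resolvent domination, then Trotter), and it does so only under the extra hypotheses $\mathcal D(d_{0,\max})=\mathcal D(d_{0,\min})$ and $\vol_g(M)<\infty$, deferring the general case to the literature. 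A quadratic-form (Ouhabaz/Simon-type) argument is a legitimate alternative and could in principle dispense with those extra hypotheses. However, as written your proof has a genuine gap: the conditions you actually verify --- $s\in\mathcal D(Q_P)\Rightarrow|s|_h\in\mathcal D(Q_{\Delta_0})$, the \emph{diagonal} inequality $Q_{\Delta_0}(|s|_h,|s|_h)+c\|s\|^2\leq Q_P(s,s)$, and positivity preservation of $e^{-t\Delta_0^{\mathcal F}}$ --- are \emph{not} the hypotheses of the domination criterion, and they do not imply domination. A two-point counterexample: on $\ell^2(\{1,2\})$ take $B=0$ and $A=\left(\begin{smallmatrix}1&-1\\-1&1\end{smallmatrix}\right)$; then $e^{-tB}=\id$ is positivity preserving, the domain condition is vacuous, and $\langle Au,u\rangle\geq 0=\langle Bu,u\rangle$ for all $u$, yet $|e^{-tA}(1,0)|=\bigl(\tfrac{1+e^{-2t}}{2},\tfrac{1-e^{-2t}}{2}\bigr)\not\leq(1,0)$.

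What the abstract theorems of Hess--Schrader--Uhlenbrock and Ouhabaz actually require, beyond positivity of the dominating semigroup, is (a) that $\mathcal D(Q_P)$ be an \emph{ideal} of $\mathcal D(Q_{\Delta_0+c})$ --- i.e.\ not only $|s|_h\in\mathcal D(d_{0,\min})$ for $s\in\mathcal D(Q_P)$, but also $g\,\si(s)\in\mathcal D(Q_P)$ whenever $g\in\mathcal D(d_{0,\min})$ with $0\leq g\leq|s|_h$ --- and (b) the \emph{off-diagonal} inequality $\re Q_P(s,\,g\,\si(s))\geq Q_{\Delta_0}(|s|_h,g)+c\langle|s|_h,g\rangle_{L^2(M,g)}$ for all such pairs. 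These are the substantive analytic points (they are exactly what Proposition \ref{disu} establishes, in resolvent form, by integrating the refined pointwise inequality against $u=g\,\si(s)$), and neither follows from Corollary \ref{katocor}, which only controls the diagonal. Your sentence asserting that the interaction of the modulus map with the positive cone ``is packaged in the abstract domination theorems'' mistakes a hypothesis to be verified for a conclusion supplied by the theorem. To complete your argument you must state the precise criterion you invoke and prove (a) and (b) in the bundle setting; separately, your claim that $Q_P$ is closed on all of $W^{1,2}_0(M,E)$ is unjustified when $L$ is unbounded above --- one only gets $\mathcal D(Q_P)\hookrightarrow W^{1,2}_0(M,E)$ continuously, as in \eqref{voxz}, which is what should be used.
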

This theorem  is proved in \cite{Batu},  Theorem 2.13. Here we provide  a different proof, under some additional assumptions,  in the spirit of \cite{HSUH}.  Our additional assumptions are:
\begin{itemize}
\item $\mathcal{D}(d_{0,\max})=\mathcal{D}(d_{0,\min})$ on $(M,g)$.
\item $\vol_g(M)<\infty$.
\end{itemize}
Clearly the second assumption is satisfied in our cases of interest, that is  $M$ is the regular part of a complex projective variety $V\subset \mathbb{C}\mathbb{P}^n$ and $g$ is the K\"ahler metric induced by the Fubini-Study metric on $\mathbb{C}\mathbb{P}^n$ or $M$ is the regular part of a smoothly Thom-Mather stratified pseudomanifold endowed with an iterated edge metric. Moreover, as we will see in Prop. \ref{taglia} and in Prop. \ref{tagliaz} , also the first assumption is fulfilled in our cases of interest.
We give the proof assuming that  $E$ is a Hermitian vector bundle.  In the real case the proof is analogous with the obvious modifications. We divide the proof  through several propositions. In order to state the first result  we recall from  \cite{RSIV} pag. 201 the following notion: Let $(X,\mu)$ be a $\sigma$-finite measure space. A function $f\in L^2(X,\mu)$ is called positive if it is non negative almost everywhere and  is not the zero function. A bounded operator $A:L^2(X,\mu)\rightarrow L^2(X,\mu)$ is called positivity preserving if $Af$ is positive whenever $f$ is positive.
\begin{prop}
\label{primo}
Let $(M,g)$ be an open and possibly incomplete Riemannian manifold.  Let $\Delta_{0}^{\mathcal{F}}$ be the Friedrich extension of the  Laplacian acting on smooth functions with compact support $\Delta_{0}:C^{\infty}_c(M)\rightarrow C^{\infty}_c(M)$. Consider the heat operator $e^{-t\Delta_0^{\mathcal{F}}}:L^{2}(M,g)\longrightarrow L^2(M,g)$. Then 
$e^{-t\Delta_0^{\mathcal{F}}}$ is positivity preserving for all $t>0$. 
\end{prop}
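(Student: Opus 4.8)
The plan is to characterise positivity preservation through the associated quadratic form, i.e. via the first Beurling--Deny criterion, and then to feed in Kato's inequality in the form of Corollary \ref{katocor}. First I would identify the form of $\Delta_0^{\mathcal{F}}$: applying Proposition \ref{fall} with $D:=d_0:C^{\infty}_c(M)\rightarrow \Omega^1_c(M)$, so that $D^t\circ D=d_0^t\circ d_0=\Delta_0$, gives that the form domain is $\mathcal{D}(Q_{\Delta_0})=\mathcal{D}(d_{0,\min})=W^{1,2}_0(M,g)$ and that $Q_{\Delta_0}(u,v)=\langle d_{0,\min}u,d_{0,\min}v\rangle_{L^2\Omega^1(M,g)}$ for all $u,v\in W^{1,2}_0(M,g)$. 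Since $\Delta_0$ has real coefficients it commutes with complex conjugation, hence so do $\Delta_0^{\mathcal{F}}$ and $e^{-t\Delta_0^{\mathcal{F}}}$; thus $e^{-t\Delta_0^{\mathcal{F}}}$ maps real functions to real functions and it suffices to argue on the real Hilbert space $L^2(M,g)$, showing that $e^{-t\Delta_0^{\mathcal{F}}}f\geq 0$ whenever $f\geq 0$.

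By the first Beurling--Deny criterion (see e.g. \cite{RSIV}), the nonnegative self-adjoint operator $\Delta_0^{\mathcal{F}}$ has positivity preserving semigroup $e^{-t\Delta_0^{\mathcal{F}}}$ for all $t>0$ if and only if, for every real $u\in\mathcal{D}(Q_{\Delta_0})$, one has $|u|\in\mathcal{D}(Q_{\Delta_0})$ and $Q_{\Delta_0}(|u|,|u|)\leq Q_{\Delta_0}(u,u)$. To verify this I would apply Corollary \ref{katocor} to the trivial real line bundle $E=M\times\mathbb{R}$ equipped with the standard metric $h$ and the metric connection $\nabla=d_0$, whose minimal extension is $d_{0,\min}$: for every $u\in W^{1,2}_0(M,g)=\mathcal{D}(d_{0,\min})$ the function $|u|=|u|_h$ lies in $\mathcal{D}(d_{0,\min})=\mathcal{D}(Q_{\Delta_0})$ and satisfies
\begin{equation}
\|d_{0,\min}|u|\|_{L^2\Omega^1(M,g)}\leq \|d_{0,\min}u\|_{L^2\Omega^1(M,g)},
\end{equation}
i.e. exactly $Q_{\Delta_0}(|u|,|u|)\leq Q_{\Delta_0}(u,u)$. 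Hence the Beurling--Deny criterion is met and $e^{-t\Delta_0^{\mathcal{F}}}$ is positivity preserving for every $t>0$.

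I do not expect a serious obstacle here: the argument is essentially an assembly of Proposition \ref{fall}, Corollary \ref{katocor} and the abstract form criterion. The two points that require a little care are the identification of the form domain of the Friedrich extension with $W^{1,2}_0(M,g)$, which is precisely the content of Proposition \ref{fall}(2), and the reduction to real-valued functions, which is harmless since $\Delta_0$ is a real operator; note in particular that no completeness of $(M,g)$ and no essential self-adjointness of $\Delta_0$ is needed, because the Beurling--Deny criterion is formulated entirely at the level of the quadratic form of the (already self-adjoint) Friedrich extension.
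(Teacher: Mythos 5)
Your proposal is correct and follows essentially the same route as the paper's own proof: the Beurling--Deny criterion, the identification of the form domain of $\Delta_0^{\mathcal{F}}$ with $\mathcal{D}(d_{0,\min})$ via Proposition \ref{fall}, and Kato's inequality in the form of Corollary \ref{katocor}. Your additional remark on reducing to real-valued functions is a harmless point of care that the paper leaves implicit.
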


\begin{proof}
According to the Beurling-Deny criterion, see \cite{RSIV} pag. 209 or Theorem 3 in the appendix of \cite{PB}, the statement is equivalent to prove that if  $f\in \mathcal{D}(Q_{\Delta_0^{\mathcal{F}}})$ then $|f|\in \mathcal{D}(Q_{\Delta_0^{\mathcal{F}}})$ and  $Q_{\Delta_0^{\mathcal{F}}}(|f|,|f|)\leq Q_{\Delta_0^{\mathcal{F}}}(f,f)$. By Prop. \ref{fall} this condition becomes: 
for each $f\in \mathcal{D}(d_{0,\min})$ we have $|f|\in \mathcal{D}(d_{0,\min})$ and $$\langle d_{0,\min}|f|,d_{0,\min}|f|\rangle_{L^2\Omega^1(M,g)}\leq \langle d_{0,\min}f,d_{0,\min}f\rangle_{L^2\Omega^1(M,g)}.$$ Finally this last inequality is a consequence of  Prop. \ref{devo} and Cor. \ref{katocor}.
\end{proof}

\begin{prop}
\label{step}
Under the assumption of Prop. \ref{primo}. For each $\lambda>0$ the operator $$(\Delta_{0}^{\mathcal{F}}+\lambda)^{-1}:L^2(M,g)\rightarrow L^2(M,g)$$ is positivity preserving.
\end{prop}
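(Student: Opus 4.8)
The plan is to express the resolvent of $\Delta_0^{\mathcal{F}}$ as the Laplace transform of the heat semigroup and then transfer the positivity-preserving property established in Prop. \ref{primo}. Since $\Delta_0^{\mathcal{F}}$ is a non-negative self-adjoint operator on $L^2(M,g)$ and $\lambda>0$ lies in its resolvent set, the spectral theorem yields the identity
\[
(\Delta_0^{\mathcal{F}}+\lambda)^{-1}=\int_0^{\infty}e^{-\lambda t}\,e^{-t\Delta_0^{\mathcal{F}}}\,dt ,
\]
the integral being understood in the strong operator topology. This is nothing but the scalar formula $\frac{1}{\mu+\lambda}=\int_0^{\infty}e^{-\lambda t}e^{-t\mu}\,dt$, valid for $\mu\geq 0$ and $\lambda>0$, integrated against the spectral measure of $\Delta_0^{\mathcal{F}}$; convergence is guaranteed by the uniform bound $\|e^{-t\Delta_0^{\mathcal{F}}}\|\leq 1$ for all $t\geq 0$.

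Next I would fix a positive $f\in L^2(M,g)$, that is $f\geq 0$ almost everywhere and $f\neq 0$. By Prop. \ref{primo}, for every $t>0$ the section $e^{-t\Delta_0^{\mathcal{F}}}f$ is non-negative almost everywhere. Then for an arbitrary non-negative $\psi\in L^2(M,g)$ one interchanges the $t$-integral with the $L^2$-pairing (justified by Tonelli's theorem, or by dominated convergence using the contraction bound) to obtain
\[
\langle (\Delta_0^{\mathcal{F}}+\lambda)^{-1}f,\psi\rangle_{L^2(M,g)}=\int_0^{\infty}e^{-\lambda t}\,\langle e^{-t\Delta_0^{\mathcal{F}}}f,\psi\rangle_{L^2(M,g)}\,dt\geq 0 .
\]
Since this holds for every non-negative $\psi\in L^2(M,g)$, it follows that $(\Delta_0^{\mathcal{F}}+\lambda)^{-1}f\geq 0$ almost everywhere.

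It remains only to rule out that $(\Delta_0^{\mathcal{F}}+\lambda)^{-1}f$ is the zero function: this is immediate, because $(\Delta_0^{\mathcal{F}}+\lambda)^{-1}$ is the inverse of a densely defined operator and hence injective, so $f\neq 0$ forces $(\Delta_0^{\mathcal{F}}+\lambda)^{-1}f\neq 0$. Combining the two facts, $(\Delta_0^{\mathcal{F}}+\lambda)^{-1}f$ is positive whenever $f$ is, which is precisely the claim. The only points demanding a little care are the rigorous justification of the Laplace representation and of the exchange of the $t$-integral with the $L^2$-inner product, and both reduce to the uniform contractivity of the heat semigroup; there is no genuine analytic obstacle here, the statement being essentially a formal consequence of Prop. \ref{primo}.
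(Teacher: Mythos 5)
Your proposal is correct and follows exactly the paper's argument: the paper likewise deduces the statement from the Laplace-transform representation $(\Delta_{0}^{\mathcal{F}}+\lambda)^{-1}=\int_{0}^{\infty}e^{-\lambda t}e^{-t\Delta_{0}^{\mathcal{F}}}\,dt$ combined with Prop.~\ref{primo}, citing \cite{PB} for the formula. You merely spell out the routine details (pairing against non-negative test functions and injectivity of the resolvent) that the paper leaves implicit.
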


\begin{proof}
It is a consequence of the following formula combined with Prop. \ref{primo}: $$(\Delta_{0}^{\mathcal{F}}+\lambda)^{-1}=\int_{0}^{\infty}e^{-\lambda t}e^{-t\Delta_{0}^{\mathcal{F}}}dt,\ \lambda>0.$$ See for instance \cite{PB}, Prop. 2 in the appendix.
\end{proof}

\begin{prop}
\label{gaugamelazz}
Let $(M,g)$ be an open and possibly incomplete Riemannian manifold. Let $P:C^{\infty}_c(M,E)\rightarrow C_c^{\infty}(M,E)$ be as in the statement of  Theorem \ref{ready}. Given $s\in C^{\infty}(M,E)$ and $\epsilon >0$ let us  define $|s|_{h,\epsilon}$ as  $|s|_{h,\epsilon}:=(|s|^2_h+\epsilon^2)^{\frac{1}{2}}$. 
\begin{itemize}
\item Assume that $\vol_g(M)<\infty$. Let $s\in \mathcal{D}(P_{\max})\cap  C^{\infty}(M,E)$.  Then $|s|_{h,\epsilon}\in \mathcal{D}(\Delta_{0,\max})\cap  C^{\infty}(M)$. 
\item Assume that  $\vol_g(M)<\infty$ and that $\mathcal{D}(d_{0,\min})=\mathcal{D}(d_{0,\max})$. Let $s\in \mathcal{D}(P^{\mathcal{F}})\cap C^{\infty}(M,E)$. Then\\ $|s|_{h,\epsilon}\in \mathcal{D}(\Delta_0^{\mathcal{F}})\cap C^{\infty}(M)$.
\end{itemize}
\end{prop}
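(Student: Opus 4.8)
The plan is to handle the two bullets in order, deducing the second from the first. First I would dispose of the easy parts: since $|s|_h^2=h(s,s)$ is smooth and $|s|_h^2+\epsilon^2\ge\epsilon^2>0$, the function $|s|_{h,\epsilon}=(|s|_h^2+\epsilon^2)^{1/2}$ is smooth on all of $M$, which settles the $C^\infty$ assertion in both bullets, and
$$\||s|_{h,\epsilon}\|^2_{L^2(M,g)}=\|s\|^2_{L^2(M,E)}+\epsilon^2\vol_g(M)<\infty,$$
which is exactly where the hypothesis $\vol_g(M)<\infty$ is used. For the first bullet the heart of the matter is a Kato--Bochner pointwise identity on $M$, obtained by differentiating $|s|_{h,\epsilon}^2=h(s,s)+\epsilon^2$ twice:
$$|s|_{h,\epsilon}\,\Delta_0|s|_{h,\epsilon}=\re h(\nabla^t\nabla s,s)-\big(|\nabla s|^2_{\tilde h}-|d(|s|_{h,\epsilon})|_g^2\big),$$
where the bracketed correction is non-negative by Kato's inequality (the computation of Prop.~\ref{devo}, now with no zero set since $\epsilon>0$). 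Writing $\nabla^t\nabla s=Ps-Ls$ with $Ps\in L^2(M,E)$ (because $s\in\mathcal{D}(P_{\max})$), using the one-sided bound $h(Ls,s)\ge c\,h(s,s)$, and using $|s|_h\le|s|_{h,\epsilon}$ together with $\vol_g(M)<\infty$, I would show that $\Delta_0|s|_{h,\epsilon}$ is dominated by $L^2$ functions, hence $\Delta_0|s|_{h,\epsilon}\in L^2(M,g)$. Since $|s|_{h,\epsilon}$ is smooth, Green's formula against $\phi\in C^\infty_c(M)$ gives $\int_M|s|_{h,\epsilon}\,\Delta_0\phi\,\dvol_g=\int_M(\Delta_0|s|_{h,\epsilon})\,\phi\,\dvol_g$, so by the defining condition \eqref{ner} (with $\Delta_0^t=\Delta_0$) we conclude $|s|_{h,\epsilon}\in\mathcal{D}(\Delta_{0,\max})$.

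For the second bullet, observe first that $P^{\mathcal{F}}\subset P_{\max}$, so $s\in\mathcal{D}(P^{\mathcal{F}})\cap C^\infty(M,E)$ also lies in $\mathcal{D}(P_{\max})\cap C^\infty(M,E)$ and the first bullet already gives $|s|_{h,\epsilon}\in\mathcal{D}(\Delta_{0,\max})$. It remains to place $|s|_{h,\epsilon}$ in $\mathcal{D}(Q_{\Delta_0})$. Since $P$ is positive and $L\ge c$, one has $Q_P(u,u)\ge\|\nabla_{\min}u\|^2_{L^2}+c\|u\|^2_{L^2}$ on $C^\infty_c(M,E)$, whence $\mathcal{D}(P^{\mathcal{F}})\subset\mathcal{D}(Q_P)\subset\mathcal{D}(\nabla_{\min})$ and $\nabla s=\nabla_{\min}s\in L^2(M,T^*M\otimes E)$. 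By Kato's inequality $|d(|s|_{h,\epsilon})|_g\le|\nabla s|_{\tilde h}$, so $|s|_{h,\epsilon}\in\mathcal{D}(d_{0,\max})$, and by the standing assumption $\mathcal{D}(d_{0,\min})=\mathcal{D}(d_{0,\max})$ this gives $|s|_{h,\epsilon}\in\mathcal{D}(d_{0,\min})=\mathcal{D}(Q_{\Delta_0})$ by Prop.~\ref{fall}. Finally, from the characterization $\mathcal{D}(\Delta_0^{\mathcal{F}})=\mathcal{D}(Q_{\Delta_0})\cap\mathcal{D}(\Delta_{0,\max})$ of the Friedrich extension we obtain $|s|_{h,\epsilon}\in\mathcal{D}(\Delta_0^{\mathcal{F}})$.

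The main obstacle is the $L^2$-estimate inside the first bullet: the Bochner term $\re h(\nabla^t\nabla s,s)/|s|_{h,\epsilon}$ and the non-negative gradient correction $(|\nabla s|^2_{\tilde h}-|d(|s|_{h,\epsilon})|_g^2)/|s|_{h,\epsilon}$ must each be shown to lie in $L^2$ using only $Ps\in L^2$, the one-sided bound $h(Ls,s)\ge c\,h(s,s)$ and $\vol_g(M)<\infty$. It is here that the $\epsilon$-truncation and the finite-volume hypothesis do the real work, this being the incomplete-manifold refinement of the classical argument, where $\Delta_0$ need not be essentially self-adjoint.
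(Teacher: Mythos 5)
Your overall route is the paper's: for the first bullet the Kato--Bochner pointwise computation combined with $\nabla^t\nabla s=Ps-Ls$, and for the second bullet the form comparison $\langle\cdot,\cdot\rangle+Q_P\geq\frac{1}{m}\bigl(\langle\cdot,\cdot\rangle+Q_{\nabla^t\circ\nabla}\bigr)$ to place $s$ in $\mathcal{D}(\nabla_{\min})$, then Kato's inequality, finite volume, the hypothesis $\mathcal{D}(d_{0,\min})=\mathcal{D}(d_{0,\max})$, and the characterization $\mathcal{D}(\Delta_0^{\mathcal{F}})=\mathcal{D}(Q_{\Delta_0})\cap\mathcal{D}(\Delta_{0,\max})$. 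Your treatment of the second bullet is complete and essentially identical to the paper's.

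The first bullet, however, contains a genuine gap, and you have located it yourself: you never prove that $\Delta_0|s|_{h,\epsilon}\in L^2(M,g)$, you only announce it as ``the main obstacle''. Moreover the sub-goal you set --- showing that the Bochner term and the non-negative correction $\bigl(|\nabla s|^2_{\tilde h}-|d(|s|_{h,\epsilon})|_g^2\bigr)/|s|_{h,\epsilon}$ \emph{each} lie in $L^2$ using only $Ps\in L^2$, the one-sided bound on $L$ and $\vol_g(M)<\infty$ --- is not attainable: for $s\in\mathcal{D}(P_{\max})$ nothing controls $|\nabla s|_{\tilde h}$ in $L^2$ (that information only becomes available in the second bullet, via $\mathcal{D}(Q_P)\subset\mathcal{D}(\nabla_{\min})$), and even then one would only get $|\nabla s|^2_{\tilde h}/\epsilon\in L^1$, not $L^2$. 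The paper never estimates that correction term: it discards it by its sign, i.e.\ it works with the resulting one-sided inequality $\re h(Ps,s)\geq|s|_{h,\epsilon}\Delta_0|s|_{h,\epsilon}+c|s|^2_h$ and then invokes the two-sided pointwise estimate $|\Delta_0|s|_{h,\epsilon}|\leq|Ps|_h\frac{|s|_h}{|s|_{h,\epsilon}}+|c|\,|s|_h\frac{|s|_h}{|s|_{h,\epsilon}}$ attributed to Hess--Schrader--Uhlenbrock, whose right-hand side is bounded by $|Ps|_h+|c|\,|s|_h\in L^2(M,g)$. (Obtaining the lower bound on $\Delta_0|s|_{h,\epsilon}$, i.e.\ passing from the one-sided inequality to the absolute value, is exactly the delicate point, and it is handled by that citation, not by putting the gradient correction in $L^2$.) So you should replace ``estimate each term separately'' by ``drop the correction term by sign and quote the pointwise bound of \cite{HSU}''; as written, your plan stalls precisely at the step that carries all the content of the first bullet.
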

\begin{proof}
According to \cite{HSU}  pag. 30-31, we have $$\re(h(Ps,s))=\re (h((\nabla^t\circ \nabla)s+Ls,s))=\re (h((\nabla^t\circ \nabla)s,s))+h(Ls,s)\geq |s|_{h,\epsilon}\Delta_0|s|_{h,\epsilon}+c|s|^2_{h}.$$ Therefore $$|\Delta_0|s|_{h,\epsilon}|\leq |Ps|_h\frac{|s|_h}{|s|_{h,\epsilon}}+|c||s|_h\frac{|s|_h}{|s|_{h,\epsilon}}.$$ In this way we can conclude that $|\Delta_0|s|_{h,\epsilon}|\in L^2(M,g)$ because $|Ps|_h\in L^2(M,g)$, $|s|_h\in L^2(M,g)$ and $\frac{|s|_h}{|s|_{h,\epsilon}}\in  L^{\infty}(M,g)$. By the fact that $\vol_g(M)<\infty$ we have that $|s|_{h,\epsilon}\in L^2(M,g)$ and thus the proof of the first point is complete.  For the second point we can argue in this way: let $m\geq |c|+1$. Then, for each $\phi\in C^{\infty}_c(M,E)$, we have 
\begin{align}
\nn &m(\langle \phi,\phi\rangle_{L^2(M,E)}+\langle P\phi,\phi\rangle_{L^2(M,E)})\geq m\langle \phi,\phi\rangle_{L^2(M,E)}+\langle P\phi,\phi\rangle_{L^2(M,E)}=\\
 \nn &=m\langle \phi,\phi\rangle_{L^2(M,E)}+\langle (\nabla^t\circ\nabla)\phi,\phi\rangle_{L^2(M,E)}+\langle L\phi,\phi\rangle_{L^2(M,E)}\geq\\
\nn & \geq m\langle \phi,\phi\rangle_{L^2(M,E)}+\langle (\nabla^t\circ\nabla)\phi,\phi\rangle_{L^2(M,E)}+c\langle \phi,\phi\rangle_{L^2(M,E)}\geq \langle (\nabla^t\circ\nabla)\phi,\phi\rangle_{L^2(M,E)}+\langle \phi,\phi\rangle_{L^2(M,E)}.
\end{align}
 Therefore, if $Q_{P}$ is the quadratic form associated to $P$ and $Q_{\nabla^t\circ \nabla}$ is the quadratic form associated to $\nabla^t\circ \nabla$, we proved that 
\begin{equation}
\label{voxz}
\langle\ , \rangle_{L^2(M,E)}+Q_{P}\geq \frac{1}{m}(\langle\ , \rangle_{L^2(M,E)}+Q_{\nabla^t\circ \nabla}).
\end{equation}
This implies immediately that the identity $\id: C^{\infty}_c(M,E)\rightarrow C^{\infty}_c(M,E)$ induces a linear, bounded  and injective map $i:\mathcal{D}(Q_{P})\rightarrow \mathcal{D}(Q_{\nabla^t\circ \nabla}).$ Now if we take $s\in \mathcal{D}(P^{\mathcal{F}})\cap C^{\infty}(M,E)$ we know that $s\in \mathcal{D}(Q_{P})\cap \mathcal{D}(P_{\max})$. By the fact that $s\in \mathcal{D}(P_{\max})$, as a consequence of  the first point of this  proposition, we get $|s|_{h,\epsilon}\in \mathcal{D}(\Delta_{0,\max})$. By the fact that  $s\in \mathcal{D}(Q_{P})$, using \eqref{voxz} and Prop. \ref{fall}, we get $$s\in \mathcal{D}(\nabla_{\min})\ \text{and}\ Q_{\nabla^t\circ \nabla}(s,s)=\langle\nabla_{\min}s,\nabla_{\min}s\rangle_{L^2(M,T^*M\otimes E)}.$$ Now, using Kato's inequality in Prop. \ref{devo} and the fact that $\vol_g(M)<\infty$, we have $|s|_{h,\epsilon}\in \mathcal{D}(d_{0,\max})$. Finally the assumption  $\mathcal{D}(d_{0,\max})=\mathcal{D}(d_{0,\min})$ implies that  $|s|_{h,\epsilon}\in \mathcal{D}(d_{0,\min})$. Therefore  $|s|_{h,\epsilon}\in \mathcal{D}(d_{0,\min})\cap \mathcal{D}(\Delta_{0,\max})$. This in turn implies immediately  that $|s|_{h,\epsilon}\in \mathcal{D}(d^t_{0,\max}\circ d_{0,\min})$ that is $|s|_{h,\epsilon}\in \mathcal{D}(\Delta_{0}^{\mathcal{F}})$ according to Prop. \ref{fall}. 
\end{proof}

\begin{prop}
\label{disu}
Under the assumptions of Theorem \ref{ready}. Let $\mathcal{D}_0$ be defined as $\mathcal{D}_0:=\mathcal{D}(P^{\mathcal{F}})\cap C^{\infty}(M,E)$. Then for all $\lambda>0$, all $s\in \mathcal{D}_0$, all $f\in  C_c^{\infty}(M)$ such that  $f\geq 0$, there exists $u\in L^2(M,E)$ such that
\begin{enumerate}
\item $|u|_{h}=(\Delta_0^{\mathcal{F}}+\lambda)^{-1}f$.
\item $\langle s,u\rangle_{L^2(M,E)}=\langle |s|_h, |u|_h\rangle_{L^2(M,g)}.$
\item $\re\langle P^{\mathcal{F}} s,u\rangle_{L^2(M,E)}\geq \langle |s|_h,(\Delta_0^{\mathcal{F}}+c)|u|_h\rangle_{L^2(M,g)}$
\end{enumerate}
\end{prop}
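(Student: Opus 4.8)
The plan is to write down the section $u$ explicitly — pointing along $s$ where $s\neq 0$ and along $P^{\mathcal{F}}s$ where $s=0$ — and then to check the three properties, the last one via a regularized Kato inequality followed by an $\epsilon\to 0$ limiting argument. First I would set $w:=(\Delta_0^{\mathcal{F}}+\lambda)^{-1}f$. Since $f\in C^\infty_c(M)\subset L^2(M,g)$ is nonnegative, Proposition~\ref{step} gives $w\geq 0$ a.e., and clearly $w\in\mathcal{D}(\Delta_0^{\mathcal{F}})\subset L^2(M,g)$. Writing $Z$ for the zero set of $s$, I define
\[
u:=\begin{cases} w\,\dfrac{s}{|s|_h}, & \text{on }M\setminus Z,\\ w\,\sigma_0, & \text{on }Z,\end{cases}
\]
where $\sigma_0$ is a measurable section of $E$ with $|\sigma_0|_h=1$ a.e.\ chosen so that $\re(h(P^{\mathcal{F}}s,\sigma_0))\geq 0$ a.e.\ on $Z$ (for instance $\sigma_0:=P^{\mathcal{F}}s/|P^{\mathcal{F}}s|_h$ on $Z\cap\{P^{\mathcal{F}}s\neq 0\}$ and any fixed measurable unit section elsewhere; recall $P^{\mathcal{F}}s=Ps$ since $s\in\mathcal{D}(P^{\mathcal{F}})\cap C^\infty(M,E)\subset\mathcal{D}(P_{\max})$). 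Then $|u|_h=w$ a.e., so $u\in L^2(M,E)$ and property (1) holds; property (2) is immediate because $s=0$ on $Z$, so $\langle s,u\rangle_{L^2(M,E)}=\int_{M\setminus Z}(w/|s|_h)h(s,s)\,\dvol_g=\int_M w|s|_h\,\dvol_g=\langle|s|_h,|u|_h\rangle_{L^2(M,g)}$.

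For property (3) I would work with the regularizations $\phi_\epsilon:=|s|_{h,\epsilon}=(|s|_h^2+\epsilon^2)^{1/2}\in C^\infty(M)$, $\epsilon>0$. Recalling from the proof of Proposition~\ref{gaugamelazz} the pointwise inequality (valid on all of $M$, since $\phi_\epsilon>0$)
\[
\re(h(Ps,s))\ \geq\ \phi_\epsilon\,\Delta_0\phi_\epsilon+c|s|_h^2 ,
\]
I divide by $\phi_\epsilon>0$, multiply by $w\geq 0$ and integrate. All the resulting terms lie in $L^1(M,g)$: indeed $(w/\phi_\epsilon)|\re(h(Ps,s))|\leq w|Ps|_h$, one has $\Delta_0\phi_\epsilon\in L^2(M,g)$ by the first part of Proposition~\ref{gaugamelazz}, and $w|s|_h^2/\phi_\epsilon\leq w|s|_h$, all of these being products of $L^2$-functions since $w,|Ps|_h,|s|_h\in L^2(M,g)$ (recall $s\in\mathcal{D}(P^{\mathcal{F}})\subset W^{1,2}(M,E)$, so $\nabla s\in L^2$ as well). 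Hence
\[
\int_M\frac{w}{\phi_\epsilon}\,\re(h(Ps,s))\,\dvol_g\ \geq\ \int_M w\,\Delta_0\phi_\epsilon\,\dvol_g\ +\ c\int_M\frac{w\,|s|_h^2}{\phi_\epsilon}\,\dvol_g .
\]

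Next I would turn the middle integral into a quadratic form and let $\epsilon\to0$. Since $\phi_\epsilon$ is smooth with $d\phi_\epsilon\in L^2$ (Kato's inequality, Proposition~\ref{devo}) and $\Delta_0\phi_\epsilon\in L^2$, one has $\phi_\epsilon\in\mathcal{D}(d_{0,\max})$ and $d_{0,\max}\phi_\epsilon\in\mathcal{D}(d^t_{0,\max})$, so integration by parts together with $w\in\mathcal{D}(\Delta_0^{\mathcal{F}})\subset\mathcal{D}(d_{0,\min})$ (Proposition~\ref{fall}) gives $\int_M w\,\Delta_0\phi_\epsilon\,\dvol_g=\langle d_{0,\min}w,d_{0,\max}\phi_\epsilon\rangle_{L^2\Omega^1(M,g)}$. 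As $\epsilon\to0$ the integrands on the left-hand side and in the last term converge pointwise on $M\setminus Z$ and vanish identically on $Z$ (where $s=0$), so the uniform $L^1$-majorants above allow passing to the limit by dominated convergence; in addition $d_{0,\max}\phi_\epsilon\to d_{0,\max}|s|_h$ in $L^2\Omega^1(M,g)$ (dominated convergence with majorant $|\nabla s|_{\tilde h}$, the limit being the Kato expression \eqref{cucchi}), and $\phi_\epsilon\to|s|_h$ in $L^2(M,g)$ because $\vol_g(M)<\infty$. Therefore $\langle d_{0,\min}w,d_{0,\max}\phi_\epsilon\rangle\to\langle d_{0,\min}w,d_{0,\max}|s|_h\rangle$, and — this is where the hypothesis $\mathcal{D}(d_{0,\max})=\mathcal{D}(d_{0,\min})$ enters — since $|s|_h\in\mathcal{D}(d_{0,\max})=\mathcal{D}(d_{0,\min})$ by Corollary~\ref{katocor}, this equals $\langle d_{0,\min}w,d_{0,\min}|s|_h\rangle=Q_{\Delta_0^{\mathcal{F}}}(w,|s|_h)=\langle\Delta_0^{\mathcal{F}}w,|s|_h\rangle_{L^2(M,g)}$ by Proposition~\ref{fall}. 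Collecting everything,
\[
\int_{M\setminus Z}\frac{w}{|s|_h}\,\re(h(Ps,s))\,\dvol_g\ \geq\ \langle\Delta_0^{\mathcal{F}}w,|s|_h\rangle_{L^2(M,g)}+c\,\langle w,|s|_h\rangle_{L^2(M,g)}=\langle|s|_h,(\Delta_0^{\mathcal{F}}+c)|u|_h\rangle_{L^2(M,g)},
\]
and since $\re\langle P^{\mathcal{F}}s,u\rangle_{L^2(M,E)}=\int_{M\setminus Z}(w/|s|_h)\re(h(Ps,s))\,\dvol_g+\int_Z w\,\re(h(P^{\mathcal{F}}s,\sigma_0))\,\dvol_g$ with the last integral $\geq0$ by the choice of $\sigma_0$, property (3) follows.

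I expect the main obstacle to be the $\epsilon\to0$ passage in the third paragraph: it requires knowing beforehand that $\phi_\epsilon=|s|_{h,\epsilon}$ genuinely lies in $\mathcal{D}(\Delta_0^{\mathcal{F}})$, so that the integration by parts is legitimate and its limit is identified with $\langle\Delta_0^{\mathcal{F}}w,|s|_h\rangle$ rather than with something larger, and this is precisely where both standing hypotheses $\vol_g(M)<\infty$ and $\mathcal{D}(d_{0,\max})=\mathcal{D}(d_{0,\min})$ get consumed, through Proposition~\ref{gaugamelazz} and Corollary~\ref{katocor}. A secondary technical point is to arrange $u$ on the (possibly positive-measure) zero set $Z$ so that (1), (2) and (3) hold simultaneously; choosing $\sigma_0$ to point along $P^{\mathcal{F}}s$ there forces the $Z$-contribution to $\re\langle P^{\mathcal{F}}s,u\rangle$ to be nonnegative and avoids having to argue that $P^{\mathcal{F}}s$ vanishes almost everywhere on $Z$.
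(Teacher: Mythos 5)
Your proof is correct and follows essentially the same route as the paper's: the same explicit construction $u=w\,\sigma(s)$, the same regularized pointwise inequality $\re(h(Ps,s))\geq|s|_{h,\epsilon}\Delta_0|s|_{h,\epsilon}+c|s|_h^2$ from Prop.~\ref{gaugamelazz}, an integration by parts, and dominated convergence as $\epsilon\to0$. The only (harmless) variations are that you fix the unit section on the zero set $Z$ to point along $P^{\mathcal{F}}s$ so that the $Z$-contribution to $\re\langle P^{\mathcal{F}}s,u\rangle$ is manifestly nonnegative, and that you route the integration by parts through $\langle d_{0,\min}w,d_{0,\max}|s|_{h,\epsilon}\rangle$ and the quadratic form, whereas the paper moves $\Delta_0$ directly onto $|u|_h$ via $\langle|s|_{h,\epsilon},\Delta_0^{\mathcal{F}}|u|_h\rangle$ and then lets $|s|_{h,\epsilon}\to|s|_h$ in $L^2$.
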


\begin{proof}
For the first two points we follow the construction given in \cite{HSUH} pag. 895. Let $g=(\Delta_0^{\mathcal{F}}+\lambda)^{-1}f$. Let $e$ be a measurable section of $E$ such that  $h_p(e(p),e(p))=1$ for each $p\in M$. Define $\si (s)$ as 
\begin{equation}
\label{cassiusclayx}
\si (s):=\left\{
\begin{array}{ll}
\frac{s}{|s|_h} & s(p)\neq 0\\
e & s(p)=0
\end{array}
\right.
\end{equation}
Now define $u:= g\si (s)$. It follows immediately that  $\langle s,u\rangle_{L^2(M,E)}=\langle |s|_h, |u|_h\rangle_{L^2(M,g)}$ and that $|u|_h=g$. In particular the last equality follows by Prop. \ref{step}. This proves the first two points of the proposition. About the  third point, by the fact that $s$ is smooth, we will write simply $Ps$ instead of $P^{\mathcal{F}}s$. As explained in the proof of Prop. \ref{gaugamelazz}, we have  $\re(h(Ps,s))\geq |s|_{h,\epsilon}\Delta_0|s|_{h,\epsilon}+c|s|^2_h$, that is $\re(h(Ps,s/|s|_{h,\epsilon}))\geq\Delta_0|s|_{h,\epsilon}+c|s|^2_h/|s|_{h,\epsilon}$. This implies that $$\re(h(Ps,|u|_h\frac{s}{|s|_{h,\epsilon}}))\geq(\Delta_0|s|_{h,\epsilon})|u|_h+c\frac{|s|^2_h}{|s|_{h,\epsilon}}|u|_h.$$ We can integrate because $Ps\in L^2(M,E)$, $|s|_h$, $|u|_h\in L^2(M,g)$, $s/|s|_{h,\epsilon}\in L^{\infty}(M,E)$ and $\Delta_0|s|_{h,\epsilon}\in L^2(M,g).$ In this way we get $$\int_M\re(h(Ps,|u|_h\frac{s}{|s|_{h,\epsilon}}))\dvol_g\geq \int_M(\Delta_0|s|_{h,\epsilon})|u|_h\dvol_g+\int_Mc\frac{|s|^2_h}{|s|_{h,\epsilon}}|u|_h\dvol_g$$ that is 
\begin{equation}
\label{sonnya}
\re \langle Ps, |u|_h\frac{s}{|s|_{h,\epsilon}}\rangle_{L^2(M,E)}\geq \langle \Delta_0|s|_{h,\epsilon}, |u|_h\rangle_{L^2(M,g)}+\langle c\frac{|s|^2_h}{|s|_{h,\epsilon}}, |u|_h \rangle_{L^2(M,g)}.
\end{equation}
For the right hand side of \eqref{sonnya} we know that $|u|_h\in \mathcal{D}(\Delta_0^{\mathcal{F}})$ because $|u|_{h}=(\Delta_0^{\mathcal{F}}+\lambda)^{-1}f$ and $\lambda>0$. Moreover, by Prop. \ref{gaugamelazz}, we also know  that $|s|_{h,\epsilon}\in \mathcal{D}(\Delta_0^{\mathcal{F}})$.  Therefore, on the right hand side of \eqref{sonnya}, integration by part is allowed. This lead us to the expression $$\re \langle Ps, |u|_h\frac{s}{|s|_{h,\epsilon}}\rangle_{L^2(M,E)}\geq \langle |s|_{h,\epsilon}, \Delta_0^{\mathcal{F}}|u|_h \rangle_{L^2(M,g)}+\langle \frac{|s|^2_h}{|s|_{h,\epsilon}}, c|u|_h \rangle_{L^2(M,g)}$$ that is 
\begin{equation}
\label{listonx}
\int_{M}\re(h(Ps,|u|_h\frac{s}{|s|_{h,\epsilon}}))\dvol_g\geq \int_{M}(|s|_{h,\epsilon}\Delta_0^{\mathcal{F}}|u|_h+c\frac{|s|^2_h}{|s|_{h,\epsilon}}|u|_h)\dvol_g.
\end{equation}
Keeping in mind \eqref{cassiusclayx} and applying the Lebesgue's dominate convergence theorem, \eqref{listonx} becomes
$$\int_{M}\re(h(Ps,u))\dvol_g\geq \int_{M}|s|_{h}(\Delta_0^{\mathcal{F}}|u|_h+c|u|_h)\dvol_g$$ that is $$\re\langle Ps,u\rangle_{L^2(M,E)}\geq \langle |s|_{h}, (\Delta_0^{\mathcal{F}}+c)|u|_h\rangle_{L^2(M,g)}.$$ 
\end{proof}

Finally we have the last proposition.

\begin{prop}
\label{ulimo}
Under the assumptions of Prop. \ref{disu}. For each $\mu>\max\{0,-c\}$ and for each $\beta\in L^2(M,E)$ we have
\begin{equation}
\label{mosul}
(\Delta_0^\mathcal{F}+c+\mu)^{-1} |\beta|_h\geq  |(P^\mathcal{F}+\mu)^{-1} \beta|_h
\end{equation}
This in turn implies that 
\begin{equation}
\label{mosulz}
e^{-t(\Delta_0^\mathcal{F}+c)} |\beta|_h\geq  |e^{-tP^\mathcal{F}} \beta|_h
\end{equation}
and therefore
\begin{equation}
\label{mosulzx}
e^{-tc}e^{-t\Delta_0^\mathcal{F}} |\beta|_h\geq  |e^{-tP^\mathcal{F}} \beta|_h
\end{equation}
\end{prop}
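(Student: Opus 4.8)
The plan is to establish first the resolvent inequality \eqref{mosul} as a direct consequence of Proposition \ref{disu}, then to bootstrap it to the semigroup inequality \eqref{mosulz} by the exponential formula, and finally to read off \eqref{mosulzx}.

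For \eqref{mosul} I would fix $\mu>\max\{0,-c\}$, set $\lambda:=c+\mu>0$, and first treat $\beta\in C^\infty(M,E)\cap L^2(M,E)$. Put $s:=(P^{\mathcal F}+\mu)^{-1}\beta$; since $\beta$ is smooth and $P+\mu$ is elliptic, elliptic regularity gives $s\in\mathcal D(P^{\mathcal F})\cap C^\infty(M,E)=\mathcal D_0$. For an arbitrary non-negative $f\in C_c^\infty(M)$, Proposition \ref{disu} applied with this $\lambda$ produces $u\in L^2(M,E)$ with $|u|_h=(\Delta_0^{\mathcal F}+\lambda)^{-1}f$, with $\langle s,u\rangle_{L^2(M,E)}=\langle|s|_h,|u|_h\rangle_{L^2(M,g)}$ (in particular this pairing is real), and with $\re\langle P^{\mathcal F}s,u\rangle_{L^2(M,E)}\ge\langle|s|_h,(\Delta_0^{\mathcal F}+c)|u|_h\rangle_{L^2(M,g)}$. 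Adding $\mu$ times the middle identity to the last inequality and using $(\Delta_0^{\mathcal F}+\lambda)|u|_h=f$ gives $\re\langle\beta,u\rangle_{L^2(M,E)}=\re\langle(P^{\mathcal F}+\mu)s,u\rangle_{L^2(M,E)}\ge\langle|s|_h,(\Delta_0^{\mathcal F}+c+\mu)|u|_h\rangle_{L^2(M,g)}=\langle|s|_h,f\rangle_{L^2(M,g)}$. On the other hand $\re\langle\beta,u\rangle_{L^2(M,E)}\le\int_M|\beta|_h\,|u|_h\,\dvol_g=\langle|\beta|_h,(\Delta_0^{\mathcal F}+\lambda)^{-1}f\rangle_{L^2(M,g)}=\langle(\Delta_0^{\mathcal F}+\lambda)^{-1}|\beta|_h,f\rangle_{L^2(M,g)}$, the last step because $(\Delta_0^{\mathcal F}+\lambda)^{-1}$ is self-adjoint and the functions involved are real. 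Comparing, $\langle|s|_h,f\rangle_{L^2(M,g)}\le\langle(\Delta_0^{\mathcal F}+c+\mu)^{-1}|\beta|_h,f\rangle_{L^2(M,g)}$ for every non-negative $f\in C_c^\infty(M)$, hence $|(P^{\mathcal F}+\mu)^{-1}\beta|_h\le(\Delta_0^{\mathcal F}+c+\mu)^{-1}|\beta|_h$ almost everywhere. Density of $C^\infty(M,E)\cap L^2(M,E)$ in $L^2(M,E)$, boundedness of the two resolvents, and the fact that $\beta\mapsto|\beta|_h$ is $1$-Lipschitz from $L^2(M,E)$ to $L^2(M,g)$ then extend \eqref{mosul} to all $\beta\in L^2(M,E)$.

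To pass from \eqref{mosul} to \eqref{mosulz} I would iterate. Since $\lambda=c+\mu>0$, Proposition \ref{step} gives that $(\Delta_0^{\mathcal F}+c+\mu)^{-1}$ is positivity preserving, hence monotone on non-negative functions; combining this with \eqref{mosul} in an induction on $n$ — at the inductive step writing $(\Delta_0^{\mathcal F}+c+\mu)^{-(n+1)}|\beta|_h=(\Delta_0^{\mathcal F}+c+\mu)^{-1}\big[(\Delta_0^{\mathcal F}+c+\mu)^{-n}|\beta|_h\big]\ge(\Delta_0^{\mathcal F}+c+\mu)^{-1}|(P^{\mathcal F}+\mu)^{-n}\beta|_h\ge|(P^{\mathcal F}+\mu)^{-(n+1)}\beta|_h$ — yields $(\Delta_0^{\mathcal F}+c+\mu)^{-n}|\beta|_h\ge|(P^{\mathcal F}+\mu)^{-n}\beta|_h$ for every $n\in\mathbb N$. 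Fixing $t>0$ and specializing to $\mu=n/t$ (admissible once $n$ is large), the Hille–Yosida exponential formula for the self-adjoint, bounded-below operators $P^{\mathcal F}$ and $\Delta_0^{\mathcal F}+c$ gives $(n/t)^n(P^{\mathcal F}+n/t)^{-n}\beta\to e^{-tP^{\mathcal F}}\beta$ and $(n/t)^n(\Delta_0^{\mathcal F}+c+n/t)^{-n}|\beta|_h\to e^{-t(\Delta_0^{\mathcal F}+c)}|\beta|_h$, both in $L^2$; passing to a subsequence along which both converge almost everywhere and using continuity of $|\cdot|_h$ turns the previous inequality (multiplied by $(n/t)^n$) into $|e^{-tP^{\mathcal F}}\beta|_h\le e^{-t(\Delta_0^{\mathcal F}+c)}|\beta|_h$ almost everywhere, which is \eqref{mosulz}. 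Finally \eqref{mosulzx} is immediate because $e^{-t(\Delta_0^{\mathcal F}+c)}=e^{-tc}e^{-t\Delta_0^{\mathcal F}}$ by the spectral theorem, and this also completes the proof of Theorem \ref{ready}, \eqref{mosulzx} being precisely \eqref{vis}. I expect the only genuine work to be in the first two paragraphs — choosing $\lambda=c+\mu$ so that the test-function pairing collapses cleanly to an inequality between $|s|_h$ and $(\Delta_0^{\mathcal F}+c+\mu)^{-1}|\beta|_h$, and the reduction to smooth $\beta$ via elliptic regularity and density — whereas the iteration and the passage to the limit are a routine monotone-approximation argument once \eqref{mosul} and Proposition \ref{step} are in hand.
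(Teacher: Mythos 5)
Your proof is correct and, for the key resolvent inequality \eqref{mosul}, follows essentially the same route as the paper: apply Proposition \ref{disu} with $\lambda=c+\mu$, combine the second and third points to get $\re\langle(P^{\mathcal F}+\mu)s,u\rangle\geq\langle|s|_h,f\rangle$, estimate the left side by $\langle|\beta|_h,|u|_h\rangle$, and use self-adjointness of the resolvent together with the arbitrariness of $f\geq0$. The two minor divergences are both legitimate and arguably instructive. First, for the density step the paper lets $s$ range over $\mathcal D_0$ (dense in $\mathcal D(P^{\mathcal F})$ for the graph norm by Prop.~\ref{partiro}), so that $\beta=(P^{\mathcal F}+\mu)s$ automatically sweeps out a dense subset of $L^2(M,E)$; you instead start from smooth $\beta$ and recover $s\in\mathcal D_0$ by elliptic regularity — this works because $P^{\mathcal F}s=P_{\max}s$ is the distributional action and $P+\mu$ is elliptic, but it is a slightly heavier tool for the same purpose. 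Second, for the passage from resolvent domination \eqref{mosul} to semigroup domination \eqref{mosulz} the paper simply cites a general functional-analytic result (Hess--Schrader--Uhlenbrock, or Corollary 15 in the appendix of B\'erard), whereas you reprove it via the positivity-preserving property of $(\Delta_0^{\mathcal F}+\lambda)^{-1}$ (Prop.~\ref{step}), induction on powers of the resolvent, and the exponential formula $(n/t)^n(A+n/t)^{-n}\to e^{-tA}$ with an a.e.\ convergent subsequence — this is exactly the content of the cited result, so your write-up is self-contained where the paper is not. Finally, for \eqref{mosulzx} you observe that $e^{-t(\Delta_0^{\mathcal F}+c)}=e^{-tc}e^{-t\Delta_0^{\mathcal F}}$ by the functional calculus for a scalar shift, which is simpler than the paper's appeal to the Trotter product formula.
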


\begin{proof}
Let $\mathcal{D}_0=\mathcal{D}(P^{\mathcal{F}})\cap C^{\infty}(M,E)$, $s\in \mathcal{D}_0$, $f\in C^{\infty}_c(M)$, $f\geq 0$, $\mu>-c$ and $u\in L^2(M,E)$ such that $|u|_h=(\Delta_0^{\mathcal{F}}+\mu+c)^{-1}f$.
Then, by Prop. \ref{disu}, we know that $$\re\langle P^{\mathcal{F}} s,u\rangle_{L^2(M,g)}\geq \langle |s|_h,(\Delta_0^{\mathcal{F}}+c)|u|_h\rangle_{L^2(M,g)}.$$ By the second point of Prop. \ref{disu}, for each $\mu\geq 0$, we still  have $$\re\langle (\mu+P^{\mathcal{F}}) s,u\rangle_{L^2(M,E)}\geq \langle |s|_h,(\Delta_0^{\mathcal{F}}+c+\mu)|u|_h\rangle_{L^2(M,g)}.$$ The previous inequality, requiring $\mu> \max\{0,-c\}$,  produces

\begin{equation}
\label{pfss}
\langle (|\mu+P^{\mathcal{F}}) s|_h,|u|_h\rangle_{L^2(M,g)}\geq \langle |s|_h, f\rangle_{L^2(M,g)}.
\end{equation}
 Now, if we put $\beta:=(P^{\mathcal{F}}+\mu)s$, \eqref{pfss} becomes $\langle (|\beta|_h,|u|_h\rangle_{L^2(M,g)}\geq \langle |(P^{\mathcal{F}}+\mu)^{-1}\beta|_h, f\rangle_{L^2(M,g)}.$ Finally, keeping in mind that $|u|_h=(\Delta_0^{\mathcal{F}}+\mu+c)^{-1}f$,  we get 
\begin{equation}
\label{leparche}
\langle (\Delta_0^{\mathcal{F}}+\mu+c)^{-1}|\beta|_h, f\rangle_{L^2(M,g)}\geq \langle |(P^{\mathcal{F}}+\mu)^{-1}\beta|_h, f\rangle_{L^2(M,g)}
\end{equation}
and therefore 
\begin{equation}
\label{leparchex}
(\Delta_0^{\mathcal{F}}+\mu+c)^{-1}|\beta|_h \geq  |(P^{\mathcal{F}}+\mu)^{-1}\beta|_h
\end{equation}
because $f$, according to Prop. \ref{disu}, is any non negative function lying in $C_c^{\infty}(M)$.
As   $s\in \mathcal{D}_0$, which by Prop. \ref{partiro} is dense in $\mathcal{D}(P^{\mathcal{F}})$ with the graph norm of $P^{\mathcal{F}}$,  we have that $\beta$ runs over a dense subset in $L^2(M,E)$   and thus \eqref{mosul} follows by the continuity of the resolvent and the map  $|\ |_h:L^2(M,E)\rightarrow L^2(M,g)$. The second statement, that is \eqref{mosulz}, follows by a general result of functional analysis, see for example \cite{HSUH} pag. 897 or Corollary 15 in the appendix of \cite{PB}. Finally \eqref{mosulzx} follows by \eqref{mosulz} applying the Trotter's product formula. See for example, \cite{HSU} pag. 31 or \cite{RSI} pag. 295-297.
\end{proof}

\section{Some general results}

This section is made of two subsections. The first collects some results concerning Sobolev spaces of sections. The second one concerns Schroedinger operators with potential bounded from below.

\subsection{Some results for Sobolev spaces and first order differential operators}

We start with the following proposition.

\begin{prop}
\label{sette}
Let $(M,g)$ be an open and  possibly incomplete Riemannian manifold. Assume that there exists a sequence of Lipschitz  functions with compact support $\{\phi_j\}_{j\in \mathbb{N}}$ such that 
\begin{itemize}
\item  $0\leq \phi_j\leq 1$ for each $j$.
\item $\phi_j\rightarrow 1 $ pointwise.
\item    $\lim_{j\rightarrow \infty}\|d_{0,\min}\phi_j\|_{L^2\Omega^1(M,g)}=0$.
\end{itemize}
Let $E$ be a vector bundle over  $M$ and let $h$ be a metric on $E$, Riemannian if $E$ is a real vector bundle, Hermitian if $E$ is a complex vector bundle. Finally let $\nabla:C^{\infty}(M,E)\rightarrow C^{\infty}(M,T^*M\otimes E)$ be a metric connection. Then we have $$W^{1,2}_0(M,E)=W^{1,2}(M,E).$$
\end{prop}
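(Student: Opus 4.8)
The plan is to establish the nontrivial inclusion $W^{1,2}(M,E)\subseteq W^{1,2}_0(M,E)$; the reverse one is immediate from the definitions \eqref{cardo} and \eqref{scold}.

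First I would reduce to bounded smooth sections: by Prop. \ref{corti} the subspace $C^\infty(M,E)\cap L^\infty(M,E)\cap W^{1,2}(M,E)$ is dense in $W^{1,2}(M,E)$, and $W^{1,2}_0(M,E)=\mathcal{D}(\nabla_{\min})$ is a closed subspace of $W^{1,2}(M,E)=\mathcal{D}(\nabla_{\max})$ for the graph norm; hence it suffices to show that every $s\in C^\infty(M,E)\cap L^\infty(M,E)\cap W^{1,2}(M,E)$ lies in $W^{1,2}_0(M,E)$. Fixing such an $s$, I would then look at the cut-off sequence $\{\phi_j s\}$. Since each $\phi_j$ is Lipschitz with compact support it belongs to $\mathcal{D}(d_{0,\min})$ (a compactly supported Lipschitz function on a Riemannian manifold is a $W^{1,2}$-limit of compactly supported smooth functions), so Prop. \ref{pasta} applies and gives $\phi_j s\in W^{1,2}_0(M,E)$ together with $\nabla_{\min}(\phi_j s)=d_{0,\min}\phi_j\otimes s+\phi_j\nabla s$. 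The remaining task is to check that $\phi_j s\to s$ in the graph norm of $\nabla_{\max}$; since $W^{1,2}_0(M,E)$ is closed this will force $s\in W^{1,2}_0(M,E)$, proving the proposition.

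For the convergence I would argue termwise. On the one hand, $\|s-\phi_j s\|^2_{L^2(M,E)}=\int_M(1-\phi_j)^2|s|_h^2\,\dvol_g\to 0$ by dominated convergence, using $0\le\phi_j\le1$, $\phi_j\to1$ pointwise and $|s|_h^2\in L^1(M,g)$. On the other hand, writing $\nabla_{\min}(\phi_j s)-\nabla s=d_{0,\min}\phi_j\otimes s+(\phi_j-1)\nabla s$, the term $(\phi_j-1)\nabla s$ tends to $0$ in $L^2(M,T^*M\otimes E)$ by the same dominated convergence argument applied to $|\nabla s|_{\tilde{h}}^2\in L^1(M,g)$, while
\[
\|d_{0,\min}\phi_j\otimes s\|_{L^2(M,T^*M\otimes E)}\le \|s\|_{L^\infty(M,E)}\,\|d_{0,\min}\phi_j\|_{L^2\Omega^1(M,g)}\longrightarrow 0
\]
by the third hypothesis on $\{\phi_j\}$; this last estimate is exactly where the reduction to bounded $s$ is needed. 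Putting the two estimates together yields $\phi_j s\to s$ in the graph norm, which closes the argument. I expect the only genuinely delicate point to be the bookkeeping that legitimizes the use of Prop. \ref{pasta} — namely that the compactly supported Lipschitz cut-offs lie in $\mathcal{D}(d_{0,\min})$ with $d_{0,\min}\phi_j$ equal to the almost-everywhere differential; the rest is a routine application of the dominated convergence theorem.
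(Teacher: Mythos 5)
Your proposal is correct and follows essentially the same route as the paper: reduce to bounded smooth sections via Prop.~\ref{corti}, apply Prop.~\ref{pasta} to the cut-offs $\phi_j s$, and conclude by dominated convergence together with the $L^\infty$--$L^2$ estimate on $d_{0,\min}\phi_j\otimes s$. The point you flag as delicate is handled in the paper by combining Theorem 11.3 of Grigor'yan's book with Prop.~\ref{carmina}, which is the same observation you make.
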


\begin{proof}
We start pointing out that by Theorem 11.3 in \cite{GYA}, the fact that $\phi_j$ has compact support for every $j\in \mathbb{N}$ and Prop. \ref{carmina}, we get that $\{\phi_j\}_{j\in \mathbb{N}}\subset \mathcal{D}(d_{0,\min})$ so that the third point in the statement is well defined.\\According to Prop. \ref{corti}, in order to prove the first point, it is enough to show that $$C^{\infty}(M,E)\cap L^{\infty}(M,E)\cap W^{1,2}(M,E)\subset W^{1,2}_0(M,E).$$ Let  $\eta_j:=d_{0,\min}\phi_j$.  
Let  $s\in C^{\infty}(M,E)\cap L^{\infty}(M,E)\cap W^{1,2}(M,E)$. Then, by Prop. \ref{pasta}, we have $\phi_js\in W^{1,2}_0(M,E)$. Moreover 
\begin{equation}
\label{zucchi}
\lim_{j\rightarrow \infty}\|s-\phi_js\|^2_{L^2(M,E)}=\lim_{j\rightarrow \infty}\int_{M}(1-\phi_j)^2|s|_h^2\dvol_g=\int_{M}\lim_{j\rightarrow \infty}(1-\phi_j)^2|s|_h^2\dvol_g=0
\end{equation}
 by the Lebesgue dominate convergence theorem.\\If now we consider $\nabla_{\min}(\phi_j s)$ then, by Prop. \ref{pasta}, we have $\nabla_{\min}(\phi_j s)=\eta_j\otimes s+\phi_j\nabla s$ and therefore $$\|\nabla s-\nabla_{\min}(\phi_js)\|_{L^2(M,T^*M\otimes E)}\leq \|\nabla s-\phi_j\nabla s\|_{L^2(M,T^*M\otimes E)}+\|\eta_j\otimes s\|_{L^2(M,T^*M\otimes E)}.$$ For the first term we have $$\lim_{j\rightarrow \infty}\|\nabla s-\phi_j\nabla s\|^2_{L^2(M,T^*M\otimes E)}=\lim_{j\rightarrow \infty}\int_{M}(1-\phi_j)^2|\nabla s|^2_{\tilde{h}}\dvol_g=0$$ again by the Lebesgue dominate convergence theorem.  For $\|\eta_j\otimes s\|^2_{L^2(M,T^*M\otimes E)}$ we have: $$\|\eta_j\otimes s\|^2_{L^2(M,T^*M\otimes E)}\leq \|\eta_j\|_{L^2\Omega^1(M,g)}^2\|s\|^2_{L^{\infty}(M,E)}$$
and therefore $$\lim_{j\rightarrow \infty}\|\eta_j\otimes s\|^2_{L^2(M,T^*M\otimes E)}=0$$ because $\lim_{j\rightarrow \infty}\|\eta_j\|^2_{L^2\Omega^1(M,g)}=0$. So we  established that 
\begin{equation}
\label{witty}
\lim_{j\rightarrow \infty}\|\nabla s-\nabla(\phi_j s)\|^2_{L^2(M, T^*M\otimes E)}=0.
\end{equation}
By  \eqref{zucchi} and \eqref{witty} we showed  that $s\in W^{1,2}_0(M,E)$ and this completes the proof.
\end{proof}

As a consequence of Prop. \eqref{sette} we have the following result.

\begin{prop}
\label{drivevz}
Let $(M,g)$ and $\{\phi_j\}_{j\in \mathbb{N}}$ be as in Prop. \ref{sette}. Let  $E$ and $F$ be two vector bundles over $M$ endowed respectively with  metrics $h$ and $\rho$, Riemannian if $E$ and $F$ are real vector bundles, Hermitian if $E$ and $F$ are complex vector bundles. Finally let  $\nabla:C^{\infty}(M,E)\rightarrow C^{\infty}(M,T^*M\otimes E)$
be a metric connection. Consider a  first order differential operator of this type:
\begin{equation}
\label{gazzz}
D:=\theta_0\circ \nabla:C^{\infty}_c(M,E)\rightarrow C^{\infty}_c(M,F)
\end{equation}
where $\theta_0\in  C^{\infty}(M,\Hom(T^*M\otimes E,F)).$ Assume that  $\theta_0$ extends as a bounded operator  $\theta:L^2(M, T^*M\otimes E)\rightarrow L^2(M, F)$.  Then we have the following inclusion:
\begin{equation}
\label{cantiano}
\mathcal{D}(D_{\max})\cap L^{\infty}(M,E)\subset \mathcal{D}(D_{\min}).
\end{equation}
In particular \eqref{cantiano} holds when $D$ is the de Rham differential $d_k:\Omega_{c}^k(M)\rightarrow \Omega^{k+1}_c(M)$, a Dirac  operator   $D:C^{\infty}_c(M,E)\rightarrow C^{\infty}_c(M,E)$ or the Dolbeault operator $\overline{\partial}_{p,q}:\Omega_{c}^{p,q}(M)\rightarrow \Omega^{p,q+1}_c(M)$ in the case $M$ is a complex manifold.
\end{prop}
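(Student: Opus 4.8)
The plan is to reduce the statement, by a cut-off argument, to Prop.~\ref{carmina}. Fix $s\in\mathcal{D}(D_{\max})\cap L^{\infty}(M,E)$, let $\{\phi_j\}_{j\in\mathbb{N}}$ be the cut-off sequence from Prop.~\ref{sette}, and set $s_j:=\phi_j s$. The central step is a Leibniz-type identity for the maximal extension of $D=\theta_0\circ\nabla$, whose principal symbol is $\sigma_D(\xi)v=\theta_0(\xi\otimes v)$: one has $s_j\in\mathcal{D}(D_{\max})$ with
\[
D_{\max}s_j=\theta\big(d_{0,\min}\phi_j\otimes s\big)+\phi_j\,D_{\max}s .
\]
I would establish this first for a smooth real compactly supported function $\phi$ in place of $\phi_j$, by pairing $\phi s$ against an arbitrary $\psi\in C^{\infty}_c(M,F)$: using that the formal adjoint is $D^{t}\psi=\nabla^{t}(\theta_0^{*}\psi)$ with $\theta_0^{*}\psi\in C^{\infty}_c(M,T^*M\otimes E)$ and a standard computation with the principal symbol of $D^{t}$, one finds $\langle\phi s,D^{t}\psi\rangle_{L^2(M,E)}=\langle\phi\,D_{\max}s+\theta_0(d\phi\otimes s),\psi\rangle_{L^2(M,F)}$. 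Then one passes to the Lipschitz function $\phi_j$ by approximating it in $\mathcal{D}(d_{0,\min})$ by smooth compactly supported functions that stay uniformly bounded in $[0,1]$ (such exist because $\phi_j\in\mathcal{D}(d_{0,\min})$), and invoking the closedness of $D_{\max}$; here the hypothesis $s\in L^{\infty}(M,E)$ together with the boundedness of $\theta$ is what makes the terms $\theta(d\phi\otimes s)$ converge in $L^{2}(M,F)$, since $\|d\phi\otimes s\|_{L^{2}(M,T^*M\otimes E)}\le\|s\|_{L^{\infty}(M,E)}\|d\phi\|_{L^{2}\Omega^{1}(M,g)}$.

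Granting the identity, $\theta(d_{0,\min}\phi_j\otimes s)\in L^{2}(M,F)$ (again by boundedness of $\theta$ and $s\in L^{\infty}$) and $\phi_j\,D_{\max}s\in L^{2}(M,F)$ because $0\le\phi_j\le1$, so $s_j\in\mathcal{D}(D_{\max})$; since moreover $s_j$ vanishes outside a relatively compact open set containing $\supp\phi_j$, Prop.~\ref{carmina} gives $s_j\in\mathcal{D}(D_{\min})$ for every $j$. It then remains to let $j\to\infty$: for the $L^2$ parts, dominated convergence yields $\|s-\phi_j s\|_{L^{2}(M,E)}\to0$ and $\|D_{\max}s-\phi_j D_{\max}s\|_{L^{2}(M,F)}\to0$, while for the symbol term the third property of the cut-offs gives
\[
\|\theta(d_{0,\min}\phi_j\otimes s)\|_{L^{2}(M,F)}\le\|\theta\|_{\op}\,\|s\|_{L^{\infty}(M,E)}\,\|d_{0,\min}\phi_j\|_{L^{2}\Omega^{1}(M,g)}\longrightarrow0 .
\]
Hence $s_j\to s$ and $D_{\max}s_j\to D_{\max}s$ in $L^2$, and since $\mathcal{D}(D_{\min})$ is closed under the graph norm we conclude $s\in\mathcal{D}(D_{\min})$, which is \eqref{cantiano}.

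For the three named examples it suffices to write them as $\theta_0\circ\nabla$ with $\theta_0$ bounded on $L^{2}$: the de Rham differential is $d_k=\mathrm{alt}\circ\nabla$ for the Levi-Civita connection, with $\theta_0\colon T^*M\otimes\Lambda^kT^*M\to\Lambda^{k+1}T^*M$ the wedge map; a Dirac operator is $D=c\circ\nabla$ with $c$ Clifford multiplication and $\nabla$ a metric Clifford connection; and the Dolbeault operator is $\overline{\partial}_{p,q}=\pi\circ\nabla$, where $\nabla$ is the Chern connection and $\pi$ the orthogonal projection onto $(p,q+1)$-forms. In each case $\theta_0$ has pointwise operator norm bounded by a constant (for Clifford multiplication it equals $|\xi|$), so it extends to a bounded operator on $L^{2}$ and all hypotheses are met.

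The step needing the most care is the Leibniz identity for $D_{\max}$: since the cut-offs are only Lipschitz and $s$ is not assumed smooth, one must run a smoothing-and-limit argument in which the convergence of the test-function pairings and the $L^{2}$-convergence of the symbol terms $\theta(d\phi\otimes s)$ have to be controlled simultaneously; the hypothesis that $\theta$ extends boundedly to $L^{2}$ is used precisely to make these estimates uniform. Everything after that is routine dominated convergence together with Prop.~\ref{carmina} and the closedness of $D_{\min}$.
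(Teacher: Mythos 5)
Your proof is correct, and it follows the same cut-off strategy that the paper invokes: the paper's own proof is a one-line reference, ``argue as in the proof of Prop.~\ref{sette} and use the continuity of $\theta$''. The one genuine difference is where you place the technical weight. The proof of Prop.~\ref{sette} first reduces to sections $s\in C^{\infty}(M,E)\cap L^{\infty}(M,E)$ via Prop.~\ref{corti} and then applies the pointwise product rule of Prop.~\ref{pasta}; that reduction does not transfer verbatim to $\mathcal{D}(D_{\max})$, since Prop.~\ref{corti} is specific to $\nabla_{\max}$ and Prop.~\ref{partiro} would require $D^t\circ D$ to be elliptic (false already for $d_k$, $k\geq 1$). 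You avoid this by proving the Leibniz identity $D_{\max}(\phi_j s)=\phi_j D_{\max}s+\theta(d_{0,\min}\phi_j\otimes s)$ directly at the distributional level, which is exactly the supplement the paper's sketch needs; note also that $\mathcal{D}(D_{\max})$ is in general strictly larger than $\mathcal{D}(\nabla_{\max})$, so working only with the symbol term $\theta(d\phi_j\otimes s)$, rather than with $\nabla s$, is essential and your identity gets this right. The only step I would ask you to spell out is the existence of smooth compactly supported approximants of $\phi_j$ in the graph norm of $d_{0,\min}$ that are \emph{uniformly bounded}: graph-norm convergence alone does not give this, but one can post-compose an arbitrary approximating sequence with a fixed smooth function $F:\mathbb{R}\to[-2,2]$ equal to the identity on a neighbourhood of $[0,1]$ with $|F'|\leq 1$; this preserves smoothness, compact support and graph-norm convergence to $\phi_j$ and yields the uniform bound needed for your dominated-convergence arguments. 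With that remark added, the argument, including the closing verification for $d_k$, Dirac operators and $\overline{\partial}_{p,q}$, is complete.
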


\begin{proof}
The first statement, that is \eqref{cantiano}, follows arguing as in the proof of Prop. \ref{sette} and then using the continuity of $\theta:L^2(M, T^*M\otimes E)\rightarrow L^2(M, F)$. The fact that \eqref{cantiano} holds for the de Rham differential $d_k:\Omega_{c}^k(M)\rightarrow \Omega^{k+1}_c(M)$, for a Dirac  operator   $D:C^{\infty}_c(M,E)\rightarrow C^{\infty}_c(M,E)$ or for the Dolbeault operator $\overline{\partial}_{p,q}:\Omega_{c}^{p,q}(M)\rightarrow \Omega^{p,q+1}_c(M)$  is a straightforward verification.
\end{proof}

We have now the following proposition.

\begin{prop}
\label{otto}
Let $(M,g)$ be an open and possibly incomplete Riemannian manifold. Assume that for some $v\in \mathbb{R}$ with $v>2$   we have a continuous inclusion $W^{1,2}_0(M,g)\hookrightarrow L^{\frac{2v}{v-2}}(M,g)$. Let $E$ be a vector bundle over  $M$ and let $h$ be a metric on $E$, Riemannian if $E$ is a real vector bundle, Hermitian if $E$ is a complex vector bundle. Finally let $\nabla:C^{\infty}(M,E)\rightarrow C^{\infty}(M,T^*M\otimes E)$ be a metric connection. Then we have the following properties:
\begin{itemize}
\item We have a continuous inclusion $W^{1,2}_0(M,E)\hookrightarrow L^{\frac{2v}{v-2}}(M,E)$.
\item If furthermore $M$ has finite volume then the inclusion $W^{1,2}_0(M,E)\hookrightarrow L^2(M,E)$ is a compact operator. 
\end{itemize}
\end{prop}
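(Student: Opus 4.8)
The plan is to derive both assertions from the scalar Sobolev inequality hypothesised on $(M,g)$, using Kato's inequality to pass from functions to sections, and to handle the compactness by a truncation-plus-diagonalisation argument in which the scalar Sobolev embedding furnishes the uniform control of the tails.

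\textbf{The continuous inclusion.} First I would establish the estimate on the dense subspace $C^{\infty}_c(M,E)$. Given $s\in C^{\infty}_c(M,E)$, the function $|s|_h$ is Lipschitz with compact support; by Prop. \ref{devo} it lies in $\mathcal{D}(d_{0,\max})$ with $\|d_{0,\max}|s|_h\|_{L^2\Omega^1(M,g)}\leq \|\nabla s\|_{L^2(M,T^*M\otimes E)}$, and since it is compactly supported Prop. \ref{carmina} gives $|s|_h\in \mathcal{D}(d_{0,\min})=W^{1,2}_0(M,g)$, whence $\||s|_h\|_{W^{1,2}_0(M,g)}\leq \|s\|_{W^{1,2}_0(M,E)}$. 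Applying the assumed continuous inclusion $W^{1,2}_0(M,g)\hookrightarrow L^{\frac{2v}{v-2}}(M,g)$ and using $\||s|_h\|_{L^q(M,g)}=\|s\|_{L^q(M,E)}$, one obtains $\|s\|_{L^{\frac{2v}{v-2}}(M,E)}\leq C\|s\|_{W^{1,2}_0(M,E)}$ for all $s\in C^{\infty}_c(M,E)$. Since $W^{1,2}_0(M,E)=\mathcal{D}(\nabla_{\min})$ is the graph-norm closure of $C^{\infty}_c(M,E)$, this inequality propagates to all of $W^{1,2}_0(M,E)$: a graph-Cauchy sequence in $C^{\infty}_c(M,E)$ is Cauchy in $L^{\frac{2v}{v-2}}(M,E)$, and its two limits coincide almost everywhere after passing to a.e.-convergent subsequences.

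\textbf{Compactness when $\vol_g(M)<\infty$.} Write $p:=\tfrac{2v}{v-2}>2$ and let $\{s_n\}$ be a bounded sequence in $W^{1,2}_0(M,E)$. Fix an exhaustion $M=\bigcup_k U_k$ by relatively compact open sets with $\overline{U_k}\subset U_{k+1}$, and choose $\chi_k\in C^{\infty}_c(M)$ with $0\leq \chi_k\leq 1$, $\chi_k\equiv 1$ on $\overline{U_k}$ and $\supp\chi_k\subset U_{k+1}$. Multiplication by $\chi_k$ is bounded on $C^{\infty}_c(M,E)$ in the graph norm of $\nabla$ (Leibniz rule), hence by density it maps $W^{1,2}_0(M,E)$ boundedly into $W^{1,2}_0(U_{k+1},E|_{U_{k+1}})$; so $\{\chi_k s_n\}_n$ is bounded there for each $k$. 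By Prop. \ref{lunedi} the map $W^{1,2}_0(U_{k+1},E|_{U_{k+1}})\hookrightarrow L^2(M,E)$ is compact, so a diagonal argument yields a subsequence $\{s_{n_j}\}$ with $\{\chi_k s_{n_j}\}_j$ convergent in $L^2(M,E)$ for every $k$. To see this subsequence is Cauchy in $L^2(M,E)$, fix $\varepsilon>0$: the first part gives $C':=\sup_n\|s_n\|_{L^p(M,E)}<\infty$, and since $1-\chi_k$ is supported in $M\setminus U_k$, Hölder's inequality gives $\|(1-\chi_k)(s_{n_j}-s_{n_l})\|_{L^2(M,E)}\leq 2C'\,\vol_g(M\setminus U_k)^{\frac12-\frac1p}$, which is $<\varepsilon/2$ once $k$ is large because $\vol_g(M\setminus U_k)\to 0$; fixing such $k$ and then $j,l$ large so that $\|\chi_k(s_{n_j}-s_{n_l})\|_{L^2(M,E)}<\varepsilon/2$, the triangle inequality gives $\|s_{n_j}-s_{n_l}\|_{L^2(M,E)}<\varepsilon$.

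The Leibniz/density verification that $\chi_k$ acts boundedly into $W^{1,2}_0$ of a relatively compact set, and the a.e.-subsequence bookkeeping in the density step, are routine. The step carrying the real content is the tail estimate: it is precisely the finiteness of the volume together with the higher integrability $W^{1,2}_0\hookrightarrow L^p$ with $p>2$ from the first part that lets a purely local compactness statement (Prop. \ref{lunedi}) plus a uniform bound on the tails upgrade to global compactness.
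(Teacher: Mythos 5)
Your proposal is correct and follows essentially the same route as the paper: Kato's inequality (via Prop. \ref{devo}/Cor. \ref{katocor}) reduces the embedding to the scalar hypothesis, and compactness is obtained by cut-offs, the local compactness of Prop. \ref{lunedi}, a diagonal subsequence, and the H\"older tail estimate $\vol_g(M\setminus U_k)^{\frac12-\frac1p}$ (which equals the paper's exponent $\frac{1}{v}$). The only difference is organizational — you show the diagonal subsequence is Cauchy directly, while the paper first proves the intermediate limits $\tilde{s}_n$ form a Cauchy sequence — and this does not change the substance of the argument.
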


\begin{proof}
Using Cor. \ref{katocor}, we get the continuous inclusion $$W_0^{1,2}(M,E)\hookrightarrow L^{\frac{2v}{v-2}}(M,E).$$  Now we prove the second statement. Let $\{s_i\}_{i\in \mathbb{N}}$ be a bounded sequence in $W_0^{1,2}(M,E)$. By Prop. \ref{partiro} we can assume that each $s_i$ is smooth.  Let $B\in \mathbb{R}$ be a positive number such that 
\begin{equation}
\label{anto}
\|s_i\|_{L^2(M, E)}+\|\nabla s_i\|_{L^2(M, T^*M\otimes E)}\leq B
\end{equation} 
Let $\{K_i\}_{i\in \mathbb{N}}$ be an exhausting  sequence of compact subset of $M$, that is $K_{i}\subset \inte(K_{i+1})$,  $\inte(K_i)$ is the interior of $K_i$ and $\bigcup_{i\in \mathbb{N}}K_i=M$  . Let $\{\chi_i\}_{i\in \mathbb{N}}\subset C^{\infty}_c(M)$ be a sequence of smooth functions with compact support such that
\begin{itemize}
\item $0\leq \chi_i\leq  1$,
\item $\chi_i|_{K_i}=1$,
\item $\chi_i|_{M\setminus K_{i+1}}=0$
\end{itemize} 
Then, according to Prop. \ref{pasta}, $\{\chi_1s_i\}_{i\in\mathbb{N}}$ is a bounded sequence in $W^{1,2}_0(U,E)$ where $U$ is any  open subset of $M$ with compact closure such that $\supp(\chi_1)\subset U$. Therefore, applying Prop. \ref{lunedi}, we get the existence of a subsequence of  $\{s_i\}_{i\in \mathbb{N}}$, that we label $\{s_{i,1}\}_{i\in \mathbb{N}}$, such that $\{\chi_1s_{i,1}\}_{i\in \mathbb{N}}$ converges in $L^{2}(M, E)$ to some element $\tilde{s}_1$. Now consider the sequence $\{\chi_2s_{i,1}\}_{i\in \mathbb{N}}$. Arguing as in the previous case we get the existence of a subsequence of $\{s_{i,1}\}_{i\in \mathbb{N}}$, that we label by $\{s_{i,2}\}_{i\in \mathbb{N}}$, such that $\{\chi_{2}s_{i,2}\}_{i\in \mathbb{N}}$ converges in $L^2(M, E)$ to some $\tilde{s}_2\in L^2(M, E)$. Iterating this construction we get a countable family of sequences  
\begin{equation}
\label{spesa}
\{\{\chi_1s_{i,1}\}_{i\in \mathbb{N}},\ \{\chi_2s_{i,2}\}_{i\in \mathbb{N}},...,\{\chi_ns_{i,n}\}_{i\in \mathbb{N}},... \}
\end{equation}
 such that, for each $n\in \mathbb{N}$, $\{s_{i,n+1}\}_{i\in \mathbb{N}}$ is a subsequence of $\{s_{i,n}\}_{i\in \mathbb{N}}$ and such that $\{\chi_ns_{i,n}\}_{i\in \mathbb{N}}$ converges in $L^2(M,E)$ to some element $\tilde{s}_n\in L^2(M,E)$. Now we want to prove that the sequence $\{\tilde{s}_n\}_{n\in \mathbb{N}}$ is a Cauchy sequence in $L^2(M,E)$. In order to prove this claim let $k$ and $m$ be two natural numbers with $m>k>0$. Then, by the construction performed above, we know that $\{\chi_ms_{i,m}\}_{i\in \mathbb{N}}$ converges to $\tilde{s}_m$ in $L^2(M,E)$
and that $\{\chi_ks_{i,k}\}_{i\in \mathbb{N}}$ converges to $\tilde{s}_k$ in $L^2(M,E)$. By the fact that $\{s_{i,m}\}_{i\in \mathbb{N}}$ is a subsequence of $\{s_{i,k}\}_{i\in \mathbb{N}}$ we also know that  $\{\chi_ks_{i,m}\}_{i\in \mathbb{N}}$ converges to $\tilde{s}_k$ in $L^2(M,E)$. Therefore we can find a number $j\in \mathbb{N}$ such that for each $i\geq j$ we have 
\begin{equation}
\label{topoarriva}
\max\{\|\tilde{s}_m-\chi_ms_{i,m}\|_{L^2(M,E)},\ \|\tilde{s}_k-\chi_ks_{i,m}\|_{L^2(M,E)}\}\leq \frac{1}{m}
\end{equation}
This implies that 
\begin{align}
\label{topobabi}
& \|\tilde{s}_m-\tilde{s}_k\|_{L^2(M,E)}\leq\\ 
\nn &\leq \|\tilde{s}_m-\chi_ms_{i,m}\|_{L^2(M,E)}+\|\chi_ms_{i,m}-\chi_ks_{i,m}\|_{L^2(M,E)}+\|\chi_ks_{i,m}-\tilde{s}_k\|_{L^2(M,E)}\leq\\
\nn &\leq \frac{2}{m}+ \|\chi_ms_{i,m}-\chi_ks_{i,m}\|_{L^2(M,E)}.
\end{align}
In this way, in order to conclude that   $\{\tilde{s}_n\}_{n\in \mathbb{N}}$ is a Cauchy sequence in $L^2(M,E)$, we have to estimate $\|\chi_ms_{i,m}-\chi_ks_{i,m}\|_{L^2(M,E)}$. Let $T_k$ be defined as $T_k:=M\setminus K_k$.
The fact that  $\vol_g(M)<\infty$ implies  immediately that  $\lim_{k\rightarrow \infty}\vol_g(T_k)=0$.
Then, for  $\|\chi_ms_{i,m}-\chi_ks_{i,m}\|^2_{L^2(M,E)}$, we have
$$\|\chi_ms_{i,m}-\chi_ks_{i,m}\|^2_{L^2(M,E)}=\int_{M}(\chi_m-\chi_k)^2|s_{i,m}|_h^2\dvol_g.$$
Now, in virtue of the first point of the theorem, we know that $|s_{i,m}|^2_h\in L^{\frac{v}{v-2}}(M,g)$. 
Clearly $(\chi_m-\chi_k)^2\in L^z(M,g)$ for each $z\in [1,\infty]$. Moreover $(\chi_m-\chi_k)^2=0$ on $M\setminus T_k$ and $(\chi_m-\chi_k)^2\leq 1$ on $T_k$. Therefore we can apply the H\"older inequality, see \cite{TAU} pag. 88,  and  we get  
\begin{align}
\label{babitopo}
& \int_{M}(\chi_m-\chi_k)^2|s_{i,m}|_h^2\dvol_g\leq \|(\chi_m-\chi_k)^2\|_{L^{\frac{v}{2}}(M, g))}\||s_{i,m}|^2_h\|_{L^\frac{v}{v-2}(M,g)}\leq\\
\nn & \leq \|1\|_{L^{\frac{v}{2}}(T_k, g|_{T_k}))}\||s_{i,m}|^2_h\|_{L^{\frac{v}{v-2}}(M,g)}\leq BC (\vol_g(T_k))^{\frac{2}{v}}\ \text{and}\ \lim_{k\rightarrow \infty}(\vol_g(T_k))^{\frac{2}{v}}=0
\end{align}
 where $B$ is the same constant of \eqref{anto} and $C$ is the same constant of the continuous inclusion $W_0^{1,2}(M,E)\hookrightarrow L^{\frac{2v}{v-1}}(M,E)$.  
In this way, for $\|\chi_ms_{i,m}-\chi_ks_{i,m}\|_{L^2(M,E)}$,  we get the following inequality  
\begin{equation}
\label{guetta}
\|\chi_ms_{i,m}-\chi_ks_{i,m}\|_{L^2(M,E)}\leq (BC)^{\frac{1}{2}}(\vol_g(T_k))^{\frac{1}{v}}
\end{equation}
which in turn, by \eqref{topobabi}, implies  that 
$$\|\tilde{s}_m-\tilde{s}_k\|_{L^2(M,E)}\leq \frac{2}{m}+(BC)^{\frac{1}{2}} (\vol_g(T_k))^{\frac{1}{v}}.$$ 
Therefore for each $\delta>0$ we can find $\overline{m}\in \mathbb{N}$ such that for each $m>k>\overline{m}$ we have 
\begin{equation}
\label{swed}
\|\tilde{s}_m-\tilde{s}_k\|_{L^2(M,E)}\leq \frac{2}{m}+(BC)^{\frac{1}{2}} (\vol_g(T_k))^{\frac{1}{v}}\leq \delta.
\end{equation}
In this way we proved that $\{\tilde{s}_{n}\}_{n\in \mathbb{N}}$ is a Cauchy sequence in $L^2(M,E)$ and this implies that there exists an accumulation point $s\in L^2(M,E)$ for  $\{\tilde{s}_{n}\}_{n\in \mathbb{N}}$. Finally, in order to conclude the proof, we have to show that $s$ is an accumulation point in $L^2(M,E)$ for the original sequence $\{s_i\}_{i\in \mathbb{N}}$. Let $\gamma>0$. Then we can find an element  $\tilde{s}_{m}\in \{\tilde{s}_{n}\}_{n\in \mathbb{N}}$ such that  $\|s-\tilde{s}_m\|_{L^2(M,E)}\leq \gamma$. Consider now    the countable  family of sequences  defined in \eqref{spesa}
\begin{equation}
\label{mja}
\{\{\chi_1s_{i,1}\}_{i\in \mathbb{N}}, \{\chi_2s_{i,2}\}_{i\in \mathbb{N}},...,\{\chi_ns_{i,n}\}_{i\in \mathbb{N}},...\}.
\end{equation}
We recall that,  for each $n\in \mathbb{N}$, $\{\chi_ns_{i,n}\}$ converges in $L^2(M,E)$ to $\tilde{s}_n$, that  $\{s_{i,n+1}\}_{i\in \mathbb{N}}$ is a subsequence of $\{s_{i,n}\}_{i\in \mathbb{N}}$ and that $\{s_{i,1}\}_{i\in \mathbb{N}}$ is a subsequence of $\{s_{i}\}_{i\in \mathbb{N}}$. 
Then we can find a positive integer number $i_0\in \mathbb{N}$ such that $\|\tilde{s}_m-\chi_ms_{i,m}\|_{L^2(M,E)}\leq \gamma$ for each  $i>i_0$. Now if we consider  $\|\chi_ms_{i,m}-s_{i,m}\|_{L^2(M,E)}$ then, arguing as in \eqref{babitopo}--\eqref{guetta}, we get $\|\chi_ms_{i,m}-s_{i,m}\|_{L^2(M,E)}\leq (BC)^{\frac{1}{2}}(\vol_g(T_m))^{\frac{1}{v}}$ and $(BC)^{\frac{1}{2}}(\vol_g(T_m))^{\frac{1}{v}}<\gamma$ when $m$ is sufficiently big. Ultimately for $m$ and $i$ sufficiently big we have 
\begin{align}
\nn & \|s-s_{i,m}\|_{L^2(M,E)}\leq\\
\nn &\leq  \|s-\tilde{s}_m\|_{L^2(M,E)}+\|\tilde{s}_m-\chi_ms_{i,m}\|_{L^2(M,E)}+\|\chi_ms_{i,m}-s_{i,m}\|_{L^2(M,E)}\leq\\ 
\nn &\leq 2\gamma+ (BC)^{\frac{1}{2}}(\vol_g(T_m))^{\frac{1}{v}}\leq 3\gamma.
\end{align}
This shows that $s$ is an accumulation point in $L^2(M,E)$ for the original sequence $\{s_i\}_{i\in \mathbb{N}}$ because $s_{i,m}\in \{s_i\}_{i\in \mathbb{N}}$ and therefore the proof is completed.
\end{proof}

\begin{rem}
We can reformulate the statement of  Prop. \ref{sette} saying that $\mathcal{D}(\nabla_{\max})=\mathcal{D}(\nabla_{\min})$. Analogously we can reformulate the statement of  Prop. \ref{otto} saying that there exists a continuous inclusion $\mathcal{D}(\nabla_{\min})\hookrightarrow L^{\frac{2v}{v-2}}(M,E)$ and that the natural inclusion  $\mathcal{D}(\nabla_{\min})\hookrightarrow L^{2}(M,E)$ is a compact operator where $\mathcal{D}(\nabla_{\min})$ is endowed with the corresponding graph norm.
\end{rem}

We conclude this section with the following corollary:

\begin{cor}
\label{gaber}
Consider an open and possibly incomplete Riemannian manifold $(M,g)$. Let  $E$ be a vector bundle over $M$ endowed with a metric $h$, Riemannian if $E$ is a real vector bundle, Hermitian if $E$ is a complex vector bundle. Finally let  $\nabla:C^{\infty}(M,E)\rightarrow C^{\infty}(M,T^*M\otimes E)$
be a metric connection. Assume  that the natural inclusion $W^{1,2}_0(M,E)\hookrightarrow L^2(M,E)$
is a compact operator. Then the image of $\nabla_{\min}$, $\im(\nabla_{\min})$, is a closed subset of $L^2(M,T^*M\otimes E)$.
\end{cor}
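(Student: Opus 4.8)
The plan is to reduce the closedness of $\im(\nabla_{\min})$ to a Poincar\'e-type a priori estimate on the $L^2$-orthogonal complement of $\ker(\nabla_{\min})$, and then to obtain that estimate by the standard compactness--contradiction argument, using the hypothesis that $W^{1,2}_0(M,E)=\mathcal{D}(\nabla_{\min})\hookrightarrow L^2(M,E)$ is compact.

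First I would set $K:=\ker(\nabla_{\min})$. Since $\nabla_{\min}$ is a closed, densely defined operator (it is the graph-closure of $\nabla$ on $C^\infty_c(M,E)$), $K$ is a closed subspace of $L^2(M,E)$ contained in $\mathcal{D}(\nabla_{\min})$. On $K$ the graph norm of $\nabla_{\min}$ coincides with the $L^2$-norm, so the assumed compactness of $W^{1,2}_0(M,E)\hookrightarrow L^2(M,E)$ forces the closed unit ball of $K$ to be $L^2$-relatively compact, hence $\dim K<\infty$ by the theorem of Riesz; strictly speaking only the closedness of $K$ is needed below. Using the $L^2$-orthogonal decomposition $L^2(M,E)=K\oplus K^\perp$, any $u\in\mathcal{D}(\nabla_{\min})$ splits as $u=u_0+u_1$ with $u_0$ the orthogonal projection of $u$ onto $K$; then $u_0\in K\subset\mathcal{D}(\nabla_{\min})$, so $u_1=u-u_0\in\mathcal{D}(\nabla_{\min})\cap K^\perp$ and $\nabla_{\min}u=\nabla_{\min}u_1$. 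Consequently $\im(\nabla_{\min})$ coincides with the image of the injective restriction $\nabla_{\min}|_{\mathcal{D}(\nabla_{\min})\cap K^\perp}$.

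The main step is to prove: there is a constant $C>0$ with $\|s\|_{L^2(M,E)}\leq C\|\nabla_{\min}s\|_{L^2(M,T^*M\otimes E)}$ for every $s\in\mathcal{D}(\nabla_{\min})\cap K^\perp$. Suppose not; then there is a sequence $\{s_n\}\subset\mathcal{D}(\nabla_{\min})\cap K^\perp$ with $\|s_n\|_{L^2(M,E)}=1$ and $\|\nabla_{\min}s_n\|_{L^2(M,T^*M\otimes E)}\to 0$. Such a sequence is bounded in $W^{1,2}_0(M,E)=\mathcal{D}(\nabla_{\min})$, so by the hypothesis a subsequence converges in $L^2(M,E)$ to some $s$; then $\|s\|_{L^2(M,E)}=1$ and $s\in K^\perp$ since $K^\perp$ is closed. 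Along this subsequence $(s_n,\nabla_{\min}s_n)\to(s,0)$ in $L^2(M,E)\times L^2(M,T^*M\otimes E)$, and since the graph of $\nabla_{\min}$ is closed we get $s\in\mathcal{D}(\nabla_{\min})$ with $\nabla_{\min}s=0$, i.e.\ $s\in K$ --- contradicting $s\in K^\perp$ and $\|s\|=1$.

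Finally, given $w$ in the $L^2$-closure of $\im(\nabla_{\min})$, I would choose $s_n\in\mathcal{D}(\nabla_{\min})\cap K^\perp$ (using the decomposition above) with $\nabla_{\min}s_n\to w$; the estimate just established shows $\{s_n\}$ is Cauchy in $L^2(M,E)$, hence $s_n\to s$, and the closedness of $\nabla_{\min}$ yields $s\in\mathcal{D}(\nabla_{\min})$ with $\nabla_{\min}s=w$, so $w\in\im(\nabla_{\min})$. Thus $\im(\nabla_{\min})$ is closed. The only genuinely delicate point is the compactness--contradiction step, and there the care required is simply to invoke that $\nabla_{\min}$ is a closed operator --- so that the $L^2$-limit of the extracted subsequence lies back in $\mathcal{D}(\nabla_{\min})$ with the expected covariant derivative --- and that orthogonality to the closed subspace $K$ passes to the limit; everything else is formal.
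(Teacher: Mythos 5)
Your proof is correct, and it takes a genuinely different route from the paper's. You work directly with the first-order operator: you reduce closedness of $\im(\nabla_{\min})$ to the a priori estimate $\|s\|_{L^2(M,E)}\leq C\|\nabla_{\min}s\|_{L^2(M,T^*M\otimes E)}$ on $\mathcal{D}(\nabla_{\min})\cap\ker(\nabla_{\min})^{\perp}$, and you obtain that estimate by the standard compactness--contradiction argument, using only that $\nabla_{\min}$ is a closed operator and that the inclusion $\mathcal{D}(\nabla_{\min})\hookrightarrow L^2(M,E)$ is compact. All the delicate points are handled: the limit of the extracted subsequence lies in $\mathcal{D}(\nabla_{\min})$ with vanishing covariant derivative because the graph is closed, orthogonality to the closed subspace $K$ passes to the limit, and the Cauchy argument at the end legitimately applies the estimate to differences $s_n-s_m$, which stay in the linear subspace $\mathcal{D}(\nabla_{\min})\cap K^{\perp}$. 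The paper instead passes through the second-order operator $\nabla^t_{\max}\circ\nabla_{\min}$ (the Friedrich extension of the Bochner Laplacian, by Prop.~\ref{fall}): the compactness hypothesis makes it a discrete, hence Fredholm, operator with closed range; a sandwich of inclusions then shows $\im(\nabla^t_{\max})=\overline{\im(\nabla^t_{\max})}$, and the closed range theorem applied to the adjoint pair $\nabla^t_{\max}=(\nabla_{\min})^*$ finally yields closedness of $\im(\nabla_{\min})$. Your argument is the more elementary and self-contained of the two; the paper's detour has the side benefit of establishing the discreteness of $(\nabla^t\circ\nabla)^{\mathcal{F}}$, which is reused later (e.g.\ in Corollary~\ref{demo}).
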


\begin{proof}
Consider the operator $\nabla^t_{\max}\circ \nabla_{\min}:L^2(M,E)\rightarrow L^2(M,E)$ whose domain is given by $\mathcal{D}(\nabla_{\max}^t\circ \nabla_{\min}):=\{s\in \mathcal{D}(\nabla_{\min}): \nabla_{\min}s\in \mathcal{D}(\nabla_{\max}^t)\}$. Arguing as in the proof of Prop. \ref{partiro} we get the following inequality $$\|\nabla_{\min}s\|^2_{L^2(M,T^*M\otimes E)}\leq \frac{1}{2}(\|s\|^2_{L^2(M,E)}+\|\nabla^t_{\max}(\nabla_{\min}s)\|^2_{L^2(M,E)})$$ for each $s\in \mathcal{D}(\nabla_{\max}^t\circ \nabla_{\min})$. Therefore we can conclude that the natural inclusion 
\begin{equation}
\label{stand}
\mathcal{D}(\nabla^t_{\max}\circ \nabla_{\min})\hookrightarrow \mathcal{D}(\nabla_{\min})
\end{equation}
is a continuous operator where each domain is endowed with the corresponding graph norm. In this way, using the assumption on $W^{1,2}_0(M,g)\hookrightarrow L^2(M,g)$, we get that the natural inclusion
\begin{equation}
\label{standa}
\mathcal{D}(\nabla^t_{\max}\circ \nabla_{\min})\hookrightarrow L^2(M,E)
\end{equation}
is a compact operator where $\mathcal{D}(\nabla^t_{\max}\circ \nabla_{\min})$ is endowed with its graph norm. As it is well know this is equivalent to say that $\nabla_{\max}^t\circ \nabla_{\min}$ is a discrete operator and this in turn implies  in particular  that it is a Fredholm operator on its domain endowed with the graph norm. Therefore we can conclude that  $\im(\nabla^t_{\max}\circ \nabla_{\min})$ is closed in $L^2(M,E)$. Now consider the two following orthogonal decompositions of $L^2(M,E)$: $$L^2(M,E)=\ker(\nabla_{\min})\oplus \overline{\im(\nabla^t_{\max})}\ \text{and}\   L^2(M,E)=\ker(\nabla^t_{\max}\circ \nabla_{\min})\oplus \overline{\im(\nabla^t_{\max}\circ \nabla_{\min})}.$$ Clearly $\ker(\nabla^t_{\max}\circ \nabla_{\min})=\ker(\nabla_{\min})$. Therefore we have the following chain of inclusions: $$\im(\nabla^t_{\max}\circ \nabla_{\min})\subset \im(\nabla^t_{\max})\subset \overline{\im(\nabla^t_{\max})}= \overline{\im(\nabla^t_{\max}\circ \nabla_{\min})}=\im(\nabla^t_{\max}\circ \nabla_{\min})$$ which in particular implies that $\overline{\im(\nabla^t_{\max})}=\im(\nabla^t_{\max})$ and therefore, taking the adjoint, $\overline{\im(\nabla_{\min})}=\im(\nabla_{\min})$. 
\end{proof}

\subsection{Some results for Schr\"odinger type operators}

\begin{prop}
\label{luglioaberlino}
Let $(M,g)$ be an open and possibly incomplete Riemannian manifold of finite volume and dimension $m>2$. Assume that we have a continuous inclusion $W^{1,2}_0(M,g)\rightarrow L^{\frac{2m}{m-2}}(M,g)$. Let $\Delta_0^{\mathcal{F}}:L^2(M,g)\hookrightarrow L^2(M,g)$ be the Friedrich extension of the Laplacian $\Delta_0:C^{\infty}_c(M)\rightarrow C^{\infty}_c(M)$. Then the heat operator $e^{-t\Delta_0^{\mathcal{F}}}:L^2(M,g)\rightarrow L^2(M,g)$ is  a trace class operator and we have the following inequalities for  its trace:
\begin{equation}
\label{intraces}
 \Tr(e^{-t\Delta_0^{\mathcal{F}}})\leq C\vol_g(M)t^{\frac{-m}{2}}
\end{equation}
for $t\in (0,1)$.
\end{prop}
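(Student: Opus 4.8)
The plan is to deduce from the Sobolev inequality an ultracontractivity estimate for the semigroup $e^{-t\Delta_0^{\mathcal{F}}}$, and then to integrate the resulting on-diagonal bound for its kernel over $M$. Recall first that $\Delta_0^{\mathcal{F}}$ is a non-negative self-adjoint operator whose associated quadratic form, by Prop. \ref{fall}, is $Q(u,v)=\langle d_{0,\min}u,d_{0,\min}v\rangle_{L^2\Omega^1(M,g)}$ with form domain $\mathcal{D}(Q)=\mathcal{D}(d_{0,\min})=W^{1,2}_0(M,g)$. Since $Q$ is the closure of $u\mapsto\int_M|d_0u|_g^2\,\dvol_g$ on $C^\infty_c(M)$, it is a Dirichlet form; hence $e^{-t\Delta_0^{\mathcal{F}}}$ is symmetric, positivity preserving (Prop. \ref{primo}) and submarkovian, so it is a strongly continuous contraction semigroup on every $L^p(M,g)$, $1\le p\le\infty$.

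Next, the hypothesis provides a constant $A>0$ with $\|u\|_{L^{2m/(m-2)}(M,g)}^2\le A\bigl(Q(u,u)+\|u\|_{L^2(M,g)}^2\bigr)$ for every $u\in\mathcal{D}(Q)$; this is the (inhomogeneous) $m$-dimensional Sobolev inequality attached to the Dirichlet form $Q$. Invoking the standard equivalence between Sobolev inequalities of this kind and ultracontractivity of symmetric submarkovian semigroups (Nash's argument, together with duality and the semigroup property; see e.g.\ \cite{GYA}), we obtain a constant $C_1>0$ such that
$$\|e^{-t\Delta_0^{\mathcal{F}}}\|_{L^1(M,g)\to L^\infty(M,g)}\le C_1\,t^{-m/2}\qquad\text{for all }t\in(0,1].$$
The restriction to small $t$ comes precisely from the lower-order term $\|u\|_2^2$ in the Sobolev inequality; it is in any case unavoidable since $\vol_g(M)<\infty$ prevents the semigroup from decaying as $t\to\infty$. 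Equivalently, for $t\in(0,1]$ the operator $e^{-t\Delta_0^{\mathcal{F}}}$ is an integral operator with kernel $p_t\in L^\infty(M\times M,\dvol_g\otimes\dvol_g)$ satisfying $0\le p_t(x,y)\le C_1 t^{-m/2}$.

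Since $\vol_g(M)<\infty$, for $t\in(0,1]$ the bounded kernel $p_t$ lies in $L^2(M\times M)$, so $e^{-t\Delta_0^{\mathcal{F}}}$ is Hilbert--Schmidt; consequently, for $t\in(0,1)$, writing $e^{-t\Delta_0^{\mathcal{F}}}=e^{-(t/2)\Delta_0^{\mathcal{F}}}\circ e^{-(t/2)\Delta_0^{\mathcal{F}}}$ as a product of two Hilbert--Schmidt operators shows that it is trace class. By the semigroup property $p_t(x,x)=\int_M p_{t/2}(x,y)^2\,\dvol_g(y)\ge0$, and $\Tr(e^{-t\Delta_0^{\mathcal{F}}})=\int_M p_t(x,x)\,\dvol_g(x)$ (the trace of a non-negative trace-class integral operator equals the integral of its diagonal; discreteness of $\spec(\Delta_0^{\mathcal{F}})$, which makes a Mercer-type argument clean, follows from Prop. \ref{otto}). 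Combining this with the on-diagonal bound $p_t(x,x)\le\|p_t\|_{L^\infty}\le C_1 t^{-m/2}$ gives $\Tr(e^{-t\Delta_0^{\mathcal{F}}})\le C_1\vol_g(M)t^{-m/2}$ for $t\in(0,1)$, that is, \eqref{intraces} with $C=C_1$.

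The delicate step is the passage from the Sobolev inequality to the ultracontractivity bound, together with the care needed for the inhomogeneous lower-order term, which is the reason the estimate holds only for $t\in(0,1)$; this rests on the standard but somewhat technical Nash--Moser/Davies machinery and on the fact that $Q$ is a Dirichlet form, so that $e^{-t\Delta_0^{\mathcal{F}}}$ is submarkovian. An alternative route that sidesteps heat kernels would use the discreteness of $\spec(\Delta_0^{\mathcal{F}})$ (Prop. \ref{otto}), derive from the Sobolev inequality a Faber--Krahn type estimate $\#\{k:\lambda_k\le\lambda\}\le C\vol_g(M)\lambda^{m/2}$ for its eigenvalues, and then bound $\Tr(e^{-t\Delta_0^{\mathcal{F}}})=\sum_k e^{-t\lambda_k}$ by integrating $e^{-t\lambda}$ against this counting function.
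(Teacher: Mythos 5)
Your proof is correct and follows essentially the same route as the paper's: both deduce from the Sobolev embedding the on-diagonal heat kernel bound $k_{\Delta_0}(t,x,y)\leq Ct^{-m/2}$ for $t\in(0,1)$ (the paper by citing \cite{ACM} and \cite{KIY}, you by invoking the standard Nash/ultracontractivity equivalence for the Dirichlet form of $\Delta_0^{\mathcal{F}}$), then use finite volume to get Hilbert--Schmidt, factor the semigroup to get trace class, and integrate the diagonal. The only cosmetic difference is that the paper also spells out trace-classness for all $t>0$ via an $n$-fold factorization, which your argument extends to trivially.
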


\begin{proof}
As showed in \cite{ACM} pag. 1062  or in  \cite{KIY} Theorem 2.1  the continuous inclusion $W^{1,2}_0(M,g)\hookrightarrow L^{\frac{2m}{m-2}}(M,g)$ is equivalent to the following property: 
\begin{equation}
\label{tracce}
k_{\Delta_0}(t,x,y)\leq Ct^{\frac{-m}{2}}
\end{equation}
 for each $(x,y)\in M\times M$, $t\in (0,1)$ and where $k_{\Delta_0}(t,x,y)$ is the smooth kernel of the heat operator $e^{-t\Delta_0^{\mathcal{F}}}$ and $C$ is a positive constant. This implies immediately that $\tr(e^{-t\Delta_0^{\mathcal{F}}})\leq C t^{\frac{-m}{2}}$ for $t\in (0,1)$. Moreover using \eqref{tracce} we get that $$\int_{M\times M}(k_{\Delta_0}(t,x,y))^2\dvol_g(x)\dvol_g(v)<\infty$$ for each $t\in (0,1)$ and this in turn implies that $e^{-t\Delta^{\mathcal{F}}}:L^2(M,g)\rightarrow L^2(M,g)$ is a Hilbert-Schimdt operator for each $t\in (0,1)$, see for instance \cite{RSI} pag. 210. Writing $t=\frac{t}{2}+\frac{t}{2}$ we get $e^{-t\Delta^{\mathcal{F}}}=e^{-\frac{t}{2}\Delta^{\mathcal{F}}}\circ e^{-\frac{t}{2}\Delta^{\mathcal{F}}}$ and this tells us that  $e^{-t\Delta^{\mathcal{F}}}:L^2(M,g)\rightarrow L^2(M,g)$ is a trace class operator for each $t\in (0,1)$. Ultimately, fixing any $t>0$ and writing $t=n\frac{t}{n}$ with $n>t$, we have $e^{-t\Delta^{\mathcal{F}}}=e^{-\frac{t}{n}\Delta^{\mathcal{F}}}\circ...\circ e^{-\frac{t}{n}\Delta^{\mathcal{F}}}$ ($n$-times) and this allows us to conclude that $e^{-t\Delta^{\mathcal{F}}}:L^2(M,g)\rightarrow L^2(M,g)$ is a trace class operator. Finally we have $\Tr(e^{-t\Delta^{\mathcal{F}}})=\int_Mk_{\Delta_0}(t,x,x)\dvol_g$ and therefore for each $t\in (0,1)$ we find $$\Tr(e^{-t\Delta^{\mathcal{F}}})=\int_Mk_{\Delta_0}(t,x,x)\dvol_g\leq \int_MCt^{-\frac{m}{2}}\dvol_g=C\vol_g(M)t^{\frac{-m}{2}}.$$
\end{proof}

As in the previous proposition consider again a possibly incomplete Riemannian manifold $(M,g)$ of finite volume and dimension $m$. Let  $E$ be a vector bundle over $M$ endowed with a metric $h$, Riemannian if $E$ is a real vector bundle, Hermitian if $E$ is a complex vector bundle.  Finally let $\nabla:C^{\infty}(M,E)\rightarrow C^{\infty}(M,T^*M\otimes E)$ be a metric connection. Our goal is to study  some properties of Schr\"odinger type operators, that is  operators of type 
\begin{equation}
\label{polverec}
\nabla^t\circ\nabla +L 
\end{equation}
 where $\nabla^t: C_c^{\infty}(M,T^*M\otimes E)\rightarrow C_c^{\infty}(M,E)$ is the formal adjoint of $\nabla$ and $L\in C^{\infty}(M,\End(E))$.  In particular we are interested in  \eqref{polverec} acting on $L^2(M,E)$ with $C^{\infty}_c(M,E)$ as core domain. One of the reasons, as it is well known,  is provided by the fact that the square of the typical first order  differential operators arising in differential geometry, for instance  the Gauss-Bonnet operator $d+\delta$, the Hodge-Dolbeault operator $\overline{\partial}+\overline{\partial}^t$ and  the Spin-Dirac operator $\eth$, are Schr\"odinger type operators.  

\begin{prop}
\label{skernelz}
Let $(M,g)$ be an open and possibly incomplete Riemannian manifold of finite volume and dimension $m>2$. Assume that we have a continuous inclusion $W^{1,2}_0(M,g)\rightarrow L^{\frac{2m}{m-2}}(M,g)$. Let $V$, $E$, $g$, $h$, and $\nabla$ be as described above. Let $$P:=\nabla^t\circ \nabla +L,\ P:C^{\infty}_c(M,E)\rightarrow C_c^{\infty}(M,E)$$ be a Schr\"odinger type operator with $L\in C^{\infty}(M,\End(E))$. Assume that:
\begin{itemize}
\item $P$ is symmetric and  positive.
\item    There is a constant $c\in \mathbb{R}$ such that, for each $s\in C^{\infty}(M,E)$, we have  $$h(Ls,s)\geq ch(s,s).$$
\end{itemize}
Let $P^{\mathcal{F}}:L^2(M,E)\rightarrow L^2(M,E)$ be the Friedrich extension of $P$ and let  $\Delta_0^{\mathcal{F}}:L^2(M,g)\rightarrow L^2(M,g)$  be the Friedrich extension of $\Delta_0:C^{\infty}_c(M)\rightarrow C^{\infty}_c(M)$. Then the heat operator associated to $P^{\mathcal{F}}$ $$e^{-tP^{\mathcal{F}}}:L^{2}(M,E)\longrightarrow L^2(M,E)$$  
is a trace class operator and its trace satisfies the following inequality: 
\begin{equation}
\label{marzz}
\Tr(e^{-tP^{\mathcal{F}}})\leq re^{-tc}\Tr(e^{-t\Delta_0^{\mathcal{F}}}).
\end{equation}
where $r$ is the rank of the vector bundle $E$.
\end{prop}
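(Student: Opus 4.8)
The plan is to deduce everything from the domination of semigroups of Theorem \ref{ready} together with the properties of $e^{-t\Delta_0^{\mathcal F}}$ established in Prop. \ref{luglioaberlino} and Prop. \ref{primo}. Since $(M,g)$, $E$, $h$, $\nabla$ and $P$ satisfy the hypotheses of Theorem \ref{ready}, we have
\begin{equation}
\label{domproof}
|e^{-tP^{\mathcal F}}s|_h\le e^{-tc}\,e^{-t\Delta_0^{\mathcal F}}|s|_h\qquad\text{for every }s\in L^2(M,E)\text{ and }t>0,
\end{equation}
and by Prop. \ref{luglioaberlino} and Prop. \ref{primo} the operator $e^{-t\Delta_0^{\mathcal F}}$ is, for every $t>0$, a trace class integral operator with a non-negative smooth kernel $k_{\Delta_0}(t,\cdot,\cdot)$ satisfying $\Tr(e^{-t\Delta_0^{\mathcal F}})=\int_M k_{\Delta_0}(t,x,x)\,\dvol_g$, it is Hilbert--Schmidt for $t\in(0,1)$, and $k_{\Delta_0}(t,x,y)\le Ct^{-m/2}$ on $M\times M$ for $t\in(0,1)$.

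The first genuine step is to pass from \eqref{domproof} to a pointwise domination of heat kernels. By the theory of domination of positivity preserving semigroups as developed in \cite{HSUH} and \cite{HSU}, the inequality \eqref{domproof}, combined with the fact that $e^{-t\Delta_0^{\mathcal F}}$ is positivity preserving and is an integral operator, implies that $e^{-tP^{\mathcal F}}$ is an integral operator whose kernel $k_P(t,x,y)\in\Hom(E_y,E_x)$ satisfies, pointwise on $M\times M$,
$$\|k_P(t,x,y)\|_{\op}\le e^{-tc}\,k_{\Delta_0}(t,x,y),$$
and $k_P(t,\cdot,\cdot)$ is in addition smooth by elliptic regularity, $P$ being elliptic. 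I expect this passage from an operator inequality to a pointwise kernel inequality, in the possibly incomplete setting, to be the main point requiring care; everything preceding it, namely the domination \eqref{domproof} itself, has already been prepared in Section 2.

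Granting the kernel estimate, the rest is routine. For $t\in(0,1)$, since $\|k_P(t,x,y)\|_{\mathrm{HS}}^2\le r\,\|k_P(t,x,y)\|_{\op}^2$ and $\vol_g(M)<\infty$, we get
$$\int_{M\times M}\|k_P(t,x,y)\|_{\mathrm{HS}}^2\,\dvol_g(x)\,\dvol_g(y)\le r\,e^{-2tc}\int_{M\times M}k_{\Delta_0}(t,x,y)^2\,\dvol_g(x)\,\dvol_g(y)\le r\,e^{-2tc}C^2t^{-m}\vol_g(M)^2<\infty,$$
hence $e^{-tP^{\mathcal F}}$ is Hilbert--Schmidt for $t\in(0,1)$; writing $e^{-tP^{\mathcal F}}=e^{-\frac t2 P^{\mathcal F}}\circ e^{-\frac t2 P^{\mathcal F}}$ shows it is trace class for $t\in(0,1)$, and realizing it as the $n$-fold composition of $e^{-\frac tn P^{\mathcal F}}$ with itself, $n>t$, shows it is trace class for every $t>0$. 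Finally, since $e^{-tP^{\mathcal F}}$ is for every $t>0$ a composition of Hilbert--Schmidt operators with continuous kernels, one has $\Tr(e^{-tP^{\mathcal F}})=\int_M\tr_{E_x}k_P(t,x,x)\,\dvol_g$ (see e.g. \cite{BGV}), so from $|\tr_{E_x}k_P(t,x,x)|\le r\,\|k_P(t,x,x)\|_{\op}\le r\,e^{-tc}k_{\Delta_0}(t,x,x)$ we conclude
$$\Tr(e^{-tP^{\mathcal F}})\le\int_M r\,e^{-tc}k_{\Delta_0}(t,x,x)\,\dvol_g=r\,e^{-tc}\Tr(e^{-t\Delta_0^{\mathcal F}}),$$
which is \eqref{marzz}.
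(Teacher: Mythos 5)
Your proposal is correct and follows essentially the same route as the paper: domination of semigroups from Theorem \ref{ready}, the passage to the pointwise kernel inequality $\|k_P(t,x,y)\|_{h,\op}\leq e^{-tc}k_{\Delta_0}(t,x,y)$ via the Hess--Schrader--Uhlenbrock theory, and then the trace-class conclusion and trace bound from Prop. \ref{luglioaberlino}. The only difference is that you spell out the Hilbert--Schmidt/semigroup-factorization argument for $e^{-tP^{\mathcal F}}$ explicitly, which the paper leaves implicit after having done it for $e^{-t\Delta_0^{\mathcal F}}$.
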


\begin{proof}
According to Theorem \ref{ready} we know that 
\begin{equation}
\label{sargon}
|e^{-tP^{\mathcal{F}}}s|_h\leq e^{-tc}e^{-\Delta_0^{\mathcal{F}}}|s|_h
\end{equation}
for each $s\in L^2(M,E).$ Let $k_P(t,x,y)$ be the smooth kernel of the heat operator $e^{-tP^{\mathcal{F}}}$ and analogously let  $k_{\Delta_0}(t,x,y)$ be the smooth kernel of the heat operator $e^{-t\Delta_0^{\mathcal{F}}}$. Therefore, for each pair $(x,y)\in M\times M$, $k_P(t,x,y)\in \Hom(E_y,E_x)$ and analogously $k_{\Delta_0}(t,x,y)\in \Hom(\mathbb{R},\mathbb{R})$ that is $k_{\Delta_0}(t,x,y)\in C^{\infty}(M\times M)$.   Let us label by $\|\ \|_{h,\op}$ the pointwise operator norm for the linear operators acting between  $(E_y,h_y)$ and $(E_x,h_x)$. As explained in \cite{HSU} pag. 32, the inequality \eqref{sargon} implies that  for the pointwise operator norm $\|k_P(t,x,y)\|_{h,\op}$ the following inequality holds: $$\|k_P(t,x,y)\|_{h,\op}\leq e^{-tc}k_{\Delta_0}(t,x,y).$$ In particular for $x=y$ we have  
\begin{equation}
\label{shugar}
\|k_P(t,x,x)\|_{h,\op}\leq e^{-tc}k_{\Delta_0}(t,x,x).
\end{equation}
 This implies immediately the following inequality for the pointwise  traces 
\begin{equation}
\label{frazier}
\tr(k_P(t,x,x))\leq r e^{-tc}k_{\Delta_0}(t,x,x).
\end{equation} 
By Prop. \ref{luglioaberlino} we know that $e^{-t\Delta_0^{\mathcal{F}}}:L^2(M,g)\rightarrow L^2(M,g)$ is a trace class operator. Therefore, using  \eqref{shugar} and \eqref{frazier}, we get that also $e^{-tP^{\mathcal{F}}}:L^2(M,E)\rightarrow L^2(M,E)$ is a trace class operator and  its trace satisfies the inequality
\begin{equation}
\label{ronlyle}
\Tr(e^{-tP^{\mathcal{F}}})\leq r e^{-tc}\Tr(e^{-t\Delta_0^{\mathcal{F}}}).
\end{equation}
\end{proof}

\begin{cor}
\label{eigen}
Under the assumptions of Prop. \ref{skernelz}. For each $t\in (0,1)$ we have the following inequalities for the pointwise trace and for the heat trace of $e^{-tP^{\mathcal{F}}}$ respectively:
$$\tr(k_{P}(t,x,x))\leq Cre^{-tc}t^{\frac{-m}{2}}$$ $$\Tr(e^{-tP^{\mathcal{F}}})\leq Cre^{-tc}\vol_g(M)t^{\frac{-m}{2}}$$ where $C$ is the same constant of \eqref{intraces}. The operator $P^{\mathcal{F}}:L^2(M,E)\rightarrow L^2(M,E)$ is a discrete operator. 
If we label its eigenvalues with $$0\leq \lambda_{0}\leq \lambda_1\leq...\leq \lambda_n\leq...$$ then there exists a positive constant $K$ such that  we have the following asymptotic inequality 
\begin{equation}
\label{sierra}
\lambda_j\geq Kj^{\frac{2}{m}}+c
\end{equation}
 as $j\rightarrow \infty$.
\end{cor}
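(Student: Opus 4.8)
The plan is to read off the first two estimates directly from Proposition \ref{skernelz} together with Proposition \ref{luglioaberlino}, and then to extract discreteness and the eigenvalue asymptotics from the resulting trace bound by an elementary argument. For the pointwise and global trace estimates, I would combine the inequalities \eqref{shugar} and \eqref{frazier} with the on-diagonal heat kernel bound $k_{\Delta_0}(t,x,x)\leq Ct^{-m/2}$ and the trace bound $\Tr(e^{-t\Delta_0^{\mathcal{F}}})\leq C\vol_g(M)t^{-m/2}$ supplied by Proposition \ref{luglioaberlino}: this yields $\tr(k_P(t,x,x))\leq re^{-tc}k_{\Delta_0}(t,x,x)\leq Cre^{-tc}t^{-m/2}$ for $t\in(0,1)$, and integrating over $M$ (equivalently, feeding the bound of Proposition \ref{luglioaberlino} into \eqref{marzz}) gives $\Tr(e^{-tP^{\mathcal{F}}})\leq Cre^{-tc}\vol_g(M)t^{-m/2}$.

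For the discreteness of $P^{\mathcal{F}}$, I would argue that since $e^{-tP^{\mathcal{F}}}$ is trace class for every $t>0$ by Proposition \ref{skernelz}, it is in particular a compact operator; a nonnegative self-adjoint operator whose heat semigroup is compact has compact resolvent (for instance, $(P^{\mathcal{F}}+1)^{-1}=\int_0^{\infty}e^{-s}e^{-sP^{\mathcal{F}}}\,ds$ is a norm limit of compact operators), hence purely discrete spectrum made of eigenvalues of finite multiplicity with no finite accumulation point. Arranging these eigenvalues as $0\leq\lambda_0\leq\lambda_1\leq\cdots$ we have $\lambda_j\to\infty$, and the spectral theorem gives $\Tr(e^{-tP^{\mathcal{F}}})=\sum_{j\geq 0}e^{-t\lambda_j}$.

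The eigenvalue inequality then follows by the standard trick of bounding the partial sums of the heat trace from below. For every $N\in\mathbb{N}$ and every $t\in(0,1)$, monotonicity of $\{\lambda_j\}$ gives
\begin{equation}
Ne^{-t\lambda_N}\leq\sum_{j=0}^{N}e^{-t\lambda_j}\leq\Tr(e^{-tP^{\mathcal{F}}})\leq Cre^{-tc}\vol_g(M)t^{-\frac{m}{2}},
\end{equation}
so, writing $A:=Cr\vol_g(M)$ and $\mu_N:=\lambda_N-c$, we obtain $e^{-t\mu_N}t^{\frac{m}{2}}\leq A/N$. Since $\mu_N\to\infty$, for all sufficiently large $N$ one has $\mu_N>1$, and choosing $t=1/\mu_N\in(0,1)$ yields $e^{-1}\mu_N^{-\frac{m}{2}}\leq A/N$, that is $\lambda_N-c=\mu_N\geq(N/(eA))^{\frac{2}{m}}$. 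Hence \eqref{sierra} holds with $K:=(eCr\vol_g(M))^{-\frac{2}{m}}$. There is no real obstacle here, as everything reduces to the already established Propositions \ref{skernelz} and \ref{luglioaberlino}; the only point requiring a little care is that the choice $t=1/(\lambda_N-c)$ is $N$-dependent and is admissible precisely because discreteness forces $\lambda_N\to\infty$, so that eventually $t$ falls in the interval $(0,1)$ on which the bound of Proposition \ref{luglioaberlino} is valid.
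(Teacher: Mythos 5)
Your proposal is correct and follows essentially the same route as the paper: the trace estimates are read off from Propositions \ref{luglioaberlino} and \ref{skernelz}, discreteness follows from the compactness of the (trace class) heat semigroup, and the eigenvalue bound comes from the heat trace estimate. The only difference is that where the paper invokes ``a classical argument from Tauberian theory'' for \eqref{sierra}, you carry out the elementary computation explicitly (bounding $Ne^{-t\lambda_N}$ by the trace and optimizing with $t=1/\mu_N$), which is a perfectly valid and self-contained substitute since only a lower bound, valid for large $N$, is required.
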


\begin{proof}
The  inequality for the pointwise trace as well as  that for the heat trace of $e^{-tP^{\mathcal{F}}}$ follow by Prop. \ref{luglioaberlino} and Prop. \ref{skernelz}. By the fact that $e^{-tP^\mathcal{F}}$ is a trace class operator we get immediately that $P^{\mathcal{F}}$ is a discrete operator.  Finally, by \eqref{marzz} and the first point of this corollary, we know that $\sum_je^{-t\lambda_j}\leq Cre^{-tc}\vol_g(M)t^{\frac{-m}{2}}.$ This is equivalent to say that  $$\sum_{j\in \mathbb{N}}e^{-t\mu_j}\leq Cr\vol_g(M)t^{\frac{-m}{2}}$$ where $\mu_j:=\lambda_j-c$. Now the thesis follows applying a classical argument from Tauberian theory, see for instance \cite{Tay} pag. 107.
\end{proof}

\begin{prop}
\label{pietrobz}
Under the assumptions of Theorem \ref{skernelz}.  Let $k_{P}(t,x,y)$ and $\|k_{P}(t,x,y)\|_{h,\op}$ be as in the proof of Theorem \ref{skernelz}. Then the following inequality holds for $0<t<1$:
\begin{equation}
\label{ubez}
\|k_{P}(t,x,y)\|_{h,\op}\leq Ce^{-tc}t^{\frac{-m}{2}}.
\end{equation}
where $C$ is the same  positive constant of \eqref{intraces}. This implies that
\begin{enumerate}
\item $e^{-tP^{\mathcal{F}}}$ is a ultracontractive operator for each $0<t<1$. This means, see \cite{LSC}, that for each $0<t<1$ there exists $C_t>0$  such that   $$\|e^{-tP^{\mathcal{F}}}s\|_{L^{\infty}(M,E)}\leq C_t\|s\|_{L^1(M,E)}$$ for each $s\in L^{1}(M,E)$. In particular, for each $0<t<1$, $e^{-tP^{\mathcal{F}}}:L^{1}(M,E)\rightarrow L^{\infty}(M,E)$ is continuous.
\item If $s$ is an eigensection of $P^\mathcal{F}:L^2(M,E)\rightarrow L^2(M,E)$ then $s\in L^{\infty}(M,E)$.
\end{enumerate}
\end{prop}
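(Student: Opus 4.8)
The plan is to obtain the kernel estimate \eqref{ubez} directly from the heat-kernel comparison that is already available, and then to read off the two consequences as formal corollaries of \eqref{ubez}. Concretely, in the proof of Prop.~\ref{skernelz} one shows, starting from the domination of semigroups \eqref{sargon} of Theorem~\ref{ready} and arguing as in \cite{HSU} pag.~32, that $\|k_{P}(t,x,y)\|_{h,\op}\le e^{-tc}\,k_{\Delta_0}(t,x,y)$ for \emph{every} pair $(x,y)\in M\times M$ and every $t>0$. On the other hand Prop.~\ref{luglioaberlino} tells us that the hypothesis $W^{1,2}_0(M,g)\hookrightarrow L^{\frac{2m}{m-2}}(M,g)$ is equivalent to the bound \eqref{tracce}, namely $k_{\Delta_0}(t,x,y)\le Ct^{-\frac{m}{2}}$ for all $(x,y)\in M\times M$ and all $t\in(0,1)$, with $C$ the constant of \eqref{intraces}. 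Multiplying the two inequalities gives \eqref{ubez}; nothing delicate is involved, one only uses the off-diagonal form of \eqref{tracce}, which is the one actually stated in Prop.~\ref{luglioaberlino}.

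\emph{Ultracontractivity.} Here I would use that $\vol_g(M)<\infty$, so that $L^2(M,E)\hookrightarrow L^1(M,E)$ continuously, together with the fact that $e^{-tP^{\mathcal{F}}}$ is represented by its smooth integral kernel (it is trace class by Prop.~\ref{skernelz}). For $s\in L^2(M,E)$ and a.e.~$x\in M$, using \eqref{ubez} one estimates
\[
|(e^{-tP^{\mathcal{F}}}s)(x)|_h\le\int_M\|k_{P}(t,x,y)\|_{h,\op}\,|s(y)|_h\,\dvol_g(y)\le Ce^{-tc}t^{-\frac{m}{2}}\,\|s\|_{L^1(M,E)},
\]
and taking the supremum over $x$ gives the ultracontractivity estimate with $C_t:=Ce^{-tc}t^{-\frac{m}{2}}$. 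Since the right-hand side only involves $\|s\|_{L^1(M,E)}$, the same kernel formula extends $e^{-tP^{\mathcal{F}}}$ to a bounded operator $L^1(M,E)\to L^{\infty}(M,E)$, which coincides with $e^{-tP^{\mathcal{F}}}$ on $L^2(M,E)=L^2(M,E)\cap L^1(M,E)$. The only point deserving a word of care is the justification of the pointwise kernel representation of $e^{-tP^{\mathcal{F}}}$ and of this extension from $L^2$ to $L^1$.

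\emph{Boundedness of eigensections.} If $s\in L^2(M,E)$ satisfies $P^{\mathcal{F}}s=\lambda s$, then $e^{-tP^{\mathcal{F}}}s=e^{-t\lambda}s$ for every $t>0$; fixing some $t\in(0,1)$ and using $s\in L^2(M,E)\subset L^1(M,E)$ together with the previous step, we get $e^{-t\lambda}s\in L^{\infty}(M,E)$, hence $s\in L^{\infty}(M,E)$. This is immediate once the ultracontractivity step is in place. Overall I do not expect a genuine obstacle: the proposition simply repackages the kernel comparison from the proof of Prop.~\ref{skernelz} and the Sobolev/heat-kernel equivalence of Prop.~\ref{luglioaberlino}, the main thing to be careful about being the legitimacy of the integral-kernel representation and of extending the semigroup to act on $L^1(M,E)$ through its kernel.
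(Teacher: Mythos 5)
Your proposal is correct and follows essentially the same route as the paper: combine the pointwise domination $\|k_{P}(t,x,y)\|_{h,\op}\leq e^{-tc}k_{\Delta_0}(t,x,y)$ from the proof of Prop.~\ref{skernelz} with the off-diagonal bound $k_{\Delta_0}(t,x,y)\leq Ct^{-\frac{m}{2}}$ of Prop.~\ref{luglioaberlino}, then read off the two consequences from \eqref{ubez}. You in fact spell out the ultracontractivity and eigensection steps in more detail than the paper, which simply asserts they follow immediately.
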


\begin{proof}
As pointed out in the proof of Prop. \ref{skernelz} we have $\|k_P(t,x,y)\|_{h,\op}\leq e^{-ct}k_{\Delta_0}(t,x,y)$. 
By the assumptions we know that there is a continuous inclusion $W_0^{1,2}(M,g)\hookrightarrow L^{\frac{2m}{m-2}}(M,g)$. As recalled in the proof of Prop. \ref{luglioaberlino} this is equivalent to say that, for some positive constant $C$, $$k_{\Delta_0}(t,x,y)\leq Ct^{\frac{-m}{2}},\ 0<t<1.$$ Combining together the previous inequalities we have, for $0<t<1$, $$\|k_P(t,x,y)\|_{h,\op}\leq Ce^{-tc}t^{\frac{-m}{2}}$$ and this establishes \eqref{ubez}.
Finally the remaining two properties follow immediately using \eqref{ubez}.
\end{proof}

\section{Applications to irreducible complex projective varieties}

\subsection{Sobolev spaces on irreducible complex projective varieties}

This section  concerns {\em irreducible complex projective varieties $V\subset \mathbb{C}\mathbb{P}^n$}. This means that $V$ is the zero set of a family of homogeneous polynomials such that it is not possible to decompose $V$ as $V=V_1\cup V_2$ with $V_1\subset V$, $V_2\subset V$, $V\neq V_1$, $V\neq V_2$ and such that $V_1$ and $V_2$ are the zero set of other two families of homogeneous polynomials. Using the language of Zariski topology this means that $V$ is a Zariski closed subset of $\mathbb{C}\mathbb{P}^n$ and it is not possible to decompose $V$  as $V=V_1\cup V_2$ with $V_1\subset V$, $V_2\subset V$, $V\neq V_1$, $V\neq V_2$ where $V_1$ and $V_2$ are other two Zariski closed subsets of $\mathbb{C}\mathbb{P}^n$. Our reference for this topic is \cite{GHa}. Given an irreducible complex projective variety $V\subset \mathbb{C}\mathbb{P}^n$ we will label by  $\sing(V)$ the singular locus of $V$ and by $\reg(V):= V\setminus \sing(V)$ the regular part of $V$. The regular part of $V$, $\reg(V)$,  becomes a K\"ahler manifold when we endow it with the K\"ahler metric induced by the Fubini-Study metric of $\mathbb{C}\mathbb{P}^n$. In particular we get an {\em open and incomplete K\"ahler manifold} when $\sing(V)\neq \emptyset$.\\Now we state a proposition which provides the existence of a suitable sequence of cut-off functions. A similar result is contained   in  \cite{LT} pag. 871 and in \cite{KIY}, Theorem 3.1 and Theorem 3.2.  First we recall the following property.

\begin{prop}
\label{milma}
Let $M$ be a complex manifold and let $h$ and $g$ be two Hermitian metrics on $M$ such that $g\geq h$. Then for each $\eta \in \Omega^1_c(M)$ we have $\|\eta\|_{L^2\Omega^1(M,h)}\leq \|\eta\|_{L^2\Omega^1(M,g)}$. Therefore the identity map $\id: \Omega^1_c(M)\rightarrow \Omega^1_c(M)$ extends  as a continuous inclusion $L^2\Omega^1(M,g)\hookrightarrow L^2\Omega^1(M,h)$ so that for each $\phi\in L^2\Omega^1(M,g)$ we have $\|\phi\|_{L^2\Omega^1(M,h)}\leq \|\phi\|_{L^2\Omega^1(M,g)}$.
\end{prop}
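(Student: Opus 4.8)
The plan is to reduce the asserted $L^2$-inequality to a pointwise comparison of densities on $M$ and then to prove that comparison by simultaneous diagonalization, which is the one place where the complex hypothesis genuinely enters. Concretely, I claim that for every $p\in M$ and every covector $\xi\in T_p^*M$ one has
\begin{equation}
\nn |\xi|_{h}^2\ \le\ \Big(\tfrac{\dvol_g}{\dvol_h}\Big)(p)\,|\xi|_{g}^2 ,
\end{equation}
where $(\dvol_g/\dvol_h)(p)$ denotes the (positive real) ratio of the two one-densities at $p$. Granting this, for an arbitrary measurable $1$-form $\phi$ one integrates to get $\|\phi\|_{L^2\Omega^1(M,h)}^2=\int_M|\phi|_h^2\dvol_h\le\int_M|\phi|_g^2\dvol_g=\|\phi\|_{L^2\Omega^1(M,g)}^2$. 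This gives the inequality for $\eta\in\Omega^1_c(M)$ and, in fact, shows directly that $L^2\Omega^1(M,g)\subseteq L^2\Omega^1(M,h)$ as spaces of equivalence classes of measurable $1$-forms, with $\|\cdot\|_{L^2\Omega^1(M,h)}\le\|\cdot\|_{L^2\Omega^1(M,g)}$; the inclusion is injective because $\dvol_g$ and $\dvol_h$ are mutually absolutely continuous, and it is the continuous extension of $\id$ because $\Omega^1_c(M)$ is dense in $L^2\Omega^1(M,g)$.

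For the pointwise step I would fix $p\in M$. The hypothesis $g\ge h$ means $g_p(X,X)\ge h_p(X,X)$ for all real tangent vectors $X$; since $h$ and $g$ are $J$-invariant, the endomorphism $B_p$ of $T_pM$ defined by $g_p(X,Y)=h_p(B_pX,Y)$ is $h_p$-self-adjoint, positive, commutes with $J$, and satisfies $B_p\ge\id$. Its eigenspaces are therefore $J$-invariant, hence of even real dimension, so there is an $h_p$-orthonormal basis $f_1,Jf_1,\dots,f_n,Jf_n$ (with $n=\dim_{\mathbb C}M$) in which $g_p=\mathrm{diag}(\mu_1,\mu_1,\dots,\mu_n,\mu_n)$, each $\mu_j\ge 1$. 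On the dual side the dual basis is $h_p$-orthonormal and $g_p$-diagonal with eigenvalues $\mu_j^{-1}$, again of multiplicity two, and $(\dvol_g/\dvol_h)(p)=\sqrt{\det_{h_p}g_p}=\prod_{k=1}^n\mu_k$. Writing $\xi=\sum_j(a_jf^j+b_j(Jf_j)^{*})$ gives $|\xi|_h^2=\sum_j(a_j^2+b_j^2)$ and $|\xi|_g^2=\sum_j\mu_j^{-1}(a_j^2+b_j^2)$, whence
\begin{equation}
\nn \Big(\tfrac{\dvol_g}{\dvol_h}\Big)(p)\,|\xi|_g^2=\sum_{j=1}^n\Big(\prod_{k\neq j}\mu_k\Big)(a_j^2+b_j^2)\ \ge\ \sum_{j=1}^n(a_j^2+b_j^2)=|\xi|_h^2 ,
\end{equation}
the inequality being simply $\prod_{k\neq j}\mu_k\ge 1$ since every $\mu_k\ge 1$. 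This is exactly the pointwise claim.

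The only substantive point — and the sole use of the complex structure — is that each eigenvalue $\mu_j$ of $g_p$ relative to $h_p$ occurs with \emph{even} multiplicity, so that the volume factor $\prod_k\mu_k$ dominates each individual $\mu_j$ on its own; for a general real Riemannian manifold the statement is false, as the one-dimensional example $h=dx^2$, $g=\lambda\,dx^2$ with $\lambda>1$ already shows (there $\|dx\|_{L^2(h)}>\|dx\|_{L^2(g)}$). Apart from this observation, the argument is routine: simultaneous diagonalization of two $J$-compatible symmetric forms plus a single integration, and no delicate analytic estimate is involved.
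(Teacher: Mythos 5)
Your proof is correct, and it is essentially the argument the paper has in mind: the paper does not carry out the computation itself but defers to ``a careful calculation of linear algebra'' in Grant Melles--Milman, and your simultaneous diagonalization of $g_p$ against $h_p$, with the observation that $J$-invariance forces each eigenvalue $\mu_j\geq 1$ to occur with even multiplicity so that $\dvol_g/\dvol_h=\prod_k\mu_k$ dominates each $\mu_j$, is precisely that calculation. Your remark that the statement fails for real Riemannian metrics (already in dimension one) correctly isolates the only place the complex hypothesis is used.
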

\begin{proof}
The proof lies on a careful calculation of linear algebra. It is carried out, for instance, in \cite{GMMI} pag. 146.
\end{proof}

\begin{prop}
\label{taglia}
Let $V\subset \mathbb{C}\mathbb{P}^n$ be an irreducible  complex projective variety of complex dimension $v$ and  let $g$ be the K\"ahler  metric on $\reg(V)$ induced by the Fubini-Study metric of $\mathbb{C}\mathbb{P}^n$.
Then there exists a sequence of Lipschitz  functions  $\{\phi_j\}_{j\in \mathbb{N}}$ with compact support in $\reg(V)$ such that 
\begin{itemize}
\item  $0\leq \phi_j\leq 1$ for each $j$.
\item $\phi_j\rightarrow 1 $ pointwise.
\item $\phi_j\in \mathcal{D}(d_{0,\min})$ for each $j\in \mathbb{N}$ and  $\lim_{j\rightarrow \infty}\|d_{0,\min}\phi_j\|_{L^2\Omega^1(\reg(V),g)}=0$.  
\end{itemize}
In particular $1\in \mathcal{D}(d_{0,\min})$.
\end{prop}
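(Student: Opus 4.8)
The plan is to transport the construction to a smooth compact model by resolving the singularities of $V$, and there to build the $\phi_j$ by the classical logarithmic truncation that works across any closed set of real codimension two. If $\sing(V)=\emptyset$ then $\reg(V)=V$ is compact and $\phi_j\equiv 1$ suffices, so assume $\sing(V)\neq\emptyset$. Let $\pi\colon M\to V$ be a resolution of singularities, so that $M$ is a smooth compact K\"ahler manifold, $D:=\pi^{-1}(\sing(V))$ is a divisor with normal crossings and $\pi\colon M\setminus D\to\reg(V)$ is a biholomorphism; fix a K\"ahler metric $\omega$ on $M$. Since $\pi^{\ast}g$ is the pull-back of a smooth form on $\mathbb{C}\mathbb{P}^n$ it extends to a smooth semipositive form on all of $M$, so there is $c>0$ with $c\,\omega\geq\pi^{\ast}g$ on $M\setminus D$. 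By Prop. \ref{milma} applied on $M\setminus D$ (with the larger metric $c\,\omega$ and the smaller one $\pi^{\ast}g$) we get $\|\eta\|_{L^2\Omega^1(M\setminus D,\pi^{\ast}g)}\leq\|\eta\|_{L^2\Omega^1(M\setminus D,c\,\omega)}$ for every $\eta\in\Omega^1_c(M\setminus D)$, and the right-hand side is comparable to $\|\eta\|_{L^2\Omega^1(M,\omega)}$ because $M$ is compact. Hence, for every Lipschitz $\phi$ with compact support in $\reg(V)$,
\[
\|d\phi\|_{L^2\Omega^1(\reg(V),g)}=\|d(\phi\circ\pi)\|_{L^2\Omega^1(M\setminus D,\pi^{\ast}g)}\leq C\,\|d(\phi\circ\pi)\|_{L^2\Omega^1(M,\omega)} ,
\]
and it is enough to produce Lipschitz $\psi_j$ on $M$ with $0\leq\psi_j\leq1$, $\psi_j=0$ near $D$, $\psi_j\to1$ pointwise on $M\setminus D$ and $\|d\psi_j\|_{L^2\Omega^1(M,\omega)}\to 0$; then $\phi_j:=\psi_j\circ\pi^{-1}$ is Lipschitz with compact support in $\reg(V)$ (since $\psi_j$ vanishes near $D$ and $\pi$ is a biholomorphism off $D$) and does the job.

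Such $\psi_j$ are standard. Cover the compact set $D$ by finitely many holomorphic charts in which $D=\{z_1\cdots z_k=0\}$; in such a chart set $\chi_j:=\min_{1\leq i\leq k}\beta_j(|z_i|)$, where $\beta_j(t)=0$ for $t\leq j^{-2}$, $\beta_j(t)=\log(j^2t)/\log j$ for $j^{-2}\leq t\leq j^{-1}$, and $\beta_j(t)=1$ for $t\geq j^{-1}$. Patching these with a fixed smooth cut-off supported away from $D$ gives a globally Lipschitz $\psi_j$ on $M$ with the stated pointwise properties, and on the transition region $|d\psi_j|_\omega\leq C\,(t\log j)^{-1}$ with $t=|z_i|$ for the active index. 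Using that a circle of radius $r$ has length $2\pi r$ and that the remaining real directions contribute a bounded factor (as $M$ is compact),
\[
\|d\psi_j\|_{L^2\Omega^1(M,\omega)}^2\leq\frac{C}{(\log j)^2}\int_{j^{-2}}^{j^{-1}}\frac{dt}{t}=\frac{C}{\log j}\longrightarrow 0\qquad(j\to\infty).
\]
This gives the three bullets. That $\phi_j\in\mathcal{D}(d_{0,\min})$ follows from Theorem 11.3 in \cite{GYA} and Prop. \ref{carmina}, as in the proof of Prop. \ref{sette}. Finally $\vol_g(\reg(V))<\infty$ since $V$ is a compact variety, so dominated convergence gives $\phi_j\to1$ in $L^2(\reg(V),g)$; as $d_{0,\min}$ is closed and $d_{0,\min}\phi_j\to0$, we conclude $1\in\mathcal{D}(d_{0,\min})$ with $d_{0,\min}1=0$.

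The only genuinely non-formal ingredient is the algebro-geometric one: the existence of a resolution of singularities and, crucially, the fact that $\pi^{\ast}g$ is a \emph{bounded} semipositive form on $M$ — which is precisely where Prop. \ref{milma} enters, allowing the $L^2$ estimate on $M$ to be pushed down to $\reg(V)$. Everything else is the familiar ``real codimension two has vanishing $2$-capacity'' argument, the only point requiring care being that the partition-of-unity patching of the local logarithmic cut-offs does not spoil the gradient bound. A variant avoiding resolution would take a nonnegative $\rho\in C^{\infty}(\mathbb{C}\mathbb{P}^n)$ vanishing exactly on the subvariety defining $\sing(V)$ and run the same truncation with $t=\rho$; there the needed estimate $\int_{\{\rho<\delta\}\cap\reg(V)}\rho^{-2}|d\rho|_g^2\,\dvol_g=O(\log(1/\delta))$ rests on a {\L}ojasiewicz gradient inequality together with the coarea formula, which is the route closer to \cite{LT} and \cite{KIY}.
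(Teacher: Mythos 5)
Your proof is correct and follows the same global architecture as the paper's: resolve the singularities, observe that $\pi^{*}g$ is dominated by a smooth metric on the compact resolution, invoke Prop.~\ref{milma} to push $L^2$ gradient estimates from the smooth model down to $(\reg(V),g)$, and build the cut-offs upstairs around the exceptional set using that it has real codimension two. Where you genuinely diverge is in the cut-off construction itself, which is the technical heart of the statement. The paper works with the distance functions $r_i$ to the components $S_i$ of the exceptional set and uses the Chavel--Feldman cut-off $(r_i/\epsilon_n)^{\epsilon_n}$ on $2\epsilon_n'\leq r_i\leq\epsilon_n$ with $\epsilon_n=n^{-2}$, $\epsilon_n'=e^{-n^4}$, multiplying the resulting functions over the components and outsourcing the key limit $\|d\psi_{j,M_i}\|_{L^2}\to 0$ to \cite{ChFe}, where it rests on tubular-neighborhood volume estimates. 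You instead use the classical logarithmic truncation $\beta_j(|z_i|)$ in normal-crossings coordinates, combined by taking the minimum over the local defining functions and patching by a partition of unity; your computation $\|d\psi_j\|^2\lesssim(\log j)^{-2}\int_{j^{-2}}^{j^{-1}}t^{-1}\,dt=C/\log j$ is self-contained and correct. The trade-off: your argument uses the normal-crossings structure of the exceptional divisor and needs the (flagged, and easily supplied) check that the partition-of-unity terms $\sum_\alpha(d\rho_\alpha)(\chi_j^{(\alpha)}-1)$ tend to zero in $L^2$ because they are bounded and supported in a set of shrinking volume, whereas the paper's distance-function cut-offs work for any finite union of submanifolds of codimension at least two and avoid chart-by-chart patching, at the price of citing an external estimate. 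Both are instances of the "real codimension two has vanishing $2$-capacity" principle, and your closing remarks (closedness of $d_{0,\min}$ plus finite volume giving $1\in\mathcal{D}(d_{0,\min})$) match the paper.
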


\begin{proof}
Let $\pi:\tilde{V}\longrightarrow V$ be a resolution of singularities (which exists thanks to the fundamental work of Hironaka, 
see \cite{Hiro}).  We recall that $\pi:\tilde{V}\longrightarrow V$ is a holomorphic and surjective map such that $$\pi|_{\tilde{V}\setminus E}:\tilde{V}\setminus E\longrightarrow V\setminus \sing(V)$$ is a biholomorphism  where $E=\pi^{-1}(\sing(V))$ is the exceptional set. Moreover we can  assume that $E$ is a divisor with only normal crossings, that is, the irreducible components of $E$ are regular and meet complex transversely. In particular, and this is what we need for our purpose,  $\tilde{V}\setminus \pi^{-1}(\reg(V))$ is a finite union of compact complex submanifolds, that is $\tilde{V}\setminus \pi^{-1}(\reg(V))=\cup_{i=1}^m S_i$ for some $m\in \mathbb{N}$. Therefore for  the real codimension of $S_i$ we have $\cod_{\mathbb{R}}(S_i)\geq 2$ for each $i=1,...,m$. Consider now a Hermitian metric $h$ on $\tilde{V}$. Let us define $V':=\pi^{-1}(\reg(V))$ and let $h'$ be defined us $h':=h|_{V'}$. As a first step we want to show that on $(V',h')$ there is a sequence of Lipschitz functions with compact support $\{\psi_j\}_{j\in \mathbb{N}}$ which satisfies the three properties stated in  this proposition. To this aim we adapt to our case the strategy used in \cite{ChFe} and in \cite{LT}. Define $M_i:= \tilde{V}\setminus  S_i$.
Let $r_i$ be the distance function to $S_i$ induced by $h$. Let $\epsilon_n:=\frac{1}{n^2}$ and let $\epsilon'_n:=e^{-\frac{1}{\epsilon_n^2}}=e^{-n^4}$. Then we define $\psi_{j,M_i}$ as 
\begin{equation}
\label{cut}
\psi_{j,M_i}:= \left\{
\begin{array}{ll}
1 & r_i\geq \epsilon_n\\
(\frac{r_i}{\epsilon_n})^{\epsilon_n} & 2\epsilon'_n\leq r_i\leq \epsilon_n\\
(\frac{2\epsilon'_n}{\epsilon_n})^{\epsilon_n}(\frac{r_i}{\epsilon'_n}-1) &  \epsilon'_n\leq r_i\leq 2\epsilon'_n\\
0 & 0\leq r_i\leq \epsilon'_n
\end{array}
\right.
\end{equation}

By \eqref{cut} we get  easily that each $\psi_{j,M_i}$ is a Lipschitz function with compact support. Therefore, combining together Theorem 11.3 in \cite{GYA}, Prop. \ref{carmina} and the fact that $M_i$ has finite volume, we get that $\{\psi_{j,M_i}\}_{j\in \mathbb{N}}\subset \mathcal{D}(d_{0,\min})$ on $(M_i,h|_{M_i})$.  Clearly we have $0\leq \psi_{j,M_i}\leq 1$ and $\lim_{j\rightarrow \infty}\psi_{j,M_i}=1$ pointwise. Moreover, according to \cite{ChFe},  we have 
\begin{equation}
\label{paspas}
\lim_{j\rightarrow \infty}\|d_{0,\min}\psi_{j,M_i}\|_{L^2\Omega^1(M_i,h|_{M_i})}=0.
\end{equation}
 Here, we only recall that the previous limit is  based on an estimate of the volume of a tubular neighborhood of $S_i$ and that for this estimate the lower bound on the real codimension of $S_i$ plays a fundamental role. Now define $$\psi_j:=\prod_{i=1}^m \psi_{j,M_{i}}.$$ For each $j\in \mathbb{N}$,  $\psi_j$ is defined as a product of a finite number of non negative Lipschitz functions with compact support and bounded above by $1$.  Therefore $\psi_j$ is in turn a non negative Lipschitz function which compact support and bounded above by $1$. Thus, arguing as above, we can conclude that $\{\psi_j\}_{j\in \mathbb{N}}\subset \mathcal{D}(d_{0,\min})$ on $(V',h')$.   Clearly  for each $\psi_j$ we have $0\leq \psi_{j}\leq 1$ and $\lim_{j\rightarrow \infty}\psi_{j}=1$ pointwise.
Now we have to show that 
\begin{equation}
\label{limit}
\lim_{j\rightarrow \infty}\langle d_{0,\min} \psi_j,d_{0,\min} \psi_j\rangle_{L^2\Omega^1(V',h')}=0.
\end{equation}
We have $d_{0,\min} \psi_j=\sum_{i=1}^m\gamma_id_{0,\min}\psi_{j,M_i}$ where $\gamma_i$ is given by the product $\psi_{j,M_1}...\psi_{j,M_{i-1}}\psi_{j,M_{i+1}}...\psi_{j,M_m}$. By the fact that $0\leq \gamma_i\leq 1$ to establish \eqref{limit} it is enough to show that 
$$\lim_{j\rightarrow \infty}\langle d_{0,\min} \psi_{j,M_p},d_{0,\min} \psi_{j,M_q}\rangle_{L^2\Omega^1(V',h')}=0\ \text{for each }\ p,q\in \{1,...,m\}.
$$
This follows because 
$$
\langle d_{0,\min} \psi_{j,M_p},d_{0,\min} \psi_{j,M_q}\rangle_{L^2\Omega^1(V',h')} \leq \|d_{0,\min} \psi_{j,M_p}\|_{L^2\Omega^1(V',h')} \|d_{0,\min} \psi_{j,M_q}\|_{L^2\Omega^1(V',h')},
$$
and, by \eqref{paspas}, we have 
$$
 \lim_{j\rightarrow \infty}\|d_{0,\min} \psi_{j,M_p}\|_{L^2\Omega^1(V',h')}=0,
$$
and 
$$
\lim_{j\rightarrow \infty}\|d_{0,\min} \psi_{j,M_q}\|_{L^2\Omega^1(V',h')}=0.
$$
This allow us to conclude  that on $(V',h')$ there is a sequence of Lipschitz functions with compact support $\{\psi_j\}_{j\in \mathbb{N}}$ which satisfies the three properties stated in  this proposition. Now let $\tilde{g}$ be the K\"ahler metric on $V'$  defined as $\pi^*g$. We can look at $\tilde{g}$ as the pull-back of the Fubini-Study metric on $\mathbb{C}\mathbb{P}^n$ through the map $\pi:\tilde{V}\longrightarrow \mathbb{C}\mathbb{P}^n$. By the fact that   $d\pi$, the differential of $\pi$, degenerates on $\tilde{V}\setminus V'$ we get that $\tilde{g}\leq Ch'$,  for some positive real constant $C>0$. Now, as an immediate application of Prop. \ref{milma}, we can conclude  that the sequence $\{\psi_j\}_{j\in \mathbb{N}}$ satisfies the three properties stated in  this proposition also with respect to the K\"ahler  manifold $(V',\tilde{g})$. Finally, by the fact that $\pi|_{V'}:(V',\tilde{g})\longrightarrow (\reg(V),g)$ is an isometry, defining $\phi_j:=\psi_j\circ (\pi|_{V'})^{-1}$ we obtain our desired sequence on $(\reg(V),g)$.
\end{proof}

Now we have the main result of this section.

\begin{teo}
\label{pianopiano}
Let $V\subset \mathbb{C}\mathbb{P}^n$ be an irreducible complex projective variety of complex dimension $v$. Let $E$ be a vector bundle over $\reg(V)$ and let $h$ be a metric on $E$, Riemannian if $E$ is a real vector bundle, Hermitian if $E$ is a complex vector bundle. Let $g$ be the K\"ahler  metric on $\reg(V)$ induced by the Fubini-Study metric of  $\mathbb{C}\mathbb{P}^n$. Finally let $\nabla:C^{\infty}(\reg(V),E)\rightarrow C^{\infty}(\reg(V),T^*\reg(V)\otimes E)$ be a metric connection. We have the following properties:
\begin{itemize}
\item $W^{1,2}(\reg(V),E)=W^{1,2}_0(\reg(V),E)$.
\item Assume that $v>1$. Then there exists a continuous inclusion $W^{1,2}(\reg(V),E)\hookrightarrow L^{\frac{2v}{v-1}}(\reg(V),E)$.
\item Assume that $v>1$. Then the inclusion $W^{1,2}(\reg(V),E)\hookrightarrow L^2(\reg(V),E)$ is a compact operator.
\end{itemize}
\end{teo}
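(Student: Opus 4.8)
The plan is to reduce everything to the abstract statements of Section~3, namely Prop.~\ref{sette} and Prop.~\ref{otto}, feeding them the geometric input provided by Prop.~\ref{taglia} together with the scalar Sobolev inequality of Li and Tian. First I would dispose of the first bullet: Prop.~\ref{taglia} furnishes a sequence $\{\phi_j\}_{j\in\mathbb{N}}$ of compactly supported Lipschitz functions on $\reg(V)$ with $0\leq\phi_j\leq1$, $\phi_j\to1$ pointwise and $\|d_{0,\min}\phi_j\|_{L^2\Omega^1(\reg(V),g)}\to0$. These are precisely the hypotheses of Prop.~\ref{sette} with $M=\reg(V)$, so that proposition immediately yields $W^{1,2}(\reg(V),E)=W^{1,2}_0(\reg(V),E)$. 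Thanks to this identity it then suffices to establish the remaining two bullets for $W^{1,2}_0$ in place of $W^{1,2}$.

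For the second and third bullets I would set $m:=\dim_{\mathbb{R}}\reg(V)=2v$, so that the hypothesis $v>1$ is the same as $m>2$ and the Sobolev conjugate exponent satisfies $\tfrac{2m}{m-2}=\tfrac{2v}{v-1}$. The key input is the scalar Sobolev inequality on the regular part, i.e.\ the continuous inclusion $W^{1,2}_0(\reg(V),g)\hookrightarrow L^{\frac{2m}{m-2}}(\reg(V),g)$, which is due to Li and Tian (see \cite{LT}, and also \cite{KIY}). Moreover $\reg(V)$ has finite volume, since by Wirtinger's theorem $\vol_g(\reg(V))$ is a fixed multiple of the projective degree of $V$. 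I would then apply Prop.~\ref{otto} with the real dimension $m=2v$ in the role of the parameter called $v$ there: its first conclusion gives the continuous inclusion $W^{1,2}_0(\reg(V),E)\hookrightarrow L^{\frac{2v}{v-1}}(\reg(V),E)$, while its second conclusion, which requires precisely the finite-volume property just checked, gives that $W^{1,2}_0(\reg(V),E)\hookrightarrow L^2(\reg(V),E)$ is a compact operator. Rewriting $W^{1,2}_0$ as $W^{1,2}$ via the first bullet finishes the proof of all three statements.

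With Prop.~\ref{sette}, Prop.~\ref{otto} and Prop.~\ref{taglia} available, the argument above is essentially bookkeeping, so the genuine weight of the theorem sits in the scalar Sobolev inequality $W^{1,2}_0(\reg(V),g)\hookrightarrow L^{\frac{2v}{v-1}}(\reg(V),g)$, which encodes the behaviour of $g$ near $\sing(V)$; this is the step I expect to be the main obstacle if one wants a self-contained treatment rather than citing Li and Tian. In that case it would be obtained by passing to a resolution of singularities $\pi:\tilde V\to V$, working on $\pi^{-1}(\reg(V))$, and combining the Euclidean Sobolev inequality on the smooth part with the same tubular-neighbourhood volume estimates for the exceptional components that already appear in the proof of Prop.~\ref{taglia} (cf.\ \cite{ChFe}). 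Everything else, that is the cut-off construction, the passage from functions to sections via Kato's inequality inside Prop.~\ref{otto}, and the finite-volume compactness mechanism, is already packaged in the propositions quoted above.
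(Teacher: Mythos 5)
Your proposal is correct and follows essentially the same route as the paper: Prop.~\ref{taglia} plus Prop.~\ref{sette} for the first bullet, the Li--Tian scalar Sobolev inequality combined with Kato's inequality for the second, and Prop.~\ref{otto} together with finite volume for the compactness. The only cosmetic difference is that the paper derives the second bullet directly from Prop.~\ref{devo} and density of smooth sections rather than quoting the first conclusion of Prop.~\ref{otto}, but the content is identical.
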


\begin{proof}
The first point follows by Prop. \ref{taglia} and by  Prop. \ref{sette}.  The continuous inclusion 
$W^{1,2}_0(\reg(V),g)\hookrightarrow L^{\frac{2v}{v-1}}(\reg(V),g)$ is established in \cite{LT} pag. 874 or in \cite{KIY} pag. 113. Now, by the first point of this theorem (or by \cite{LT} Theorem 4.1 or by \cite{KIY} Cor. 3.1), we know that $W^{1,2}(\reg(V),g)=W_0^{1,2}(\reg(V),g)$ and therefore we have the continuous inclusion $W^{1,2}(\reg(V),g)\hookrightarrow L^{\frac{2v}{v-1}}(\reg(V),g)$. Now, using Prop. \ref{devo}, we get the continuous inclusion $$C^{\infty}(\reg(V),E)\cap W^{1,2}(\reg(V),E)\hookrightarrow L^{\frac{2v}{v-1}}(\reg(V),E).$$ Finally, by the density of $C^{\infty}(\reg(V),E)\cap W^{1,2}(\reg(V),E)$ in $W^{1,2}(\reg(V),E)$, see Prop. \ref{carmina}, the continuous inclusion $W^{1,2}(\reg(V),E)\hookrightarrow L^{\frac{2v}{v-1}}(\reg(V),E)$ is established. Finally the third point is a consequence of the second point and  Prop. \ref{otto}.
\end{proof}

\begin{rem}
We can reformulate the statement of  Theorem \ref{pianopiano} saying that $\mathcal{D}(\nabla_{\max})=\mathcal{D}(\nabla_{\min})$, there exists a continuous inclusion $\mathcal{D}(\nabla_{\max})\hookrightarrow L^{\frac{2v}{v-1}}(\reg(V),E)$ and that the natural inclusion  $\mathcal{D}(\nabla_{\max})\hookrightarrow L^{2}(\reg(V),E)$ is a compact operator where $\mathcal{D}(\nabla_{\max})$ is endowed with the corresponding graph norm.
\end{rem}

\begin{cor}
\label{delida}
Under the assumptions of Theorem \ref{pianopiano}. Then $\im(\nabla_{\min})=\im(\nabla_{\max})$ is a closed subspace of $L^2(\reg(V),T^*\reg(V)\otimes E)$.
\end{cor}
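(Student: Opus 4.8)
The plan is to obtain this corollary as an immediate consequence of Theorem \ref{pianopiano} together with Corollary \ref{gaber}, so that essentially no new argument is required.

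First I would observe that the identity $\im(\nabla_{\min})=\im(\nabla_{\max})$ is merely a restatement of the first bullet of Theorem \ref{pianopiano}. Indeed, recalling from the first section that $W^{1,2}(\reg(V),E)=\mathcal{D}(\nabla_{\max})$ and $W^{1,2}_0(\reg(V),E)=\mathcal{D}(\nabla_{\min})$, the equality $W^{1,2}(\reg(V),E)=W^{1,2}_0(\reg(V),E)$ says precisely that $\nabla_{\min}$ and $\nabla_{\max}$ coincide as closed operators; in particular they have the same range.

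Next, for the closedness statement, I would invoke the third bullet of Theorem \ref{pianopiano}: since $v>1$, the natural inclusion $W^{1,2}_0(\reg(V),E)\hookrightarrow L^2(\reg(V),E)$ is a compact operator. This is exactly the hypothesis required to apply Corollary \ref{gaber} with $M=\reg(V)$ and $g$ the Fubini--Study metric, whose conclusion is that $\im(\nabla_{\min})$ is a closed subspace of $L^2(\reg(V),T^*\reg(V)\otimes E)$. Combined with the previous paragraph this completes the proof.

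Since all the substance already lies in the proven Theorem \ref{pianopiano} and Corollary \ref{gaber}, there is no real obstacle here; the only point worth keeping in mind is that the closedness part uses the compact embedding and hence implicitly needs $v>1$ (for $v\le 1$ that route is not available). For the reader's convenience one may recall the mechanism inside Corollary \ref{gaber}: the compact embedding forces $\nabla^t_{\max}\circ\nabla_{\min}$ to be a discrete, hence Fredholm, operator, so $\im(\nabla^t_{\max}\circ\nabla_{\min})$ is closed; comparing the two orthogonal decompositions $L^2(\reg(V),E)=\ker(\nabla_{\min})\oplus\overline{\im(\nabla^t_{\max})}$ and $L^2(\reg(V),E)=\ker(\nabla^t_{\max}\circ\nabla_{\min})\oplus\overline{\im(\nabla^t_{\max}\circ\nabla_{\min})}$, and using $\ker(\nabla^t_{\max}\circ\nabla_{\min})=\ker(\nabla_{\min})$, one gets $\overline{\im(\nabla^t_{\max})}=\im(\nabla^t_{\max})$, whence by taking adjoints $\im(\nabla_{\min})$ is closed.
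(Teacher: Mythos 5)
Your proof is correct and follows exactly the paper's own argument: the first bullet of Theorem \ref{pianopiano} gives $\nabla_{\max}=\nabla_{\min}$, hence equal images, and the compact inclusion from the third bullet feeds into Corollary \ref{gaber} to give closedness. Your remark that the closedness step implicitly requires $v>1$ is a fair observation that the paper leaves tacit, but otherwise there is nothing to add.
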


\begin{proof}
According to Prop. \ref{pianopiano} we know that $\nabla_{\max}=\nabla_{\min}$ and therefore $\im(\nabla_{\max})=\im(\nabla_{\min})$. Now the thesis follows by Cor. \ref{gaber}.
\end{proof}

We conclude this section with the following proposition. The case of the Dolbeault operator is already treated in \cite{JRU}.

\begin{prop}
\label{nove}
Let $(\reg(V),g)$ be as in Theorem \ref{pianopiano}. Let  $E$ and $F$ be two vector bundles over $\reg(V)$ endowed respectively with  metrics $h$ and $\rho$, Riemannian if $E$ and $F$ are real vector bundles, Hermitian if $E$ and $F$ are complex vector bundles. Finally let  $\nabla:C^{\infty}(\reg(V),E)\rightarrow C^{\infty}(M,T^*\reg(V)\otimes E)$
be a metric connection. Consider a  first order differential operator of this type:
\begin{equation}
\label{gazzzz}
D:=\theta_0\circ \nabla:C^{\infty}_c(\reg(V),E)\rightarrow C^{\infty}_c(\reg(V),F)
\end{equation}
where $\theta_0\in  C^{\infty}(\reg(V),\Hom(T^*\reg(V)\otimes E,F)).$ Assume that  $\theta_0$ extends as a bounded operator  $$\theta:L^2(\reg(V), T^*\reg(V)\otimes E)\rightarrow L^2(\reg(V), F).$$  Then we have the following inclusion:
\begin{equation}
\label{cantianobello}
\mathcal{D}(D_{\max})\cap L^{\infty}(\reg(V),E)\subset \mathcal{D}(D_{\min}).
\end{equation}
In particular \eqref{cantianobello} holds when $D$ is the de Rham differential $d_k:\Omega_{c}^k(\reg(V))\rightarrow \Omega^{k+1}_c(\reg(V))$, a Dirac  operator   $D:C^{\infty}_c(\reg(V),E)\rightarrow C^{\infty}_c(\reg(V),E)$ or the Dolbeault operator $\overline{\partial}_{p,q}:\Omega_{c}^{p,q}(\reg(V))\rightarrow \Omega^{p,q+1}_c(\reg(V))$.
\end{prop}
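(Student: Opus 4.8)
The plan is to observe that Proposition \ref{nove} is simply Proposition \ref{drivevz} specialized to the Riemannian manifold $(\reg(V),g)$, so the whole task reduces to checking that this manifold satisfies the standing hypothesis of Proposition \ref{sette}: it carries a sequence of compactly supported Lipschitz functions $\{\phi_j\}_{j\in\mathbb{N}}$ with $0\le\phi_j\le1$, $\phi_j\to1$ pointwise, and $\lim_{j\to\infty}\|d_{0,\min}\phi_j\|_{L^2\Omega^1(\reg(V),g)}=0$. This is precisely the content of Proposition \ref{taglia}. Hence, taking $M=\reg(V)$ and $g$ the K\"ahler metric induced by the Fubini--Study metric, all hypotheses of Proposition \ref{drivevz} are met, and the inclusion \eqref{cantianobello} follows at once from that proposition.

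For the concrete operators listed at the end of the statement, one verifies that each is of the form $\theta_0\circ\nabla$ with $\theta_0$ a pointwise bounded bundle homomorphism, so that $\theta_0$ extends to a bounded operator $\theta:L^2(\reg(V),T^*\reg(V)\otimes E)\to L^2(\reg(V),F)$. For the de Rham differential $d_k$ one takes $\nabla$ to be the Levi-Civita connection on $\Lambda^kT^*\reg(V)$ and $\theta_0$ the antisymmetrization map $T^*\reg(V)\otimes\Lambda^kT^*\reg(V)\to\Lambda^{k+1}T^*\reg(V)$, which is parallel and of bounded pointwise norm. For a Dirac operator $\theta_0$ is Clifford multiplication, whose pointwise operator norm is controlled by the fiber metrics. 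For the Dolbeault operator $\overline{\partial}_{p,q}$ one takes a Hermitian connection on $\Lambda^{p,q}$ and lets $\theta_0$ be the projection onto the $(p,q+1)$-component of $T^*\reg(V)\otimes\Lambda^{p,q}$, again parallel and bounded. In each case the check that $D=\theta_0\circ\nabla$ on $C^\infty_c$ is routine, and the rest has already been carried out in Proposition \ref{drivevz}.

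There is no genuine new difficulty here: all the analytic work sits in Propositions \ref{taglia} and \ref{drivevz}, and the only point requiring (minor) attention is the boundedness of $\theta_0$, which is immediate since $\theta_0$ is in each case an algebraically defined, metric-compatible bundle map of bounded pointwise norm. Thus the proof amounts to assembling Proposition \ref{taglia} and Proposition \ref{drivevz}, plus the elementary identifications above for the named examples.
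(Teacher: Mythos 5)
Your proposal is correct and follows essentially the same route as the paper: the paper's proof of this proposition is a one-line reduction to Prop.~\ref{drivevz} (whose cut-off hypothesis is supplied, as you note, by Prop.~\ref{taglia}), with the named examples left as "a straightforward verification," which you simply spell out. If anything, citing Prop.~\ref{taglia} directly is slightly cleaner than the paper's citation of Theorem~\ref{pianopiano}, since the hypothesis of Prop.~\ref{drivevz} is the existence of the cut-off sequence rather than the Sobolev-space equality itself.
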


\begin{proof}
This follows applying Theorem \ref{pianopiano} and Prop. \ref{drivevz}.
\end{proof}

\subsection{Schr\"odinger operators on irreducible complex projective varieties}

As in the previous section consider again an irreducible complex projective variety $V\subset \mathbb{C}\mathbb{P}^n$ of complex dimension $v$. Let $\reg(V)$ be its regular part and let $E$ be a vector bundle over $\reg(V)$ endowed with a metric $h$, Riemannian if $E$ is a real vector bundle, ermitian if $E$ is a complex vector bundle. Finally let $g$ be the K\"ahler metric on $\reg(V)$ induced by the Fubini-Study metric of  $\mathbb{C}\mathbb{P}^n$ and let $\nabla:C^{\infty}(\reg(V),E)\rightarrow C^{\infty}(\reg(V),T^*M\otimes E)$ be a metric connection. In this section we consider again some  Schr\"odinger type operators
\begin{equation}
\label{polvere}
\nabla^t\circ\nabla +L 
\end{equation}
 where $\nabla^t: C_c^{\infty}(\reg(V),T^*M\otimes E)\rightarrow C_c^{\infty}(\reg(V),E)$ is the formal adjoint of $\nabla$ and $L\in C^{\infty}(\reg(V),\End(E))$.

\begin{teo}
\label{skernel}
Let $V$, $E$, $g$, $h$, and $\nabla$ be as described above. Let $$P:=\nabla^t\circ \nabla +L,\ P:C^{\infty}_c(\reg(V),E)\rightarrow C_c^{\infty}(\reg(V),E)$$ be a Schr\"odinger type operator with $L\in C^{\infty}(\reg(V),\End(E))$. Assume that:
\begin{itemize}
\item $P$ is symmetric and  positive.
\item    There is a constant $c\in \mathbb{R}$ such that, for each $s\in C^{\infty}(\reg(V),E)$, we have  $$h(Ls,s)\geq ch(s,s).$$
\end{itemize}
Let $P^{\mathcal{F}}:L^2(\reg(V),E)\rightarrow L^2(\reg(V),E)$ be the Friedrich extension of $P$ and let  $\Delta_0^{\mathcal{F}}:L^2(\reg(V),g)\rightarrow L^2(\reg(V),g)$  be the Friedrich extension of $\Delta_0:C^{\infty}_c(\reg(V))\rightarrow C^{\infty}_c(\reg(V))$. Then the heat operator associated to $P^{\mathcal{F}}$ $$e^{-tP^{\mathcal{F}}}:L^{2}(\reg(V),E)\longrightarrow L^2(\reg(V),E)$$  
is a trace class operator and its trace satisfies the following inequality: 
\begin{equation}
\label{marz}
\Tr(e^{-tP^{\mathcal{F}}})\leq me^{-tc}\Tr(e^{-t\Delta_0^{\mathcal{F}}}).
\end{equation}
where $m$ is the rank of the vector bundle $E$.
\end{teo}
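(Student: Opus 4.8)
The plan is to deduce Theorem \ref{skernel} directly from the general Proposition \ref{skernelz}, so that the only thing to verify is that the base manifold $(\reg(V),g)$ meets the two standing hypotheses of that proposition: finite volume and, writing $m$ for its real dimension, the continuous Sobolev inclusion $W^{1,2}_0(\reg(V),g)\hookrightarrow L^{\frac{2m}{m-2}}(\reg(V),g)$. The two hypotheses on $P$ — symmetry and positivity, and the lower bound $h(Ls,s)\geq c\,h(s,s)$ — are assumed verbatim, and $P=\nabla^t\circ\nabla+L$ with $L\in C^{\infty}(\reg(V),\End(E))$ is exactly the form required; hence once the conditions on the base manifold are in place, the conclusion that $e^{-tP^{\mathcal{F}}}$ is trace class and that $\Tr(e^{-tP^{\mathcal{F}}})\leq m\,e^{-tc}\Tr(e^{-t\Delta_0^{\mathcal{F}}})$, with $m$ the rank of $E$, follows with no further work.

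First I would record that $\reg(V)$ has finite volume with respect to $g$: this is classical, since an irreducible projective variety is an analytic subvariety of $\mathbb{C}\mathbb{P}^n$ and, by Wirtinger's formula, $\vol_g(\reg(V))=\deg(V)\cdot\vol(\mathbb{C}\mathbb{P}^v)<\infty$. Second, the real dimension of $\reg(V)$ is $m=2v$, so $\frac{2m}{m-2}=\frac{2v}{v-1}$, and the required inclusion $W^{1,2}_0(\reg(V),g)\hookrightarrow L^{\frac{2v}{v-1}}(\reg(V),g)$ is precisely the Sobolev inequality of Li and Tian already invoked in the proof of Theorem \ref{pianopiano} (see \cite{LT}, also \cite{KIY}). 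Feeding these two facts into Proposition \ref{skernelz} completes the argument; internally that proposition combines the domination of semigroups $|e^{-tP^{\mathcal{F}}}s|_h\leq e^{-tc}e^{-t\Delta_0^{\mathcal{F}}}|s|_h$ of Theorem \ref{ready}, the trace-class property and on-diagonal bound for $e^{-t\Delta_0^{\mathcal{F}}}$ from Proposition \ref{luglioaberlino}, and the passage from the semigroup domination to the pointwise kernel inequality $\|k_P(t,x,x)\|_{h,\op}\leq e^{-tc}k_{\Delta_0}(t,x,x)$ and then to $\tr(k_P(t,x,x))\leq m\,e^{-tc}k_{\Delta_0}(t,x,x)$.

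Thus the whole analytic substance has already been isolated in Sections 2 and 3, and the one genuine point of friction is the dimensional restriction hidden in the Sobolev inequality: Proposition \ref{skernelz} needs $m>2$, i.e.\ $v>1$. The borderline case $v=1$ has to be dealt with separately, and it is the easier one, since then $\reg(V)$ is obtained from the smooth compact normalization $\tilde V$ by deleting a finite set of points, a set of zero $2$-capacity; consequently $\Delta_0^{\mathcal{F}}$, and with it $P^{\mathcal{F}}$, can be analysed on a compact model and the trace-class statement reduces to the classical case.
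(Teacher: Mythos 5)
Your proposal is correct and follows essentially the same route as the paper, whose proof of Theorem \ref{skernel} is precisely the one-line deduction from Proposition \ref{skernelz} combined with the finite volume of $(\reg(V),g)$ and the Sobolev inclusion established in (the proof of) Theorem \ref{pianopiano}. Your remark that the case $v=1$ falls outside the hypotheses of Proposition \ref{skernelz} (which requires $m=2v>2$) identifies a point the paper itself passes over in silence, and your suggested reduction to the compact normalization is a sensible way to close that gap.
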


\begin{proof}
This follows by Prop. \ref{skernelz} and by Theorem \ref{pianopiano}.
\end{proof}

\begin{cor}
\label{quovadis}
Under the assumptions of Theorem \ref{skernel}. The operator  $P^{\mathcal{F}}:L^2(\reg(V),E)\rightarrow L^2(\reg(V),E)$  is a discrete operator.  Moreover, for $t\in (0,1)$, we have the following inequalities:
\begin{equation}
\label{blau}
\tr(k_P(t,x,x))\leq me^{-tc}(4\pi t)^{-v}(1+\frac{4v(v+1)}{6}t+O(t^2))
\end{equation}
\begin{equation}
\label{zun}
\Tr(e^{-tP^{\mathcal{F}}})\leq me^{-tc}(4\pi t)^{-v}\left(\vol_g(\reg(V))(1+\frac{4v(v+1)}{6}t)+O(t^2)\right).
\end{equation}
Let $\{\lambda_j\}$ be the sequence of eigenvalues of  $P^{\mathcal{F}}:L^2(\reg(V),E)\rightarrow L^2(\reg(V),E)$. Then we have  the following asymptotic inequality:
\begin{equation}
\label{waser}
\lambda_j\geq  \left(\frac{(2\pi)^{2v}j}{\omega_{2v}m\vol_g(\reg(V))}\right)^{\frac{1}{v}}+c
\end{equation}
as $j\rightarrow \infty$ where $\omega_{2v}$ is the volume of the unit $2v$-ball in $\mathbb{R}^{2v}$.
\end{cor}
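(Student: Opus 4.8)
The plan is to obtain Corollary \ref{quovadis} from Theorem \ref{skernel} and Corollary \ref{eigen} by inserting the \emph{sharp} on-diagonal heat kernel estimate for $(\reg(V),g)$ due to Li and Tian, and then to read off the eigenvalue asymptotics by a Karamata-type Tauberian argument. First, discreteness of $P^{\mathcal F}$: since Theorem \ref{pianopiano} lets us apply Proposition \ref{skernelz}, Theorem \ref{skernel} tells us that $e^{-tP^{\mathcal F}}$ is trace class, hence compact; as $P^{\mathcal F}$ is nonnegative and self-adjoint, $(P^{\mathcal F}+1)^{-1}$ is then compact, so $P^{\mathcal F}$ has discrete spectrum $0\leq\lambda_0\leq\lambda_1\leq\cdots\to\infty$ (this is already recorded in Corollary \ref{eigen}).

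Next I would prove the on-diagonal bounds \eqref{blau} and \eqref{zun}. Exactly as in the proof of Proposition \ref{skernelz}, the domination of semigroups of Theorem \ref{ready} gives $\|k_{P}(t,x,y)\|_{h,\op}\leq e^{-tc}k_{\Delta_0}(t,x,y)$, hence $\tr(k_{P}(t,x,x))\leq m\,e^{-tc}k_{\Delta_0}(t,x,x)$, where $k_{\Delta_0}$ is the heat kernel of $\Delta_0^{\mathcal F}$ on $(\reg(V),g)$. It remains to bound $k_{\Delta_0}(t,x,x)$, and here I would invoke \cite{LT}: using a generic linear projection $\pi\colon\reg(V)\to\mathbb{C}\mathbb{P}^{v}$, which is distance non-increasing and a generically $(\deg V)$-to-$1$ branched covering, Li and Tian obtain the pointwise comparison $k_{\Delta_0}(t,x,x)\leq k_{\mathbb{C}\mathbb{P}^v}(t)$ for all $x\in\reg(V)$ and $t>0$, where $k_{\mathbb{C}\mathbb{P}^v}(t)$ denotes the on-diagonal heat kernel of the Fubini--Study Laplacian on $\mathbb{C}\mathbb{P}^v$, constant in the space variable by homogeneity. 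The Minakshisundaram--Pleijel expansion on the closed manifold $\mathbb{C}\mathbb{P}^v$ then gives
\[
k_{\mathbb{C}\mathbb{P}^v}(t)=(4\pi t)^{-v}\big(1+\tfrac{s}{6}\,t+O(t^2)\big)=(4\pi t)^{-v}\big(1+\tfrac{4v(v+1)}{6}\,t+O(t^2)\big),\qquad t\to0^+,
\]
since the scalar curvature of $\mathbb{C}\mathbb{P}^v$ equals $s=4v(v+1)$; combined with the previous inequality this yields \eqref{blau}. Integrating over $\reg(V)$, which has finite volume, and noting that the remainder above is uniform in $x$ (so $\vol_g(\reg(V))\cdot O(t^2)=O(t^2)$), one gets $\Tr(e^{-tP^{\mathcal F}})=\int_{\reg(V)}\tr(k_{P}(t,x,x))\,\dvol_g\leq m\,e^{-tc}(4\pi t)^{-v}\big(\vol_g(\reg(V))(1+\tfrac{4v(v+1)}{6}t)+O(t^2)\big)$, that is \eqref{zun}. (Equivalently, one may integrate the Li--Tian bound first to get $\Tr(e^{-t\Delta_0^{\mathcal F}})\leq(\deg V)\,\Tr(e^{-t\Delta_{\mathbb{C}\mathbb{P}^v}})$, then use Wirtinger's theorem $\vol_g(\reg(V))=(\deg V)\vol(\mathbb{C}\mathbb{P}^v)$ together with Theorem \ref{skernel}.)

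For the eigenvalue asymptotics \eqref{waser} I set $\mu_j:=\lambda_j-c$. Using $e^{-tc}=1+O(t)$, \eqref{zun} gives $\sum_j e^{-t\mu_j}=e^{tc}\Tr(e^{-tP^{\mathcal F}})\leq m\,\vol_g(\reg(V))(4\pi t)^{-v}(1+O(t))$ as $t\to0^+$, hence $\limsup_{t\to0^+}t^{v}\sum_j e^{-t\mu_j}\leq m\,\vol_g(\reg(V))(4\pi)^{-v}$. By the (one-sided) Karamata Tauberian theorem, as used for Corollary \ref{eigen} (see \cite{Tay} pag. 107), the counting function $N(\lambda):=\#\{j:\mu_j\leq\lambda\}$ satisfies $N(\lambda)\leq \dfrac{m\,\vol_g(\reg(V))}{(4\pi)^v\,\Gamma(v+1)}\,\lambda^{v}(1+o(1))$ as $\lambda\to\infty$; rewriting the constant via $\omega_{2v}=\pi^{v}/\Gamma(v+1)$ (so $(4\pi)^v\Gamma(v+1)=(2\pi)^{2v}/\omega_{2v}$) turns this into $N(\lambda)\leq \dfrac{m\,\omega_{2v}\,\vol_g(\reg(V))}{(2\pi)^{2v}}\,\lambda^{v}(1+o(1))$. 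Since $j<N(\mu_j)$, inverting this inequality gives $\mu_j\geq\big(\tfrac{(2\pi)^{2v}j}{\omega_{2v}\,m\,\vol_g(\reg(V))}\big)^{1/v}(1+o(1))$, i.e. $\lambda_j\geq\big(\tfrac{(2\pi)^{2v}j}{\omega_{2v}\,m\,\vol_g(\reg(V))}\big)^{1/v}+c$ as $j\to\infty$, which is \eqref{waser}.

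The main obstacle is precisely the sharp estimate $k_{\Delta_0}(t,x,x)\leq k_{\mathbb{C}\mathbb{P}^v}(t)$; everything else is bookkeeping, since the trace-class property and the pointwise domination are already in place (Theorems \ref{ready} and \ref{skernel}, Proposition \ref{skernelz}) and the last step is the standard Tauberian inversion. The self-contained estimates of this paper (Proposition \ref{luglioaberlino}) only give $k_{\Delta_0}(t,x,x)\leq Ct^{-v}$ with a non-explicit constant $C$ — enough for discreteness and for the qualitative form of \eqref{sierra}, but not for the explicit leading coefficient $(4\pi t)^{-v}$ appearing in \eqref{blau}--\eqref{waser} — so the correct constant genuinely requires importing the generic-projection comparison of Li and Tian from \cite{LT}.
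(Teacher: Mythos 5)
Your proposal is correct and follows essentially the same route as the paper: the paper's proof consists of citing Theorem \ref{skernel} for discreteness and then invoking Corollary 5.4 of \cite{LT}, which packages exactly the ingredients you reconstruct, namely the sharp on-diagonal comparison $k_{\Delta_0}(t,x,x)\leq k_{\mathbb{C}\mathbb{P}^v}(t)$ via generic projection, the Minakshisundaram--Pleijel expansion on $\mathbb{C}\mathbb{P}^v$ with scalar curvature $4v(v+1)$, and the Tauberian inversion with the Weyl constant. You have correctly identified that the self-contained bound of Proposition \ref{luglioaberlino} is insufficient for the explicit coefficients and that the Li--Tian comparison is the essential imported input.
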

\begin{proof}
The fact that $P^{\mathcal{F}}:L^2(\reg(V),E)\rightarrow L^2(\reg(V),E)$ is a discrete operator is  a consequence of Theorem \ref{skernel}.  Inequalities \eqref{blau}, \eqref{zun} and \eqref{waser}  follow by Theorem \ref{skernel} and Corollary 5.4 in \cite{LT}.  
\end{proof}

An important case of the previous corollary is given by  $(\nabla^t\circ \nabla)^{\mathcal {F}}$, the Friedrich extension of the Bochner Laplacian $\nabla^t\circ \nabla:C^{\infty}_c(\reg(V),E)\rightarrow C^{\infty}_c(\reg(V),E)$. As before we label by $k_{\nabla^t\circ \nabla}(t,x,y)$ and by $k_{\Delta_0}(t,x,y)$ the smooth  kernel of the heat operators $e^{-t(\nabla^t\circ \nabla)^{\mathcal{F}}}$ and $e^{-t\Delta_0^{\mathcal{F}}}$ respectively.
\begin{cor}
\label{demo}
Let $V, E, h$ and $g$ as in the statement of  Theorem \ref{skernel}. Consider the Bochner Laplacian $$\nabla^t\circ \nabla:C^{\infty}_c(\reg(V),E)\rightarrow C^{\infty}_c(\reg(V),E).$$ Let 
\begin{equation}
\label{boclap}
(\nabla^t\circ \nabla)^{\mathcal{F}}:L^2(\reg(V),E)\rightarrow L^2(\reg(V),E)
\end{equation}
be its Friedrich extension. Then 
\begin{equation}
\label{muio}
e^{-t(\nabla^t\circ \nabla)^{\mathcal{F}}}:L^{2}(\reg(V),E)\longrightarrow L^2(\reg(V),E)
\end{equation}
is a trace class operator;  its pointwise trace and its  trace satisfy respectively the following inequalities:
\begin{equation}
\label{jfrazier}
\tr(k_{\nabla^t\circ \nabla}(t,x,x))\leq m(4\pi t)^{-v}(1+\frac{4v(v+1)}{6}t+O(t^2))
\end{equation} 
 \begin{equation}
\label{marzl}
\Tr(e^{-t(\nabla^t\circ \nabla)^{\mathcal{F}}})\leq m(4\pi t)^{-v}\left(\vol_g(\reg(V))(1+\frac{4v(v+1)}{6}t)+O(t^2)\right)
\end{equation}
for $t\in (0,1)$. Furthermore \eqref{boclap} is a discrete operator and its sequence of eigenvalues, $\{\lambda_j\}$,  satisfies the following asymptotic inequality:
\begin{equation}
\label{waserz}
\lambda_j\geq  \left(\frac{(2\pi)^{2v}j}{\omega_{2v}m\vol_g(\reg(V))}\right)^{\frac{1}{v}}
\end{equation}
as $j\rightarrow \infty$ where $\omega_{2v}$ is the volume of the unit $2v$-ball in $\mathbb{R}^{2v}$.
Finally a  core domain for \eqref{boclap} is given by 
\begin{equation}
\label{negimv}
\{s\in C^{\infty}(\reg(V),E)\cap L^{2}(\reg(V),E),\  \nabla s\in L^2(\reg(V),T^*\reg(V)\otimes E),\ \nabla^t(\nabla s)\in L^2(\reg(V),E)\}.
\end{equation}
The last statement is equivalent to say that  $\nabla^t \circ \nabla$, with domain given by \eqref{negimv}, is essentially self-adjoint.
\end{cor}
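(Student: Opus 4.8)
First I would note that the quantitative half of the statement requires no new work: applying Theorem \ref{skernel} and Corollary \ref{quovadis} with $L=0$ (so that, $P=\nabla^t\circ\nabla$ being symmetric, positive and with vanishing zero-th order term, one may take the constant $c$ to be $0$) yields at once that $e^{-t(\nabla^t\circ\nabla)^{\mathcal{F}}}$ is trace class, together with the pointwise and integrated trace estimates \eqref{jfrazier} and \eqref{marzl}, the discreteness of \eqref{boclap}, and the eigenvalue asymptotic \eqref{waserz}. So the real content to establish is that the set \eqref{negimv} is a core for $(\nabla^t\circ\nabla)^{\mathcal{F}}$, the equivalent reformulation as essential self-adjointness of $\nabla^t\circ\nabla$ restricted to \eqref{negimv} being standard.

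The plan is first to put $(\nabla^t\circ\nabla)^{\mathcal{F}}$ in a workable form and then to run the density argument behind Proposition \ref{partiro}. By Proposition \ref{fall} one has $(\nabla^t\circ\nabla)^{\mathcal{F}}=(\nabla^t)_{\max}\circ\nabla_{\min}$, whose domain is $\{s\in\mathcal{D}(\nabla_{\min}):\nabla_{\min}s\in\mathcal{D}((\nabla^t)_{\max})\}$. The first point of Theorem \ref{pianopiano} gives $\nabla_{\min}=\nabla_{\max}$ on $(\reg(V),g)$, so $\mathcal{D}(\nabla_{\min})$ is exactly the space of $s\in L^2(\reg(V),E)$ whose covariant derivative, computed in the distributional sense, lies in $L^2(\reg(V),T^*\reg(V)\otimes E)$. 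Intersecting the domain of $(\nabla^t\circ\nabla)^{\mathcal{F}}$ with $C^\infty(\reg(V),E)$, and using that for a smooth section distributional and classical derivatives coincide, I would check directly that $C^\infty(\reg(V),E)\cap\mathcal{D}((\nabla^t\circ\nabla)^{\mathcal{F}})$ is precisely the set \eqref{negimv}; in particular the set \eqref{negimv} is contained in $\mathcal{D}((\nabla^t\circ\nabla)^{\mathcal{F}})$ and there the Friedrich extension acts as the classical operator $\nabla^t\circ\nabla$.

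For the density, I would observe that $\nabla^t\circ\nabla$ is of Laplace type, hence elliptic. By \cite{BL} pag. 98 the space $\bigcap_{k\in\mathbb{N}}\mathcal{D}(((\nabla^t\circ\nabla)^{\mathcal{F}})^k)$ is dense in $\mathcal{D}((\nabla^t\circ\nabla)^{\mathcal{F}})$ for the graph norm of $(\nabla^t\circ\nabla)^{\mathcal{F}}$, and by elliptic regularity this intersection consists of smooth sections; hence $C^\infty(\reg(V),E)\cap\mathcal{D}((\nabla^t\circ\nabla)^{\mathcal{F}})$ is dense in $\mathcal{D}((\nabla^t\circ\nabla)^{\mathcal{F}})$ for that norm. (This is exactly the intermediate step in the proof of Proposition \ref{partiro}, applied with $P=\nabla$ and $\overline P=\nabla_{\min}$, for which $\overline P^*\circ\overline P=(\nabla^t)_{\max}\circ\nabla_{\min}=(\nabla^t\circ\nabla)^{\mathcal{F}}$.) Combined with the identification of the previous paragraph, this shows that \eqref{negimv} is a core for $(\nabla^t\circ\nabla)^{\mathcal{F}}$. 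Finally, $\nabla^t\circ\nabla$ with domain \eqref{negimv} is symmetric — for $s,u$ in \eqref{negimv} one has $\langle\nabla^t\nabla s,u\rangle=\langle(\nabla^t\circ\nabla)^{\mathcal{F}}s,u\rangle=\langle s,(\nabla^t\circ\nabla)^{\mathcal{F}}u\rangle=\langle s,\nabla^t\nabla u\rangle$ by self-adjointness of the Friedrich extension — its closure is contained in the closed operator $(\nabla^t\circ\nabla)^{\mathcal{F}}$ because \eqref{negimv} lies in the domain of $(\nabla^t\circ\nabla)^{\mathcal{F}}$, and contains it because \eqref{negimv} is a core; so the closure equals $(\nabla^t\circ\nabla)^{\mathcal{F}}$, which is self-adjoint, and therefore $\nabla^t\circ\nabla$ with domain \eqref{negimv} is essentially self-adjoint.

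The one step genuinely demanding attention is the identification of $C^\infty(\reg(V),E)\cap\mathcal{D}((\nabla^t\circ\nabla)^{\mathcal{F}})$ with \eqref{negimv}, and it is precisely here that the $L^2$-Stokes-type equality $\nabla_{\max}=\nabla_{\min}$ of Theorem \ref{pianopiano} is indispensable: without it the requirement that $\nabla s$ belong to $L^2$, which appears in \eqref{negimv}, would only place a smooth $s$ in $\mathcal{D}(\nabla_{\max})$, a space that need not be contained in $\mathcal{D}(\nabla_{\min})=\mathcal{D}(Q_{\nabla^t\circ\nabla})$, so such an $s$ could fail to lie in the Friedrich domain. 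Everything else is a formal consequence of Theorem \ref{pianopiano}, Propositions \ref{fall} and \ref{partiro}, the density argument recalled above, and standard functional analysis.
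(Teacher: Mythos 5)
Your proposal is correct and follows essentially the same route as the paper: the quantitative statements come from Theorem \ref{skernel} and Corollary \ref{quovadis} with $L=0$, $c=0$, and the core/essential self-adjointness claim comes from combining Prop. \ref{fall} ($(\nabla^t\circ\nabla)^{\mathcal{F}}=\nabla^t_{\max}\circ\nabla_{\min}$), Theorem \ref{pianopiano} ($\nabla_{\max}=\nabla_{\min}$) and the density statement of Prop. \ref{partiro}. The only difference is that you spell out the identification of $C^{\infty}(\reg(V),E)\cap\mathcal{D}((\nabla^t\circ\nabla)^{\mathcal{F}})$ with the set \eqref{negimv} and the final essential self-adjointness argument, which the paper leaves implicit.
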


\begin{proof}
The assertion \eqref{muio}--\eqref{waserz} follow by Theorem \ref{skernel} and Corollary \ref{quovadis}. For \eqref{negimv} we have $(\nabla^t\circ \nabla)^{\mathcal{F}}=\nabla^t_{\max}\circ \nabla_{\min}$  by Prop. \ref{fall}. Moreover, by Prop. \ref{partiro},  we know that $C^{\infty}(\reg(V),E)\cap \mathcal{D}(\nabla^t_{\max}\circ \nabla_{\min})$ is dense in $\mathcal{D}(\nabla^t_{\max}\circ \nabla_{\min})$ with respect to its graph norm and by Theorem \ref{pianopiano} we know that $\nabla_{\max}=\nabla_{\min}$ and therefore  $\nabla_{\max}^t=\nabla_{\min}^t$. All together these propositions imply immediately \eqref{negimv}. Finally we point out that the discreteness of \eqref{boclap} follows already by Theorem \ref{pianopiano} when $v>1$. In fact we know that the inclusion $\mathcal{D}(\nabla_{\max})\hookrightarrow L^2(\reg(V),E)$ is compact where $\mathcal{D}(\nabla_{\max})$ is endowed with its graph norm. Moreover, as showed in the proof of Prop. \ref{partiro}, the inclusion $\mathcal{D}((\nabla^t\circ \nabla)^{\mathcal{F}})\hookrightarrow \mathcal{D}(\nabla_{\max})$ is continuous where again each domain is endowed with its graph norm. Therefore the inclusion $\mathcal{D}((\nabla^t\circ \nabla)^{\mathcal{F}})\hookrightarrow L^2(\reg(V),E)$ is a compact operator and this is well known to be equivalent to the discreteness of $(\nabla^t\circ \nabla)^{\mathcal{F}}:L^2(\reg(V),E)\rightarrow L^2(\reg(V),E)$.
\end{proof}

\begin{prop}
\label{pietrob}
Under the assumptions of Theorem \ref{skernel}. Assume that the complex dimension of $V$ satisfies $v>1$. Let $k_{P}(t,x,y)$ and $\|k_{P}(t,x,y)\|_{h,\op}$ be as in the proof of Theorem \ref{skernelz}. Then the following inequality holds for $0<t<1$:
\begin{equation}
\label{ube}
\|k_{P}(t,x,y)\|_{h,\op}\leq Ce^{-tc}t^{-v}.
\end{equation}
This implies that
\begin{enumerate}
\item $e^{-tP^{\mathcal{F}}}$ is a ultracontractive operator for each $0<t<1$. This means, see \cite{LSC}, that for each $0<t<1$ there exists $C_t>0$  such that   $$\|e^{-tP^{\mathcal{F}}}s\|_{L^{\infty}(\reg(V),E)}\leq C_t\|s\|_{L^1(\reg(V),E)}$$ for each $s\in L^{1}(\reg(V),E)$. In particular, for each $0<t<1$, $e^{-tP^{\mathcal{F}}}:L^{1}(\reg(V),E)\rightarrow L^{\infty}(\reg(V),E)$ is continuous.
\item If $s$ is an eigensection of $P^\mathcal{F}:L^2(\reg(V),E)\rightarrow L^2(\reg(V),E)$ then $s\in L^{\infty}(\reg(V),E)$.
\end{enumerate}
\end{prop}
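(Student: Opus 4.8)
The plan is to mimic the proof of Proposition \ref{pietrobz}, adapting it to the geometry of $\reg(V)$; the substantive inputs are Theorem \ref{skernel} and Theorem \ref{pianopiano}. First I would recall, from the proof of Theorem \ref{skernel} (which rests on Theorem \ref{ready} together with the pointwise heat-kernel estimate of \cite{HSU} pag. 32), that the domination of semigroups gives
\[
\|k_{P}(t,x,y)\|_{h,\op}\leq e^{-tc}k_{\Delta_0}(t,x,y)
\]
for all $(x,y)\in\reg(V)\times\reg(V)$ and all $t>0$, where $k_{\Delta_0}(t,x,y)$ is the smooth kernel of $e^{-t\Delta_0^{\mathcal{F}}}$.

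Next I would invoke Theorem \ref{pianopiano}: because $v>1$, there is a continuous inclusion $W^{1,2}_0(\reg(V),g)=W^{1,2}(\reg(V),g)\hookrightarrow L^{\frac{2v}{v-1}}(\reg(V),g)$. Since $\reg(V)$ has real dimension $m=2v$ and $\frac{2v}{v-1}=\frac{2m}{m-2}$, this is exactly a Sobolev inequality of the type $W^{1,2}_0\hookrightarrow L^{\frac{2m}{m-2}}$, which by the equivalence recalled in the proof of Prop. \ref{luglioaberlino} (see \cite{ACM} pag. 1062 or \cite{KIY} Theorem 2.1) forces $k_{\Delta_0}(t,x,y)\leq Ct^{-m/2}=Ct^{-v}$ for $0<t<1$, with $C$ the constant of \eqref{intraces}. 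Combining this with the previous display yields \eqref{ube}.

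For the two consequences I would argue as follows. From \eqref{ube}, for $0<t<1$ the operator $e^{-tP^{\mathcal{F}}}$ has kernel bounded in operator norm by $Ce^{-tc}t^{-v}$, so for $s\in L^1(\reg(V),E)$ one gets $\|e^{-tP^{\mathcal{F}}}s\|_{L^\infty(\reg(V),E)}\leq Ce^{-tc}t^{-v}\|s\|_{L^1(\reg(V),E)}$; this is precisely ultracontractivity in the sense of \cite{LSC}, and in particular $e^{-tP^{\mathcal{F}}}\colon L^1(\reg(V),E)\to L^\infty(\reg(V),E)$ is bounded. For the second point, if $P^{\mathcal{F}}s=\lambda s$ then, using that $\vol_g(\reg(V))<\infty$ so that $s\in L^2(\reg(V),E)\subset L^1(\reg(V),E)$, together with $s=e^{t\lambda}e^{-tP^{\mathcal{F}}}s$ for any fixed $t\in(0,1)$, I conclude $s\in L^\infty(\reg(V),E)$.

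I do not expect any real obstacle here: the only point that requires a moment's care is the exponent bookkeeping — checking that the Sobolev exponent $\frac{2v}{v-1}$ of Theorem \ref{pianopiano} equals $\frac{2m}{m-2}$ for the real dimension $m=2v$, so that the resulting on-diagonal heat-kernel bound carries the power $t^{-v}$ (rather than $t^{-v/2}$), which then propagates to \eqref{ube} and to both consequences.
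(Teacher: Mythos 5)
Your proposal is correct and follows essentially the same route as the paper: the paper's proof simply cites Prop.~\ref{pietrobz} (whose proof is exactly the combination of the semigroup domination bound $\|k_P(t,x,y)\|_{h,\op}\leq e^{-tc}k_{\Delta_0}(t,x,y)$ with the equivalence between the Sobolev inclusion and the on-diagonal bound $k_{\Delta_0}(t,x,y)\leq Ct^{-m/2}$) together with the Sobolev inequality supplied by Theorem~\ref{pianopiano}, and your exponent bookkeeping $m=2v$, $\frac{2v}{v-1}=\frac{2m}{m-2}$ is right. You have merely unpacked the specialization rather than quoting the general proposition, and your derivation of the two consequences matches the paper's.
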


\begin{proof}
This follows by Prop. \ref{pietrobz} and by Theorem \ref{skernel}.
\end{proof}

With the next result we extend Cor. 5.5 of \cite{LT} to our setting. We refer  to \cite{GHa} for the notion of degree of a complex projective variety.

\begin{teo}
\label{lower}
There exists a positive constant $\gamma=\gamma(d,n,m)$, that is $\gamma$ depends only on the dimension of the ambient space $\mathbb{C}\mathbb{P}^n$, on the degree $d$ and  on the rank $m$, such that for every irreducible complex projective variety $V\subset \mathbb{C}\mathbb{P}^n$ of degree $d$, for every vector bundle $E$ on $\reg(V)$ of rank $m$ endowed with an arbitrary metric $h$ and for every Schr\"odinger type operator $P:C^{\infty}_c(\reg(V),E)\rightarrow C^{\infty}_c(\reg(V),E)$ as in Theorem \ref{skernel} with  $L\geq 0$, the $(md)$-th eingenvalue of $P^{\mathcal{F}}$, that is $\lambda_{md}$, satisfies the following inequality:
\begin{equation}
\label{vicvic}
0<\gamma \leq \lambda_{md}. 
\end{equation}
\end{teo}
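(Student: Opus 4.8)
The plan is to derive \eqref{vicvic} from the heat-trace domination of Theorem \ref{skernel}, combined with the scalar result of Li and Tian, Cor.\ 5.5 of \cite{LT}, which I take to read: there is a constant $\gamma'=\gamma'(d,n)>0$, depending only on $d$ and $n$, such that for every irreducible $V\subset\mathbb{C}\mathbb{P}^n$ of degree $d$ the eigenvalues $0=\mu_0\leq\mu_1\leq\cdots$ of $\Delta_0^{\mathcal{F}}$ satisfy $\mu_d\geq\gamma'$; equivalently at most $d$ of them lie below $\gamma'$. (Here $\mu_0=0$ with multiplicity one, since $V$ irreducible forces $\reg(V)$ connected and $1\in\mathcal{D}(\Delta_0^{\mathcal F})$ by Prop.\ \ref{taglia}.) Since $L\geq 0$ we may take $c=0$ in Theorem \ref{skernel}, whence
$$\Tr(e^{-tP^{\mathcal{F}}})\leq m\,\Tr(e^{-t\Delta_0^{\mathcal{F}}})\qquad\text{for all }t>0,$$
and in particular $e^{-tP^{\mathcal F}}$ is trace class and $P^{\mathcal F}$ is discrete with eigenvalues $0\leq\lambda_0\leq\lambda_1\leq\cdots$.

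The first step is to bound $\Tr(e^{-t\Delta_0^{\mathcal F}})$ for large $t$ uniformly in $V$ of degree $d$. Splitting $\sum_k e^{-t\mu_k}$ into the at most $d$ indices with $\mu_k<\gamma'$, each contributing at most $1$, and the tail with $\mu_k\geq\gamma'$, one gets for $t\geq 1$
$$\Tr(e^{-t\Delta_0^{\mathcal{F}}})\leq d+e^{-(t-\frac12)\gamma'}\,\Tr(e^{-\frac12\Delta_0^{\mathcal{F}}})\leq d+C_2(d,n)\,e^{-t\gamma'},$$
where $\Tr(e^{-\frac12\Delta_0^{\mathcal F}})\leq C_1(d,n)$ comes from \eqref{zun} applied to $\Delta_0$ (using $\vol_g(\reg(V))=d\cdot(\text{Fubini--Study volume of }\mathbb{C}\mathbb{P}^v)$, $v\leq n$, together with the uniformity in $V$ of the remainder, as in Cor.\ 5.4 of \cite{LT}), and $C_2:=C_1e^{\gamma'/2}$. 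Hence
$$\Tr(e^{-tP^{\mathcal{F}}})\leq md\Big(1+\tfrac{C_2}{d}e^{-t\gamma'}\Big)\qquad\text{for }t\geq 1.$$
For the excluded case $v=1$ one argues in the same way, the corresponding heat-trace bound for curves being again available from \cite{LT}.

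Now for any $\gamma>0$ and any $t\geq 1$, using $e^{-t\lambda_j}>e^{-t\gamma}$ whenever $\lambda_j<\gamma$, one has (when the left count is nonzero) $\#\{j:\lambda_j<\gamma\}\,e^{-t\gamma}<\Tr(e^{-tP^{\mathcal F}})$, so that
$$\#\{j:\lambda_j<\gamma\}< md\,e^{t\gamma}\Big(1+\tfrac{C_2}{d}e^{-t\gamma'}\Big).$$
It therefore suffices to produce $\gamma=\gamma(d,n,m)>0$ and $t\geq 1$ with $md\,e^{t\gamma}(1+\tfrac{C_2}{d}e^{-t\gamma'})\leq md+1$; then $\#\{j:\lambda_j<\gamma\}<md+1$, i.e.\ $\leq md$, i.e.\ $\lambda_{md}\geq\gamma>0$, which is \eqref{vicvic} (and if there is no $j$ with $\lambda_j<\gamma$ the conclusion is trivial). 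Concretely I would put $\varepsilon:=\ln(1+\tfrac1{2md})$, take $t:=\varepsilon/\gamma$ (so that $e^{t\gamma}=1+\tfrac1{2md}$), and then choose $\gamma$ so small --- depending only on $d,n,m$ through $\varepsilon$, $\gamma'$ and $C_2$ --- that simultaneously $\gamma\leq\varepsilon$ (hence $t\geq 1$) and $\tfrac{C_2}{d}e^{-\varepsilon\gamma'/\gamma}\leq\tfrac{1/(2md)}{1+1/(2md)}$; a one-line computation then gives $e^{t\gamma}(1+\tfrac{C_2}{d}e^{-t\gamma'})\leq 1+\tfrac1{md}$, as needed.

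The main obstacle is precisely this last balancing: the domination inequality can never push $\Tr(e^{-tP^{\mathcal F}})$ below $\approx md$ (since $\Tr(e^{-t\Delta_0^{\mathcal F}})\geq e^{-t\mu_0}=1$ for every $t$), so one must genuinely exploit the exponential decay of the non-small part of the spectrum of $\Delta_0^{\mathcal F}$ and trade a small $\gamma$ against a large $t\sim 1/\gamma$, keeping the factor $e^{t\gamma}$ just below the threshold $1+\tfrac1{md}$; this is what forces the dependence $\gamma=\gamma(d,n,m)$ and explains the appearance of the index $md$. A secondary point requiring care is that every constant entering the argument ($\gamma'$ and the heat-trace bound $C_1$) must be uniform over all degree-$d$ subvarieties of $\mathbb{C}\mathbb{P}^n$, which is exactly the content of Cor.\ 5.4 and Cor.\ 5.5 of \cite{LT}.
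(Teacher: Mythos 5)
Your argument is correct, but it takes a genuinely different route from the paper's. The paper (i) first proves $\lambda_{md}>0$ by showing, via the cut-off sequence of Prop.~\ref{taglia}, that $\ker(P^{\mathcal F})$ consists of parallel sections, so that $\dim\ker(P^{\mathcal F})\le m$; and (ii) derives the quantitative bound from a \emph{small-time} estimate $\Tr(e^{-tP^{\mathcal F}})\le me^{-tc}d\,(1+b(n)t^{-n})$ for $t\in(0,1)$, obtained by dominating the heat kernel of $\reg(V)$ pointwise by the rotationally invariant heat kernel of the ambient $(\mathbb{C}\mathbb{P}^n,g_{FS})$ (Theorem~2.1 of \cite{LT}) together with $\vol_g(\reg(V))=d\,\vol_{FS}(\mathbb{C}\mathbb{P}^n)$; the lower bound then comes out of the substitution $t=\alpha/\lambda_{md}$. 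You instead take the scalar statement of Cor.~5.5 of \cite{LT} as a black box (this is not circular: it is the known $m=1$, trivial-bundle case of the theorem), convert it into a \emph{large-time} bound $\Tr(e^{-t\Delta_0^{\mathcal F}})\le d+C_2e^{-t\gamma'}$, transfer this to $P^{\mathcal F}$ via the domination \eqref{marz} with $c=0$, and conclude with an eigenvalue count at $t\sim 1/\gamma$. Both arguments are sound, and your balancing of $\gamma$ against $t=\varepsilon/\gamma$ checks out. What each buys: yours is a clean reduction of the bundle case to the scalar case (the factor $m$ in the index $md$ entering only through the rank in \eqref{marz}) and dispenses with the separate kernel-dimension step; the paper's is self-contained in that it re-derives the scalar input rather than quoting it, and it automatically supplies the one uniformity ingredient your argument still needs, namely a bound on $\Tr(e^{-\frac12\Delta_0^{\mathcal F}})$ depending only on $(d,n)$ --- for that, rather than appealing to \eqref{zun} and the uniformity of its $O(t^2)$ remainder, it is cleanest to use the same ambient comparison, which yields $\Tr(e^{-t\Delta_0^{\mathcal F}})\le d(1+b(n)t^{-n})$ for $t\in(0,1)$ uniformly over all degree-$d$ subvarieties (cf.\ \eqref{cepro}--\eqref{bbio}). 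Finally, both proofs tacitly use the trace-class statement in dimension $v=1$, which falls outside Prop.~\ref{luglioaberlino}; you flag this explicitly, which the paper does not.
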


\begin{proof}
As first step we show that $\lambda_{md}\neq 0$. By \eqref{voxz} we have a continuous inclusion $\mathcal{D}(P^{\mathcal{F}})\hookrightarrow \mathcal{D}(\nabla_{\min})$ where each domain is endowed with the corresponding graph norm. Consider now $s\in \ker(P^{\mathcal{F}})$. Then, by Prop. \ref{pietrob} and by the fact that $P$ is elliptic, we have $s\in L^{\infty}(\reg(V),E)\cap C^{\infty}(\reg(V),E)$. We want to show that  $s\in \ker(\nabla_{\min})$.  Let $\{\phi_k\}_{k\in \mathbb{N}}$ be a sequence as in Prop. \ref{taglia}. Then we have: 
\begin{align}
\label{toporibattino}
& 0=P^{\mathcal{F}}s=\langle P^{\mathcal{F}}s ,s\rangle_{L^2(\reg(V),E)}=\lim_{k\rightarrow \infty} \langle P^{\mathcal{F}}s ,\phi_k^2 s\rangle_{L^2(\reg(V),E)}=\lim_{k\rightarrow \infty} \langle \nabla^t(\nabla s)+Ls ,\phi_k^2 s\rangle_{L^2(\reg(V),E)}\\ \nonumber &=\lim_{k\rightarrow \infty} \langle \nabla s ,\nabla(\phi_k^2 s)\rangle_{L^2(\reg(V),E)}+\lim_{k\rightarrow \infty} \langle L(\phi_ks) ,\phi_k s\rangle_{L^2(\reg(V),E)}\\ \nonumber &\geq \lim_{k\rightarrow \infty} \langle \nabla s ,\nabla(\phi_k^2 s)\rangle_{L^2(\reg(V),E)}=\lim_{k\rightarrow \infty} \langle \nabla s ,\phi_k^2\nabla s\rangle_{L^2(\reg(V),E)}+\lim_{k\rightarrow \infty} \langle \nabla s ,2\phi_k(d_{0,\min}\phi_k)\otimes s\rangle_{L^2(\reg(V),E)}\\ \nonumber &=\|\nabla s\|^2_{L^2(\reg(V),T^*\reg(V)\otimes E)}.
\end{align}
In the computations of the limits above we used the Dominate convergence Theorem to deduce that $$\lim_{k\rightarrow \infty} \langle \nabla s,\phi_k^2\nabla s\rangle_{L^2(\reg(V),T^*\reg(V)\otimes E)}=\|\nabla s\|^2_{L^2(\reg(V),T^*\reg(V)\otimes E)}$$ and  the inequality $$\langle \nabla s, \phi_k(d_{0,\min}\phi_k)\otimes s) \rangle_{L^2(\reg(V),T^*\reg(V)\otimes E)}\leq \|\nabla s\|_{L^2(\reg(V),T^*\reg(V)\otimes E)}\|s\|_{L^{\infty}(\reg(V),E)}\|d_{0,\min}\phi_k\|_{L^2\Omega^1(\reg(V),g)}$$ to deduce that $$\lim_{k\rightarrow \infty} \langle \nabla s ,2\phi_k(d_{0,\min}\phi_k)\otimes s\rangle_{L^2(\reg(V),E)}=0.$$ By \eqref{toporibattino}  we can thus conclude that $\nabla s=0$. This tells us that $\ker(P^{\mathcal{F}})$ is made of parallel sections and this in turn implies that $\dim(\ker(P^{\mathcal{F}}))\leq m$. Hence for the eigenvalues of $P^{\mathcal{F}}$ we have $$0\leq\lambda_0\leq \lambda_1\leq...\leq \lambda_{m-2}\leq \lambda_{m-1}<\lambda_m\leq...$$ and so we proved that $\lambda_{md}\neq 0$.\\ 
Now we proceed  adapting to our context the proof given in  \cite{LT}, Cor. 5.5.   Let $g_{FS}$ be the Fubini-Study metric on $\mathbb{C}\mathbb{P}^n$. Let us label by $\tilde{k}(t,x,y)$ the heat kernel of the unique self-adjoint extension of the  Laplacian $\Delta_0$ acting on $C^{\infty}(\mathbb{C}\mathbb{P}^n)$ that, with a little abuse of notation, we still label by $\Delta_0:L^2(\mathbb{C}\mathbb{P}^n,g_{FS})\rightarrow L^2(\mathbb{C}\mathbb{P}^n,g_{FS})$.  If we denote by $r:\mathbb{C}\mathbb{P}^n\times \mathbb{C}\mathbb{P}^n\rightarrow \mathbb{R}$ the distance function induced by the Fubini-Study metric then we know that $\tilde{k}(t,x,y)=\tilde{k}(t,r(x,y))$, see \cite{LT}.  Therefore, by Theorem \ref{skernel} and  Theorem 2.1 in \cite{LT}, we get  that $\tr(e^{-tP^{\mathcal{F}}})\leq me^{-tc}\tilde{k}(t,0)$. Moreover,  for $e^{-t\Delta_0}:L^2(\mathbb{C}\mathbb{P}^n,g_{FS})\rightarrow L^2(\mathbb{C}\mathbb{P}^n,g_{FS})$, we have 
\begin{equation}
\label{cepro}
\vol_{FS}(\mathbb{C}\mathbb{P}^n)\tilde{k}(t,0)=\int_{\mathbb{C}\mathbb{P}^n}\tilde{k}(t,0)\dvol_{g_{FS}}=\Tr(e^{-t\Delta_0})=\sum_{j=0}^{\infty}e^{-t\mu_j}\leq (1+b(n)t^{-n})
\end{equation}
 where $b(n)$ is a positive constant depending only on $n$, $\mu_j$ is the $(j+1)$-th eigenvalue of $\Delta_0:L^2(\mathbb{C}\mathbb{P}^n,g_{FS})\rightarrow L^2(\mathbb{C}\mathbb{P}^n,g_{FS})$ and $\vol_{FS}(\mathbb{C}\mathbb{P}^n)$ is the volume of $\mathbb{C}\mathbb{P}^n$ with respect to the Fubini-Study metric. In particular the last inequality in \eqref{cepro} comes from the asymptotic expansion of  $\tilde{k}(t,0)$ and holds for $t\in (0,1)$.
Hence, integrating $\tr(e^{-tP^{\mathcal{F}}})$ and $\tilde{k}(t,0)$ over $\reg(V)$,  we have:  $$\Tr(e^{-tP^{\mathcal{F}}})\leq me^{-tc}\vol_g(\reg(V))\tilde{k}(t,0)=me^{-tc}\frac{\vol_g(\reg(V))}{\vol_{FS}(\mathbb{C}\mathbb{P}^n)}(1+\sum_{j=1}^{\infty}e^{-t\mu_j})\leq me^{-tc} \frac{\vol_g(\reg(V))}{\vol_{FS}(\mathbb{C}\mathbb{P}^n)}(1+b(n)t^{-n})$$  and finally, using the fact that $\vol_g(\reg(V))=d\vol_{FS}(\mathbb{C}\mathbb{P}^n)$ \footnote{see for instance \cite{LT} pag. 876 or \cite{KIY} pag. 97}, we get 
\begin{equation}
\label{bbio}
q+\sum_{i=q}^{\infty}e^{-t\lambda_j}\leq me^{-tc} d(1+b(n)t^{-n})
\end{equation}
where $\lambda_j$ is the $(j+1)$-th eigenvalue of $P^{\mathcal{F}}$ and $q:=\dim(\ker(P^{\mathcal{F}}))$. Clearly we have $\lambda_{md}\geq \lambda_j$ for ${md}\geq j$. This in turn implies that $e^{-t\lambda_{md}}\leq e^{-t\lambda_j}$ for every $j\leq md$ and therefore $(md-q+1)e^{-t\lambda_{md}}\leq \sum_{j=q}^{md}e^{-t\lambda_j}$. 
In this way  for every  $0<\alpha<\lambda_{md}$, performing the  substitution $t=\frac{\alpha}{\lambda_{md}}$, we get from \eqref{bbio} the following inequality
\begin{equation}
\label{felicia}
q+(md-q+1)e^{-\alpha}\leq me^{-tc} d\left(1+b(n)\left(\frac{\alpha}{\lambda_{md}}\right)^{-n}\right)\leq m d\left(1+b(n)\left(\frac{\alpha}{\lambda_{md}}\right)^{-n}\right)
\end{equation}
and therefore 
$$\alpha\left(\frac{q-md+(md-q+1)e^{-\alpha}}{mdb(n)}\right)^{\frac{1}{n}}\leq \lambda_{md}$$
which in turn implies 
\begin{equation}
\label{pietrobb}
\alpha\left(\frac{-md+(md+1)e^{-\alpha}}{mdb(n)}\right)^{\frac{1}{n}}\leq\lambda_{md}.
\end{equation}
Clearly, choosing $\alpha$ sufficiently small, we have $-md+(md+1)e^{-\alpha}>0$ and hence the thesis follows defining $$\gamma:=\alpha\left(\frac{-md+(md+1)e^{-\alpha}}{mdb(n)}\right)^{\frac{1}{n}}.$$
\end{proof}

As an immediate consequence of Theorem  \ref{lower} we have the following corollary. In the case $N=\mathbb{C}$ we get Cor. 5.5 in \cite{LT}.

\begin{cor}
\label{attenti}
There exists a positive constant $\gamma'=\gamma'(d,n)$, that is $\gamma'$ depends only on the dimension of the ambient space $\mathbb{C}\mathbb{P}^n$ and on the degree $d$, such that for every irreducible complex projective variety $V\subset \mathbb{C}\mathbb{P}^n$ of degree $d$, for every Hermitian line bundle $(N,h)$ on $\reg(V)$ and for every Schr\"odinger type operator $P:C^{\infty}_c(\reg(V),N)\rightarrow C^{\infty}_c(\reg(V),N)$ as in Theorem \ref{skernel}  with $L\geq 0$ we have the following uniform lower bound for $\lambda_d$, the $d$-th eigenvalue of $P^{\mathcal{F}}$:
\begin{equation}
\label{beabei}
0<\gamma' \leq \lambda_d.
\end{equation}
\end{cor}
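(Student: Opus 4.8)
The plan is to read off Corollary \ref{attenti} as the rank-one specialization of Theorem \ref{lower}. First I would observe that a Hermitian line bundle $(N,h)$ over $\reg(V)$ is, by definition, a complex vector bundle of rank $m=1$ equipped with a Hermitian metric, so it falls within the hypotheses of Theorem \ref{lower}. Likewise, a Schr\"odinger type operator $P=\nabla^t\circ\nabla+L$ on $\reg(V)$ acting on sections of $N$, with $L\in C^{\infty}(\reg(V),\End(N))$ and $L\geq 0$, is exactly an operator of the kind admitted in Theorem \ref{lower}: it is symmetric, and since $L\geq 0$ both positivity holds and the lower bound constant may be taken to be $c=0$. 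Thus nothing beyond checking these bookkeeping points is needed to invoke Theorem \ref{lower}.

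Next I would apply Theorem \ref{lower} with $m=1$. That theorem produces a constant $\gamma=\gamma(d,n,m)>0$, depending only on $d$, $n$, and $m$, such that $\lambda_{md}\geq\gamma(d,n,m)$ for every irreducible complex projective variety $V\subset\mathbb{C}\mathbb{P}^n$ of degree $d$ and every vector bundle of rank $m$ with arbitrary metric. Setting $m=1$ gives $md=d$, hence $0<\gamma(d,n,1)\leq\lambda_d$, where $\lambda_d$ is the $(d+1)$-st eigenvalue of $P^{\mathcal{F}}$ in the chosen enumeration (i.e.\ the $d$-th eigenvalue in the sense of the statement). I would then simply define
\begin{equation}
\label{gammaprimedef}
\gamma'=\gamma'(d,n):=\gamma(d,n,1),
\end{equation}
which depends only on $d$ and $n$ since the rank has been frozen at $1$, and this yields the asserted inequality $0<\gamma'\leq\lambda_d$.

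There is no genuine obstacle here: the content has already been carried out in the proof of Theorem \ref{lower}, and Corollary \ref{attenti} is a direct specialization. The only point worth emphasizing, as the final remark in the corollary indicates, is that when $N=\mathbb{C}$ is the trivial line bundle with its standard metric and $P=\Delta_0$ (so $L=0$), this recovers Cor.~5.5 of \cite{LT}; the novelty of Corollary \ref{attenti} is precisely that the lower bound $\gamma'$ is uniform over all Hermitian line bundles and all admissible $L\geq 0$, which is inherited verbatim from the uniformity of $\gamma(d,n,m)$ in Theorem \ref{lower}.
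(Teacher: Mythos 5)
Your proposal is correct and coincides with the paper's own treatment: the paper presents Corollary \ref{attenti} as an immediate consequence of Theorem \ref{lower}, obtained exactly by specializing to rank $m=1$ (so $md=d$) and setting $\gamma'(d,n):=\gamma(d,n,1)$. The hypothesis checks you carry out (line bundle as rank-one bundle, $L\geq 0$ giving the admissible lower bound $c=0$) are the right bookkeeping and nothing more is needed.
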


Finally we conclude the section with the following applications. For the definition and the general properties of Dirac operators we refer to the the books \cite{BGV}, \cite{BBW} \cite{LaM} and \cite{Jroe}.

\begin{cor}
\label{diracy}
Let $V$, $E$, $g$ and $h$ be as in Theorem \ref{pianopiano}. Assume that $E$ is a Clifford module. Let $D=\tilde{c}\circ \nabla$
\begin{equation}
\label{brose}
D:C^{\infty}_c(\reg(V),E)\rightarrow C^{\infty}_c(\reg(V),E)
\end{equation}
be a Dirac operator where $\nabla:C^{\infty}(\reg(V),E)\rightarrow C^{\infty}(\reg(V),T^*\reg(V)\otimes E)$ is a metric connection and $\tilde{c}\in \Hom(T^*\reg(V)\otimes E,E)$ is the bundle homomorphism induced by the Clifford multiplication. Let  $D^2$ be the Dirac Laplacian and let $L$ be the endomorphism of $E$ arising in the Weitzenb\"ock decomposition formula, see\cite{Jroe} pag. 43--44, 
\begin{equation}
\label{vaso}
D^2=\nabla^t\circ \nabla +L.
\end{equation}
 Assume that there is a  constant $c\in \mathbb{R}$ such that  $h( L\phi,\phi)\geq ch( \phi,\phi)$ for each $\phi\in C^{\infty}_c(\reg(V),E)$. Then  Theorem \ref{skernel}, Corollary \ref{quovadis} and Prop. \ref{pietrob} hold for $e^{-tD^{2,\mathcal{F}}}$ where $D^{2,\mathcal{F}}$ is the Friedrich extension of $D^2$. In particular $$e^{-tD^{2,\mathcal{F}}}:L^2(\reg(V),E)\rightarrow L^2(\reg(V),E)$$ is a trace class operator and $D^{2,\mathcal{F}}:L^2(\reg(V),E)\rightarrow L^2(\reg(V),E)$ is a discrete operator.
\end{cor}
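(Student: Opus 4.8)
The plan is to check that the Dirac Laplacian $P := D^2$, which by the Weitzenb\"ock formula \eqref{vaso} has the form $\nabla^t\circ\nabla + L$ of a Schr\"odinger type operator with $L\in C^{\infty}(\reg(V),\End(E))$, satisfies the two standing hypotheses of Theorem \ref{skernel}, and then to invoke Theorem \ref{skernel}, Corollary \ref{quovadis} and Prop.~\ref{pietrob} directly.

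First I would recall that a Dirac operator $D = \tilde c\circ\nabla$ attached to a Clifford module with a Clifford-compatible metric connection is formally self-adjoint on $C^{\infty}_c(\reg(V),E)$, i.e. $D^t = D$ (see e.g. \cite{Jroe}, \cite{BGV}, \cite{LaM}). Hence $D^2$ is formally self-adjoint as well, so it is symmetric as an unbounded operator on $L^2(\reg(V),E)$ with core domain $C^{\infty}_c(\reg(V),E)$; moreover for every $\phi\in C^{\infty}_c(\reg(V),E)$ one has $\langle D^2\phi,\phi\rangle_{L^2(\reg(V),E)} = \langle D\phi,D\phi\rangle_{L^2(\reg(V),E)}\geq 0$, so $D^2$ is positive. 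The remaining hypothesis of Theorem \ref{skernel}, namely the existence of $c\in\mathbb{R}$ with $h(L\phi,\phi)\geq c\,h(\phi,\phi)$ for all $\phi\in C^{\infty}_c(\reg(V),E)$, is assumed outright in the statement. Thus $P=D^2$ fits the framework of Theorem \ref{skernel} (the Sobolev inclusion that enters, through Prop.~\ref{skernelz}, for $\reg(V)$ of real dimension $2v>2$ is exactly the one provided by the second item of Theorem \ref{pianopiano}).

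Granted this, Theorem \ref{skernel} applied to $P^{\mathcal{F}} = D^{2,\mathcal{F}}$ shows at once that $e^{-tD^{2,\mathcal{F}}}$ is trace class with the trace bound \eqref{marz}; Corollary \ref{quovadis}, specialized to $P=D^2$, gives the discreteness of $D^{2,\mathcal{F}}$ together with the pointwise and global heat-trace estimates \eqref{blau} and \eqref{zun} and the Weyl-type lower bound \eqref{waser} for its eigenvalues; and Prop.~\ref{pietrob} (valid since $v>1$) gives the ultracontractivity of $e^{-tD^{2,\mathcal{F}}}$ for $0<t<1$ and the $L^{\infty}$-regularity of the eigensections. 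The only genuine point to watch is the formal self-adjointness and nonnegativity of $D^2$, which rest on the compatibility of $\nabla$ with the Clifford action; everything else is a direct citation of the results of Section 4, so no essential difficulty arises.
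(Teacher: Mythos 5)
Your proposal is correct and matches the paper's intent exactly: the paper states Corollary \ref{diracy} without proof, treating it as an immediate application of Theorem \ref{skernel}, Corollary \ref{quovadis} and Prop.~\ref{pietrob} once one observes that $D^2=\nabla^t\circ\nabla+L$ is symmetric and positive (via $D^t=D$) and that the lower bound on $L$ is assumed. Your verification of these hypotheses, including the dimension condition $2v>2$ needed for the underlying Sobolev inclusion, is precisely the routine check the paper leaves to the reader.
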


As application of the previous theorem we have the following corollary. 
\begin{cor}
\label{spino}
Let $V$ and $g$ be as in  Theorem \ref{pianopiano}. Assume that $\reg(V)$ is a spin manifold and assume moreover that $s_g$, the scalar curvature of $g$, satisfies $s_g\geq c$ for some $c\in \mathbb{R}$. Let $\Sigma$ be the spinor bundle on $\reg(V)$ and let 
\begin{equation}
\label{spinn}
\eth:C^{\infty}_c(\reg(V),\Sigma)\rightarrow C^{\infty}_c(\reg(V),\Sigma)
\end{equation}
be  the associated spin Dirac operator. Then Theorem \ref{skernel}, Corollary \ref{quovadis} and Prop. \ref{pietrob} hold for \begin{equation}
\label{azzs}
e^{-t\eth^{2,\mathcal{F}}}:L^2(\reg(V),\Sigma)\rightarrow L^2(\reg(V),\Sigma).
\end{equation}
In particular \eqref{azzs} is a trace class operator and $\eth:L^2(\reg(V),\Sigma)\rightarrow L^2(\reg(V),\Sigma)$ is a discrete operator.
\end{cor}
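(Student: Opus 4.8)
The plan is to obtain this corollary as a direct specialization of Corollary \ref{diracy} to the spinor bundle, the only geometric input being the Lichnerowicz formula.

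First I would arrange the data so that the hypotheses of Corollary \ref{diracy} are literally satisfied. Equip $\Sigma$ with its canonical Hermitian metric $h$ and with the spin connection $\nabla$ obtained by lifting the Levi-Civita connection of $g$; this $\nabla$ is a metric connection in the sense used throughout the paper. Since $\Sigma$ is a Clifford module and $\eth=\tilde c\circ\nabla$, with $\tilde c\in\Hom(T^*\reg(V)\otimes\Sigma,\Sigma)$ the bundle map induced by Clifford multiplication, $\eth$ is a Dirac operator of exactly the type treated in Corollary \ref{diracy}. Because Clifford multiplication is skew-Hermitian with respect to $h$ and $\nabla$ is metric, $\eth$ is formally self-adjoint on $C^\infty_c(\reg(V),\Sigma)$, so $\eth^2=\eth^t\circ\eth$ is symmetric and positive.

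Next I would invoke the Lichnerowicz identity, which is precisely the Weitzenb\"ock decomposition \eqref{vaso} in this case: $\eth^2=\nabla^t\circ\nabla+L$ with $L=\tfrac14 s_g\,\id_\Sigma$. By hypothesis $s_g\in C^\infty(\reg(V))$ and $s_g\geq c$, hence for every $\phi\in C^\infty_c(\reg(V),\Sigma)$ we have $h(L\phi,\phi)=\tfrac14 s_g\,h(\phi,\phi)\geq\tfrac{c}{4}\,h(\phi,\phi)$. Thus all the assumptions of Corollary \ref{diracy} hold, with the lower bound constant $c/4$ in the role of $c$.

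Applying Corollary \ref{diracy} then yields at once that Theorem \ref{skernel}, Corollary \ref{quovadis} and Prop. \ref{pietrob} hold for $e^{-t\eth^{2,\mathcal F}}$ (with the statements resting on Prop. \ref{pietrob} requiring $v>1$, exactly as there); in particular $e^{-t\eth^{2,\mathcal F}}$ is trace class and $\eth^{2,\mathcal F}$ is a discrete operator. The discreteness of $\eth$ then follows from that of its square, since compactness of the resolvent of $\eth^{2,\mathcal F}$ is equivalent to compactness of the resolvent of the associated closed extension of $\eth$, $\eth^{2,\mathcal F}$ being obtained from it via Prop. \ref{fall} (in the same spirit as the discreteness passage in Corollary \ref{demo}). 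There is no real obstacle here: the entire content is the bookkeeping of the first step — checking that the spin Dirac operator genuinely fits the metric-connection and Clifford-module framework and that its Weitzenb\"ock term is exactly $\tfrac14 s_g$ — which is classical, see \cite{Jroe}.
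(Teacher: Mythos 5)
Your proposal is correct and is essentially the paper's own argument: the author also deduces the corollary directly from the Lichnerowicz formula $\eth^2=\nabla^t\circ\nabla+\tfrac14 s_g$, i.e.\ as the specialization of Corollary \ref{diracy} to the spinor bundle with $L=\tfrac14 s_g\,\id_\Sigma$ bounded below by $c/4$. Your additional remarks on the spin connection being metric and on passing from discreteness of $\eth^{2,\mathcal{F}}$ to discreteness of $\eth$ only make explicit what the paper leaves implicit.
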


\begin{proof}
It is a consequence of the Lichnerowicz formula, $\eth^2=\nabla^t\circ \nabla+\frac{1}{4}s_g$, see for instance \cite{LaM} pag. 160.
\end{proof}

\begin{cor}
\label{sitornaacasa}
Let $V\subset \mathbb{C}\mathbb{P}^n$ be an irreducible complex projective variety of complex dimension $v$. Let $g$ be the K\"ahler metric on $\reg(V)$ induced by the Fubini Study metric of $\mathbb{C}\mathbb{P}^n$. Let  $k\in \{0,...,2v\}$ and consider the Bochner-Weitzenb\"ock identity for the Laplacian $\Delta_k:\Omega_c^k(\reg(V))\rightarrow \Omega_c^k(\reg(V))$, see \cite{LaM} pag. 155 or \cite{Jroe} pag. 43--44,
\begin{equation}
\label{holynight}
\Delta_k=\nabla^t_k\circ \nabla_k+L_k,
\end{equation}
where $\nabla_k:\Omega^k(\reg(V))\rightarrow C^{\infty}(\reg(V),T^*\reg(V)\otimes \Lambda^kT^*\reg(V))$ is the metric connection induced by the Levi Civita conneection. Assume that there is a constant $c$ such that $\langle L_k\eta,\eta\rangle_{g_k}\geq c\langle\eta,\eta\rangle_{g_k}$ for each $\eta\in \Omega^k_c(\reg(V))$. Let $\Delta_k^{\mathcal{F}}$ be the Friedrich extension of $\Delta_k$ and let 
\begin{equation}
\label{nununu}
e^{-t\Delta_k^{\mathcal{F}}}:L^2\Omega^k(\reg(V),g)\rightarrow L^2\Omega^k(\reg(V),g)
\end{equation}
be the heat operator associated to $\Delta_k^{\mathcal{F}}$. Then \eqref{nununu} is a trace class operator. In particular Theorem \ref{skernel}, Cor. \ref{quovadis} and Prop. \ref{pietrob} hold for \eqref{nununu}.\\ Consider now the Hodge-Kodaira Laplacian $\Delta_{p,q,\overline{\partial}}:\Omega_c^{p,q}(\reg(V))\rightarrow \Omega_c^{p,q}(\reg(V))$ such that $p+q=k$. Let  $\Delta_{p,q,\overline{\partial}}^{\mathcal{F}}$ be the Friedrich extension of $\Delta_{p,q,\overline{\partial}}$ and let 
\begin{equation}
\label{nununuza}
e^{-t\Delta_{p,q,\overline{\partial}}^{\mathcal{F}}}:L^2\Omega^{p,q}(\reg(V),g)\rightarrow L^2\Omega^{p,q}(\reg(V),g)
\end{equation}
be the heat operator associated to $\Delta_{p,q,\overline{\partial}}^{\mathcal{F}}$. Then \eqref{nununuza} is a trace class operator. As in the previous case Theorem \ref{skernel}, Cor. \ref{quovadis} and Prop. \ref{pietrob} hold for \eqref{nununuza}.
\end{cor}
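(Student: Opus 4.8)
The plan is to derive both assertions from Theorem \ref{skernel} (equivalently Prop. \ref{skernelz}), taking the Hodge--de Rham Laplacian as the primary object and reducing the Hodge--Kodaira case to it by means of the K\"ahler identities.

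First I would handle $\Delta_k$. One applies Theorem \ref{skernel} with $E=\Lambda^kT^*\reg(V)$, $h=g_k$, the metric connection $\nabla_k$ induced by the Levi--Civita connection, $L=L_k$ and $P=\Delta_k$, so that the rank of $E$ is $\binom{2v}{k}$. The only points to check are the hypotheses of that theorem: $\Delta_k$ is formally self--adjoint on $\Omega^k_c(\reg(V))$, and for $\eta\in\Omega^k_c(\reg(V))$ one has $\langle\Delta_k\eta,\eta\rangle_{g_k}=\|d_k\eta\|^2_{L^2\Omega^{k+1}(\reg(V),g)}+\|d^t_{k-1}\eta\|^2_{L^2\Omega^{k-1}(\reg(V),g)}\ge 0$, so $\Delta_k$ is symmetric and positive; the bound $\langle L_k\eta,\eta\rangle_{g_k}\ge c\langle\eta,\eta\rangle_{g_k}$ is precisely the standing assumption. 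Theorem \ref{skernel} then gives that $e^{-t\Delta_k^{\mathcal F}}$ is trace class with $\Tr(e^{-t\Delta_k^{\mathcal F}})\le\binom{2v}{k}e^{-tc}\Tr(e^{-t\Delta_0^{\mathcal F}})$, and since Cor. \ref{quovadis} and Prop. \ref{pietrob} are proved under exactly these hypotheses (with $v>1$ where required), they apply to \eqref{nununu} as well.

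For $\Delta_{p,q,\overline{\partial}}$ I would use that $(\reg(V),g)$ is K\"ahler. The K\"ahler identities give, as an identity of differential operators on $\Omega^{p,q}_c(\reg(V))$ with $p+q=k$, that $\Delta_k=2\,\Delta_{p,q,\overline{\partial}}$, and, more importantly, $\Delta_k$ preserves the bidegree splitting $\Lambda^kT^*_{\mathbb C}\reg(V)=\bigoplus_{p+q=k}\Lambda^{p,q}T^*\reg(V)$. The associated orthogonal projections $\pi^{p,q}$ map $\Omega^k_c(\reg(V))$ into itself and commute with $\Delta_k$ there; a short computation then shows that $\pi^{p,q}$ is a contraction for the form norm of $\Delta_k$, hence extends to a bounded operator on $\mathcal D(Q_{\Delta_k})$ and commutes with $\Delta_k^{\mathcal F}$. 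Thus $L^2\Omega^{p,q}(\reg(V),g)$ reduces $\Delta_k^{\mathcal F}$ and $\Delta_{p,q,\overline{\partial}}^{\mathcal F}=\tfrac12\,\Delta_k^{\mathcal F}|_{L^2\Omega^{p,q}(\reg(V),g)}$. Consequently $e^{-t\Delta_{p,q,\overline{\partial}}^{\mathcal F}}$ is the restriction of $e^{-\frac t2\Delta_k^{\mathcal F}}$ to an invariant subspace; being the restriction of a trace class operator it is itself trace class, with $\Tr(e^{-t\Delta_{p,q,\overline{\partial}}^{\mathcal F}})\le\Tr(e^{-\frac t2\Delta_k^{\mathcal F}})$. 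Discreteness, the heat--trace estimate and eigenvalue asymptotics of Cor. \ref{quovadis}, and the ultracontractivity and $L^\infty$--regularity of eigensections of Prop. \ref{pietrob} all descend to the invariant summand, since the eigenvalues of the restriction form a sub--multiset of those of $\tfrac12\Delta_k^{\mathcal F}$ and its heat kernel is the pointwise restriction of that of $e^{-\frac t2\Delta_k^{\mathcal F}}$; hence the stated results hold for \eqref{nununuza}.

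I expect the delicate point to be this reduction step for $\Delta_{p,q,\overline{\partial}}$. One cannot simply exhibit $\Delta_{p,q,\overline{\partial}}$ as $(\nabla')^t\circ\nabla'+L'$ for a metric connection $\nabla'$, because on a K\"ahler manifold the connection term produced by the Weitzenb\"ock formula for $\Delta_{p,q,\overline{\partial}}$ is $\tfrac12\nabla_k^t\nabla_k$, a rescaling of a metric connection that is no longer metric, so Theorem \ref{skernel} does not apply to it verbatim. The argument must therefore pass through the fact that the bidegree decomposition genuinely reduces the self--adjoint operator $\Delta_k^{\mathcal F}$, and the small verification that matters is that $\pi^{p,q}$ preserves the form domain $\mathcal D(Q_{\Delta_k})$, so that the splitting is a reduction of $\Delta_k^{\mathcal F}$ and not merely of the formal operator. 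Everything else is a direct application of Theorem \ref{skernel}, Cor. \ref{quovadis} and Prop. \ref{pietrob}.
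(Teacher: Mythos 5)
Your proof is correct and follows essentially the same route as the paper: a direct application of Theorem \ref{skernel} (together with Cor. \ref{quovadis} and Prop. \ref{pietrob}) for $\Delta_k$, and the K\"ahler decomposition $\Delta_k^{\mathcal F}=2\bigoplus_{p+q=k}\Delta_{p,q,\overline{\partial}}^{\mathcal F}$ for the Hodge--Kodaira case. The only difference is that the paper simply cites \cite{FB}, pag. 169, for the compatibility of this decomposition with the Friedrich extensions, whereas you supply the (correct) argument directly, via the observation that the projections $\pi^{p,q}$ are contractions for the form norm and hence reduce $\Delta_k^{\mathcal F}$.
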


\begin{proof}
The first part of the theorem, that is the one concerning with $\Delta_{k}^{\mathcal{F}}$, is an immediate application of Theorem \ref{skernel}, Cor. \ref{quovadis} and Prop. \ref{pietrob}. The second part follows by the fact that $\Delta_k=2\bigoplus_{p+q=k}\Delta_{p,q,\overline{\partial}}$ and that, see for instance \cite{FB} pag. 169,  $\Delta_k^{\mathcal{F}}=2\bigoplus_{p+q=k}\Delta_{p,q,\overline{\partial}}^{\mathcal{F}}$. 
\end{proof}

\section{Applications to stratified pseudomanifolds}

This last section contains applications concerning Thom-Mather stratified pseudomanifolds. We start  recalling briefly the basic definitions and properties.  We first recall that, given a topological space $Z$, $C(Z)$ stands for the cone over $Z$ that is $Z\times [0,2)/\sim$ where $(p,t)\sim (q,r)$ if and only if $r=t=0$.

\begin{defi}   
\label{thom}
A smoothly Thom-Mather stratified pseudomanifold $X$  of dimension $m$  is a metrizable, locally compact, second countable space which admits a locally finite decomposition into a union of locally closed strata $\mathfrak{G}=\{Y_{\alpha}\}$, where each $Y_{\alpha}$ is a smooth, open and connected manifold, with dimension depending on the index $\alpha$. We assume the following:
\begin{enumerate}
\item[(i)] If $Y_{\alpha}$, $Y_{\beta} \in \mathfrak{G}$ and $Y_{\alpha} \cap \overline{Y}_{\beta} \neq \emptyset$ then $Y_{\alpha} \subset \overline{Y}_{\beta}$
\item[(ii)]  Each stratum $Y$ is endowed with a set of control data $T_{Y} , \pi_{Y}$ and $\rho_{Y}$ ; here $T_{Y}$ is a neighborhood of $Y$ in $X$ which retracts onto $Y$, $\pi_{Y} : T_{Y} \rightarrow Y$
is a fixed continuous retraction and $\rho_{Y}: T_{Y}\rightarrow [0, 2)$ is a continuous function in this tubular neighborhood such that $\rho_{Y}^{-1}(0) = Y$ . Furthermore,
we require that if $Z \in \mathfrak{G}$ and $Z \cap T_{Y}\neq \emptyset$  then
$(\pi_{Y} , \rho_{Y} ) : T_{Y} \cap Z \rightarrow Y\times [0,2) $
is a proper smooth submersion.
\item[(iii)] If $W, Y,Z \in \mathfrak{G}$, and if $p \in T_{Y} \cap T_{Z} \cap W$ and $\pi_{Z}(p) \in T_{Y} \cap Z$ then
$\pi_{Y} (\pi_{Z}(p)) = \pi_{Y} (p)$ and $\rho_{Y} (\pi_{Z}(p)) = \rho_{Y} (p)$.
\item[(iv)] If $Y,Z \in \mathfrak{G}$, then
$Y \cap \overline{Z} \neq \emptyset \Leftrightarrow T_{Y} \cap Z \neq \emptyset$ ,
$T_{Y} \cap T_{Z} \neq \emptyset \Leftrightarrow Y\subset \overline{Z}, Y = Z\ or\ Z\subset \overline{Y} .$
\item[(v)]  For each $Y \in \mathfrak {G}$, the restriction $\pi_{Y} : T_{Y}\rightarrow Y$ is a locally trivial fibration with fibre the cone $C(L_{Y})$ over some other stratified space $L_{Y}$ (called the link over $Y$ ), with atlas $\mathcal{U}_{Y} = \{(\phi,\mathcal{U})\}$ where each $\phi$ is a trivialization
$\pi^{-1}_{Y} (U) \rightarrow U \times C(L_{Y} )$, and the transition functions are stratified isomorphisms  which preserve the rays of each conic
fibre as well as the radial variable $\rho_{Y}$ itself, hence are suspensions of isomorphisms of each link $L_{Y}$ which vary smoothly with the variable $y\in U$.
\item[(vi)] For each $j$ let $X_{j}$ be the union of all strata of dimension less or equal than $j$, then 
$$
X_{m-1}=X_{m-2}\ \text{and  $X\setminus X_{m-2}$ dense\ in $X$}
$$
\end{enumerate}
\end{defi}

The \emph{depth} of a stratum $Y$ is largest integer $k$ such that there is a chain of strata $Y=Y_{k},...,Y_{0}$ such that $Y_{j}\subset \overline{Y_{j-1}}$ for $1\leq j\leq k.$ A stratum of maximal depth is always a closed subset of $X$.  The  maximal depth of any stratum in $X$ is called the \emph{depth of $X$} as stratified spaces.
 Consider the filtration
\begin{equation}
X = X_{m}\supset X_{m-1}= X_{m-2}\supset X_{m-3}\supset...\supset X_{0}.
\label{pippo}
\end{equation}
 We refer to the open subset $X\setminus X_{m-2}$ of a smoothly Thom-Mather-stratified pseudomanifold $X$ as its regular set, and the union of all other strata as the singular set,
$$\reg(X):=X\setminus \sing(X)\ \text{where}\ \sing(X):=\bigcup_{Y\in \mathfrak{G}, \depth(Y)>0 }Y. $$
Given two Thom-Mather smoothly stratified pseudomanifolds  $X$ and $X'$,  a stratified isomorphism between them is a
homeomorphism $F: X\rightarrow X'$ which carries the open strata of $X$ to the open strata of $X'$
diffeomorphically, and such that $\pi'_{F(Y )}\circ  F = F \circ \pi_Y$ , $\rho'_{F(Y)}\circ F=\rho_Y$  for all $Y\in \mathfrak{G}(X)$.
For more details, properties and comments we refer to \cite{ALMP}, \cite{BHS}, \cite{RrHS} and the bibliography cited there. Here we point out that a large class of topological space such as irreducible complex analytic spaces or  quotient of manifolds through a proper Lie group action belong to this class of spaces.\\ 
As a next step we introduce the class of smooth Riemmanian metrics on $\reg(X)$ which we are interested in. The definition is given by induction on the depth of $X$.  We label by $\hat{c}:=(c_2,...,c_m)$ a $(m-1)$-tuple of non negative real numbers.

\begin{defi}
\label{iter}
Let $X$ be a smoothly Thom-Mather-stratified pseudomanifold and let $g$ be a Riemannian metric on $\reg(X)$. If $\depth(X)=0$, that is $X$ is a smooth manifold, a  $\hat{c}$-iterated  edge metric is understood to be any smooth Riemannian metric on $X$. Suppose now that $\depth(X)=k$ and that the definition of $\hat{c}$-iterated  edge metric is given in the case $\depth(X)\leq k-1$; then we call a smooth Riemannian metric $g$ on $\reg(X)$ a  $\hat{c}$-iterated  edge metric  if it satisfies the following properties:
\begin{itemize}
\item Let $Y$ be a stratum of $X$ such that $Y\subset X_{i}\setminus  X_{i-1}$; by  Def. \ref{thom} for each $q\in Y$ there exist an open neighbourhood $U$ of $q$ in $Y$ such that 
$$
\phi:\pi_{Y}^{-1}(U)\longrightarrow U\times C(L_{Y})
$$
 is a stratified isomorphism; in particular, 
$$
\phi:\pi_{Y}^{-1}(U)\cap \reg(X)\longrightarrow U\times \reg(C(L_{Y}))
$$
is a smooth diffeomorphism. Then, for each $q\in Y$, there exists one of these trivializations $(\phi,U)$ such that $g$ restricted on $\pi_{Y}^{-1}(U)\cap \reg(X)$ satisfies the following properties:
\begin{equation} 
(\phi^{-1})^{*}(g|_{\pi_{Y}^{-1}(U)\cap \reg(X)})\sim  dr^2+h_{U}+r^{2c_{m-i}}g_{L_{Y}}
\label{yhn}
\end{equation}
 where $m$ is the dimension of $X$, $h_{U}$ is a Riemannian metric defined over $U$ and  $g_{L_{Y}}$ is a $(c_2,...,c_{m-i-1})$-iterated edge metric  on $\reg(L_{Y})$, $d r^2+h_{U}+r^{2c_{m-i}}g_{L_{Y}}$ is a Riemannian metric of product type on $U\times \reg(C(L_{Y}))$ and with $\sim$ we mean quasi-isometric. 
\end{itemize}
\end{defi} 
We remark that in \eqref{yhn} the neighborhood  $U$ can be chosen sufficiently small so that it is diffeomorphic to $(0,1)^i$ and $h_U$ it is quasi-isometric to the Euclidean metric restricted on $(0,1)^i$. Moreover we point out that with this kind of Riemannian metrics we have $\vol_g(\reg(X))<\infty$ in case $X$ is compact. There is the following nontrivial existence result:

\begin{prop}
Let $X$ be a smoothly Thom-Mather stratified pseudomanifold of dimension $m$. For any $(m-1)$-tuple  of positive numbers $\hat{c}=(c_2,...,c_m)$, there exists  a smooth Riemannian metric on $\reg(X)$ which is a $\hat{c}$-iterated edge metric.
\end{prop}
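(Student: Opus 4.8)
The plan is to argue by induction on $\depth(X)$, constructing $g$ from the local model metrics appearing in \eqref{yhn} and gluing them with a smooth partition of unity.

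If $\depth(X)=0$ then $X=\reg(X)$ is a smooth manifold and, by Definition \ref{iter}, every smooth Riemannian metric on it is a $\hat{c}$-iterated edge metric, so the base case is immediate. Assume the statement proved for all smoothly Thom-Mather stratified pseudomanifolds of depth at most $k-1$, and let $\depth(X)=k$. For each singular stratum $Y\subset X_i\setminus X_{i-1}$ the link $L_Y$ is a smoothly stratified pseudomanifold of depth strictly smaller than $k$, hence by the inductive hypothesis $\reg(L_Y)$ admits a $(c_2,\dots,c_{m-i-1})$-iterated edge metric $g_{L_Y}$; fix one for every $Y$ once and for all.

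Next I would assemble a locally finite open cover of $\reg(X)$. For each singular stratum $Y$ pick, from the control data, a tubular neighbourhood $T_Y$ and a slightly smaller one $T_Y'$ with $\overline{T_Y'}\subset T_Y$; the sets $T_Y$ together with the "regular core" $X_0:=X\setminus\bigcup_Y\overline{T_Y'}$ cover $X$, and intersecting with $\reg(X)$ gives an open cover of $\reg(X)$ in which $X_0\cap\reg(X)$ stays bounded away from every singular stratum. On $X_0\cap\reg(X)$ use any smooth metric. On each $T_Y\cap\reg(X)$ cover $Y$ by trivializing neighbourhoods $U$ as in Definition \ref{thom}(v) and transport, via the diffeomorphism $\phi:\pi_Y^{-1}(U)\cap\reg(X)\to U\times\reg(C(L_Y))$, the product-type metric $dr^2+h_U+r^{2c_{m-i}}g_{L_Y}$, where $h_U$ is an arbitrary smooth metric on $U$ (chosen quasi-isometric to the Euclidean one under $U\cong(0,1)^i$). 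Since $X$ is metrizable, locally compact and second countable, $\reg(X)$ is paracompact; take a locally finite refinement of this cover with a subordinate smooth partition of unity $\{\chi_\alpha\}$ and set $g:=\sum_\alpha\chi_\alpha g_\alpha$, where $g_\alpha$ is the model metric attached to the cover member containing $\supp\chi_\alpha$. Being a convex combination of positive-definite symmetric $2$-tensors, $g$ is a smooth Riemannian metric on $\reg(X)$, and $\vol_g(\reg(X))<\infty$ when $X$ is compact.

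Finally one must check that $g$ satisfies \eqref{yhn} near every stratum $Y$. Fix $q\in Y$ and a trivializing $U\ni q$; on a small neighbourhood of $q$ in $\pi_Y^{-1}(U)\cap\reg(X)$ the only summands $g_\alpha$ that contribute are those coming from trivializations of $T_Y$ itself and those coming from tubular neighbourhoods $T_Z$ of strata $Z$ whose closure meets this neighbourhood. By the compatibility axioms (iii)--(iv) on the control data the retractions $\pi_Z$ and radial functions $\rho_Z$ restrict compatibly over $T_Y$, so each such $g_\alpha$ is, on the relevant region, itself quasi-isometric to $dr^2+h_U+r^{2c_{m-i}}g_{L_Y}$ — here one uses that the inductively chosen link metrics are determined up to quasi-isometry and that quasi-isometry is preserved under restriction. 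A sum $\sum_\alpha\chi_\alpha g_\alpha$ with $0\le\chi_\alpha\le1$ and $\sum_\alpha\chi_\alpha=1$ of metrics all quasi-isometric to a fixed one is again quasi-isometric to it; hence $g$ verifies \eqref{yhn} near $Y$. Doing this at every stratum of every depth closes the induction. I expect the genuinely delicate point to be exactly this last verification: at a point lying in the closures of several strata of different depths one must reconcile the model metrics produced from distinct systems of control data and show they are simultaneously quasi-isometric to the single model \eqref{yhn} for the stratum under consideration — this is precisely where the Thom-Mather compatibility conditions (iii)--(iv), ensuring coherence of the retractions and radial functions along chains of strata, are indispensable.
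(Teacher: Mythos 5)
Your construction is correct in outline, but note that the paper does not actually prove this proposition: its ``proof'' is a one-line citation to \cite{BHS} and \cite{ALMP}, so what you have written is a reconstruction of the argument those references contain rather than a parallel to anything in the text. Your reconstruction follows the standard route (induction on $\depth(X)$, model metrics $dr^2+h_U+r^{2c_{m-i}}g_{L_Y}$ transported through the trivializations of axiom (v) of Definition \ref{thom}, glued by a partition of unity subordinate to a locally finite cover by tubular neighbourhoods plus a regular core), and you correctly isolate the genuinely delicate step. Two points in your sketch are asserted rather than proved and would need to be supplied in a complete write-up. First, you invoke that ``the inductively chosen link metrics are determined up to quasi-isometry'': this is the statement that any two $(c_2,\dots,c_{m-i-1})$-iterated edge metrics on the compact link $\reg(L_Y)$ are quasi-isometric, which is true but is itself a lemma requiring its own induction on depth (using compactness of $L_Y$ to make the local quasi-isometry constants of \eqref{yhn} uniform); without it, the model metrics built from two overlapping trivializations of $\pi_Y^{-1}(U)$, or from the tubular neighbourhood of a deeper stratum, need not be comparable. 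Second, at a point of $T_Y\cap T_Z$ with $Y\subset\overline{Z}$ you must show that the $Z$-model metric, expressed in the $Y$-trivialization, again has the form \eqref{yhn} for $Y$; this uses not only axioms (iii)--(iv) but the fact that $Z\cap T_Y$ fibres over $Y$ with fibre the regular part of a cone over a stratum of $L_Y$, so that the $Z$-model is absorbed into the term $r^{2c_{m-i}}g_{L_Y}$ with $g_{L_Y}$ the inductively constructed metric on $\reg(L_Y)$. For this reason the cleaner version of the argument (as in \cite{ALMP}) builds the metric by descending induction from the deepest (closed) strata outward, extending at each stage, rather than gluing all strata simultaneously; your simultaneous gluing works but pushes all the compatibility burden onto the final verification, exactly as you anticipate.
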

\begin{proof}
See \cite{BHS} or \cite{ALMP} in the case $\hat{c}=(1,...,1,...,1)$.
\end{proof}
\noindent When $\hat{c}=(1,...,1)$ we will call this kind of  metrics simply  {\em iterated edge metric}.\\
The importance of this class of metrics lies on its deep connection with the topology of $X$. In fact, as pointed out by Cheeger in his seminal paper \cite{JC} (see also  \cite{FBe} and the bibliography cited there  for further developments ) the $L^2$-cohomology of $\reg(X)$ associated to an iterated edge metric is isomorphic to the intersection cohomology  of $X$ associated with a perversity depending only on $\hat{c}$. In other words the $L^2$-cohomology of these kind of metrics (which a priori is an object that lives only  on $\reg(X)$) provides non trivial topological informations of the whole space $X$.

\noindent Now we have the following proposition which assure the existence of a suitable sequence of cut-off functions. 

\begin{prop}
\label{tagliaz}
Let  $X$ be a compact, smoothly Thom-Mather stratified pseudomanifold of dimension $m$. Consider on $\reg(X)$ an iterated edge metric $g$. Then there exists a sequence of Lipschitz  functions with compact support contained in $\reg(X)$, $\{\phi_j\}_{j\in \mathbb{N}}$, such that 
\begin{itemize}
\item  $0\leq \phi_j\leq 1$ for each $j$.
\item $\phi_j\rightarrow 1 $ pointwise.
\item $\phi_j\in \mathcal{D}(d_{0,\min})$ for each $j\in \mathbb{N}$ and  $\lim_{j\rightarrow \infty}\|d_{0,\min}\phi_j\|_{L^2\Omega^1(\reg(V),g)}=0$.  
\end{itemize}
In particular $1\in \mathcal{D}(d_{0,\min})$.
\end{prop}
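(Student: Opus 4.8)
The plan is to build $\phi_j$ as a \emph{finite product of one logarithmic cut-off per singular stratum}, each constructed directly from the control function $\rho_Y$. Since $X$ is compact and the decomposition $\mathfrak{G}$ of Def.~\ref{thom} is locally finite, there are only finitely many strata; list those of positive depth as $Y_1,\dots,Y_N$, so that $\sing(X)=\bigcup_{a=1}^N Y_a$. For $j\geq 3$ let $\psi_j\colon[0,2)\to[0,1]$ be the non-decreasing Lipschitz function equal to $0$ on $[0,1/j]$, equal to $1$ on $[1/2,2)$, and equal to $\log(jr)/\log(j/2)$ on $[1/j,1/2]$, so that $\psi_j'(r)=\bigl(r\log(j/2)\bigr)^{-1}$ on $(1/j,1/2)$. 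Using the control function $\rho_{Y_a}\colon T_{Y_a}\to[0,2)$, set $\chi^a_j:=\psi_j\circ\rho_{Y_a}$ on $T_{Y_a}$ and $\chi^a_j:=1$ on $X\setminus T_{Y_a}$; by the standard properties of the control data (after shrinking the $T_{Y_a}$ if necessary so that $\{\rho_{Y_a}\leq 1/2\}$ is a compact subset of $T_{Y_a}$), this is a well-defined Lipschitz function on $X$, equal to $1$ outside $\{\rho_{Y_a}<1/2\}$ and vanishing on the open set $\rho_{Y_a}^{-1}\bigl([0,1/j)\bigr)$, which is a neighbourhood of $Y_a$. Finally put $\phi_j:=\prod_{a=1}^N\chi^a_j$.

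The elementary properties are then immediate. The function $\phi_j$ is Lipschitz and $[0,1]$-valued; it tends to $1$ pointwise on $\reg(X)$, since for $p\in\reg(X)$ one has $\rho_{Y_a}(p)>0$ for every $a$ (as $p\notin Y_a$), hence $\chi^a_j(p)=\psi_j(\rho_{Y_a}(p))\to 1$. Moreover $\supp(\phi_j)\cap\sing(X)=\emptyset$: any $p\in\sing(X)$ lies in some $Y_a$, and $\chi^a_j$ — hence $\phi_j$ — vanishes on the open neighbourhood $\rho_{Y_a}^{-1}([0,1/j))$ of $p$. As $\supp(\phi_j)$ is closed in the compact space $X$, it is a compact subset of $\reg(X)$, so $\phi_j$ is Lipschitz with compact support in $\reg(X)$; by Theorem~11.3 in \cite{GYA}, the compactness of $\supp(\phi_j)$, Prop.~\ref{carmina} and $\vol_g(\reg(X))<\infty$ (exactly as in the proof of Prop.~\ref{sette}), it follows that $\phi_j\in\mathcal{D}(d_{0,\min})$.

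The substance is the estimate $\lim_{j\to\infty}\|d_{0,\min}\phi_j\|_{L^2\Omega^1(\reg(X),g)}=0$. Since $0\leq\chi^b_j\leq 1$, the Leibniz rule gives $|d\phi_j|_g\leq\sum_{a=1}^N|d\chi^a_j|_g$ pointwise, so it suffices to prove $\|d\chi^a_j\|_{L^2\Omega^1(\reg(X),g)}\to0$ for each fixed $a$. Here $d\chi^a_j=\psi_j'(\rho_{Y_a})\,d\rho_{Y_a}$ is supported in the compact set $\{1/j\leq\rho_{Y_a}\leq1/2\}\subset T_{Y_a}\cap\reg(X)$, which by Def.~\ref{thom}(v) and compactness is covered by finitely many trivializing charts $\phi\colon\pi_{Y_a}^{-1}(U)\to U\times C(L_{Y_a})$ in which $\rho_{Y_a}$ is the radial coordinate $r$ of the cone and, since $\hat c=(1,\dots,1)$, the metric satisfies $g\sim dr^2+h_U+r^2 g_{L_{Y_a}}$ by Def.~\ref{iter} and \eqref{yhn}. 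In each such chart $|d\rho_{Y_a}|_g\sim 1$ and $\dvol_g\sim r^{\ell}\,dr\,\dvol_{h_U}\,\dvol_{g_{L_{Y_a}}}$ with $\ell:=\dim L_{Y_a}$, hence
\[
\int_{\pi_{Y_a}^{-1}(U)\cap\reg(X)}|d\chi^a_j|_g^2\,\dvol_g\ \lesssim\ \vol_{h_U}(U)\,\vol_{g_{L_{Y_a}}}(\reg(L_{Y_a}))\int_{1/j}^{1/2}|\psi_j'(r)|^2\,r^{\ell}\,dr .
\]
Now $\vol_{h_U}(U)<\infty$, and $\vol_{g_{L_{Y_a}}}(\reg(L_{Y_a}))<\infty$ because $L_{Y_a}$ is a compact smoothly Thom–Mather stratified pseudomanifold carrying an iterated edge metric; moreover Def.~\ref{thom}(vi) forbids codimension‑one strata, so $\cod_{\mathbb{R}}(Y_a)=\ell+1\geq 2$, i.e. $\ell\geq1$. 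With $\psi_j'(r)^2=r^{-2}(\log(j/2))^{-2}$ on $(1/j,1/2)$ we obtain $\int_{1/j}^{1/2}|\psi_j'(r)|^2 r^{\ell}\,dr=(\log(j/2))^{-2}\int_{1/j}^{1/2}r^{\ell-2}\,dr$, which is $O((\log j)^{-2})$ when $\ell\geq2$ and equals $(\log(j/2))^{-1}$ when $\ell=1$; in either case it tends to $0$. Summing over the finitely many charts and over $a$ yields $\|d_{0,\min}\phi_j\|_{L^2\Omega^1(\reg(X),g)}\to0$, and letting $j\to\infty$ gives $1\in\mathcal{D}(d_{0,\min})$.

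\textbf{Expected main obstacle.} The genuinely delicate points are all structural rather than analytic: that the global control function $\rho_{Y}$ really coincides, up to the quasi-isometry built into \eqref{yhn}, with the radial cone coordinate in every admissible trivialization; that $\{\rho_{Y}\leq1/2\}$ is compact in $T_Y$ and is covered by finitely many such trivializations, so that the local volume estimates may legitimately be summed; and that the locally defined $\psi_j\circ\rho_Y$ patch to a Lipschitz function on all of $X$ (this uses the standard control-data facts, see \cite{BHS}, \cite{ALMP}). Once these are in place, the only quantitative inputs are $\cod_{\mathbb{R}}(Y)\geq2$ and $\vol_{g_{L_Y}}(\reg(L_Y))<\infty$, and what remains is the familiar logarithmic cut-off computation. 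One could equivalently phrase the whole argument as an induction on $\depth(X)$, feeding in on each link $L_Y$ the cut-off sequence provided by the inductive hypothesis; the direct product construction above simply avoids having to propagate cut-offs through the links.
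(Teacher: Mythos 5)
Your construction is sound, but it is worth noting that the paper itself gives no proof here: it simply cites \cite{BeGu}, where the existence of such a cut-off sequence is established as part of the proof that $\reg(X)$ is ($2$-)parabolic, essentially by an induction on $\depth(X)$. What you supply is the direct, non-inductive version of that argument: one logarithmic cut-off $\psi_j\circ\rho_{Y_a}$ per singular stratum, glued by taking the finite product, with the two quantitative inputs being $\cod_{\mathbb{R}}(Y_a)\geq 2$ (from axiom (vi), $X_{m-1}=X_{m-2}$, so $\ell=\dim L_{Y_a}\geq 1$) and $\vol_{g_{L_{Y_a}}}(\reg(L_{Y_a}))<\infty$. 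This parallels the paper's own Prop.\ \ref{taglia} for projective varieties, where the cut-offs are instead built from distance functions to the components of the exceptional divisor of a resolution and the profile $(r/\epsilon_n)^{\epsilon_n}$ is used in place of $\log(jr)/\log(j/2)$; both profiles work because both exploit codimension $\geq 2$. Your route buys a self-contained proof adapted to the Thom--Mather control data and avoids propagating cut-offs through the links, at the price of having to verify the structural facts you list (that $\rho_Y$ is the radial cone coordinate in each trivialization, that the tubes can be shrunk so that $\{\rho_Y\leq 1/2\}$ is compact, and that finitely many trivializations suffice); all of these are standard and the overall estimate, including the case distinction $\ell=1$ versus $\ell\geq 2$, is correct.

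One assertion is inaccurate, though harmlessly so: for a stratum $Y_a$ of depth $\geq 2$ the set $\{1/j\leq\rho_{Y_a}\leq 1/2\}$ is \emph{not} contained in $\reg(X)$, since by axiom (iv) the tube $T_{Y_a}$ meets every singular stratum $Z$ with $Y_a\subset\overline{Z}$, and $\rho_{Y_a}$ takes values throughout $(0,2)$ on $T_{Y_a}\cap Z$. What is true, and what your estimate actually needs, is that this set is compact in $X$, that its image under $\pi_{Y_a}$ is a compact subset of $Y_a$ coverable by finitely many trivializing charts, and that the quasi-isometry \eqref{yhn} together with the integration being performed only over $U\times(0,2)\times\reg(L_{Y_a})$ (the regular part of the chart) yields the stated bound; the presence of deeper singular strata inside the chart is irrelevant because $\vol_{g_{L_{Y_a}}}(\reg(L_{Y_a}))<\infty$ already accounts for the singularities of the link. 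With that phrasing corrected, the argument goes through.
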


\begin{proof}
See  \cite{BeGu}.
\end{proof}

\begin{teo}
\label{pianopianof}
Let $X$ be a compact, smoothly Thom-Mather stratified pseudomanifold of dimension $m$.  Consider on $\reg(X)$ an iterated edge metric $g$.  Let $E$ be a vector bundle over $\reg(X)$ and let $h$ be a metric on $E$, Riemannian if $E$ is a real vector bundle, Hermitian if $E$ is a complex vector bundle.  Finally let $\nabla:C^{\infty}(\reg(X),E)\rightarrow C^{\infty}(\reg(V),T^*\reg(X)\otimes E)$ be a metric connection. We have the following properties:
\begin{itemize}
\item $W^{1,2}(\reg(X),E)=W^{1,2}_0(\reg(X),E)$.
\item Assume that $m>2$. Then there exists a continuous inclusion $W^{1,2}(\reg(X),E)\hookrightarrow L^{\frac{2m}{m-2}}(\reg(X),E)$.
\item Assume that $m>2$. Then the inclusion $W^{1,2}(\reg(X),E)\hookrightarrow L^2(\reg(X),E)$ is a compact operator.
\end{itemize}
\end{teo}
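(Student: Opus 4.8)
The plan is to follow the three-step pattern used to prove Theorem \ref{pianopiano}, replacing each analytic ingredient specific to projective varieties by its counterpart for iterated edge metrics. For the first bullet, I would invoke Prop. \ref{tagliaz}: on $\reg(X)$ with an iterated edge metric there is a sequence $\{\phi_j\}_{j\in\mathbb{N}}$ of compactly supported Lipschitz functions with $0\le\phi_j\le 1$, $\phi_j\to 1$ pointwise, $\phi_j\in\mathcal{D}(d_{0,\min})$ and $\|d_{0,\min}\phi_j\|_{L^2\Omega^1(\reg(X),g)}\to 0$. This is exactly the hypothesis of Prop. \ref{sette}, which applied to $(\reg(X),g)$, $E$, $h$, $\nabla$ immediately yields $W^{1,2}(\reg(X),E)=W^{1,2}_0(\reg(X),E)$.

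For the second and third bullets, the crucial input is the \emph{scalar} Sobolev inequality $W^{1,2}_0(\reg(X),g)\hookrightarrow L^{\frac{2m}{m-2}}(\reg(X),g)$, which holds on a compact Thom--Mather stratified pseudomanifold of dimension $m>2$ carrying an iterated edge metric; I would cite \cite{ACM} for this. (Equivalently, by the argument recalled in the proof of Prop. \ref{luglioaberlino}, it amounts to the on-diagonal heat-kernel bound $k_{\Delta_0}(t,x,x)\le Ct^{-m/2}$ for $0<t<1$ on $\reg(X)$.) Once this is available, Prop. \ref{otto} applies with $v:=m$: its first conclusion gives the continuous inclusion $W^{1,2}_0(\reg(X),E)\hookrightarrow L^{\frac{2m}{m-2}}(\reg(X),E)$ (the proof there running through Kato's inequality, Cor. \ref{katocor}), and — using that a compact $X$ with an iterated edge metric has $\vol_g(\reg(X))<\infty$ — its second conclusion gives that $W^{1,2}_0(\reg(X),E)\hookrightarrow L^2(\reg(X),E)$ is a compact operator. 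Feeding in the first bullet, $W^{1,2}_0(\reg(X),E)=W^{1,2}(\reg(X),E)$, so both statements hold with $W^{1,2}(\reg(X),E)$ in place of $W^{1,2}_0(\reg(X),E)$, which is precisely the second and third bullets.

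Thus, granted Prop. \ref{tagliaz} and the scalar Sobolev inequality, the theorem is a formal consequence of the general results of Sections 1--3 (Prop. \ref{sette}, Cor. \ref{katocor}, Prop. \ref{corti}, Prop. \ref{otto}). The only place where new analysis genuinely enters is the scalar Sobolev inequality on $\reg(X)$; this, together with Prop. \ref{tagliaz} (proved elsewhere, see \cite{BeGu}), is where the inductive structure of iterated edge metrics and the codimension bounds on the links actually get exploited, and I expect this to be the main obstacle — everything downstream of it is bookkeeping identical to the projective-variety case.
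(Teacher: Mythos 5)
Your proposal is correct and follows essentially the same route as the paper: the first bullet via Prop. \ref{tagliaz} combined with Prop. \ref{sette}, the scalar Sobolev inequality imported from \cite{ACM} (Prop. 2.2 there), Kato's inequality to pass from functions to sections, and Prop. \ref{otto} for the compactness. The only cosmetic difference is that the paper derives the second bullet directly from Prop. \ref{devo} plus density of smooth sections rather than quoting the first conclusion of Prop. \ref{otto}, but the underlying argument is identical.
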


\begin{proof}
The first point follows by Prop. \ref{tagliaz} and by  Prop. \ref{sette}.  The continuous inclusion 
$W^{1,2}_0(\reg(X),g)\hookrightarrow L^{\frac{2m}{m-2}}(\reg(M),g)$ is established in \cite{ACM} Prop. 2.2. By the first point of this theorem  we know that $W^{1,2}(\reg(X),g)=W_0^{1,2}(\reg(X),g)$ and therefore we have the continuous inclusion $W^{1,2}(\reg(X),g)\hookrightarrow L^{\frac{2m}{m-2}}(\reg(X),g)$. Now, using Prop. \ref{devo}, we get the continuous inclusion $$C^{\infty}(\reg(X),E)\cap W^{1,2}(\reg(X),E)\hookrightarrow L^{\frac{2m}{m-2}}(\reg(X),E).$$ Finally, by the density of $C^{\infty}(\reg(X),E)\cap W^{1,2}(\reg(X),E)$ in $W^{1,2}(\reg(X),E)$, see Prop. \ref{carmina}, the continuous inclusion $W^{1,2}(\reg(X),E)\hookrightarrow L^{\frac{2m}{m-2}}(\reg(X),E)$ is established. Finally the third point is a consequence of the second point and  Prop. \ref{otto}.
\end{proof}

\begin{cor}
\label{delidaf}
Under the assumptions of Theorem \ref{pianopianof}. Then $\im(\nabla_{\min})=\im(\nabla_{\max})$ is a closed subspace of $L^2(\reg(X),T^*\reg(X)\otimes E)$.
\end{cor}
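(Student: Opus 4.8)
The plan is to reproduce essentially verbatim the argument used for the analogous statement Cor.~\ref{delida} in the projective setting, since all of the real work has already been isolated into the general results of Section~3. First I would recall the conventions of Section~1, namely that $W^{1,2}(\reg(X),E)=\mathcal{D}(\nabla_{\max})$ and $W^{1,2}_0(\reg(X),E)=\mathcal{D}(\nabla_{\min})$. The first bullet of Theorem~\ref{pianopianof} then says precisely that $\nabla_{\max}=\nabla_{\min}$ as closed operators acting on $L^2(\reg(X),E)$, and in particular $\im(\nabla_{\max})=\im(\nabla_{\min})$. Hence it suffices to show that $\im(\nabla_{\min})$ is a closed subspace.

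For this I would invoke Cor.~\ref{gaber}, whose only hypothesis is that the natural inclusion $W^{1,2}_0(\reg(X),E)\hookrightarrow L^2(\reg(X),E)$ be a compact operator. This is exactly the content of the third bullet of Theorem~\ref{pianopianof} (available, as there, under the dimensional hypothesis $m>2$). Cor.~\ref{gaber} then yields at once that $\im(\nabla_{\min})$ is closed in $L^2(\reg(X),T^*\reg(X)\otimes E)$, and combined with the previous paragraph this gives $\im(\nabla_{\min})=\im(\nabla_{\max})$ closed, which is the assertion.

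I do not expect any genuine obstacle here: the entire mechanism — passing to $\nabla^t_{\max}\circ\nabla_{\min}$, deducing its discreteness and hence Fredholmness from the compact embedding, and then comparing the two orthogonal decompositions $L^2(\reg(X),E)=\ker(\nabla_{\min})\oplus\overline{\im(\nabla^t_{\max})}$ and $L^2(\reg(X),E)=\ker(\nabla^t_{\max}\circ\nabla_{\min})\oplus\overline{\im(\nabla^t_{\max}\circ\nabla_{\min})}$ — is already carried out inside the proof of Cor.~\ref{gaber}, and the two inputs about $(\reg(X),g)$ that it relies on, namely the $L^2$-Stokes equality $\nabla_{\max}=\nabla_{\min}$ and the compact Sobolev embedding for iterated edge metrics, are furnished by Theorem~\ref{pianopianof} (these in turn resting on the cut-off sequence of Prop.~\ref{tagliaz} and the Sobolev inequality of \cite{ACM}). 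The only point deserving a line of care is to record that Cor.~\ref{gaber} needs the compact embedding, and hence the hypothesis $m>2$, exactly as in the projective case.
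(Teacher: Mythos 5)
Your proposal is correct and coincides with the paper's own proof: the first bullet of Theorem \ref{pianopianof} gives $\nabla_{\max}=\nabla_{\min}$, hence equality of images, and the compact embedding from the third bullet feeds into Cor. \ref{gaber} to yield closedness. Your additional remark that the compactness input, and hence the hypothesis $m>2$, is implicitly required is a fair point of care, but the argument is otherwise identical to the one in the paper.
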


\begin{proof}
According to Prop. \ref{pianopianof} we know that $\nabla_{\max}=\nabla_{\min}$ and therefore $\im(\nabla_{\max})=\im(\nabla_{\min})$. Now the thesis follows by Cor. \ref{gaber}.
\end{proof}

We have also the following application.

\begin{prop}
\label{dieci}
Let $(\reg(X),g)$ be as in Theorem \ref{pianopianof}. Let  $E$ and $F$ be two vector bundles over $\reg(X)$ endowed respectively with  metrics $h$ and $\rho$, Riemannian if $E$ and $F$ are real vector bundles, Hermitian if $E$ and $F$ are complex vector bundles. Finally let  $\nabla:C^{\infty}(\reg(X),E)\rightarrow C^{\infty}(\reg(X),T^*\reg(X)\otimes E)$
be a metric connection. Consider a  first order differential operator of this type:
\begin{equation}
\label{gazzzzf}
D:=\theta_0\circ \nabla:C^{\infty}_c(\reg(X),E)\rightarrow C^{\infty}_c(\reg(X),F)
\end{equation}
where $\theta_0\in  C^{\infty}(\reg(X),\Hom(T^*\reg(X)\otimes E,F)).$ Assume that  $\theta_0$ extends as a bounded operator  $$ \theta:L^2(\reg(X), T^*\reg(X)\otimes E)\rightarrow L^2(\reg(X), F).$$  Then we have the following inclusion:
\begin{equation}
\label{cantianobellof}
\mathcal{D}(D_{\max})\cap L^{\infty}(\reg(X),E)\subset \mathcal{D}(D_{\min}).
\end{equation}
In particular \eqref{cantianobellof} holds when $D$ is the de Rham differential $d_k:\Omega_{c}^k(\reg(X))\rightarrow \Omega^{k+1}_c(\reg(X))$ or  a Dirac type  operator   $D:C^{\infty}_c(\reg(X),E)\rightarrow C^{\infty}_c(\reg(X),E)$.
\end{prop}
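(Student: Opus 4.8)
The plan is to deduce this proposition directly from two facts already established in the paper: the existence of a well-behaved sequence of cut-off functions on $(\reg(X),g)$ and the abstract statement of Prop.~\ref{drivevz}, exactly mirroring the argument for Prop.~\ref{nove} in the projective case. First I would invoke Prop.~\ref{tagliaz}, which supplies a sequence of Lipschitz functions with compact support $\{\phi_j\}_{j\in\mathbb{N}}$ on $\reg(X)$ with $0\le\phi_j\le 1$, $\phi_j\to 1$ pointwise, $\phi_j\in\mathcal{D}(d_{0,\min})$, and $\|d_{0,\min}\phi_j\|_{L^2\Omega^1(\reg(X),g)}\to 0$. This is precisely the hypothesis on $(M,g)$ required by Prop.~\ref{sette}, and hence by Prop.~\ref{drivevz}. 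Applying Prop.~\ref{drivevz} with $(M,g)=(\reg(X),g)$, with the bundles $E,F$ and metrics $h,\rho$ as given, and with the first order operator $D=\theta_0\circ\nabla$ under the stated boundedness assumption on $\theta_0$, immediately yields the inclusion \eqref{cantianobellof}, that is $\mathcal{D}(D_{\max})\cap L^{\infty}(\reg(X),E)\subset\mathcal{D}(D_{\min})$.

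It then remains only to check that the two named examples are genuinely of the form $\theta_0\circ\nabla$ with $\theta_0$ extending to a bounded operator on $L^2$. For the de Rham differential $d_k:\Omega^k_c(\reg(X))\to\Omega^{k+1}_c(\reg(X))$ one writes $d_k=\mathrm{alt}\circ\nabla_k$, where $\nabla_k$ is the connection on $\Lambda^kT^*\reg(X)$ induced by the Levi-Civita connection of $g$ and $\mathrm{alt}:T^*\reg(X)\otimes\Lambda^kT^*\reg(X)\to\Lambda^{k+1}T^*\reg(X)$ is fibrewise antisymmetrization; since $\mathrm{alt}$ is bounded on each fibre by a constant depending only on $k$ and $m=\dim X$, it extends to a bounded operator on $L^2$, so Prop.~\ref{drivevz} applies. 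For a Dirac type operator $D=\tilde c\circ\nabla$ the bundle homomorphism $\tilde c$ induced by Clifford multiplication is likewise fibrewise bounded (pointwise $|\tilde c|$ is controlled by a universal constant), hence extends boundedly on $L^2$, and again Prop.~\ref{drivevz} gives \eqref{cantianobellof}.

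I expect no serious obstacle in this proof: it is a direct transplant of Prop.~\ref{nove}, with the role played there by Prop.~\ref{taglia} now taken over by Prop.~\ref{tagliaz}. The only point one must be careful about is that, unlike in the projective setting of the previous section, $\reg(X)$ need not carry a complex structure, so the Dolbeault operator $\overline{\partial}_{p,q}$ is dropped from the list of examples, while $d_k$ and Dirac type operators remain covered verbatim by the same reasoning.
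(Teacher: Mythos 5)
Your proof is correct and follows essentially the same route as the paper, which simply cites Prop.~\ref{drivevz} together with the results of the preceding section; your version is if anything slightly more precise, since the hypothesis Prop.~\ref{drivevz} actually requires is the cut-off sequence of Prop.~\ref{tagliaz} rather than Theorem~\ref{pianopianof} itself. The verification that $d_k$ and Dirac type operators fit the form $\theta_0\circ\nabla$ with $\theta_0$ fibrewise bounded, and the remark that $\overline{\partial}_{p,q}$ must be dropped in the absence of a complex structure, are exactly as intended.
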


\begin{proof}
This follows by Theorem \ref{pianopianof} and Prop. \ref{drivevz}.
\end{proof}

\noindent Finally consider  again the setting of Theorem \ref{pianopianof}. The remaining part of this section collects applications to some  Schr\"odinger type operators
\begin{equation}
\label{polvere}
\nabla^t\circ\nabla +L 
\end{equation}
 where $\nabla: C_c^{\infty}(\reg(X),E)\rightarrow C_c^{\infty}(\reg(X),T^*M\otimes E)$ is a metric connection, $\nabla^t: C_c^{\infty}(\reg(X),T^*M\otimes E)\rightarrow C_c^{\infty}(\reg(X),E)$ is the formal adjoint of $\nabla$ and $L\in C^{\infty}(\reg(X),\End(E))$ is a bundle homomorphism.

\begin{teo}
\label{skernelf}
Let $X$, $E$, $g$, $h$, and $\nabla$ be as described above. Let $$P:=\nabla^t\circ \nabla +L,\ P:C^{\infty}_c(\reg(X),E)\rightarrow C_c^{\infty}(\reg(X),E)$$ be a Schr\"odinger type operator with $L\in C^{\infty}(\reg(X),\End(E))$. Assume that:
\begin{itemize}
\item $P$ is symmetric and  positive.
\item    There is a constant $c\in \mathbb{R}$ such that, for each $s\in C^{\infty}(\reg(X),E)$, we have  $$h(Ls,s)\geq ch(s,s).$$
\end{itemize}
Let $P^{\mathcal{F}}:L^2(\reg(X),E)\rightarrow L^2(\reg(X),E)$ be the Friedrich extension of $P$ and let  $\Delta_0^{\mathcal{F}}:L^2(\reg(X),g)\rightarrow L^2(\reg(X),g)$  be the Friedrich extension of $\Delta_0:C^{\infty}_c(\reg(X))\rightarrow C^{\infty}_c(\reg(X))$. Then the heat operator associated to $P^{\mathcal{F}}$ $$e^{-tP^{\mathcal{F}}}:L^{2}(\reg(X),E)\longrightarrow L^2(\reg(X),E)$$  
is a trace class operator and its trace satisfies the following inequality: 
\begin{equation}
\label{marzf}
\Tr(e^{-tP^{\mathcal{F}}})\leq re^{-tc}\Tr(e^{-t\Delta_0^{\mathcal{F}}})
\end{equation}
where $r$ is the rank of the vector bundle $E$.
\end{teo}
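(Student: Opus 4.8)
The plan is to obtain Theorem \ref{skernelf} as an immediate corollary of Proposition \ref{skernelz}, applied with $M=\reg(X)$, the given iterated edge metric $g$, connection $\nabla$, potential $L$ and constant $c$. The conclusion of Proposition \ref{skernelz} — that $e^{-tP^{\mathcal F}}$ is trace class and that $\Tr(e^{-tP^{\mathcal F}})\le re^{-tc}\Tr(e^{-t\Delta_0^{\mathcal F}})$, with $r$ the rank of $E$ — is literally the assertion \eqref{marzf}. So the argument reduces to verifying that $(\reg(X),g)$ satisfies the standing hypotheses of Proposition \ref{skernelz}.

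First I would observe that, $X$ being compact and $g$ an iterated edge metric, one has $\vol_g(\reg(X))<\infty$, as recorded after Definition \ref{iter}. Next, for $m>2$, Theorem \ref{pianopianof} applied to the trivial real line bundle $\reg(X)\times\mathbb{R}$ (equivalently Proposition 2.2 of \cite{ACM}) furnishes the continuous Sobolev inclusion $W^{1,2}_0(\reg(X),g)\hookrightarrow L^{\frac{2m}{m-2}}(\reg(X),g)$ demanded by Proposition \ref{skernelz}. The remaining requirements — that $P=\nabla^t\circ\nabla+L$ be symmetric and positive and that $h(Ls,s)\ge ch(s,s)$ — are exactly the hypotheses of the theorem. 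Hence Proposition \ref{skernelz} applies and delivers both the trace class property and \eqref{marzf}.

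The genuine content behind this short deduction lies, as in the projective case, in Proposition \ref{skernelz} itself, whose proof combines the heat kernel bound $k_{\Delta_0}(t,x,y)\le Ct^{-\frac{m}{2}}$ (equivalent to the Sobolev inequality above) with the domination of semigroups $|e^{-tP^{\mathcal F}}s|_h\le e^{-tc}e^{-t\Delta_0^{\mathcal F}}|s|_h$ of Theorem \ref{ready}. I expect the only delicate point — already cleared in the earlier sections — to be the input needed for the Kato-inequality route to Theorem \ref{ready} on the incomplete manifold $\reg(X)$, namely $\vol_g(\reg(X))<\infty$ together with $\mathcal{D}(d_{0,\max})=\mathcal{D}(d_{0,\min})$; the latter is the $L^2$-Stokes property for functions and follows by combining Proposition \ref{tagliaz} with Proposition \ref{sette}. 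Thus, once the cut-off sequence of Proposition \ref{tagliaz} is in hand, the proof of Theorem \ref{skernelf} is essentially a one-line citation; the cases $m\le 2$, where the Sobolev embedding into $L^{\frac{2m}{m-2}}$ is unavailable, can be treated separately, since there $\reg(X)$ is either a closed manifold or carries only isolated conical singularities.
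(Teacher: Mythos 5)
Your proposal is correct and is essentially identical to the paper's own (one-line) proof, which simply cites Proposition \ref{skernelz} together with Theorem \ref{pianopianof}; your verification of the hypotheses (finite volume from compactness of $X$, the Sobolev inclusion from Theorem \ref{pianopianof}/\cite{ACM}, and $\mathcal{D}(d_{0,\max})=\mathcal{D}(d_{0,\min})$ via Propositions \ref{tagliaz} and \ref{sette}) matches the intended argument. Your remark about the low-dimensional cases $m\leq 2$ is a reasonable caveat that the paper itself does not address explicitly.
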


\begin{proof}
This follows by Prop. \ref{skernelz} and by Theorem \ref{pianopianof}.
\end{proof}

Analogously to the previous section we have now the following corollaries.

\begin{cor}
\label{otino}
Under the assumptions of Theorem \ref{skernelf}. For each $t\in (0,1)$ we have the following inequalities for the pointwise trace and for the heat trace of $e^{-tP^{\mathcal{F}}}$ respectively:
$$\tr(k_{P}(t,x,x))\leq rCe^{-tc}t^{\frac{-m}{2}}$$ $$\Tr(e^{-tP^{\mathcal{F}}})\leq rCe^{-tc}\vol_g(\reg(X))t^{\frac{-m}{2}}$$ where $C$ is the  positive constant arising from Prop. \ref{luglioaberlino}.
The operator $P^{\mathcal{F}}:L^2(\reg(X),E)\rightarrow L^2(\reg(X),E)$ is a discrete operator. 
If we label its eigenvalues with $$0\leq \lambda_{0}\leq \lambda_1\leq...\leq \lambda_n\leq...$$ then there exists a  positive constant $K$ such that we have the following asymptotic inequality $$\lambda_j\geq Kj^{\frac{2}{m}}+c$$  as $j\rightarrow \infty$.
\end{cor}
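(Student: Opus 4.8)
The plan is to obtain Cor.~\ref{otino} as the straightforward transposition of Cor.~\ref{eigen} to the present geometric setting, the only thing to check being that the hypotheses of the abstract statements of Section~3 are satisfied. First I would record that, since $X$ is compact and $g$ is an iterated edge metric, $\vol_g(\reg(X))<\infty$, and that for $m>2$ Theorem~\ref{pianopianof} provides the continuous Sobolev inclusion $W^{1,2}_0(\reg(X),g)\hookrightarrow L^{\frac{2m}{m-2}}(\reg(X),g)$. Hence, with the generic manifold $M$ of Section~3 replaced by $\reg(X)$, all the hypotheses of Prop.~\ref{luglioaberlino}, Prop.~\ref{skernelz}, Theorem~\ref{skernelf} and Cor.~\ref{eigen} hold.

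Next I would argue exactly as in the proof of Cor.~\ref{eigen}. By Prop.~\ref{luglioaberlino}, $e^{-t\Delta_0^{\mathcal{F}}}$ is trace class and its heat kernel satisfies $k_{\Delta_0}(t,x,y)\leq Ct^{-m/2}$ for $t\in(0,1)$ with $C$ the constant appearing in \eqref{intraces}; moreover $\Tr(e^{-t\Delta_0^{\mathcal{F}}})\leq C\vol_g(\reg(X))t^{-m/2}$. On the other hand, from Theorem~\ref{skernelf} (equivalently, from the proof of Prop.~\ref{skernelz}) we have the pointwise bound $\tr(k_P(t,x,x))\leq re^{-tc}k_{\Delta_0}(t,x,x)$ and the global bound $\Tr(e^{-tP^{\mathcal{F}}})\leq re^{-tc}\Tr(e^{-t\Delta_0^{\mathcal{F}}})$, with $r$ the rank of $E$. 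Chaining these two pairs of inequalities and integrating over $\reg(X)$ gives $\tr(k_P(t,x,x))\leq rCe^{-tc}t^{-m/2}$ and $\Tr(e^{-tP^{\mathcal{F}}})\leq rCe^{-tc}\vol_g(\reg(X))t^{-m/2}$ for $t\in(0,1)$.

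For the discreteness of $P^{\mathcal{F}}$ I would simply note that a trace class operator is compact, so $e^{-tP^{\mathcal{F}}}$ is compact and positive, which forces $P^{\mathcal{F}}$ to have purely discrete spectrum. For the Weyl-type lower bound I would set $\mu_j:=\lambda_j-c$, so that $\sum_{j}e^{-t\mu_j}=e^{tc}\Tr(e^{-tP^{\mathcal{F}}})\leq rC\vol_g(\reg(X))t^{-m/2}$ for $t\in(0,1)$; the classical Tauberian argument already used in Cor.~\ref{eigen} (see \cite{Tay}) then yields a constant $K>0$ with $\mu_j\geq Kj^{2/m}$ as $j\to\infty$, i.e.\ $\lambda_j\geq Kj^{2/m}+c$.

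I do not expect any real obstacle: the substantive analytic input — the Sobolev embedding for sections on $(\reg(X),g)$ and the heat-semigroup domination — is already contained in Theorem~\ref{pianopianof} and Theorem~\ref{skernelf}, and what remains is the same elementary bookkeeping (combining the heat-kernel estimate with the domination inequality and invoking the Tauberian theorem) carried out in Cor.~\ref{eigen}. The only point that deserves a word of care is the identification of the constant $C$ in the statement with the one produced by Prop.~\ref{luglioaberlino}, so that the conclusion can be phrased with a single constant.
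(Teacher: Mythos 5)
Your argument is correct and coincides with the paper's: the paper proves Cor.~\ref{otino} simply by invoking Cor.~\ref{eigen} (whose hypotheses are supplied by Theorem~\ref{pianopianof} and the finiteness of $\vol_g(\reg(X))$), and your proposal just spells out that same reduction together with the bookkeeping already carried out in the proof of Cor.~\ref{eigen}.
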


\begin{proof}
This follows by Cor. \ref{eigen}.
\end{proof}

\begin{cor}
\label{democ}
Let $X, E, h$ and $g$ as in the statement of  Theorem \ref{skernelf}. Consider the Bochner Laplacian $$\nabla^t\circ \nabla:C^{\infty}_c(\reg(X),E)\rightarrow C^{\infty}_c(\reg(X),E).$$ Let 
\begin{equation}
\label{boclapf}
(\nabla^t\circ \nabla)^{\mathcal{F}}:L^2(\reg(X),E)\rightarrow L^2(\reg(X),E)
\end{equation}
be its Friedrich extension. Then 
\begin{equation}
\label{muiof}
e^{-t(\nabla^t\circ \nabla)^{\mathcal{F}}}:L^{2}(\reg(X),E)\longrightarrow L^2(\reg(X),E)
\end{equation}
is a trace class operator; for every $t\in (0,1)$ its pointwise trace and its  trace satisfy the following inequalities:
\begin{equation}
\label{jfrazierf}
\tr(k_{\nabla^t\circ \nabla}(t,x,x))\leq rCt^{\frac{-m}{2}}
\end{equation} 
 \begin{equation}
\label{marzlf}
\Tr(e^{-t(\nabla^t\circ \nabla)^{\mathcal{F}}})\leq rC\vol_g(\reg(X))t^{\frac{-m}{2}}
\end{equation}
where $C$ is the positive constant arising from Prop. \ref{luglioaberlino}.
\eqref{boclapf} is a discrete operator and its sequence of eigenvalues, $\{\lambda_j\}$,  satisfies the following asymptotic inequality:
\begin{equation}
\label{waserzf}
\lambda_j\geq  Kj^{\frac{2}{m}}
\end{equation}
as $j\rightarrow \infty$ where $K$ is a positive constant.
Finally a  core domain for \eqref{boclapf} is given by 
\begin{equation}
\label{negimvf}
\{s\in C^{\infty}(\reg(X),E)\cap L^{2}(\reg(X),E):\  \nabla s\in L^2(\reg(X),T^*\reg(X)\otimes E)\ \text{and}\ \nabla^t(\nabla s)\in L^2(\reg(X),E)\}.
\end{equation}
The last statement is equivalent to say that  $\nabla^t \circ \nabla$, with domain given by \eqref{negimvf}, is essentially self-adjoint.
\end{cor}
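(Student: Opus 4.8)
The plan is to derive every assertion of Corollary \ref{democ} from results already established, by specializing the Schrödinger-operator machinery to the potential-free case $L=0$, for which one may take $c=0$ in Theorem \ref{skernelf} and Corollary \ref{otino}. In particular, the proof will parallel that of Corollary \ref{demo} almost verbatim, the only difference being that the scalar heat-trace bound now comes from Prop. \ref{luglioaberlino} rather than from the explicit Fubini--Study asymptotics.

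First I would observe that $\nabla^t\circ\nabla:C^{\infty}_c(\reg(X),E)\rightarrow C^{\infty}_c(\reg(X),E)$ is symmetric, positive, and of the form required by Theorem \ref{skernelf} with $L=0$ and $c=0$. Hence Theorem \ref{skernelf} shows that $e^{-t(\nabla^t\circ\nabla)^{\mathcal{F}}}$ is trace class, and Corollary \ref{otino} (again with $c=0$) yields the pointwise and integrated trace bounds \eqref{jfrazierf}, \eqref{marzlf} for $t\in(0,1)$, the discreteness of $(\nabla^t\circ\nabla)^{\mathcal{F}}$, and the asymptotic lower bound \eqref{waserzf}. I would also remark, as in the projective case, that discreteness can alternatively be read off from Theorem \ref{pianopianof} when $m>2$: the inclusion $\mathcal{D}(\nabla_{\max})\hookrightarrow L^2(\reg(X),E)$ is compact, the inclusion $\mathcal{D}(\nabla^t_{\max}\circ\nabla_{\min})\hookrightarrow\mathcal{D}(\nabla_{\min})$ is continuous by the inequality obtained in the proof of Prop. \ref{partiro}, and $\nabla_{\max}=\nabla_{\min}$ by Theorem \ref{pianopianof}, so the composite inclusion $\mathcal{D}((\nabla^t\circ\nabla)^{\mathcal{F}})\hookrightarrow L^2(\reg(X),E)$ is compact.

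For the description \eqref{negimvf} of a core domain, the key input is Prop. \ref{fall}, which identifies $(\nabla^t\circ\nabla)^{\mathcal{F}}=\nabla^t_{\max}\circ\nabla_{\min}$; since $\nabla_{\max}=\nabla_{\min}$ by Theorem \ref{pianopianof}, also $\nabla^t_{\max}=\nabla^t_{\min}$, so this Friedrich extension is simply $\nabla^t\circ\nabla$ acting on the maximal domain $\{s\in L^2(\reg(X),E):\ \nabla s\in L^2(\reg(X),T^*\reg(X)\otimes E),\ \nabla^t(\nabla s)\in L^2(\reg(X),E)\}$. Then Prop. \ref{partiro}, applied to $\nabla$ and its closed extension, shows that $C^{\infty}(\reg(X),E)\cap\mathcal{D}((\nabla^t\circ\nabla)^{\mathcal{F}})$ is dense in $\mathcal{D}((\nabla^t\circ\nabla)^{\mathcal{F}})$ for the graph norm; intersecting with the explicit maximal-domain description produces exactly \eqref{negimvf}. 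Asserting that \eqref{negimvf} is a core domain of the self-adjoint operator $(\nabla^t\circ\nabla)^{\mathcal{F}}$ is, by definition, the statement that $\nabla^t\circ\nabla$ restricted to \eqref{negimvf} is essentially self-adjoint.

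The argument requires no new estimates; the only point needing a little care is verifying that the hypotheses of Theorem \ref{skernelf}, Corollary \ref{otino} and Prop. \ref{partiro} are literally met here — in particular $\vol_g(\reg(X))<\infty$ and the continuous Sobolev inclusion $W^{1,2}_0(\reg(X),g)\hookrightarrow L^{\frac{2m}{m-2}}(\reg(X),g)$, both recorded in the discussion preceding Theorem \ref{pianopianof} and in its proof. So I expect the main (modest) obstacle to be purely organizational: assembling the correct combination of Theorem \ref{skernelf}, Corollary \ref{otino}, Prop. \ref{fall}, Prop. \ref{partiro} and Theorem \ref{pianopianof}, exactly as in the proof of Corollary \ref{demo}.
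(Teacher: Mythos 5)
Your proposal is correct and follows essentially the same route as the paper: the trace-class property, the trace bounds, discreteness and the eigenvalue asymptotics are read off from Theorem \ref{skernelf} and Corollary \ref{otino} with $L=0$, $c=0$, and the core-domain statement \eqref{negimvf} is obtained exactly as in Corollary \ref{demo}, by combining Prop. \ref{fall}, Prop. \ref{partiro} and the equality $\nabla_{\max}=\nabla_{\min}$ from Theorem \ref{pianopianof}.
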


\begin{proof}
The proof of \eqref{negimvf} is analogous to that we have provided for \eqref{negimv}. The remaining points are consequences of Cor. \ref{otino}.
\end{proof}

\begin{prop}
\label{pietrobf}
Under the assumptions of Theorem \ref{skernelf}. Assume that  $m>1$. Let $k_{P}(t,x,y)$ and $\|k_{P}(t,x,y)\|_{h,\op}$ be as in the proof of Theorem \ref{skernelz}. Then the following inequality holds for $0<t<1$:
\begin{equation}
\label{ube}
\|k_{P}(t,x,y)\|_{h,\op}\leq Ce^{-tc}t^{\frac{-m}{2}}.
\end{equation}
This implies that:
\begin{enumerate}
\item $e^{-tP^{\mathcal{F}}}$ is a ultracontractive operator for each $0<t<1$. This means, see \cite{LSC}, that for each $0<t<1$ there exists $C_t>0$  such that   $$\|e^{-tP^{\mathcal{F}}}s\|_{L^{\infty}(\reg(X),E)}\leq C_t\|s\|_{L^1(\reg(X),E)}$$ for each $s\in L^{1}(\reg(X),E)$. In particular, for each $0<t<1$, $e^{-tP^{\mathcal{F}}}:L^{1}(\reg(V),E)\rightarrow L^{\infty}(\reg(X),E)$ is continuous.
\item If $s$ is an eigensection of $P^\mathcal{F}:L^2(\reg(X),E)\rightarrow L^2(\reg(X),E)$ then $s\in L^{\infty}(\reg(X),E)$.
\end{enumerate}
\end{prop}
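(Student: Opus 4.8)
The plan is to obtain this as a direct corollary of the general results already proved, mirroring the proof of Prop. \ref{pietrob} in the projective setting. First I would go back to the proof of Prop. \ref{skernelz}, invoked here through Theorem \ref{skernelf}: the domination of semigroups in Theorem \ref{ready}, together with the comparison of heat kernels explained there, gives on $\reg(X)\times\reg(X)$ the pointwise operator-norm bound $\|k_{P}(t,x,y)\|_{h,\op}\leq e^{-tc}\,k_{\Delta_0}(t,x,y)$, where $k_{\Delta_0}(t,x,y)$ is the smooth kernel of $e^{-t\Delta_0^{\mathcal{F}}}$.

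Second I would bring in the Sobolev inclusion $W^{1,2}_0(\reg(X),g)\hookrightarrow L^{\frac{2m}{m-2}}(\reg(X),g)$ furnished by Theorem \ref{pianopianof}. As recalled in the proof of Prop. \ref{luglioaberlino} (following \cite{ACM} and \cite{KIY}), this inclusion is equivalent to the heat kernel estimate $k_{\Delta_0}(t,x,y)\leq C t^{-\frac{m}{2}}$ for $0<t<1$, where $C>0$ depends only on $(\reg(X),g)$. Plugging this into the previous inequality gives at once $\|k_{P}(t,x,y)\|_{h,\op}\leq C e^{-tc} t^{-\frac{m}{2}}$ for $0<t<1$, i.e. \eqref{ube}, with the same constant $C$.

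Finally, the two stated consequences are formal. For the first, \eqref{ube} says exactly that the integral kernel of $e^{-tP^{\mathcal{F}}}$ is bounded in fibrewise operator norm by $C_t:=C e^{-tc} t^{-\frac{m}{2}}$, whence $\|e^{-tP^{\mathcal{F}}}s\|_{L^\infty(\reg(X),E)}\leq C_t\|s\|_{L^1(\reg(X),E)}$ for $s\in L^1(\reg(X),E)$, which is the ultracontractivity in the sense of \cite{LSC}; in particular $e^{-tP^{\mathcal{F}}}\colon L^1(\reg(X),E)\to L^\infty(\reg(X),E)$ is continuous. For the second, if $P^{\mathcal{F}}s=\lambda s$ then $s=e^{t\lambda}e^{-tP^{\mathcal{F}}}s$, and since $\vol_g(\reg(X))<\infty$ one has $s\in L^2(\reg(X),E)\subset L^1(\reg(X),E)$, so the first point yields $s\in L^\infty(\reg(X),E)$. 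There is essentially no obstacle here beyond bookkeeping; the only subtlety worth flagging is the dimension range, since the Sobolev step genuinely relies on $m>2$, the regime in which Theorem \ref{pianopianof} is stated, and the hypothesis ``$m>1$'' should be read accordingly.
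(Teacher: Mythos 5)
Your argument is correct and is essentially the paper's proof: the paper simply cites Prop.~\ref{pietrobz} together with Theorem~\ref{skernelf}, and your write-up just unfolds that citation (kernel domination from Theorem~\ref{ready}, the equivalence of the Sobolev inclusion with the on-diagonal bound $k_{\Delta_0}(t,x,y)\leq Ct^{-m/2}$, and the formal consequences). Your remark that the Sobolev step really requires $m>2$, so that the stated hypothesis ``$m>1$'' should be read as $m>2$, is a correct and worthwhile observation.
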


\begin{proof}
This follows by Prop. \ref{pietrobz} and by Theorem \ref{skernelf}.
\end{proof}

Analogously to the previous section we have the following applications to Dirac operators. 

\begin{cor}
\label{diracys}
Let $X$, $E$, $g$ and $h$ be as in Theorem \ref{pianopianof}. Assume that $E$ is a Clifford module. Let $D=\tilde{c}\circ \nabla$
\begin{equation}
\label{broses}
D:C^{\infty}_c(\reg(X),E)\rightarrow C^{\infty}_c(\reg(X),E)
\end{equation}
be a Dirac operator where $\nabla:C^{\infty}(\reg(X),E)\rightarrow C^{\infty}(\reg(X),T^*\reg(X)\otimes E)$ is a metric connection and $\tilde{c}\in \Hom(T^*\reg(X)\otimes E,E)$ is the bundle homomorphism induced by the Clifford multiplication. Let  $D^2$ be the Dirac Laplacian and let $L$ be the endomorphism of $E$ arising in the Weitzenb\"ock decomposition formula, see\cite{Jroe} pag. 43--44, 
\begin{equation}
\label{vasos}
D^2=\nabla^t\circ \nabla +L.
\end{equation}
 Assume that there is a  constant $c\in \mathbb{R}$ such that  $h( L\phi,\phi)\geq ch( \phi,\phi)$ for each $\phi\in C^{\infty}_c(\reg(V),E)$. Then  Theorem \ref{skernelf}, Corollary \ref{otino} and Prop. \ref{pietrobf} hold for $e^{-tD^{2,\mathcal{F}}}$ where $D^{2,\mathcal{F}}$ is the Friedrich extension of $D^2$. In particular $$e^{-tD^{2,\mathcal{F}}}:L^2(\reg(X),E)\rightarrow L^2(\reg(X),E)$$ is a trace class operator and $D^{2,\mathcal{F}}:L^2(\reg(X),E)\rightarrow L^2(\reg(X),E)$ is a discrete operator.
\end{cor}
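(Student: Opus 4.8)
The plan is to recognize $D^2$ as a Schr\"odinger type operator of the form treated in the previous subsection and then to read off the conclusions from Theorem \ref{skernelf}, Corollary \ref{otino} and Proposition \ref{pietrobf}; so the whole statement reduces to checking the hypotheses of those results for $P:=D^2$.

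First I would observe that, being a Dirac operator, $D=\tilde{c}\circ \nabla$ is formally self-adjoint, $D^t=D$, so that $D^2=D^t\circ D:C^{\infty}_c(\reg(X),E)\rightarrow C^{\infty}_c(\reg(X),E)$ is a symmetric operator with core domain $C^{\infty}_c(\reg(X),E)$, and it is positive because $\langle D^2\phi,\phi\rangle_{L^2(\reg(X),E)}=\|D\phi\|^2_{L^2(\reg(X),E)}\geq 0$ for every $\phi\in C^{\infty}_c(\reg(X),E)$. Next, the Weitzenb\"ock decomposition formula \eqref{vasos} gives the identity of differential operators $D^2=\nabla^t\circ \nabla+L$ on $C^{\infty}_c(\reg(X),E)$, with $L\in C^{\infty}(\reg(X),\End(E))$ the curvature endomorphism, and by hypothesis $h(L\phi,\phi)\geq c\,h(\phi,\phi)$ for each $\phi\in C^{\infty}_c(\reg(X),E)$. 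Hence $P:=D^2$ satisfies all the assumptions of Theorem \ref{skernelf}.

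Then I would simply invoke Theorem \ref{skernelf} with $P=D^2$: this yields that $e^{-tD^{2,\mathcal{F}}}:L^2(\reg(X),E)\rightarrow L^2(\reg(X),E)$ is a trace class operator and that $\Tr(e^{-tD^{2,\mathcal{F}}})\leq r e^{-tc}\Tr(e^{-t\Delta_0^{\mathcal{F}}})$, $r$ being the rank of $E$; since the heat operator is trace class, $D^{2,\mathcal{F}}$ is a discrete operator. Applying Corollary \ref{otino} and Proposition \ref{pietrobf} to the same $P=D^2$ then gives the remaining assertions (the small-time estimates for the pointwise and global heat trace, the asymptotic lower bound on the eigenvalues of $D^{2,\mathcal{F}}$, the ultracontractivity of $e^{-tD^{2,\mathcal{F}}}$ for $0<t<1$, and the $L^{\infty}$-regularity of its eigensections).

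The only point requiring any care, and the main obstacle such as there is one, is to make the identification $D^2=\nabla^t\circ\nabla+L$ precise in the sense demanded by Theorem \ref{skernelf}: namely that the metric connection $\nabla$ appearing in $D=\tilde{c}\circ\nabla$ is exactly the one entering the Bochner Laplacian $\nabla^t\circ\nabla$, and that $L$ is a genuine smooth bundle endomorphism so that the second bullet of Theorem \ref{skernelf} is meaningful; this is precisely the content of the Weitzenb\"ock formula quoted from \cite{Jroe}, while the lower bound $h(L\phi,\phi)\geq c\,h(\phi,\phi)$ is assumed outright. Everything else is a verbatim transcription of the results of the previous subsection to the operator $D^2$.
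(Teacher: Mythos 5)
Your proposal is correct and is exactly the argument the paper intends: the corollary is stated without an explicit proof precisely because, once $D^2$ is recognized via the Weitzenb\"ock formula as a symmetric, positive Schr\"odinger type operator with $L$ bounded below, everything follows by applying Theorem \ref{skernelf}, Corollary \ref{otino} and Proposition \ref{pietrobf} to $P=D^2$. Your verification of positivity via $\langle D^2\phi,\phi\rangle=\|D\phi\|^2$ and your remark that the connection in $D=\tilde{c}\circ\nabla$ must be the same one appearing in the Bochner Laplacian are the right points to check, and nothing further is needed.
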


\begin{cor}
\label{spinos}
Let $X$ and $g$ be as in  Theorem \ref{pianopianof}. Assume that $\reg(X)$ is a spin manifold and assume moreover that $s_g$, the scalar curvature of $g$, satisfies $s_g\geq c$ for some $c\in \mathbb{R}$. Let $\Sigma$ be the spinor bundle on $\reg(X)$ and let 
\begin{equation}
\label{spinns}
\eth:C^{\infty}_c(\reg(X),\Sigma)\rightarrow C^{\infty}_c(\reg(X),\Sigma)
\end{equation}
be  the associated spin Dirac operator. Then Theorem \ref{skernelf}, Corollary \ref{otino} and Prop. \ref{pietrobf} hold for \begin{equation}
\label{azzss}
e^{-t\eth^{2,\mathcal{F}}}:L^2(\reg(X),\Sigma)\rightarrow L^2(\reg(X),\Sigma).
\end{equation}
In particular \eqref{azzss} is a trace class operator and $\eth:L^2(\reg(X),\Sigma)\rightarrow L^2(\reg(X),\Sigma)$ is a discrete operator.
\end{cor}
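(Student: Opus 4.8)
The plan is to put $\eth^2$ in Schr\"odinger form via the Lichnerowicz formula and then quote Corollary \ref{diracys} exactly as in the proof of Corollary \ref{spino}. First I would recall that on the spinor bundle $\Sigma$, equipped with its natural Hermitian metric $h$ and the Levi-Civita spin connection $\nabla^{\Sigma}$ (which is a metric connection), the Lichnerowicz formula reads
\[
\eth^2=(\nabla^{\Sigma})^t\circ\nabla^{\Sigma}+\tfrac14 s_g ,
\]
see \cite{LaM} pag. 160. Thus $\eth$ is a Dirac operator of the type treated in Corollary \ref{diracys}, with Clifford module $E=\Sigma$, Clifford multiplication $\tilde{c}$, metric connection $\nabla=\nabla^{\Sigma}$, and Weitzenb\"ock endomorphism $L=\tfrac14 s_g\,\id_{\Sigma}\in C^{\infty}(\reg(X),\End(\Sigma))$.

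Next I would verify the hypotheses of Corollary \ref{diracys}. Since $\eth$ is formally self-adjoint on $C^{\infty}_c(\reg(X),\Sigma)$, the operator $\eth^2=\eth^t\circ\eth$ is symmetric and positive. Moreover the curvature bound $s_g\geq c$ gives, for every $s\in C^{\infty}(\reg(X),\Sigma)$,
\[
h(Ls,s)=\tfrac14 s_g\,|s|_h^2\geq \tfrac{c}{4}\,|s|_h^2 ,
\]
so $L$ is bounded from below, with constant $c/4$. Hence Corollary \ref{diracys} applies to $\eth$ and yields at once that $e^{-t\eth^{2,\mathcal{F}}}$ is a trace class operator, that $\eth^{2,\mathcal{F}}$ is a discrete operator, and that Theorem \ref{skernelf}, Corollary \ref{otino} and Proposition \ref{pietrobf} hold for $e^{-t\eth^{2,\mathcal{F}}}$ (with $c/4$ in place of $c$ in the estimates).

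Finally, for the assertion that $\eth$ itself is a discrete operator I would note that, by Proposition \ref{fall}, the form domain of $\eth^{2,\mathcal{F}}$ is $\mathcal{D}(Q_{\eth^2})=\mathcal{D}(\eth_{\min})$ with equivalent norms; since $\eth^{2,\mathcal{F}}$ is discrete, $(\eth^{2,\mathcal{F}}+1)^{-1/2}$ is compact and maps $L^2(\reg(X),\Sigma)$ onto this form domain, so $\mathcal{D}(\eth_{\min})\hookrightarrow L^2(\reg(X),\Sigma)$ is compact, i.e. $\eth$ is discrete. I do not expect a genuine obstacle here: the whole argument is essentially a quotation of Corollary \ref{diracys}, and the only points deserving a line of justification are that $\nabla^{\Sigma}$ is a metric connection on $(\Sigma,h)$ and that $s_g$ is a well-defined smooth function on $\reg(X)$ bounded below by hypothesis, so that $L=\tfrac14 s_g$ is admissible in Theorem \ref{skernelf}.
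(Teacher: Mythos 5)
Your proposal is correct and follows essentially the same route as the paper: the paper's (one-line) argument for the parallel Corollary \ref{spino} is precisely the Lichnerowicz formula $\eth^2=\nabla^t\circ\nabla+\frac14 s_g$ feeding into Corollary \ref{diracys}, and Corollary \ref{spinos} is proved the same way. Your extra remarks — that the lower bound becomes $c/4$ in the estimates, and the form-domain argument via Prop. \ref{fall} for the discreteness of $\eth$ itself — are accurate and in fact supply details the paper leaves implicit.
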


\begin{cor}
\label{sitornaacasas}
Let $X$ and $g$ be as in Theorem \ref{pianopianof}. Let  $k\in \{0,...,m\}$ and consider the Bochner-Weitzenb\"ock identity for the Laplacian $\Delta_k:\Omega_c^k(\reg(X))\rightarrow \Omega_c^k(\reg(X))$, see \cite{LaM} pag. 155 or \cite{Jroe} pag. 43--44,
\begin{equation}
\label{holynights}
\Delta_k=\nabla^t_k\circ \nabla_k+L_k,
\end{equation}
where $\nabla_k:\Omega^k(\reg(X))\rightarrow C^{\infty}(\reg(X),T^*\reg(X)\otimes \Lambda^kT^*\reg(X))$ is the metric connection induced by the Levi Civita connection. Assume that there is a constant $c$ such that $\langle L_k\eta,\eta\rangle_{g_k}\geq c\langle\eta,\eta\rangle_{g_k}$ for each $\eta\in \Omega^k_c(\reg(X))$. Let $\Delta_k^{\mathcal{F}}$ be the Friedrich extension of $\Delta_k$ and let 
\begin{equation}
\label{nununus}
e^{-t\Delta_k^{\mathcal{F}}}:L^2\Omega^k(\reg(X),g)\rightarrow L^2\Omega^k(\reg(X),g)
\end{equation}
be the heat operator associated to $\Delta_k^{\mathcal{F}}$. Then \eqref{nununus} is a trace class operator. In particular Theorem \ref{skernelf}, Cor. \ref{otino} and Prop. \ref{pietrobf} hold for \eqref{nununu}.
\end{cor}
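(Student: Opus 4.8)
The statement follows by specializing, to the operator $\Delta_k$, the results on Schrödinger type operators already established in this section; the plan is essentially to check that $\Delta_k$ fits the template of Theorem \ref{skernelf}.

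First I would observe that, by the Bochner--Weitzenb\"ock identity \eqref{holynights}, the Hodge Laplacian $\Delta_k:\Omega^k_c(\reg(X))\to\Omega^k_c(\reg(X))$ is precisely a Schr\"odinger type operator in the sense of Theorem \ref{skernelf}: here $E$ is $\Lambda^kT^*\reg(X)$ equipped with the fibre metric $g_k$ induced by $g$, $\nabla$ is the connection $\nabla_k$ on $k$-forms induced by the Levi--Civita connection (which is metric with respect to $g_k$, so that $\nabla_k^t$ is its formal adjoint in the sense used throughout the paper), and the zeroth order term is the bundle endomorphism $L_k$, which is smooth on $\reg(X)$ since it is a curvature expression. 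Note in particular that $\Delta_k$ is elliptic, so the smooth core domains used in Prop. \ref{partiro} and in the results invoked below are available.

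Next I would verify the two hypotheses of Theorem \ref{skernelf}. Symmetry and positivity of $\Delta_k$ on $\Omega^k_c(\reg(X))$ are classical, since integrating by parts on compactly supported forms gives $\langle\Delta_k\eta,\eta\rangle_{L^2\Omega^k(\reg(X),g)}\geq 0$ and $\langle\Delta_k\eta,\mu\rangle=\langle\eta,\Delta_k\mu\rangle$ for all $\eta,\mu\in\Omega^k_c(\reg(X))$, so that $\Delta_k$ is extended by its formal adjoint. The remaining hypothesis, namely $h(L_k\eta,\eta)\geq c\,h(\eta,\eta)$ with $h=g_k$, is exactly the standing assumption $\langle L_k\eta,\eta\rangle_{g_k}\geq c\langle\eta,\eta\rangle_{g_k}$. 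Since $\Delta_k$ need not be essentially self-adjoint on $\reg(X)$, one works with the Friedrich extension $\Delta_k^{\mathcal F}$, and Theorem \ref{skernelf} then applies verbatim, yielding that $e^{-t\Delta_k^{\mathcal F}}$ is trace class with $\Tr(e^{-t\Delta_k^{\mathcal F}})\leq\binom{m}{k}e^{-tc}\Tr(e^{-t\Delta_0^{\mathcal F}})$, where $\binom{m}{k}$ is the rank of $\Lambda^kT^*\reg(X)$.

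Finally I would invoke Cor. \ref{otino} and Prop. \ref{pietrobf} with the same identifications to transfer to $\Delta_k^{\mathcal F}$ the remaining conclusions: discreteness of $\Delta_k^{\mathcal F}$, the small-time bounds on the pointwise heat trace and on the global trace of the form $\Tr(e^{-t\Delta_k^{\mathcal F}})\leq\binom{m}{k}Ce^{-tc}\vol_g(\reg(X))t^{-m/2}$ for $t\in(0,1)$, the eigenvalue asymptotics $\lambda_j\geq Kj^{2/m}+c$, the ultracontractivity of $e^{-t\Delta_k^{\mathcal F}}$ and the $L^\infty$-regularity of its eigenforms. I do not expect a genuine obstacle: the whole content is the identification of $\Delta_k$ with a Schr\"odinger operator via \eqref{holynights} together with the elementary verification of positivity and symmetry; the only points deserving a word of care are that the dimensional hypotheses of the cited results (in particular $m>2$) are in force, and that — in contrast with Cor. \ref{sitornaacasa} — there is no Hodge--Kodaira analogue in this setting, since $\reg(X)$ need not be a complex manifold, so the statement consists only of the "de Rham" part.
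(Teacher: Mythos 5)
Your proposal is correct and follows exactly the route the paper intends: identify $\Delta_k$ as a Schr\"odinger type operator via the Bochner--Weitzenb\"ock identity \eqref{holynights}, check symmetry, positivity and the lower bound on $L_k$, and then apply Theorem \ref{skernelf}, Cor. \ref{otino} and Prop. \ref{pietrobf} verbatim (the paper treats this as immediate, mirroring the first half of the proof of Cor. \ref{sitornaacasa}). Your remarks on the rank $\binom{m}{k}$, the implicit hypothesis $m>2$, and the absence of a Hodge--Kodaira decomposition in the real stratified setting are all accurate.
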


\end{document}